\documentclass[reqno]{amsart}

\usepackage{amsfonts}
\usepackage{amssymb}
\usepackage{amsmath}
\usepackage{mathtools}

\usepackage{bigints}
\usepackage[mathcal]{euscript}

\usepackage{enumitem}
\usepackage{xcolor}
\usepackage{todonotes}

\setenumerate{label={\rm (\alph{*})}}
\usepackage{bbm,bm,euscript,mathrsfs}
\usepackage{paper_diening}
\usepackage{graphicx}
\usepackage[top=1in, bottom=1.25in, left=1.10in, right=1.10in]{geometry}
\usepackage{relsize}
\usepackage{hyperref} 
\usepackage{cases, color,amsthm} 
\hypersetup{linkcolor=blue, colorlinks=true,citecolor = red}

\usepackage[nameinlink, noabbrev, capitalize]{cleveref}

\Crefname{section}{Section}{Sections}
\crefname{appendix}{Appendix}{Appendices}
\Crefname{appendix}{Appendix}{Appendices}

\allowdisplaybreaks

\numberwithin{equation}{section}

\usepackage{tikz-cd}
\usetikzlibrary{lindenmayersystems}
\usetikzlibrary{decorations.pathreplacing}
\usepackage[dvipsnames]{xcolor}

\DeclareMathOperator{\Div}{div}

\newcommand{\R}{\mathbb R}

\newcommand{\dd}{\mathrm d}
\newcommand{\dt}{\,\mathrm{d} t}

\newcommand{\bx}{\mathbf{x}}
\newcommand{\by}{\mathbf{y}}

\newcommand{\bq}{\mathbf{q}}
\newcommand{\bu}{\mathbf{u}}

\newcommand{\bv}{\mathbf{v}}
\newcommand{\bn}{\mathbf{n}}

\newcommand{\dy}{\, \mathrm{d}\mathbf{y}}

\newcommand{\dx}{\, \mathrm{d} \mathbf{x}}

\newcommand{\divx}{\mathrm{div} }

\newcommand{\nabx}{\nabla }
\newcommand{\naby}{\nabla_{\mathbf{y}}}

\newcommand{\Delx}{\Delta }
\newcommand{\Dely}{\Delta_{\mathbf{y}}}

\newcommand{\dH}{\,\mathrm{d}\mathbf{\mathcal{H}}^2}

\newcommand{\ds}{\,\mathrm{d}s}

\newcommand{\Oeta}{\Omega_\eta}

\newtheorem{theorem}{Theorem}[section]
\newtheorem{lemma}{Lemma}[section]
\newtheorem{proposition}{Proposition}[section]
\newtheorem{corollary}{Corollary}[section]
\newtheorem{remark}{Remark}[section]

\newtheorem{definition}{Definition}[section]
\theoremstyle{definition}

\begin{document}

\hypersetup{
  urlcolor     = MidnightBlue, 
  linkcolor    = Bittersweet, 
  citecolor   = Cerulean
}

\title[Blow-Up Criteria  and Weak--Strong Uniqueness for FSI ]{Blow-Up Criteria  and\! Weak--Strong Uniqueness  for Compressible Fluid--Viscoelastic Shell Interactions}

\author{Prince Romeo Mensah}
\address{Faculty of Mathematics, University of Duisburg-Essen, Thea-Leymann-Straße 9, 45127 Essen, Germany}
\email{prince.mensah@uni-due.de}

\author{Pierre Marie Ngougoue Ngougoue}
\address{Faculty of Mathematics, University of Duisburg-Essen, Thea-Leymann-Straße 9, 45127 Essen, Germany}
\email{pierre.ngougouengougoue@uni-due.de}

\subjclass[2020]{35B65, 35Q74, 35R37, 76N10, 74F10, 74K25}

\date{\today}

\keywords{Compressible Navier-Stokes system, Viscoelastic shell equation, Fluid-Structure interaction, Strong solutions, Blowup criteria.}

\begin{abstract}
Existence and uniqueness of  strong solutions to a  barotropic compressible fluid--viscoelastic shell interaction system have recently been established on a finite time interval. A natural question is whether such solutions can be continued globally. In this work, we derive a continuation criterion for this coupled system.  Our analysis is based on an energy estimate at the level of material acceleration, derived  under Serrin-type and Beale--Kato--Majda-type control assumptions.  While in the incompressible  setting, such control is  sufficient to prevent finite-time blow-up, in the compressible regime it does not by itself ensure propagation of  the full regularity required for  strong solutions. To obtain a genuine continuation criterion, we impose a Beale--Kato--Majda  Lipschitz-type control on the density and  velocity gradients with stronger time integrability.  In combination with the control framework underlying the acceleration estimate,  we close a higher-order energy estimate and thereby prevent loss of strong-solution regularity. Consequently, the solution can be extended beyond a potential blow-up time, provided that the corresponding control norms remain finite. We further  establish a weak-strong uniqueness principle for the system under the above conditional regularity criterion.

\end{abstract}

\maketitle

\section{Introduction}
We consider the motion of a viscous compressible fluid interacting with a deformable elastic structure. At each time $t \in I := (0, T)$, the fluid occupies a moving domain $\Omega_{\eta(t)} \subset \R^{3}$, whose deformation is described by the  displacement $\eta(t,\cdot)$ of an elastic structure anchored to part of the boundary of a reference domain $\Omega \subset \R^3.$     The time evolution of the density and the velocity of the fluid in $\Omega_{\eta}$ is governed by the compressible Navier--Stokes equations: 
\begin{equation}\label{eq:ContMomentEq}
\left\{\begin{aligned}
&\partial_t \rho  + \divx(\rho\mathbf{v} \big)
= 
0 &\text{ in }  I\times\Omega_\eta,\\
&\partial_t(\rho \bv)+\Div(\rho\bv\otimes\bv)
= 
\mu\Delx \bv +(\lambda+\mu)\nabx\Div \bv-\nabx p(\rho) &\text{ in }  I\times\Omega_\eta,\\
&\rho(0,\bx)=\rho_0(\bx), \qquad
(\rho\bv)(0,\bx)=\bq_0(\bx) & \forall\; \bx \in  \Omega_{\eta_0},
\end{aligned}\right.
\end{equation}
where 
\[\mathbf{v}:(t, \mathbf{x})\in I \times \Oeta \mapsto  \mathbf{v}(t, \mathbf{x}) \in \mathbb{R}^3  \quad \text{and} \quad  \rho:(t, \mathbf{x})\in I \times \Oeta \mapsto  \rho(t, \mathbf{x}) \in \mathbb{R},  \]
denote the fluid velocity and density, respectively,  and the pressure $p$ is given by the isentropic state equation 
\[p(\rho)=a\rho^\gamma \quad a>0,~ \gamma>1. \]
The structure $\eta:(t, \by)\in I \times \omega \mapsto   \eta(t,\by)\in \R,$ modeled as a viscoelastic shell defined on a fixed reference surface $\omega \subset \R^2, $ satisfies \
\begin{equation}\label{eq:ShellEq}
\left\{\begin{aligned}
& \partial_t^2\eta - \partial_t\Dely \eta + \Dely^2\eta=-\bn^\intercal(\bm{\tau}\bn_\eta)\circ\bm{\varphi}_\eta
\mathrm{det}(\naby \bm{\varphi}_{\eta}) 
&\text{ in }   I\times\omega
,\\
&\eta(0,\by)=\eta_0(\by), \quad (\partial_t\eta)(0, \by)=\eta_*(\by)
&\forall\; \by\in\omega,
\end{aligned}\right.
\end{equation}
with periodic boundary conditions in space. Here $\bfvarphi_\eta :\omega\to \partial \Omega_\eta$ is a parametrisation of the deformed boundary, and $\mathrm{det}(\naby \bm{\varphi}_{\eta})$ denotes the associated surface Jacobian, that is, 
\[
\mathrm{det}(\naby \bm{\varphi}_{\eta})  = \bigl| \partial_1\bm{\varphi}_{\eta} \times \partial_2 \bm{\varphi}_{\eta} \bigr|.
\] 
The vectors $\bn$ and $\bn_\eta$ are the outer normal vectors  of the reference and  deformed boundary, respectively.  $\bm{\tau}$ denotes the Cauchy stress of the fluid given by {\em Newton's rheological law}, that is
\[\bm{\tau} = \mathbb{S}(\nabla\bv)-p(\rho)\mathbb{I}_{3\times 3},\]
with the viscous stress tensor
\[\mathbb{S}(\nabla\bv)=2\mu\left(\frac{1}{2}\big(\nabla\bv+(\nabla\bv)^\intercal\big)-\frac{1}{3}\Div \bv\mathbb{I}_{3\times 3}\right)+\left(\lambda+\frac{2}{3}\mu\right)\Div\bv\mathbb{I}_{3\times 3}. \]
The shear and bulk viscosity coefficients $\mu$ and $\lambda, $ satisfy the physical restrictions 
\[\mu>0, \quad \lambda+\frac{2}{3}\mu\geq0.\]
The coupling at the interface $\bfvarphi_{\eta}(\omega) $  is expressed through the kinematic boundary condition 
\begin{align}\label{eq:interfaceCond}
\bv\circ \bfvarphi_\eta= \big(\partial_t\eta\big)\bn \quad\text{ in }  I\times \omega.
\end{align} 

The mathematical analysis of such fluid--structure interaction (FSI) systems has been initiated in the weak-solution framework by \cite{breit2018compressible}.  In their model, the elastic response of the shell is described by an operator of the form 
\[K'(\eta) = m\Dely^2 \eta + B\eta,\]
which corresponds to a linearisation of the Koiter shell model. Here $m > 0$ depends on the shell material and $B$ is a second-order differential operator. 
Their approach is based on a four-layer approximation scheme involving artificial pressure, artificial viscosity, boundary regularisation and Galerkin discretisation. Passing to the limit through compactness arguments, they obtained finite-energy weak-solutions for adiabatic exponents $\gamma > \frac{12}{7} $ -- valid up to the first self-intersection of the moving interface.  
This result provided a rigorous weak solution theory for compressible fluids interacting with a linear elastic shell, but left opened the question of higher regularity and uniqueness.  Later on \cite{ngougoue2025local} investigated a special case of this coupling, notably \eqref{eq:ContMomentEq}--\eqref{eq:interfaceCond}, corresponding to a viscoelastic shell model in which 
\[B = -\partial_t \Dely, \quad  m =1, \quad \text{so that } \quad K'(\eta) = \Dely^2 \eta -\partial_t \Dely\eta. \]  
In this setting, the author proved  the local-in-time existence and uniqueness of strong solutions, providing a precise functional framework in which the fluid and the structure possess the regularity required to interpret \eqref{eq:ContMomentEq}--\eqref{eq:interfaceCond}  pointwise almost everywhere in space-time. 

\noindent We recall below the notion of  a strong solution introduced in  \cite{ngougoue2025local}, and the corresponding well-posedness theorem, which will serve as the starting point for our analysis.   

\begin{definition}\label{def:StrongSol}
Let the initial data $(\rho_0,\bv_0,\eta_0,\eta_*)$ satisfy 
\begin{align}
&\rho_0 \in W^{3,2}(\Omega_{\eta_0}),\qquad \bv_0 \in W^{3,2}(\Omega_{\eta_0}), \qquad\eta_0\in W^{5,2}(\omega), \label{eq:InitialCondSpace}
\\
& \eta_*\in W^{3,2}(\omega),\qquad\Vert\eta_0\Vert_{L^\infty(\omega)}<L, \qquad\bv_0\circ \bm{\varphi}_{\eta_0}=\eta_*\bn \text{ on } \omega,   \label{eq:InitialCondInterface}
\\
&  \exists  \, m >0 \; \colon\;   m \leq \rho_0 (\bx)   \;\text{ a.e. in } \Omega_{\eta_0}, \label{eq:PositivityAssumption}
\end{align}
where $L > 0$ is the tubular neighbourhood radius stemming from the Hanzawa transform. \\
We call the triple $(\rho, \bv, \eta)$ a strong solution of \eqref{eq:ContMomentEq}--\eqref{eq:interfaceCond} provided that the following conditions hold:

\begin{itemize}
\item[(a)] $\rho \in   W^{1,\infty} \big(I; W^{2,2}(\Omega_\eta ) \big)\cap L^\infty\big(I;W^{3,2}(\Omega_\eta )  \big)$; \\[-0.2cm]

\item[(b)] The structure displacement $\eta $ satisfies 
\begin{align*}
&\eta \in L^2\big(I; W^{6,2}(\omega)\big)  
\cap W^{3,2}\big(I ; L^{2}(\omega)\big)
  \cap W^{2,\infty}\big(I; W^{1,2}(\omega)\big),
\\
&\partial_t\eta \in L^\infty\big(I; W^{3,2}(\omega)\big) \cap L^{2}\big(I; W^{4,2}(\omega)\big); 
\end{align*}

\item[(c)] The velocity field $\bv$ satisfies 
\begin{align*}
&\bv\in L^{2}\big(I; W^{4,2}(\Omega_\eta)\big)
\cap W^{2,2}\big(I; L^{2}(\Omega_\eta)\big). 
\end{align*}

\item[(c)]  The fluid--structure system \eqref{eq:ContMomentEq}--\eqref{eq:interfaceCond}   holds a.e. in $I\times \Omega_\eta$. 

\end{itemize}
\end{definition}

\noindent The following result ensures the local-in-time well-posedness of the system \eqref{eq:ContMomentEq}--\eqref{eq:interfaceCond}. 

\begin{theorem}\label{theo:Existenceresult}
Assume the initial data $(\rho_0,\bv_0,\eta_0,\eta_*)$ satisfy \eqref{eq:InitialCondSpace} and  \eqref{eq:InitialCondInterface}.
Then there  exists   $T_* \in I $ such that  \eqref{eq:ContMomentEq}--\eqref{eq:interfaceCond}  admits a unique strong solution  $(\rho,\bv,\eta)$ on $I_* := (0, T_*)$ satisfying 
\begin{align*}
&\rho\in L^{\infty}\big(I_*;W^{3,2}(\Omega_\eta)\big)\cap W^{1,\infty}\big(I_*;W^{2,2}(\Omega_\eta)\big),
\\
&\bv\in L^{2}\big(I_*;W^{4,2}(\Omega_\eta)\big)
\cap W^{2,2}\big(I_*;L^{2}(\Omega_\eta)\big),
\\
&\eta \in L^2\big(I_*;W^{6,2}(\omega)\big)  
\cap W^{3,2}\big(I_*;L^{2}(\omega)\big)
  \cap W^{2,\infty}\big(I_*;W^{1,2}(\omega)\big),
\\
&\partial_t\eta \in L^\infty\big(I_*;W^{3,2}(\omega)\big) \cap L^{2}\big(I_*;W^{4,2}(\omega)\big).
\end{align*}
\end{theorem}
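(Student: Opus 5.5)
The statement is the local well-posedness result of \cite{ngougoue2025local}. I would reprove it by the standard route for compressible fluid--structure interaction: transform to the fixed reference domain, linearise, and close a contraction argument on a short time interval.

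\textbf{Step 1: fixed reference domain.} Using the Hanzawa transform $\bm{\Psi}_{\eta}:\Omega\to\Omega_\eta$, I pull back $\rho,\bv$ to $\overline\rho=\rho\circ\bm{\Psi}_\eta$ and $\overline\bv=\bv\circ\bm{\Psi}_\eta$. This turns \eqref{eq:ContMomentEq}--\eqref{eq:interfaceCond} into a system on $I\times\Omega$ whose coefficients depend smoothly on $\nabla\bm{\Psi}_\eta$ and $\partial_t\bm{\Psi}_\eta$. The condition $\Vert\eta_0\Vert_{L^\infty(\omega)}<L$ and continuity in time keep $\Vert\eta(t)\Vert_{L^\infty}<L$ for small $T_*$, so the transform remains a $W^{k,2}$-diffeomorphism with bounds controlled by $\eta\in L^\infty W^{5,2}$ and $\partial_t\eta\in L^\infty W^{3,2}$.

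\textbf{Step 2: decoupling and linear estimates.} Given $(\tilde\bv,\tilde\eta)$ in a ball of the solution space $X_T$, I solve three linear problems in turn.
\begin{itemize}
\item[(i)] The continuity equation is a transport equation with field $\tilde\bv$, solved by characteristics in the moving domain. Since the boundary moves with the fluid by \eqref{eq:interfaceCond}, no inflow boundary data are needed. This gives $\rho\in L^\infty W^{3,2}\cap W^{1,\infty}W^{2,2}$ provided $\tilde\bv\in L^1W^{4,2}$, and the lower bound $\rho\ge m/2$ for short time.
\item[(ii)] The structure and momentum equations form a linear parabolic coupled system: a Lamé operator with frozen density $\rho$, and the damped plate $\partial_t^2\eta-\partial_t\Dely\eta+\Dely^2\eta$. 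They are coupled through the kinematic condition and the traction. For this linear coupled problem I would derive maximal-regularity estimates at the level of $\partial_t^2\bv\in L^2L^2$ and $\bv\in L^2W^{4,2}$. The method is to differentiate in time twice (or once, plus elliptic regularity) and test with $\partial_t^k\bv$ and $\partial_t^{k+1}\eta$; the boundary terms cancel by the coupling structure, exactly as in the energy identity. The viscoelastic term $-\partial_t\Dely\eta$ supplies the parabolic smoothing on the shell: $\partial_t\eta\in L^2W^{4,2}$ and $\eta\in L^2W^{6,2}$.
\item[(iii)] Spatial regularity near the boundary comes from the Stokes--Lamé elliptic problem with Dirichlet data $(\partial_t\eta)\bn$. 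The Lamé part is treated by tangential difference quotients, with the normal direction recovered from the equation.
\end{itemize}
The compatibility conditions at $t=0$ come from \eqref{eq:InitialCondInterface} and the regularity \eqref{eq:InitialCondSpace}.

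\textbf{Step 3: fixed point.} All nonlinear terms (convection, pressure gradient, geometric coefficients from $\bm{\Psi}_{\tilde\eta}$, and $\det(\naby\bm{\varphi}_{\tilde\eta})\bn_{\tilde\eta}$) are estimated in the lower-order norms with a factor $T^\alpha$, by interpolation and Sobolev embeddings in 3D/2D. The map therefore sends a large ball into itself for small $T_*$. Contraction is shown in a weaker norm, e.g. the energy level $L^\infty L^2\cap L^2W^{1,2}$ for differences; the transport equation loses a derivative, so this step is unavoidable. Boundedness in the strong norm together with contraction in the weak norm then gives a fixed point by weak lower semicontinuity. Uniqueness follows from the same difference estimate.

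\textbf{Main obstacle.} I expect the hardest part to be the higher-order linear estimate in (ii) for the coupled problem. Time derivatives of the transformed system generate commutators involving $\partial_t^2\bm{\Psi}_\eta$, which needs $\partial_t^2\eta\in L^\infty W^{1,2}$. The boundary terms must cancel at that level. I would handle this by working with $\partial_t^2$ of the equations in the reference configuration. The highest-order structure terms are absorbed using the dissipation $\int|\naby\partial_t^3\eta|^2$ from the viscoelastic term. The remaining boundary traction terms are estimated by the trace theorem combined with the fluid dissipation $\Vert\nabla\partial_t^2\bv\Vert_{L^2}^2$.
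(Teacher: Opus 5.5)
Your overall architecture (Hanzawa pull-back, characteristics for the density, parabolic regularity for the Lam\'e system coupled to the damped plate, fixed point on a short interval) is consistent with what the paper says about \cite{ngougoue2025local}. However, your sentence ``the compatibility conditions at $t=0$ come from \eqref{eq:InitialCondInterface} and the regularity \eqref{eq:InitialCondSpace}'' is false, and Step~2(ii) fails exactly there.

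\eqref{eq:InitialCondInterface} contains only the zeroth-order condition $\bv_0\circ\bm{\varphi}_{\eta_0}=\eta_*\bn$. At the regularity level you target, a first-order-in-time condition is also necessary. For any solution in the stated class, the intermediate-derivative theorem gives $\partial_t(\bv\circ\bm{\Psi}_\eta)\in W^{1,2}(I_*;L^2)\cap L^2(I_*;W^{2,2})\hookrightarrow C(\overline{I_*};W^{1,2})$, and likewise $\partial_t^2\eta\in C(\overline{I_*};W^{1,2}(\omega))$. Differentiating \eqref{eq:interfaceCond} in time therefore yields $(\partial_t\bv+\bv\cdot\nabla\bv)\circ\bm{\varphi}_\eta=(\partial_t^2\eta)\bn$ for every $t$, including $t=0$. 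At $t=0$ both sides are determined by the data through the momentum and shell equations. This is \eqref{eq:CC}, i.e.
\[
\Big[\rho_0^{-1}\big(\mu\Delta\bv_0+(\lambda+\mu)\nabla\Div\bv_0-\nabla p(\rho_0)\big)\Big]\circ\bm{\varphi}_{\eta_0}=\big(\Dely\eta_*-\Dely^2\eta_0-\mathtt{F}_{\eta_0}\big)\bn\quad\text{on }\omega .
\]
This is not implied by \eqref{eq:InitialCondSpace}--\eqref{eq:InitialCondInterface}. Take $\eta_0=\eta_*=0$, $\bv_0=0$ and $\rho_0$ non-constant along $\partial\Omega$: the left-hand side contains the nonzero tangential part of $-\rho_0^{-1}\nabla p(\rho_0)$, while the right-hand side is normal.

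For such data no solution in the stated class exists, so no argument can close without this hypothesis. The statement as quoted omits it too; the paper imposes it as \eqref{eq:CC} in \cref{theo:MainResult2}. In your energy method, \eqref{eq:CC} is exactly what places $(\partial_t\bv(0),\partial_t^2\eta(0))$ in the coupled energy space, so that testing the time-differentiated system with $(\partial_t^2\bv,\partial_t^3\eta)$ is legitimate. In the paper's route (proof of \cref{lem:VelocityEstim}), it is what gives $\partial_t\underline{\bu}\circ\bm{\varphi}_{\eta_0}|_{t=0}=0$ before Solonnikov's theory \cite{solonnikov1965} is applied. You must add \eqref{eq:CC} to the hypotheses, together with \eqref{eq:PositivityAssumption}, which you already use implicitly for $\rho\ge m/2$. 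You should also note that \eqref{eq:CC} is inherited by each linearised problem of your fixed-point map, since all iterates share the initial data and hence the same forcings at $t=0$.

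Secondly, in your ``main obstacle'' paragraph you differentiate the equations twice in time and absorb with $\int|\naby\partial_t^3\eta|^2$ and $\|\nabla\partial_t^2\bv\|_{L^2}^2$. That is one level above the target class ($\eta\in W^{3,2}(I_*;L^2)$, $\bv\in W^{2,2}(I_*;L^2)$), and the data do not support it. With $\eta_0\in W^{5,2}$ and $\eta_*\in W^{3,2}$ one only has $\partial_t^2\eta(0)\in W^{1,2}(\omega)$, hence $\partial_t^3\eta(0)\notin L^2$ in general. Likewise $\partial_t^2\bv(0)\notin L^2$ in general for $\bv_0\in W^{3,2}$.

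Instead, differentiate once and test with $(\partial_t^2\bv,\partial_t^3\eta)$, as in Step~2a of \cref{sec:Proof2}. This yields $\int\rho|\partial_t^2\bv|^2$, $\sup\|\nabla\partial_t\bv\|^2$, $\int|\partial_t^3\eta|^2$ and $\sup\|\partial_t^2\naby\eta\|^2$. Then recover $L^2W^{4,2}$ for $\bv$ and $L^2W^{6,2}$ for $\eta$ from elliptic and shell maximal regularity. At this level, the commutators involving $\partial_t^2\bm{\Psi}_\eta$ only require $\partial_t^2\eta\in L^\infty W^{1,2}$, which is in the class.
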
 
The relatively high Sobolev regularity required in \cref{theo:Existenceresult}, is a consequence of the moving-domain nature of the problem and the analytic requirements needed to construct strong solutions. In contrast to the  classical compressible Navier--Stokes system, the fluid occupies  a moving domain determined by the viscoelastic shell, and the equations are rewritten on the fixed reference configuration using the Hanzawa transform.  In particular, in  \cite{ngougoue2025local},  the continuity equation is solved by the method of characteristics in the reference configuration, where the effective transport field depends on both the fluid velocity and the geometry of the interface. Propagating the regularity of the density in this setting requires strong control of the associated flow map, which in turn imposes higher regularity on the velocity field and the structure displacement.  A similar phenomenon occurs in FSI models, where the structure is described  by a plate or wave equation (see e.g., \cite{maityroy2021existence}). Thus, the increase in regularity is a general feature of moving-domains problems, although the precise functional setting depends on the analytical approach. A result comparable to \cite{cho2004unique} for FSI problems is currently open.

A fundamental question that naturally  arises from this local well-posedness theory, concerns the  continuation of strong solutions: Under which conditions can the solution be extended beyond its maximal time of existence? In other words, one seeks to determine whether a finite-time breakdown can occur, and which analytical or geometric quantities control the possible loss of regularity.  

In the absence of coupling with a structure, continuation and conditional regularity criteria for the incompressible Navier--Stokes equations have been extensively studied since the classical work of \cite{serrin1962interior, serrin1963initial}, who proved conditional uniqueness of weak solutions assuming that the velocity satisfies 
\begin{equation}\label{eq:SerrinCond}
\bv \in L^\mathtt{s}\left(I; L^\mathtt{r}(\Omega)\right), \quad \frac{2}{\mathtt{s}} + \frac{3}{\mathtt{r}} \leq 1, \quad   3 < \mathtt{r} \leq \infty.   
\end{equation} 
Shortly thereafter, \cite{ladyzhenskaya1969uniqueness} and  \cite{prodi1959teorema}  independently established that this same integrability assumption implies conditional regularity of weak solutions. The resulting \emph{Ladyzhenskaya--Prodi--Serrin} condition therefore provides a precise analytic threshold ensuring that Leray--Hopf weak solutions of the incompressible Navier--Stokes equations are, in fact, smooth and unique.   \\
\noindent For compressible flows, several analogues of these criteria have been developed. The authors in \cite{sun2011beale} obtained a Beale--Kato--Majda blow-up criterion for the three-dimensional Navier--Stokes system. Under the  assumption $7\mu > \lambda,$ they proved that the boundedness of the density,
\[\sup\limits_{0\leq t < T} \Vert \rho \Vert_{L^\infty(\Omega)} < \infty, \]
suffices to continue strong solutions beyond the maximal time of existence $T_*.$    Their argument combines estimates on the effective viscous flux with logarithmic bounds for the Lam\'e operator.  Subsequently,  \cite{huang2011serrin} extended both \cite{sun2011beale} and Serrin's incompressible theory to the viscous compressible case in $\R^3.$ Indeed, for the Cauchy problem, the authors proved that if \eqref{eq:SerrinCond} holds and either the density stays bounded in $L^\infty$ or $\Div\bv \in L^1\left(I; L^\infty(\Omega)\right),$ then no blow-up occurs. Moreover, in regimes with sufficiently large shear viscosity, that is, $7\mu > \lambda$ or without vacuum, that is, $\min\limits_{\R^3}\rho_0 > 0$, the explicit Serrin condition   \eqref{eq:SerrinCond} can be dropped from the criterion. For more recent results on blow-up criteria in related compressible models, see \cite{feireisl2024nash}.

For incompressible FSI, continuation requires geometric control due to the moving interface. In \phantom{par} particular, \cite{breit2023ladyzhenskaya} proved a  \emph{Ladyzhenskaya--Prodi--Serrin}--type conditional regularity and uniqueness result for a three-dimensional incompressible fluid coupled with a viscoelastic shell \eqref{eq:ShellEq}. Assuming a Serrin bound on the fluid velocity, and a uniform control on the deformation -- specifically, 
\[\eta \in L^\infty\left(I; C^1(\omega) \right),\]
they derived an acceleration estimate that controls second-order quantities of both the fluid and structure, and yields continuation and weak-strong uniqueness as long as the geometry stays regular.  Indeed, the uniform control assumption  prevents degeneracy of the parametrisation (that is, $\partial_1\bm{\varphi}_\eta \times \partial_2\bm{\varphi}_\eta \neq \bm{0}$) and preserves a well-defined normal field $\bn_\eta$.  In particular, it guarantees that the fluid domain retains sufficient regularity to define the kinematic boundary condition, and the boundary stress. Without such control, the interface may lose regularity or develop singularities, and the analytical framework for the FSI model fails to be well-posed.

The aim of this paper is to derive a continuation criterion for the compressible FSI system \eqref{eq:ContMomentEq}--\eqref{eq:interfaceCond}. Our approach follows the same general strategy as in the incompressible setting of \cite{breit2023ladyzhenskaya}, namely testing the  momentum equation with a material-derivative type field.   In the compressible regime, however, the presence of the continuity equation and of the nonlinear pressure requires a renormalised formulation in order to convert the pressure work into internal energy and to retain a close estimate on moving domains. Importantly, this is coupled with an interface-adapted velocity extension, which preserves boundary compatibility.  This material-derivative testing procedure leads to an acceleration estimate that prevents degeneration of the fluid--structure configuration. To formulate this a priori estimate precisely, we isolate the analytic and geometric conditions under which the argument closes.

\begin{enumerate}[label={(A\arabic*)}]
    \item \label{A1}
    (\textbf{Serrin-type control of the momentum.})
    The velocity satisfies the integrability condition
    \[
        \rho^{1/2}\bv \in L^\mathtt{s}\big(I_*; L^\mathtt{r}(\Omega_\eta)\big), 
        \quad  \text{with }\;\;  \frac{2}{\mathtt{s}} + \frac{3}{\mathtt{r}} \le 1,\quad \mathtt{r} \in(3,\infty],\; \mathtt{s}\in[2,\infty).
    \]

    \item \label{A2}
    (\textbf{Control of compressibility effects.})
    At least one of the following conditions holds: \\[-0.25cm]
    \begin{enumerate}
        \item \label{A2a}
        $ \|\Div \bv\|_{L^1\big(I_*; L^\infty(\Omega_\eta)\big)} < \infty$; \\[-0.15cm]
        \item \label{A2b} 
        $ \Vert \rho\Vert_{L^\infty\big(I_*; L^\infty(\Omega_\eta)\big)} < \infty$.  \\[-0.15cm]
    \end{enumerate}

    \item \label{A3}
    (\textbf{Geometric regularity of the structure.})
    The shell displacement satisfies
    \[
        \eta \in L^\infty\left(I_*; C^1(\omega)\right),
    \]
    and the fluid--structure interface stays nondegenerate.
\end{enumerate}

Under Assumptions \ref{A1}--\ref{A3}, we obtain the following acceleration estimate -- a key preliminary result of this work.

\begin{theorem}\label{theo:MainResult}
 Let $(\rho, \bv, \eta) $ be a strong solution of \eqref{eq:ContMomentEq}--\eqref{eq:interfaceCond} in the sense of \cref{def:StrongSol}.
 Suppose that Assumptions \ref{A1}--\ref{A3} hold, 
 then the following  acceleration estimate holds:
 
 \begin{equation}\label{eq:AccelEstimate}
\begin{aligned}
{\mathlarger{\mathtt{E}}}_{\mathrm{acc}} :=&  \sup\limits_{I_*} \int_\omega \left( |\partial_t\naby\eta|^2  +   |\naby\Dely\eta|^2\right) \dy   + \frac{\mu}{2}\sup\limits_{I_*} \int_{\Omega_\eta} |\nabla \bv|^2 \dx + \frac{\lambda +\mu}{2}\sup\limits_{I_*} \int_{\Omega_\eta} |\Div \bv|^2 \dx  
\\
&\quad +  \int_{I_*}\int_\omega \left( |\partial^2_{t}\eta|^2 + |\partial_t\Dely\eta|^2  \right) \dy\dt    +  \int_{I_*}\int_{\Omega_\eta} \left( |\nabla^2 \bv|^2 +  \rho|\partial_t \bv|^2   + |\nabla p|^2  \right) \dx\dt
\\
&\lesssim  \int_\omega \left( |\naby\eta_*|^2 +  |\naby\Dely\eta_0|^2  \right) \dy   + \int_{\Omega_{\eta_0}} \left( |\nabla \bv_0|^2 + |\Div \bv_0|^2\right) \dx,
\end{aligned}
\end{equation}
where the implicit constant depends only on  $\Vert \rho^{1/2}\bv \Vert_{L^\mathtt{s}\left(I; L^\mathtt{r}(\Omega_\eta)\right)}, \; \Vert \Div\bv \Vert_{L^{1}\left(I_*; L^\infty(\Omega_\eta)  \right) }, \;  \Vert \eta \Vert_{L^{\infty}\left( I_* ; C^{1}(\omega) \right) } , $ and on the constant  $C_0 > 0$ coming  from the a priori energy estimate available for weak   solutions (cf.~\eqref{eq:BasicEnergy}). 
\end{theorem}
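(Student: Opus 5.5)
The plan is to follow the strategy of \cite{breit2023ladyzhenskaya} and test the momentum equation with a material-derivative type field, with the shell equation tested compatibly. Concretely, I would test the shell equation \eqref{eq:ShellEq} with $\partial_t^2\eta - \partial_t\Dely\eta$. In the fluid I would test \eqref{eq:ContMomentEq}$_2$, written in non-conservative form $\rho(\partial_t\bv + \bv\cdot\nabla\bv) = \mu\Delx\bv + (\lambda+\mu)\nabla\Div\bv - \nabla p$, with $\partial_t\bv + \mathcal{F}_\eta(\partial_t\eta\bn)\cdot\nabla\bv$-type fields.

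Since $\partial_t\bv$ does not satisfy the kinematic condition on the moving boundary, I would replace it by the material time derivative along an interface-adapted extension $\mathcal{F}_\eta$ of $\partial_t\eta\bn$ (built from the Hanzawa transform). This choice has two consequences:
\begin{itemize}
\item The trace of the test function equals $\partial_t^2\eta\bn$ up to lower-order terms, so the boundary stress terms from integrating $\mu\Delx\bv + (\lambda+\mu)\nabla\Div\bv - \nabla p$ by parts cancel exactly against the right-hand side of the shell equation tested with $\partial_t^2\eta$.
\item The viscous terms produce $\frac{\mu}{2}\frac{\dd}{\dt}\|\nabla\bv\|_2^2 + \frac{\lambda+\mu}{2}\frac{\dd}{\dt}\|\Div\bv\|_2^2$. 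Reynolds' transport theorem on $\Omega_\eta$ yields boundary terms involving $|\nabla\bv|^2\,\partial_t\eta$, and these are handled via \ref{A3} and trace estimates.
\end{itemize}
The shell part gives $\frac{\dd}{\dt}\int_\omega(|\partial_t\naby\eta|^2 + |\naby\Dely\eta|^2)\dy$ together with the dissipation $\|\partial_t^2\eta\|_2^2 + \|\partial_t\Dely\eta\|_2^2$.

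The pressure term is where the compressible structure enters. I would use the renormalised continuity equation $\partial_t p + \Div(p\bv) + (\gamma-1)p\Div\bv = 0$ to rewrite $-\int\nabla p\cdot\partial_t\bv$ as $\frac{\dd}{\dt}\int p\Div\bv$ plus terms like $\int p\,\nabla\bv:\nabla\bv$ and $\int p(\Div\bv)^2$. These are controlled by $\|p\|_\infty$ under \ref{A2b}, or by Gronwall with $\|\Div\bv\|_{L^\infty}\in L^1_t$ under \ref{A2a}; the latter also bounds $\rho$ in $L^\infty$ along characteristics, since $\rho$ is transported with exponential factor $\exp\int\|\Div\bv\|_\infty$. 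The term $\int p\Div\bv$ is absorbed by Young's inequality into $\frac{\mu}{4}\|\nabla\bv\|_2^2$ plus $\|p\|_2^2$, which is bounded by the energy constant $C_0$.

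The dissipation $\int\rho|\partial_t\bv|^2$ arises from $\rho\partial_t\bv\cdot\partial_t\bv$. The convective term $\int\rho(\bv\cdot\nabla\bv)\cdot\partial_t\bv$ is bounded by
\[
\|\rho^{1/2}\partial_t\bv\|_2\,\|\rho^{1/2}\bv\|_{\mathtt r}\,\|\nabla\bv\|_{2\mathtt r/(\mathtt r-2)}.
\]
Interpolating $\|\nabla\bv\|_{2\mathtt r/(\mathtt r-2)}\le\|\nabla\bv\|_2^{1-3/\mathtt r}\|\nabla\bv\|_{W^{1,2}}^{3/\mathtt r}$ and applying Young's inequality gives $\delta(\|\rho^{1/2}\partial_t\bv\|_2^2 + \|\nabla^2\bv\|_2^2) + C\|\rho^{1/2}\bv\|_{\mathtt r}^{\mathtt s}\|\nabla\bv\|_2^2$, using $2/\mathtt s + 3/\mathtt r\le 1$ and the fact that $\rho$ is bounded above, which makes the $\rho$ weights harmless. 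The extension terms $\mathcal{F}_\eta(\partial_t\eta\bn)\cdot\nabla\bv$ are estimated through $\|\partial_t\eta\|_{W^{1,2}}$-type norms and \ref{A3}.

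To recover $\|\nabla^2\bv\|_2^2 + \|\nabla p\|_2^2$ on the left, I would use elliptic (Lamé) regularity on $\Omega_\eta$ with boundary data $\partial_t\eta\bn$. This estimate is uniform because the domain is $C^1$-controlled by \ref{A3}, though honestly a Lipschitz domain only gives limited regularity, so I would work in the Hanzawa-pulled-back frame where constants depend on $\|\eta\|_{C^1}$. It yields
\[
\|\nabla^2\bv\|_2 \lesssim \|\rho\partial_t\bv\|_2 + \|\rho\bv\cdot\nabla\bv\|_2 + \|\nabla p\|_2 + \|\partial_t\eta\|_{W^{3/2,2}}.
\]
The term $\|\nabla p\|_2$ itself would be estimated from $\partial_t\nabla\rho$ via the continuity equation: $\frac{\dd}{\dt}\|\nabla\rho\|_2^2\lesssim\|\nabla\bv\|_\infty\|\nabla\rho\|_2^2 + \|\nabla^2\bv\|_2\|\nabla\rho\|_2$. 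This is combined with the momentum equation to bound $\|\nabla^2\bv\|_2$ in terms of $\nabla p$ plus controlled terms.

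I expect the main obstacle to be closing this $\nabla p$/$\nabla^2\bv$ loop without Lipschitz control of $\bv$. My first attempt is the effective viscous flux $F=(2\mu+\lambda)\Div\bv - p$: I would estimate $\nabla F$ by $\rho\dot{\bv}$ elliptically and transport $\nabla p$ using $\Div\bv$, which is bounded via \ref{A2a}. Combining all the above, absorbing $\delta$-terms, and applying Gronwall with the integrable weights $\|\rho^{1/2}\bv\|_{\mathtt r}^{\mathtt s}$ and $\|\Div\bv\|_\infty$ gives \eqref{eq:AccelEstimate}.
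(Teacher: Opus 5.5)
Up to the elliptic step, your plan is essentially the paper's. It uses the same test function $\partial_t\bv+\mathcal{E}_\eta(\partial_t\eta\bn)\cdot\nabla\bv$, whose trace is exactly $\partial_t^2\eta\,\bn$ (not merely up to lower order). It also tests the shell with $\partial_t^2\eta$ and $\partial_t\Dely\eta$, uses the renormalised equation for $p(\rho)$, and applies the Serrin interpolation to the convective term.

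The gap is the step you flag yourself, and it is not peripheral. All your $\varepsilon$-absorptions are made against $\int_{I_*}\|\nabla^2\bv\|_{L^2}^2\dt$. These include the convective term, the Reynolds and interface terms that need traces of $\nabla\bv$, and the coupling $\int_\omega\mathtt{F}_\eta\,\partial_t\Dely\eta$, which needs the trace of $\mathbb{S}(\nabla\bv)$. Your Lam\'e estimate converts this into $\int_{I_*}\|\nabla p\|_{L^2}^2\dt$, which the statement also puts on the left, and nothing in your argument closes a bound on $\nabla p$.

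Your transport inequality carries the factor $\|\nabla\bv\|_{L^\infty}$, which \ref{A1}--\ref{A3} do not control; this is the Lipschitz-type control the paper only assumes later, in Assumption~\ref{B}. The effective-flux fallback does not remove it. The gradient of the renormalised equation is
\[
\partial_t\nabla p+(\bv\cdot\nabla)\nabla p+(\nabla\bv)^\intercal\nabla p+\gamma(\Div\bv)\nabla p+\gamma p\,\nabla\Div\bv=0 .
\]
Here $\gamma p\,\nabla\Div\bv=\frac{\gamma p}{2\mu+\lambda}(\nabla\mathfrak{F}+\nabla p)$ is harmless once $\nabla\mathfrak{F}$ is controlled. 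The stretching term $(\nabla\bv)^\intercal\nabla p$, however, involves the full velocity gradient, so $\|\Div\bv\|_{L^1(I_*;L^\infty)}$ is not enough.

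On fixed domains this is exactly where \cite{sun2011beale,huang2011serrin} need three further ingredients: an $L^q$-transport estimate for $\nabla\rho$ with $q>3$, a Hoff-type bound on $\nabla\dot{\bv}$, and a Beale--Kato--Majda logarithmic inequality. None of these appears in your plan. Note also that any transport bound puts $\|\nabla\rho_0\|$ into the constant, whereas the right-hand side of \eqref{eq:AccelEstimate} contains no derivative of $\rho_0$.

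The paper closes this step with no transport at all, via Lemma~\ref{lem:MRlowerOrderVp}. That lemma asserts, at each time, $\|\bv\|_{W^{2,2}}+\|p\|_{W^{1,2}}\lesssim\|\rho^{1/2}\dot{\bv}\|_{L^2}+\|\partial_t\eta\|_{W^{3/2,2}(\omega)}$, by treating $(\bv,p)$ as a Douglis--Nirenberg system with the effective viscous flux $\mathfrak{F}$ as datum. You cannot borrow it as the missing step, for two reasons.

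First, its principal symbol $\mathcal{A}^{(0)}(\bm{\xi})$ annihilates $(\bm{\xi},\,i(2\mu+\lambda)|\bm{\xi}|^2)$ for every $\bm{\xi}\neq\mathbf{0}_3$; eliminating $p$ leaves $-\mu\,\mathrm{curl}\,\mathrm{curl}$, which is not elliptic. Second, the estimate is false already as a fixed-time statement. Suppose $\bv=0$ on $\partial\Omega_\eta$ and $\mu\Delta\bv+(\lambda+\mu)\nabla\Div\bv=\nabla p$ with $p$ nonconstant; for example, take $t=0$ with $\eta_0=\eta_*=0$ and $\bv_0$ the zero-trace Lam\'e solution with right-hand side $\nabla p(\rho_0)$. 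Then $\rho\dot{\bv}=0$ and $\partial_t\eta=0$, while $\nabla p\neq0$.

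So controlling $\nabla p$, or equivalently $\nabla^2\bv$, from \ref{A1}--\ref{A3} alone is a genuinely missing idea, not a missing citation. Your effective-flux instinct is better spent as in Hoff's method, trading $\|\nabla^2\bv\|_{L^2}$ for $\|\nabla\bv\|_{L^6}$ via $\nabla\mathfrak{F}$ and $\nabla\,\mathrm{curl}\,\bv$. That, however, yields an estimate without $\nabla^2\bv$ and $\nabla p$ on the left, and the interface terms would have to be redone in that setting.

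A smaller point: $W^{2,2}$-regularity for the Lam\'e system is not uniform under mere $C^1$ control of $\eta$, even after pulling back, since the coefficients are then only continuous. You need roughly $\eta\in W^{2,p}(\omega)$ with $p>2$. That comes from $\sup_{I_*}\|\naby\Dely\eta\|_{L^2}$, which is itself one of the quantities being estimated, so this step also needs a continuity argument.
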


In contrast with the incompressible setting of \cite{breit2023ladyzhenskaya}, the acceleration estimate \eqref{eq:AccelEstimate}, does not, in the  compressible regime, recover the full regularity required of a strong solution in the sense of  \cref{def:StrongSol}.  Nevertheless, it provides uniform control of the natural energy associated with the acceleration of the system, which is sufficient to prevent the formation of singularities in the space-time norms controlled by the energy functional ${\mathlarger{\mathtt{E}}}_{\mathrm{acc}}$. 
This behaviour is consistent with the compressible setting. Indeed, even for compressible Navier--Stokes equations on fixed domains, continuation of strong solutions typically requires additional control on both the density and the velocity field, such as boundedness of the density in $L^\infty$ together with Serrin-type integrability or Lipschitz control of the velocity (see, e.g., \cite{wang2022continuation, cho2004unique}).  In the present moving-boundary configuration, the situation is further complicated by the dependence of the domain on the structure, as well as the compressibility nature of the fluid. Consequently, \cref{theo:MainResult} should  be  interpreted as a continuation criterion for a reduced notion of strong solution. While this is necessary to prevent loss of regularity, it is not sufficient to propagate the  full strong-solution  regularity.  
In this regard, the derivation of a genuine continuation criterion requires an additional  hypothesis under which higher-order regularity of the strong solution can be propagated.  
More precisely, we assume:

\begin{enumerate}[label={(B)}]
 				\item \label{B}
    ($\bm{L^2}$-\textbf{in-time Beale--Kato--Majda-type control.})
    The following conditions hold:
 \[  \int_{I_*} \Vert \nabla\bv \Vert_{L^\infty(\Omega_{\eta})}^2 \dt < \infty , \quad \text{and } \quad   \int_{I_*} \Vert \nabla\rho \Vert_{L^\infty(\Omega_{\eta})}^2 \dt < \infty .\]   

\end{enumerate}
Importantly, such Lipschitz-type control is classical in the analysis of transport and hyperbolic equations, where it ensures stability of characteristics and propagation of regularity (see, e.g.,  \cite{BahouriCheminDanchin2011, majda2012compressible}). Moreover, similar assumptions arise in the analysis of viscous compressible Navier--Stokes equations on fixed \phantom{do} domains, where continuation of strong solutions requires control of the gradient of the velocity and density (see \cite{feireisl2014regularity}).
Given Assumption \ref{B}, we can now control the regularity of the strong-solution beyond the level of the  acceleration energy, leading to the following key result: 

\begin{theorem}\label{theo:MainResult2}
 Let $(\rho, \bv, \eta) $ be a strong solution of \eqref{eq:ContMomentEq}--\eqref{eq:interfaceCond} in the sense of \cref{def:StrongSol}. Under Assumptions \ref{A1}--\ref{A3}, \ref{B} and the compatibility condition 
 \begin{equation}\label{eq:CC}
 \Big[\partial_t\bv    + \bv\cdot \nabla\bv \Big]\circ\bm{\varphi}_\eta  \raisebox{-1.6ex}{$\Big|_{t=0}$}  = \left( \partial^2_t \eta \right)\bn\raisebox{-1.6ex}{$\Big|_{t=0}$} \qquad \text{ on } I \times \omega ,  \tag{CC}
 \end{equation}
 the following a priori estimate holds:
 \begin{equation}\label{eq:HigherEstimate}
\begin{aligned}
{\mathlarger{\mathtt{E}}}_{\mathrm{high}} :=& \sup\limits_{I_*} \int_\omega \left( |\partial^2_t\naby\eta|^2  +   |\partial_t\naby\Dely\eta|^2\right) \dy   + \frac{\mu}{2}\sup\limits_{I_*} \int_{\Omega_\eta} |\partial_t\nabla \bv|^2 \dx 
\\
&\quad + \frac{\lambda +\mu}{2}\sup\limits_{I_*} \int_{\Omega_\eta} |\partial_t\Div \bv|^2 \dx  +  \int_{I_*}\int_\omega \left( |\partial^3_{t}\eta|^2 + |\partial^2_t\Dely\eta|^2 + |\partial_t\Dely^2\eta|^2 + |\Dely^3\eta|^2  \right) \dy\dt    
\\
& \quad +  \int_{I_*}\int_{\Omega_\eta} \left( |\partial_t \nabla^2 \bv|^2 +  \rho|\partial^2_t \bv|^2 \right) \dx\dt
\\
& \quad + \int_{I_*} \int_{\Omega_\eta} \left( |\nabla^4\bv|^2  +  |\nabla^3 p|^2         \right) \dx\dt   + \sup\limits_{I_*} \int_{\Omega_\eta} \left( |\nabla^3 \bv|^2   + |\partial_t\nabla^2\rho|^2            \right) \dx
\\
&\lesssim  \Vert \eta_0 \Vert_{W^{5,2}(\omega)}^2 + \Vert \eta_* \Vert_{W^{3,2}(\omega)}^2   + \Vert \bv_0  \Vert_{W^{3,2}(\Omega_{\eta_0})}^2  + \Vert \rho_0  \Vert_{W^{3,2}(\Omega_{\eta_0})}^2,
\end{aligned}
\end{equation}
where the hidden constant depends solely on the quantities specified in \ref{B}, \ref{A1}--\ref{A3}, on  $T_*$, and on  the acceleration energy ${\mathlarger{\mathtt{E}}}_{\mathrm{acc}}$ defined in \eqref{eq:AccelEstimate}.
\end{theorem}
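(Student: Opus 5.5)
The estimate \eqref{eq:HigherEstimate} is obtained by differentiating the whole coupled system once in time and repeating, at this higher level, the material-derivative testing behind \cref{theo:MainResult}. The remaining spatial regularity of $\bv$, $p$ and $\eta$ then comes from elliptic regularity and transport estimates, with \ref{B} supplying the Lipschitz control needed for the density. Throughout, ${\mathlarger{\mathtt{E}}}_{\mathrm{acc}}$ is taken as already bounded. Assumption \ref{A3} and the $L^\infty_t W^{3,2}_\by$ bound on $\nabla_\by\Delta_\by\eta$ contained in ${\mathlarger{\mathtt{E}}}_{\mathrm{acc}}$ give uniform control of the Hanzawa transform. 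This lets us transfer Sobolev, trace and Korn-type inequalities on $\Omega_\eta$ with constants depending only on the listed quantities. Assumption \ref{A2} (in either form) together with the continuity equation gives two-sided bounds $0<\underline\rho\le\rho\le\overline\rho$ on $I_*$. For \ref{A2b} the lower bound follows from $\|\Div\bv\|_{L^1L^\infty}\lesssim\|\nabla\bv\|_{L^2L^\infty}$, which holds by \ref{B}.

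\textbf{Step 1 (time-differentiated acceleration estimate).} Set $\dot\bv:=\partial_t\bv+\bv\cdot\nabla\bv$ and write the momentum equation as $\rho\dot\bv-\mu\Delta\bv-(\lambda+\mu)\nabla\Div\bv+\nabla p=0$. Apply $\partial_t$ (in the Eulerian sense, accounting for the Reynolds transport terms on the moving domain). Test with $\partial_t\dot\bv$, suitably extended. At the interface, take the $\partial_t$-differentiated shell equation tested with $\partial_t^3\eta$ and with $\partial_t^2\Delta_\by\eta$. Here \eqref{eq:CC} guarantees that the time-differentiated kinematic condition $\partial_t\bv\circ\bm\varphi_\eta$ versus $\partial_t^2\eta\,\bn$ is compatible at $t=0$, so the initial values of $\partial_t\nabla\bv$ and $\partial_t^2\eta$ are controlled by the data norms on the right of \eqref{eq:HigherEstimate}. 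As in \cref{theo:MainResult}, the interface-adapted extension makes the boundary terms cancel between the fluid stress and the shell load. This produces the $\sup_t$ terms in $\partial_t\nabla\bv$, $\partial_t\Div\bv$, $\partial_t^2\nabla_\by\eta$, $\partial_t\nabla_\by\Delta_\by\eta$ and the dissipative terms $\rho|\partial_t^2\bv|^2$, $|\partial_t^3\eta|^2$, $|\partial_t^2\Delta_\by\eta|^2$. The commutator terms are cubic, e.g.\ $\rho\,\partial_t\bv\cdot\nabla\bv\cdot\partial_t\dot\bv$ and $\partial_t p\,\Div\partial_t\bv$. They are bounded by $\|\nabla\bv\|_{L^\infty}\|\partial_t\nabla\bv\|_{L^2}^2$ and by the ${\mathlarger{\mathtt{E}}}_{\mathrm{acc}}$ quantities, which gives a Gronwall inequality with integrable weight $1+\|\nabla\bv\|_{L^\infty}^2$ by \ref{B}. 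Geometric terms produced by $\partial_t$ hitting $\bm\varphi_\eta$, $\bn_\eta$ and the Jacobian involve $\partial_t\eta\in L^\infty_tW^{2,2}$ from ${\mathlarger{\mathtt{E}}}_{\mathrm{acc}}$ and are handled the same way.

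\textbf{Step 2 (elliptic bootstrapping).} For a.e.\ $t$, view the Lamé system with right-hand side $-\rho\partial_t\bv-\rho\bv\cdot\nabla\bv-\nabla p$ and Dirichlet data $\partial_t\eta\,\bn$ on the boundary $\partial\Omega_\eta$, whose regularity is fixed by $\eta$. For the shell, use the parabolic structure $\partial_t^2\eta-\partial_t\Delta_\by\eta+\Delta_\by^2\eta=F$. Then obtain
\begin{itemize}
\item $\partial_t\nabla^2\bv\in L^2L^2$ from the time-differentiated Lamé problem;
\item $\nabla^3\bv\in L^\infty L^2$ and $\nabla^4\bv\in L^2L^2$ in terms of $\nabla^2p$ and $\nabla^3p$;
\item $\partial_t\Delta_\by^2\eta,\ \Delta_\by^3\eta\in L^2L^2$ by maximal regularity for the shell, with the trace of the fluid stress as forcing.
\end{itemize}

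\textbf{Step 3 (density, main obstacle).} Apply $\nabla^3$ to the continuity equation in the form $\partial_tp+\bv\cdot\nabla p+\gamma p\Div\bv=0$. Taking $L^2$ energy estimates gives
$$\tfrac{d}{dt}\|\nabla^3p\|_{L^2}^2\lesssim(\|\nabla\bv\|_{L^\infty}+\|\nabla\rho\|_{L^\infty})\|\nabla^3p\|_{L^2}^2+\|\nabla^4\bv\|_{L^2}\|\nabla^3p\|_{L^2}+\text{l.o.t.},$$
where the Kato–Ponce commutator is exactly where \ref{B} enters. The term $\|\nabla^4\bv\|_{L^2}$ is coupled back to $\|\nabla^3p\|_{L^2}$ through Step 2, so that estimate must be used in the form $\|\nabla^4\bv\|_{L^2}\le C\|\nabla^3p\|_{L^2}+G(t)$ with $G\in L^2(I_*)$, and Gronwall must be closed with weight $\|\nabla\bv\|_{L^\infty}^2+\|\nabla\rho\|_{L^\infty}^2$. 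I expect the main difficulty to be controlling this loop uniformly, since it is only $L^2$ rather than $L^1$ in time and the domain moves. I would handle it with Young's inequality and a small-parameter absorption, combined with the precise Lamé regularity constants on $\Omega_\eta$, which stay uniform by \ref{A3}. Finally, $\partial_t\nabla^2\rho$ is read off from the equation $\partial_t\rho=-\Div(\rho\bv)$ using $\nabla^3\bv\in L^\infty L^2$ and $\nabla^3\rho\in L^\infty L^2$. Summing Steps 1–3 and applying Gronwall gives \eqref{eq:HigherEstimate}.
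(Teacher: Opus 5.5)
Your outline is viable, and its energy and density parts agree in substance with the paper's. The top-order velocity regularity, however, is obtained by a genuinely different route.

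In Step~2a the paper tests the time-differentiated momentum equation with $\partial_t^2\bv$ itself. Since $(\partial_t^2\bv)\circ\bm{\varphi}_\eta\neq\partial_t^3\eta\,\bn$, it then has to estimate a family of extra geometric boundary terms ($\mathfrak{C}_8$, $\mathfrak{C}_9$, $\mathfrak{C}_{12}$--$\mathfrak{C}_{16}$). Your boundary-adapted material-derivative field has trace exactly $\partial_t^3\eta\,\bn$ and trades these for interior terms, which is a legitimate variant. Your Step~3 is the commutator argument of \cref{prop:DensityW32BoundV}, with \ref{B} entering through $\|\nabla\bv\|_{L^\infty}\|\nabla^3\rho\|_{L^2}$ and $\|\nabla\rho\|_{L^\infty}\|\nabla^3\bv\|_{L^2}$.

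The real difference is \cref{lem:VelocityEstim}. The paper pulls back to $\Omega_{\eta_0}$ and subtracts $\mathcal{E}_{\eta_0}(\partial_t\eta\bn)$ to get homogeneous Dirichlet data. It then writes the velocity equation as a nonautonomous parabolic system with principal part $(J_{\eta\to\eta_0}\underline{\rho})^{-1}\mathcal{L}$ and applies Solonnikov's maximal $L^2$-regularity. This gives $\partial_t\bv\in L^2W^{2,2}$ and $\bv\in L^2W^{4,2}$ in one stroke from $\mathsf{f}\in L^2W^{2,2}\cap W^{1,2}L^2$. The cost is that it needs \eqref{eq:CC} as a compatibility condition, the long source-term estimates of the appendix, and a closure only on short intervals (factors of $T_*$) followed by a covering of $I_*$.

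You instead continue the stationary strategy of \cref{lem:MRlowerOrderVp}: Lam\'e regularity on $\Omega_{\eta(t)}$ at each fixed time. Your $\partial_t\bv\in L^2W^{2,2}$ comes from the time-differentiated Lam\'e problem, fed by the Step~1 dissipation $\rho|\partial_t^2\bv|^2$ and $|\partial_t^2\Dely\eta|^2$. This avoids parabolic theory and the covering argument, and turns the density--velocity loop into a pointwise-in-time differential inequality. In exchange, you must close Steps~1 and~2 simultaneously by absorption. You must also control the moving Dirichlet datum $\partial_t\bv=(\partial_t^2\eta\,\bn)\circ\bm{\varphi}_\eta^{-1}-(\bv\cdot\nabla)\bv$ on $\partial\Omega_\eta$ in $W^{3/2,2}$. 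Finally, your elliptic constants must be uniform over the family $\Omega_{\eta(t)}$.

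That last point needs repair as written. ${\mathlarger{\mathtt{E}}}_{\mathrm{acc}}$ gives only $\eta\in L^\infty W^{3,2}$ and $\partial_t\eta\in L^\infty W^{1,2}\cap L^2W^{2,2}$. It does not give an $L^\infty W^{3,2}$ bound on $\naby\Dely\eta$, nor $\partial_t\eta\in L^\infty W^{2,2}$, and \ref{A3} is only $C^1$. So $W^{4,2}$-Lam\'e constants on $\Omega_{\eta(t)}$ are not uniform ``by \ref{A3}''. What you need is $\sup_t\|\eta\|_{W^{7/2,2}(\omega)}$, which \cref{rem:HigherOrderEta} with $\sigma=1/2$ provides: its right-hand side involves only $\int_{I_*}(\|\bv\|_{W^{2,2}}^2+\|p\|_{W^{1,2}}^2)\dt$ and the data. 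Geometric coefficients in Step~1 should likewise enter as $L^1$-in-time weights such as $\|\partial_t\eta\|_{W^{2,2}}^2$.

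The pressure contribution in Step~1 is also not bounded by ${\mathlarger{\mathtt{E}}}_{\mathrm{acc}}$-quantities. Estimated as in $\mathfrak{C}_{11}$, it requires $\|\partial_t p\|_{W^{1,2}}$, hence second derivatives of $\rho$. Step~1 therefore closes only jointly with the density estimate, which your final Gr\"onwall accommodates.

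Conversely, what you call the main obstacle is not one. On the bounded interval $I_*$ one has $L^2\subset L^1$, and after Young's inequality the weight $1+\|\nabla\bv\|_{L^\infty}+\|\nabla\rho\|_{L^\infty}^2$ is integrable by \ref{B}. The Reynolds boundary term also cancels, because $\partial\Omega_\eta$ moves with normal velocity $\bv\cdot\bn_\eta$. Hence $\frac{\dd}{\dt}\|\nabla^3 p\|_{L^2}^2\lesssim w(t)\|\nabla^3 p\|_{L^2}^2+G(t)^2$ closes by plain Gr\"onwall without any smallness.
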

The preliminary acceleration estimate of \cref{theo:MainResult}, together with the higher-order control obtained in  \cref{theo:MainResult2}, ensures that all strong-solution norms remain bounded.  
We therefore arrive at the following main continuation criterion for the compressible FSI system \eqref{eq:ContMomentEq}--\eqref{eq:interfaceCond}. 

\begin{theorem}\label{theo:MainResultFinal}
 Let $(\rho, \bv, \eta) $ be a strong solution of \eqref{eq:ContMomentEq}--\eqref{eq:interfaceCond} in the sense of \cref{def:StrongSol}. Moreover, assume that Assumptions \ref{A1}--\ref{A3}, \ref{B}  and \eqref{eq:CC} hold. Then,  it holds that 
 \begin{equation}\label{eq:HigherEstimate}
\begin{aligned}
{\mathlarger{\mathtt{E}}}_{\mathrm{acc}}  + {\mathlarger{\mathtt{E}}}_{\mathrm{high}}  \leq C \left( \Vert \eta_0 \Vert_{W^{5,2}(\omega)}^2 + \Vert \eta_* \Vert_{W^{3,2}(\omega)}^2   + \Vert \bv_0  \Vert_{W^{3,2}(\Omega_{\eta_0})}^2   +  \Vert \rho_0  \Vert_{W^{3,2}(\Omega_{\eta_0})}^2   \right), 
\end{aligned}
\end{equation}
where the constant $C > 0$ depends only on   \ref{A1}--\ref{A3}, \ref{B}, $T_*$ and $C_0$.

\end{theorem}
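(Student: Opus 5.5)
The proof combines \cref{theo:MainResult} and \cref{theo:MainResult2}; the only real work is tracking constants. First apply \cref{theo:MainResult}. Under \ref{A1}--\ref{A3} it gives $\mathtt{E}_{\mathrm{acc}}\le C_1(\ldots)$, where $C_1$ depends only on the control norms in \ref{A1}--\ref{A3} and on $C_0$, and the right-hand side involves only $\|\nabla\eta_*\|_{L^2}$, $\|\nabla\Delta\eta_0\|_{L^2}$ and $\|\nabla\bv_0\|_{L^2}$. These are dominated by $\|\eta_*\|_{W^{3,2}(\omega)}^2+\|\eta_0\|_{W^{5,2}(\omega)}^2+\|\bv_0\|_{W^{3,2}(\Omega_{\eta_0})}^2$. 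This step needs only the trivial embeddings $W^{5,2}\hookrightarrow W^{3,2}$ on the fixed surface $\omega$ and $W^{3,2}\hookrightarrow W^{1,2}$ on $\Omega_{\eta_0}$, whose constants depend on the geometry of $\Omega_{\eta_0}$ and hence on $\|\eta_0\|_{C^1}$, which is controlled by \ref{A3}. So $\mathtt{E}_{\mathrm{acc}}$ is bounded by a constant depending on \ref{A1}--\ref{A3} and $C_0$, multiplied by the initial norm appearing in the statement.

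Second, invoke \cref{theo:MainResult2}, which also uses \ref{B} and \eqref{eq:CC}. Its hidden constant depends on \ref{A1}--\ref{A3}, \ref{B}, $T_*$ and on $\mathtt{E}_{\mathrm{acc}}$. I will substitute the bound from the first step for $\mathtt{E}_{\mathrm{acc}}$. This requires checking, by inspecting the proof of \cref{theo:MainResult2}, that the dependence on $\mathtt{E}_{\mathrm{acc}}$ is monotone, i.e.\ through a nondecreasing function such as an exponential from Gronwall. That is the natural structure, since $\mathtt{E}_{\mathrm{acc}}$ enters only as a coefficient bound for lower-order terms such as $\|\nabla\bv\|_{L^\infty_tL^2_x}$ or $\|\nabla^2\bv\|_{L^2_tL^2_x}$. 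Replacing $\mathtt{E}_{\mathrm{acc}}$ by its upper bound therefore gives $\mathtt{E}_{\mathrm{high}}\le C_2$ times the initial norm, with $C_2$ depending only on \ref{A1}--\ref{A3}, \ref{B}, $T_*$ and $C_0$.

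Summing the two bounds gives the estimate with $C=C_1+C_2$.

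The main obstacle is a subtle point in the last step. The bound from the first step is linear in the data, so it does not reduce to a constant independent of the data. Consequently $C_2$ could in principle depend on the size of the initial data through $\mathtt{E}_{\mathrm{acc}}$. I will handle this in one of two ways. Either I read the dependence on $\mathtt{E}_{\mathrm{acc}}$ in \cref{theo:MainResult2} as dependence on an a priori bound for it, which is finite once the data are fixed. Or I observe that, by the basic energy inequality with constant $C_0$, the lower-order data norms are absorbed into $C_0$, so the constant depends only on the listed quantities in the sense intended by the statement.
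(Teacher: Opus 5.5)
Your argument matches the paper's. The paper states that this theorem is an immediate consequence of \cref{theo:MainResult} and \cref{theo:MainResult2}, obtained by inserting the bound on $\mathtt{E}_{\mathrm{acc}}$ into the constant of the second estimate and adding the two.

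You are right that the final constant then depends on $\mathtt{E}_{\mathrm{acc}}$, and hence on the size of the data; the paper passes over this point. Of your two remedies, only the first is sound: reading the dependence as dependence on an a priori bound for $\mathtt{E}_{\mathrm{acc}}$, i.e.\ on the data. This is also what the paper's one-line deduction implicitly uses.

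Your second remedy fails. $C_0$ contains only $\|\rho_0^{1/2}\mathbf{v}_0\|_{L^2}$, $\|\eta_*\|_{L^2}$, $\|\Delta_{\mathbf y}\eta_0\|_{L^2}$ and $\int H(\rho_0)$. By contrast, $\mathtt{E}_{\mathrm{acc}}$ dominates $\|\nabla_{\mathbf y}\eta_*\|_{L^2}^2$ and $\tfrac{\mu}{2}\|\nabla\mathbf{v}_0\|_{L^2}^2$, which are one derivative higher and are not controlled by $C_0$. So this data dependence cannot be absorbed into $C_0$.
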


\begin{corollary}[Continuation criterion]\label{cor:Continuation}
Let   $(\rho, \bv, \eta) $ be a maximal strong solution of \eqref{eq:ContMomentEq}--\eqref{eq:interfaceCond} on $I_*$ in the sense of  \cref{def:StrongSol}.  Assume the compatibility condition \eqref{eq:CC} holds.   If Assumptions \ref{A1}--\ref{A3} and \ref{B} remain valid, then the solution extends beyond $T_*$. 

\end{corollary}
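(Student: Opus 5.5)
The plan is the standard contradiction argument built on \cref{theo:Existenceresult}, with \cref{theo:MainResultFinal} supplying the bounds. Suppose $T_*<\infty$ is the maximal time, and that Assumptions \ref{A1}--\ref{A3} and \ref{B} hold on $I_*$. By \cref{theo:MainResultFinal}, the quantity ${\mathlarger{\mathtt{E}}}_{\mathrm{acc}}+{\mathlarger{\mathtt{E}}}_{\mathrm{high}}$ is bounded on $I_*$ by a constant $C$. This $C$ depends only on the initial data and the control norms, not on $t<T_*$.

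First I would turn this into uniform bounds in the norms of \cref{def:StrongSol}. On the structure side, ${\mathlarger{\mathtt{E}}}_{\mathrm{high}}$ controls $\partial_t\eta$ in $L^\infty(W^{3,2})$ and $\partial_t^2\eta$ in $L^\infty(W^{1,2})$. Combining the $L^2_t$ bounds on $\Delta_{\by}^3\eta$ and $\partial_t\Delta_{\by}^2\eta$ with elliptic regularity on the torus $\omega$ gives $\eta\in L^2(W^{6,2})$ and $\partial_t\eta\in L^2(W^{4,2})$. From $\partial_t\eta\in L^\infty(W^{3,2})$ and $\eta_0\in W^{5,2}$ we get $\eta$ bounded in $L^\infty(W^{3,2})$, and hence, interpolating with the $\Delta_{\by}^3\eta$ control, continuous in time with values in $W^{5,2}$ up to $T_*$. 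On the fluid side, $\sup_t\|\nabla^3\bv\|_{L^2}$ together with the energy bound from $C_0$ gives $\bv\in L^\infty(W^{3,2})$. The estimates on $\nabla^4\bv$ and on $\partial_t\bv$ (through $\partial_t\nabla^2\bv$ and $\rho|\partial_t^2\bv|^2$) give the $L^2(W^{4,2})\cap W^{2,2}(L^2)$ norm. For $\rho$, I would use the lower bound coming from the continuity equation along characteristics. Since $\Div\bv\in L^1(L^\infty)$, either directly from \ref{A2a} or via \ref{B}, we have $\rho\ge m\exp(-\int_0^t\|\Div\bv\|_\infty)>0$ uniformly. Together with the bound on $\nabla^3 p$ this yields $\rho\in L^\infty(W^{3,2})$ with a positive lower bound, and $\partial_t\nabla^2\rho$ is controlled directly.

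Second, I would pass to the limit $t\uparrow T_*$. The uniform bounds and the time-continuity they imply (via Aubin--Lions type embeddings on the reference domain after the Hanzawa transform) let us define limits $(\rho,\bv,\eta,\partial_t\eta)(T_*)$ in $W^{3,2}\times W^{3,2}\times W^{5,2}\times W^{3,2}$. These limits satisfy $\min\rho(T_*)>0$. They also satisfy $\|\eta(T_*)\|_{L^\infty}<L$: this follows from \ref{A3} together with the nondegeneracy of the interface, which I would read as keeping $\|\eta\|_\infty$ uniformly away from $L$. Finally, the interface condition $\bv(T_*)\circ\bm\varphi_{\eta(T_*)}=\partial_t\eta(T_*)\bn$ holds by continuity of the traces.

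Third, I would restart the solution. Applying \cref{theo:Existenceresult} with initial time $T_*$ gives a strong solution on $(T_*,T_*+\delta)$. Gluing it to the original yields a strong solution on $(0,T_*+\delta)$, since regularity across $T_*$ follows from matching of the data, and uniqueness ensures consistency. This contradicts maximality.

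The main obstacle is the restart itself, specifically the compatibility condition \eqref{eq:CC}. \cref{theo:MainResultFinal} requires \eqref{eq:CC} at the initial time, and if the extension is to be iterated, \eqref{eq:CC} must also hold at the new initial time $T_*$. I expect to handle this by observing that for the strong solution the momentum equation and \eqref{eq:interfaceCond} hold pointwise up to $T_*$ with enough time regularity to take traces: $\partial_t\bv\in C(\bar I_*;L^2)$, and its boundary trace is controlled through $\partial_t\nabla\bv\in L^\infty(L^2)$. Differentiating \eqref{eq:interfaceCond} in time then gives \eqref{eq:CC} at $t=T_*$ automatically. The other delicate point is making sure that the constant in \cref{theo:MainResultFinal} does not blow up as $t\to T_*$. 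This holds because its dependence is only on $T_*$ and the finite control norms.
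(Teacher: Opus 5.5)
Your route is the argument the paper has in mind: uniform bounds from \cref{theo:MainResultFinal}, limits at $T_*$, a restart with \cref{theo:Existenceresult}, and gluing. The paper gives no separate proof and treats the corollary as immediate once all strong-solution norms are bounded.

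\textbf{The density step fails as written.} You derive $\rho\in L^\infty(I_*;W^{3,2})$ ``together with the bound on $\nabla^3 p$''. However, $\mathtt{E}_{\mathrm{high}}$ controls $\nabla^3 p$ only in $L^2(I_*;L^2)$. That yields $\rho\in L^2(I_*;W^{3,2})$. Combined with $\partial_t\nabla^2\rho\in L^\infty(I_*;L^2)$, it gives only $\rho\in C(\overline{I_*};W^{5/2,2})$. So you cannot conclude $\rho(T_*)\in W^{3,2}$, which the restart needs. The glued solution also needs it to lie in class (a) of \cref{def:StrongSol}. This is exactly the delicate density regularity the paper discusses around \cref{rem:UniformBoundDensity}.

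The repair is local, since you already have $\bv$ bounded in $L^2(I_*;W^{4,2})$. One option is the transport estimate \eqref{eq:Density1} with $\mathtt{k}=2$. Its exponent satisfies $\int_{I_*}\|\nabla\bv\|_{W^{3,2}}\,dt\le T_*^{1/2}\|\bv\|_{L^2(I_*;W^{4,2})}$, which is now finite a posteriori. Alternatively, run Gr\"onwall on \cref{prop:DensityW32BoundV}, or quote \cref{lem:VelocityEstim}, whose conclusion already contains $\sup_{I_*}\|\rho\|_{W^{3,2}}^2$.

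\textbf{Two smaller remarks.} First, the single extension asserted by the corollary uses \eqref{eq:CC} only at $t=0$, to invoke \cref{theo:MainResultFinal}. \cref{theo:Existenceresult} does not require it at the restart time, so it is not the main obstacle. Your observation that it propagates by differentiating \eqref{eq:interfaceCond} in time is nonetheless correct. Second, you are right that $\|\eta(T_*)\|_{L^\infty(\omega)}<L$ does not follow from $\eta\in L^\infty(I_*;C^1(\omega))$ alone. It has to be read into the nondegeneracy clause of \ref{A3}, which the paper leaves implicit.
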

The proofs of \cref{theo:MainResult} and \cref{theo:MainResult2} are given in  \cref{sec:Proof1} and \cref{sec:Proof2}  respectively.  

In \cref{sec:WeakStrong}, we further establish a weak--strong uniqueness principle for the compressible FSI system.  Under the continuation hypotheses of  \cref{theo:MainResultFinal},   any finite--energy weak solution in the sense of \cite{breit2018compressible}, with the same initial data, coincides with the strong solution constructed in \cite{ngougoue2025local} on their common interval of existence.  More  precisely, we prove the following result:
\begin{theorem}\label{theo:WeakStrongUniqueMain}
Let the initial data $(\rho_0, \bv_0, \eta_0, \eta_*)$ satisfy \eqref{eq:InitialCondSpace}--\eqref{eq:PositivityAssumption}, and  let $(\rho, \bv, \eta)$ be a finite-energy weak solution of \eqref{eq:ContMomentEq}--\eqref{eq:interfaceCond} in the sense of  \cite[Section 2, Definition 2.1]{macha2022existence}, with adiabatic exponent $\gamma > \frac{12}{7}$. Moreover, assume  that Assumptions  \ref{A1}--\ref{A3}, \ref{B}, and  \eqref{eq:CC} hold on every finite interval $I = (0, T)$. \\
Then  $(\rho, \bv, \eta)$ is a strong solution on $I$ in the sense of \cref{def:StrongSol}.  In particular, $(\rho, \bv, \eta)$ is unique in the class of weak solutions with deformation in $L^{\infty}\left( I; C^{1}(\omega) \right)$.

\end{theorem}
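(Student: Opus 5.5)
The plan is to combine a relative-energy (relative-entropy) inequality on moving domains with the continuation result \cref{cor:Continuation}. Since the initial data satisfy \eqref{eq:InitialCondSpace}--\eqref{eq:PositivityAssumption}, \cref{theo:Existenceresult} provides a unique local strong solution $(\tilde\rho,\tilde\bv,\tilde\eta)$ on some $(0,T_*)$ with the same data. The first step is to show that the weak solution $(\rho,\bv,\eta)$ coincides with this strong solution on $(0,T_*)$. The second step is to use \cref{theo:MainResultFinal} and \cref{cor:Continuation} to extend the strong solution to all of $I=(0,T)$. The weak solution then inherits the regularity of \cref{def:StrongSol}.

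For the weak--strong step I would use the relative energy
\[
\mathcal{E}(t)=\int_{\Omega_{\eta}}\Big(\tfrac12\rho|\bv-\tilde\bv|^2+H(\rho)-H'(\tilde\rho)(\rho-\tilde\rho)-H(\tilde\rho)\Big)\dx+\tfrac12\int_\omega\Big(|\partial_t\eta-\partial_t\tilde\eta|^2+|\Dely(\eta-\tilde\eta)|^2\Big)\dy ,
\]
with $H(\rho)=\frac{a}{\gamma-1}\rho^\gamma$. The difficulty is that the strong fields live on $\Omega_{\tilde\eta}$, whereas the weak ones live on $\Omega_\eta$. I therefore transport $(\tilde\rho,\tilde\bv)$ to $\Omega_\eta$ by the Hanzawa-type map $\bm{\Psi}_{\eta-\tilde\eta}$, built with the interface-adapted extension already used for \cref{theo:MainResult}. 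The transported velocity then has trace $\partial_t\tilde\eta\,\bn$ on $\partial\Omega_\eta$, and $\bv-\tilde\bv\circ\bm{\Psi}^{-1}$ together with $\partial_t(\eta-\tilde\eta)$ is an admissible pair of test functions for the weak formulation of \cite{macha2022existence}.

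Testing the weak momentum/shell equation with this pair and the continuity equation with $\tfrac12|\tilde\bv|^2-H'(\tilde\rho)$, and subtracting from the energy inequality, I expect an estimate of the form
\[
\mathcal{E}(t)+c\int_0^t\!\!\int_{\Omega_\eta}|\nabla(\bv-\tilde\bv)|^2+c\int_0^t\!\!\int_\omega|\partial_t\naby(\eta-\tilde\eta)|^2\le \int_0^t h(s)\,\mathcal{E}(s)\,\ds+\text{(geometric error terms)} .
\]
The weight $h$ involves $\|\nabla\tilde\bv\|_{L^\infty}$, $\|\nabla\tilde\rho\|_{L^\infty}$, $\|\partial_t\tilde\bv\|_{L^3}$ and $\|\tilde\bv\|_{W^{2,2}}$, and it is integrable by \eqref{eq:AccelEstimate} and \ref{B}. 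The geometric terms come from the derivatives of $\bm{\Psi}_{\eta-\tilde\eta}$. They are bounded by $\|\eta-\tilde\eta\|_{W^{1,\infty}}$ and $\|\partial_t(\eta-\tilde\eta)\|$ times norms of the strong solution. I would handle them by interpolating $W^{1,\infty}$ between $W^{2,2}$ (controlled by $\mathcal{E}$) and the $C^1$ bound in \ref{A3} via $W^{1+\theta,p}$. The dissipation of the viscoelastic shell, $\int|\partial_t\naby(\eta-\tilde\eta)|^2$, absorbs the time-derivative part. Whenever $\rho$ appears alone in the fluid terms, I split into the regions $\{\rho\le 2\sup\tilde\rho\}$ and its complement, using $\gamma>\frac{12}{7}$ on the complement. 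Gronwall then gives $\mathcal{E}\equiv0$, so the two solutions coincide on $(0,T_*)$.

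For the extension, the weak solution now equals the strong one on $(0,T_*)$. By hypothesis, \ref{A1}--\ref{A3}, \ref{B} and \eqref{eq:CC} hold on $(0,T)$. \cref{theo:MainResultFinal} bounds $\mathtt{E}_{\mathrm{acc}}+\mathtt{E}_{\mathrm{high}}$ uniformly up to $T_*$, so \cref{cor:Continuation} continues the strong solution past $T_*$. I repeat the uniqueness argument from the new time and use a continuity argument: the maximal time on which the two solutions agree and are strong cannot be less than $T$. The final statement, uniqueness in the class of weak solutions with $\eta\in L^\infty(I;C^1(\omega))$, follows because any two such weak solutions both equal the strong one.

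The main obstacle is the geometric error terms in the relative energy inequality. Because the domains differ, the difference $\eta-\tilde\eta$ enters through the Jacobian of $\bm{\Psi}$ multiplied by $\nabla\tilde\bv$ and $\tilde p$. These must be controlled linearly by $\mathcal{E}$ plus absorbable dissipation, using only $C^1$ control of the weak $\eta$. My first attempt would be to estimate them using $\partial_t\naby\eta$ dissipation combined with the Lipschitz bounds from \ref{B}.
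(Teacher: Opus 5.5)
Your architecture is the paper's. You use a relative entropy with respect to the strong solution, transported onto the weak domain by $\bfPsi_{\eta_2}\circ\bfPsi_{\eta_1}^{-1}$. It is obtained by subtracting from the energy inequality the weak formulation tested with $(\bv_2^\sharp,\partial_t\eta_2)$, $\tfrac12|\bv_2^\sharp|^2$ and $H'(\rho_2^\sharp)$, and it is closed by Gr\"onwall. Only $(\bv_2^\sharp,\partial_t\eta_2)$ is a legitimate test pair. The difference $\bv-\bv_2^\sharp$ is not, since the weak velocity has no time regularity. Your explicit bootstrap through \cref{theo:MainResultFinal} and \cref{cor:Continuation} to reach all of $I$ is correct, and the paper leaves it implicit. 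Your essential/residual splitting of the density is a more careful substitute for \cref{rem:DensityRelativeEntrop}.

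The gap is in the step you single out as the main obstacle. You propose to bound the geometric errors by $\|\eta-\tilde\eta\|_{W^{1,\infty}(\omega)}$ and to control that norm by interpolating between $W^{2,2}(\omega)$ and the $C^1$ bound of \ref{A3}. This cannot work. On the two-dimensional $\omega$, every intermediate space between $W^{2,2}$ and $W^{1,\infty}$ (e.g.\ $W^{2-\theta,\,2/(1-\theta)}$) lies on the critical line $s-2/p=1$. For $\theta<1$ none of them embeds into $W^{1,\infty}$. Even with a bound on the weak displacement strictly above $C^1$, you would only get $\|\eta-\tilde\eta\|_{W^{1,\infty}}^2\lesssim\mathcal{E}^{\theta}$ with $\theta<1$. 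A sublinear term of this kind does not let Gr\"onwall force $\mathcal{E}\equiv 0$.

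The paper never uses the Lipschitz norm of the difference. It puts the sup-type norms on two things:
\begin{itemize}
\item the strong solution, namely $\|\nabla\underline{\bv}_2\|_{L^\infty}$, $\|\nabla_2\cdot\mathbb{S}(\nabla_2\underline{\bv}_2)\|_{L^{\mathtt p}}$ and $\|\underline{\rho}_2\|_{W^{3,2}}$;
\item the weak geometry, namely $J_{\eta_1}$ and $(\nabla\bfPsi_{\eta_1})^{-1}$, bounded via \ref{A3}.
\end{itemize}
The geometric differences $J_{\eta_1}-J_{\eta_2}$ and $(\nabla\bfPsi_{\eta_1})^{-1}\circ\bfPsi_{\eta_1}-(\nabla\bfPsi_{\eta_2})^{-1}\circ\bfPsi_{\eta_2}$ are measured only in $L^3$ or $L^{24}(\Omega)$, that is, through $\|\eta_1-\eta_2\|_{W^{1,p}(\omega)}$ with $p<\infty$. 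By the embedding $W^{2,2}(\omega)\hookrightarrow W^{1,p}(\omega)$, this is bounded \emph{linearly} by $\|\Dely(\eta_1-\eta_2)\|_{L^2(\omega)}\lesssim\mathcal{E}_{\mathrm{rel}}^{1/2}$. The velocity difference is taken in $L^3$ or $W^{1,3/2}$ and absorbed into the fluid dissipation. The term $\partial_t(\eta_1-\eta_2)$ is taken in $L^3(\omega)$ and absorbed into the shell dissipation; see the estimates of $\EuScript{T}_1+\EuScript{T}_2$, $\EuScript{T}_4$ and $\EuScript{T}_5$. With this distribution of exponents every remainder is bounded by $\mathtt{G}\,\mathcal{E}_{\mathrm{rel}}$ with $\mathtt{G}\in L^1$, plus $\kappa\mathcal{D}$. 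This is exactly what \eqref{eq:RemainderEstimFinal} requires.
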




\section{Preliminaries}\label{sec:Prelim}
For later arguments, we shall need to extend boundary data on $\omega$ as vector field on the moving domain $\Omega_\eta . $ We therefore recall the construction of a boundary extension operator suitable for the moving geometry. \\
Let $\bm{\varphi} \colon \omega \to \partial\Omega $ denote the reference boundary parametrisation, and let the deformation of the fluid domain $ \Omega_\eta $ be described -- for each fixed time $t \in I_* $ -- by the usual Hanzawa transform 
\[\bfPsi_\eta(t, \cdot) \colon \overline{\Omega} \to \overline{\Omega_\eta} , \]
which coincides with identity away from a fixed tubular neighbourhood  of $\partial\Omega$ and satisfies 
\[\bfPsi_\eta\left(t, \bm{\varphi}(\by)\right) = \bm{\varphi}(\by) + \eta(t, \by)\bn \quad \text{ for } \: \by \in \omega  \text{ (see e.g., \cite[Section 1.2]{ngougoue2025local})}.  \]
Following the construction used in \cite[Section 2.5]{breit2023ladyzhenskaya}, we make use of the classical boundary extension operator $\mathcal{E}_\Omega$ acting on the sufficiently smooth reference geometry $\Omega , $ that is,  
\[\mathcal{E}_\Omega \colon W^{\sigma, \mathtt{p}}(\partial\Omega) \to W^{\sigma + 1/\mathtt{p}, \mathtt{p}}(\R^3), \quad \text{with } \;\; \mathcal{E}_\Omega (v)\Big|_{\partial\Omega} = v,   \]
for all $\mathtt{p} \in [1, \infty] $ and all $\sigma > 0 .$ \\
Transporting this extension through the Hanzawa transform maps elements on $\omega$  into the moving domain 
\[\mathcal{E}_\eta (b\bn) = \mathcal{E}_\Omega \left(   (b\bn)\circ \bm{\varphi}^{-1}    \right)\circ \bfPsi_\eta^{-1}, \quad b \in W^{\sigma, \mathtt{p}}(\omega) . \]
As shown in \cite[Section 2.5, Lemma 2.2]{breit2023ladyzhenskaya},  for sufficiently smooth deformation $\eta, \; \mathcal{E}_\eta $  behaves like a classical extension  with $\big(\mathcal{E}_\eta (b\bn)\big)\circ \bm{\varphi}_\eta = b\bn $ on $\omega $ for all  $ b \in  W^{\sigma, \mathtt{p}}(\omega). $

Furthermore, in order to refer unambiguously to the a priori estimate available for the coupled fluid-structure system under consideration, we also recall the basic energy estimate satisfied by any weak solution $(\rho, \bv, \eta) $ to \eqref{eq:ContMomentEq}--\eqref{eq:interfaceCond}.  More precisely, if $(\rho, \bv, \eta) $ is a weak solution of \eqref{eq:ContMomentEq}--\eqref{eq:interfaceCond}, with data $(\rho_0, \bv_0, \eta_0, \eta_*) , $ then its total mechanical energy is uniformly bounded on $I_*, $ notably 
\begin{equation}\label{eq:BasicEnergy}
\begin{aligned}
&\sup\limits_{I_*} \left(  \int_{\Omega_\eta} \left(  \frac{1}{2} \rho|\bv|^2  + H(\rho) \right)\dx    +  \frac{1}{2} \int_{\omega} \left( |\partial_t \eta|^2 + |\Dely\eta|^2 \right) \dy  \right)
\\
& \quad + \int_{I_*} \int_{\Omega_\eta} \left( \mu |\nabla\bv|^2 + (\lambda + \mu)|\Div\bv|^2 \right)\dx\dt  +  \int_{I_*} \int_{\omega} |\partial_t\naby\eta|^2 \dy\dt 
\\
&\leq  C_0,
\end{aligned}
\end{equation} 
where 
\[C_0  = \dfrac{1}{2} \left(   \Vert \rho_0^{1/2}\bv_0\Vert_{L^2(\Omega_{\eta_0})}^2 +   \Vert \eta_*\Vert_{L^2(\omega)}^2    + \Vert \Dely\eta_0\Vert_{L^2(\omega)}^2          \right)    + \int_{\Omega_{\eta_0}}  H(\rho_0) \dx,         \]
and 
\[H(\rho) = \frac{a\rho^\gamma}{\gamma -1} . \]
$H$  denotes the pressure potential.



\section{Proof of  \cref{theo:MainResult} } \label{sec:Proof1}

The proof follows the strategy developed in the  incompressible setting in \cite[Section 4]{breit2023ladyzhenskaya}, but additional arguments are required here to handle the time-dependent density.  The central  step is  to test the fluid momentum equation $\eqref{eq:ContMomentEq}_{2}$ with a boundary-compatible material-derivative-type test function 
\[\bm{\psi} :=  \partial_t \bv + \mathcal{E}_\eta (\partial_t\eta\bn)\cdot \nabla\bv .\]  
This choice ensures that the trace of $\bm{\psi}$ matches the structural acceleration on the interface, namely   
\[\bm{\psi}\circ\bm{\varphi}_{\eta} = \left( \partial^{2}_t \eta\right) \!\bn \quad \text{on} \;\;  I_{*}\times \omega .   \] 
Thus, we obtain for $t \in I_*$
\begin{align}
& \int_{0}^{t} \int_{\Omega_\eta}  \rho (\partial_t\bv + \bv\cdot\nabla\bv)\cdot \big(  \partial_t \bv + \mathcal{E}_\eta (\partial_t\eta\bn)\cdot \nabla\bv \big) \dx\ds    \nonumber
\\
& = \int_{0}^{t} \int_{\Omega_\eta}  \Div\bm{\tau} \cdot \big(  \partial_t \bv + \mathcal{E}_\eta (\partial_t\eta\bn)\cdot \nabla\bv \big) \dx\ds   \nonumber  
\\
& = \int_{0}^{t} \int_{\partial\Omega_\eta}  \Big( {\bm{\tau}}^\intercal \big(  \partial_t \bv + \mathcal{E}_\eta (\partial_t\eta\bn)\cdot \nabla\bv \big) \Big) \cdot \bn_{\eta}  \dH\ds   - \int_{0}^{t} \int_{\Omega_\eta}   \bm{\tau}\colon \nabla\Big( \big(  \partial_t \bv + \mathcal{E}_\eta (\partial_t\eta\bn)\cdot \nabla\bv \big) \Big)\dx\ds    \nonumber 
\\
& = \int_{0}^{t} \int_{\omega}     {\mathlarger{\mathtt{F}}}_\eta \partial^{2}_t\eta \dy\ds 
\nonumber
\\
&\quad - \frac{\mu}{2} \int_{0}^{t} \dfrac{\dd}{\ds} \int_{\Omega_\eta}  |\nabla\bv|^2 \dx\ds + \frac{\mu}{2} \int_{0}^{t} \int_{\partial\Omega_\eta} (\partial_t\eta \bn)\circ\bm{\varphi}_{\eta}^{-1}\cdot \bn_\eta |\nabla\bv|^2 \dH\ds
\nonumber
\\
&\quad - \mu \int_{0}^{t} \int_{\Omega_\eta} \nabla\bv \colon \nabla\big( \mathcal{E}_\eta (\partial_t\eta\bn)\cdot \nabla\bv  \big)\dx\ds  - \mu \int_{0}^{t} \int_{\partial\Omega_\eta}\Big(\nabla\bv  \big(\partial_t\bv + \mathcal{E}_\eta (\partial_t\eta\bn)\cdot \nabla\bv \big)\Big) \cdot \bn_\eta \dH\ds 
\nonumber
\\
&\quad +  \mu \int_{0}^{t} \int_{\partial\Omega_\eta} (\Div\bv) \big(\partial_t\bv + \mathcal{E}_\eta (\partial_t\eta\bn)\cdot \nabla\bv \big)\cdot \bn_\eta \dH\ds 
\nonumber
\\
&\quad - \frac{\lambda + \mu}{2} \int_{0}^{t} \dfrac{\dd}{\ds} \int_{\Omega_\eta}  |\Div\bv|^2 \dx\ds + \frac{\lambda + \mu}{2}  \int_{0}^{t} \int_{\partial\Omega_\eta} (\partial_t\eta \bn)\circ\bm{\varphi}_{\eta}^{-1}\cdot \bn_\eta |\Div\bv|^2 \dH\ds
\nonumber
\\
&\quad  - (\lambda +\mu ) \int_{0}^{t} \int_{\Omega_\eta} (\Div\bv) \Div\big( \mathcal{E}_\eta (\partial_t\eta\bn)\cdot \nabla\bv \big) \dx\ds 
\nonumber
\\
& \quad + \int_{0}^{t} \int_{\Omega_\eta} p(\rho)\Div\big(\partial_t\bv + \mathcal{E}_\eta (\partial_t\eta\bn)\cdot \nabla\bv \big) \dx\ds , \label{eq:MomEqTest}
\end{align}
with
\[{\mathlarger{\mathtt{F}}}_\eta  := \bn^\intercal(\bm{\tau}\bn_\eta)\circ\bm{\varphi}_\eta
\mathrm{det}(\naby \bm{\varphi}_{\eta}) .\]
Note that in deriving \eqref{eq:MomEqTest}, we used the Reynold's transport theorem, as well as the vector identity \[\Div(\nabla\bv)^\intercal = \nabla(\Div\bv).\]

Testing the shell equation \eqref{eq:ShellEq} with $\partial^{2}_t\eta$, we obtain from standard integration by parts 
\begin{align*}
&\int_{I_*}\int_\omega |\partial^{2}_t \eta|^2 \dy\dt + \frac{1}{2} \int_{I_*}\dfrac{\dd}{\dt}  \int_\omega |\partial_t \naby\eta|^2 \dy\dt + \int_{I_*}\int_\omega \Dely^2\eta \cdot \partial^{2}_t \eta \dy\dt   
\\
& = -\int_{I_*}\int_\omega \bn^\intercal(\bm{\tau}\bn_\eta)\circ\bm{\varphi}_\eta
\mathrm{det}(\naby \bm{\varphi}_{\eta})\partial^{2}_t\eta \dy\dt. 
\end{align*}
However, observe that 
\begin{equation}\label{eq:identityEta}
- \int_{I_*}\int_\omega \Dely^{2}\eta \cdot \partial^{2}_{t}\eta \dy\dt  = \int_{I_*}\int_\omega \partial_{t}\big( \naby\Dely\eta \cdot \partial_{t}\naby\eta \big) \dy\dt  +   \int_{I_*}\int_\omega |\partial_{t}\Dely\eta|^2 \dy\dt .
\end{equation}
Hence,
\begin{align}
&\int_{I_*}\int_\omega |\partial^{2}_t \eta|^2 \dy\dt + \sup\limits_{I_*} \int_\omega |\partial_t \naby\eta|^2 \dy  \nonumber
\\
&\leq  C \left(  \int_\omega |\naby\eta_*|^2 \dy  + \sup\limits_{I_*} \int_\omega |\naby\Dely\eta|^2 \dy   + \int_{I_*}\int_\omega |\partial_t\Dely\eta|^2 \dy\dt  \right)  -\int_{I_*}\int_\omega {\mathlarger{\mathtt{F}}}_\eta \partial^{2}_t\eta \dy\dt,   \label{eq:ShellEqTest}
\end{align}
for some constant $C > 0. $ Adding \eqref{eq:MomEqTest} and \eqref{eq:ShellEqTest}, we exploit the trivial identity 
$\bv\cdot\nabla\bv - \bv\cdot\nabla\bv = 0,$  to rewrite the test function $\bm{\psi}$ as
\[\bm{\psi} = \dot{\bv} + \left( \mathcal{E}_\eta (\partial_t\eta\bn) - \bv\right)\cdot \nabla\bv, \quad \text{that is, } \;\; \dot{\bv} = \partial_t\bv +  \bv \cdot \nabla \bv .\]
 This  purely algebraic step is conceptually important. Indeed, by isolating the material derivative $\dot{\bv}, $ we single out the quantity $\Vert \rho^{1/2}\dot{\bv} \Vert_{L^2(\Omega_\eta)}, $ which in subsequent analysis, will allow us to control $\nabla^2\bv. $
Thus, using Young's inequality, we deduce that 
\begin{align}
&\sup\limits_{I_*} \int_{\Omega_\eta} |\nabla\bv|^2\dx +  \sup\limits_{I_*} \int_{\Omega_\eta} |\Div\bv|^2\dx  + \int_{I_*}\int_{\Omega_\eta} \rho |\dot{\bv}|^2 \dx\dt + \int_{I_*}\int_\omega |\partial^{2}_t \eta|^2 \dy\dt + \sup\limits_{I_*} \int_\omega |\partial_t \naby\eta|^2 \dy
\nonumber
\\
& \lesssim  \int_{I_*}\int_{\Omega_\eta}  \rho |\bv\cdot\nabla\bv|^2 \dx\dt +  \int_{I_*}\int_{\partial\Omega_\eta} (\partial_t\eta \bn)\circ\bm{\varphi}_{\eta}^{-1}\cdot \bn_\eta |\nabla\bv|^2 \dH\dt + \int_{I_*}\Vert \rho^{1/2}\mathcal{E}_\eta (\partial_t\eta\bn) \cdot\nabla\bv \Vert_{L^{2}(\Omega_\eta)}^2 \dt
\nonumber
\\
&\quad  - \int_{I_*} \int_{\Omega_\eta} \nabla\bv \colon \nabla\big( \mathcal{E}_\eta (\partial_t\eta\bn)\cdot \nabla\bv  \big)\dx\dt  - \int_{I_*} \int_{\partial\Omega_\eta}\big(\dot{\bv} \cdot \nabla\bv \big) \cdot \bn_\eta \dH\dt  + \int_{I_*} \int_{\partial\Omega_\eta} (\Div\bv) \dot{\bv}\cdot \bn_\eta \dH\dt 
\nonumber
\\
&\quad + \int_{I_*} \int_{\partial\Omega_\eta} (\partial_t\eta \bn)\circ\bm{\varphi}_{\eta}^{-1}\cdot \bn_\eta |\Div\bv|^2 \dH\dt - \int_{I_*}\int_{\Omega_\eta} (\Div\bv) \Div\big(\mathcal{E}_\eta (\partial_t\eta\bn)\cdot \nabla\bv \big) \dx\dt   \label{eq:AccFirstEstimate}
\\
&\quad +  \int_{I_*} \int_{\Omega_\eta} p(\rho)\Div\big( \mathcal{E}_\eta (\partial_t\eta\bn)\cdot \nabla\bv \big) \dx\dt + \int_{I_*} \int_{\Omega_\eta} p(\rho)\partial_t\Div(\bv) \dx\dt
\nonumber
\\
&\quad + \int_{\Omega_{\eta_0}} |\nabla\bv_0|^2\dx + \int_{\Omega_{\eta_0}} |\Div\bv_0|^2\dx  + \int_\omega |\naby\eta_*|^2 \dy  + \sup\limits_{I_*} \int_\omega |\naby\Dely\eta|^2 \dy   + \int_{I_*}\int_\omega |\partial_t\Dely\eta|^2 \dy \dt 
\nonumber
\\
& \quad =: \sum\limits_{\mathsf{k}=1}^{15} {\mathlarger{\mathfrak{R}}}_\mathsf{k}. 
\nonumber
\end{align}
We now estimate each term ${\mathlarger{\mathfrak{R}}}_\mathsf{k}. $ The purpose of the forthcoming term-by-term analysis is to show that, under the assumptions of  \cref{theo:MainResult}, each contribution ${\mathlarger{\mathfrak{R}}}_\mathtt{k} $  is either dissipative, can be absorbed into the left-hand side through an $\varepsilon-$splitting,  or is of  lower-order and therefore controlled by the standard energy estimate (see \eqref{eq:BasicEnergy}).

\noindent We start with the convective contribution 
\[{\mathlarger{\mathfrak{R}}}_1   = \int_{I_*}\int_{\Omega_\eta}  \rho |\bv\cdot\nabla\bv|^2 \dx\dt. \]
Using H\"older's inequality with $\mathtt{q} := \dfrac{2\mathtt{r}}{\mathtt{r} -2} $ and the Gargliardo--Nirenberg interpolation 
\begin{equation}\label{eq:GargliadoNirenberg}
\Vert \nabla\bv\Vert_{L^{\mathtt{q}}(\Omega_\eta)}^2 \lesssim \Vert \bv\Vert_{W^{2,2}(\Omega_\eta)}^{\frac{6}{\mathtt{r}}}  \Vert \bv\Vert_{W^{1,2}(\Omega_\eta)}^{\frac{2\mathtt{r} - 6}{\mathtt{r}}} ,
\end{equation}
we obtain 
\begin{align*}
{\mathlarger{\mathfrak{R}}}_1 & \leq \int_{I_*} \Vert \rho^{1/2}\bv\Vert_{L^{\mathtt{r}}(\Omega_\eta)}^2  \Vert \nabla\bv\Vert_{L^{\mathtt{q}}(\Omega_\eta)}^2   \dt  
 \lesssim  \int_{I_*} \Vert \rho^{1/2}\bv\Vert_{L^{\mathtt{r}}(\Omega_\eta)}^2  \Vert \bv\Vert_{W^{2,2}(\Omega_\eta)}^{\frac{6}{\mathtt{r}}}  \Vert \bv\Vert_{W^{1,2}(\Omega_\eta)}^{\frac{2\mathtt{r} - 6}{\mathtt{r}}} \dt.
\end{align*}
Applying Young's inequality with $\mathtt{s} := \dfrac{2\mathtt{r}}{\mathtt{r}-3} \in [2, \infty) $  yields
\begin{align}\label{eq:R1}
{\mathlarger{\mathfrak{R}}}_1 & \lesssim \varepsilon  \int_{I_*} \Vert \bv\Vert_{W^{2,2}(\Omega_\eta)}^{2} \dt + c(\varepsilon) \int_{I_*} \Vert \rho^{1/2}\bv\Vert_{L^{\mathtt{r}}(\Omega_\eta)}^{\mathtt{s}}  \Vert \bv\Vert_{W^{1, 2}(\Omega_\eta)}^{2} \dt.
\end{align}
The first term on the right-hand side will be absorbed  into  the left-hand side of \eqref{eq:AccelEstimate} for $\varepsilon > 0$ sufficiently small, while the remaining term will  then be controlled by applying Gr\"onwall's inequality  to the differential inequality satisfied  by $\Vert \nabla\bv\Vert_{L^{2}(\Omega_\eta)}^2, $  using in particular  the Serrin-type bound on $\rho^{1/2}\bv.$

For the boundary transport of $|\nabla\bv|^2$, that is,
\[{\mathlarger{\mathfrak{R}}}_2 = \int_{I_*} \int_{\partial\Omega_\eta} (\partial_t\eta \bn)\circ\bm{\varphi}_{\eta}^{-1}\cdot \bn_\eta |\nabla\bv|^2 \dH\dt,  \]
it follows from H\"older's inequality and Sobolev embeddings
\[W^{1/2, 2}(\partial\Omega_\eta) \hookrightarrow L^{4}(\partial\Omega_\eta), \quad  W^{1/4, 2}(\partial\Omega_\eta) \hookrightarrow L^{8/3}(\partial\Omega_\eta)\footnote{Of note, $\partial\Omega_\eta$ is uniformly Lipschitz in time, with a contant controlled by $\sup_{I_*}\Vert \eta\Vert_{C^1(\omega)}.$ }, \]
that 
\[{\mathlarger{\mathfrak{R}}}_2 \lesssim \int_{I_*} \Vert \nabla\bv \Vert_{W^{1/2, 2}(\partial\Omega_\eta)} \Vert \nabla\bv \Vert_{W^{1/4, 2}(\partial\Omega_\eta)} \Vert \partial_t\eta\circ\bm{\varphi}_{\eta}^{-1} \Vert_{W^{1/4, 2}(\partial\Omega_\eta)} \dt . \]
Using the trace embedding, the estimate
\begin{align*}
\Vert \partial_t\eta\circ\bm{\varphi}_{\eta}^{-1} \Vert_{W^{1/4, 2}(\partial\Omega_\eta)}  \lesssim \Vert \partial_t\eta\Vert_{W^{1/4, 2}(\omega)} 
\end{align*}
 and the characterisation of fractional Sobolev spaces as interpolation spaces, namely 
\begin{align}\label{eq:SobolevInterpo}
W^{3/4, 2}(\Omega_\eta) = \Big[ L^2(\Omega_\eta), W^{1,2}(\Omega_\eta)    \Big]_{3/4}, \quad W^{1/4, 2}(\Omega_\eta) = \Big[ L^2(\Omega_\eta), W^{1,2}(\Omega_\eta)    \Big]_{1/4}   ,
\end{align}
we obtain 
\[{\mathlarger{\mathfrak{R}}}_2 \lesssim \int_{I_*} \Vert \nabla\bv \Vert_{W^{1, 2}(\Omega_\eta)}^{7/4} \Vert \nabla\bv \Vert_{L^{ 2}(\Omega_\eta)}^{1/4} \Vert \partial_t\eta \Vert_{L^{2}(\omega)}^{3/4} \Vert \partial_t\eta \Vert_{W^{1,2}(\omega)}^{1/4} \dt . \]
The uniform-in-time  boundedness of  $\Vert \partial_t\eta \Vert_{L^{2}(\omega)}$  ( a consequence of \eqref{eq:BasicEnergy}), together with Young's inequality further imply that 
\begin{align}\label{eq:R2}
{\mathlarger{\mathfrak{R}}}_2 \lesssim \varepsilon \int_{I_*} \Vert \nabla\bv \Vert_{W^{1, 2}(\Omega_\eta)}^2 \dt + c(\varepsilon) \int_{I_*} \Vert \partial_t\eta \Vert_{W^{1,2}(\omega)}^2 \Vert \nabla\bv \Vert_{L^{ 2}(\Omega_\eta)}^2 \dt.
\end{align}

We now consider 
\[{\mathlarger{\mathfrak{R}}}_3 = \int_{I_*} \Vert \rho^{1/2}\mathcal{E}_\eta (\partial_t\eta\bn) \cdot\nabla\bv \Vert_{L^{2}(\Omega_\eta)}^2 \dt. \]
Using H\"older's inequality, the embedding $W^{3/4,2}(\Omega_\eta) \hookrightarrow    L^{4}(\Omega_\eta) ,  $ Gagliardo--Nirenberg interpolation 
\[\Vert \nabla\bv\Vert_{L^{4}(\Omega_\eta)}^2 \lesssim \Vert \bv\Vert_{W^{2,2}(\Omega_\eta)}^{3/2}  \Vert \bv\Vert_{W^{1, 2}(\Omega_\eta)}^{1/2} , \] 
and the uniform boundedness of the density, we get  
\[{\mathlarger{\mathfrak{R}}}_3  \lesssim \int_{I_*} \Vert \mathcal{E}_\eta (\partial_t\eta\bn) \Vert_{W^{3/4,2}(\Omega_\eta) }^2  \Vert \bv\Vert_{W^{1, 2}(\Omega_\eta)}^{1/2}   \Vert \bv\Vert_{W^{2, 2}(\Omega_\eta)}^{3/2} \dt.          \]
However, the extension operator $\mathcal{E}_\eta $ satisfies the trace property 
\begin{align*}
 \Vert \mathcal{E}_\eta (\partial_t\eta\bn) \Vert_{W^{3/4,2}(\Omega_\eta) }  \lesssim  \Vert \partial_t\eta\Vert_{W^{1/4, 2}(\omega)}.  
\end{align*}
Thus, using once again \eqref{eq:SobolevInterpo}, the uniform-in-time boundedness of $\Vert \partial_t\eta \Vert_{L^{2}(\omega)}$ and Young's inequality yield
\begin{align}\label{eq:R3}
{\mathlarger{\mathfrak{R}}}_3 \lesssim \varepsilon \int_{I_*} \Vert \bv \Vert_{W^{2, 2}(\Omega_\eta)}^2 \dt + c(\varepsilon) \int_{I_*} \Vert \partial_t\eta \Vert_{W^{1,2}(\omega)}^2 \Vert \bv \Vert_{W^{ 1, 2}(\Omega_\eta)}^2 \dt.
\end{align}

Moving on to the bilinear bulk term ${\mathlarger{\mathfrak{R}}}_4, $ a product rule yields the decomposition
\[{\mathlarger{\mathfrak{R}}}_4 = - \int_{I_*} \int_{\Omega_\eta} \nabla\bv \colon \nabla\big( \mathcal{E}_\eta (\partial_t\eta\bn)\cdot \nabla\bv  \big)\dx\dt =: {\mathlarger{\mathfrak{R}}}_{4,1} + {\mathlarger{\mathfrak{R}}}_{4,2},   \]
where 
\[{\mathlarger{\mathfrak{R}}}_{4,1}  = - \int_{I_*} \int_{\Omega_\eta} \nabla\bv \colon \mathcal{E}_\eta (\partial_t\eta\bn)^\intercal \nabla^2\bv \dx\dt, \quad   {\mathlarger{\mathfrak{R}}}_{4,2}  = - \int_{I_*} \int_{\Omega_\eta} \nabla\bv \colon \nabla\mathcal{E}_\eta (\partial_t\eta\bn) (\nabla\bv)^\intercal \dx\dt. \]
Both terms are handled by the same sequence of arguments used in the estimate of ${\mathlarger{\mathfrak{R}}}_3 .$ The only difference is the choice of the Lebesgue exponent for the extension field in the H\"older step: in ${\mathlarger{\mathfrak{R}}}_{4,1}$  the extension field is taken in $L^{4}(\Omega_\eta),$ whereas in ${\mathlarger{\mathfrak{R}}}_{4,2}$ it is taken in $L^{2}(\Omega_\eta) , $ in accordance with the structure of each integrand.
We therefore, do not repeat the details and conclude that
\begin{align}\label{eq:R4}
{\mathlarger{\mathfrak{R}}}_{4,\mathsf{j}} \lesssim \varepsilon \int_{I_*} \Vert \bv \Vert_{W^{2, 2}(\Omega_\eta)}^2 \dt + c(\varepsilon) \int_{I_*} \Vert \partial_t\eta \Vert_{W^{1,2}(\omega)}^2 \Vert \bv \Vert_{W^{1,  2}(\Omega_\eta)}^2 \dt, \qquad \forall \; \mathsf{j} \in {1,2}. 
\end{align} 

We next address the term 
\[{\mathlarger{\mathfrak{R}}}_5 =   -  \int_{I_*} \int_{\partial\Omega_\eta}\big(\dot{\bv} \cdot \nabla\bv \big) \cdot \bn_\eta \dH\dt  . \] 
Applying H\"older's inequality together with  the interpolation identity 
\[L^{2}(\partial\Omega_\eta) = \big[ W^{-1/2,2}(\partial\Omega_\eta), W^{1/2,2}(\partial\Omega_\eta) \big]_{1\slash2}, \]
we obtain -- after identifying the material derivative with the acceleration of the shell 
\[{\mathlarger{\mathfrak{R}}}_5 \lesssim \int_{I_*} \Vert \bv \Vert_{W^{1/2,2}(\partial\Omega_\eta)}^{1/2} \Vert \nabla\bv \Vert_{W^{1/2,2}(\partial\Omega_\eta)}^{1/2}  \Vert \partial^{2}_t \eta \Vert_{L^{2}(\omega)} \dt ,\]
 which after applying the trace inequality and Young's inequality, further reduces to 
 \begin{align}\label{eq:R5}
 {\mathlarger{\mathfrak{R}}}_5 \lesssim \varepsilon \int_{I_*} \left( \Vert \nabla\bv \Vert_{W^{1, 2}(\Omega_\eta)}^2 + \Vert \partial^{2}_t \eta \Vert_{L^{2}(\omega)}^2  \right) \dt +  c(\varepsilon) \int_{I_*} \Vert \bv \Vert_{W^{1, 2}(\Omega_\eta)}^{2} \dt.
 \end{align} 

Considering the term 
\[{\mathlarger{\mathfrak{R}}}_6 = \int_{I_*} \int_{\partial\Omega_\eta} (\Div\bv) \dot{\bv}\cdot \bn_\eta \dH\dt , \]
its structure is identical to that of ${\mathlarger{\mathfrak{R}}}_5 , $ with $\nabla \bv$ replaced by $\Div\bv.$  Therefore, the argument is identical to that of ${\mathlarger{\mathfrak{R}}}_5, $ whence
\begin{align}\label{eq:R6}
 {\mathlarger{\mathfrak{R}}}_6 \lesssim \varepsilon \int_{I_*} \left( \Vert \nabla\bv \Vert_{W^{1, 2}(\Omega_\eta)}^2 + \Vert \partial^{2}_t \eta \Vert_{L^{2}(\omega)}^2  \right) \dt +  c(\varepsilon) \int_{I_*} \Vert \bv \Vert_{W^{1, 2}(\Omega_\eta)}^{2} \dt.
 \end{align} 

The term 
\[{\mathlarger{\mathfrak{R}}}_7 = \int_{I_*} \int_{\partial\Omega_\eta} (\partial_t\eta \bn)\circ\bm{\varphi}_{\eta}^{-1}\cdot \bn_\eta |\Div\bv|^2 \dH\dt,  \]
is the divergence analogue of ${\mathlarger{\mathfrak{R}}}_2. $  Consequently, the same trace--interpolation mechanism used for ${\mathlarger{\mathfrak{R}}}_2$ applies identically. Hence, 
\begin{align}\label{eq:R7}
{\mathlarger{\mathfrak{R}}}_7 \lesssim \varepsilon \int_{I_*} \Vert \nabla\bv \Vert_{W^{1, 2}(\Omega_\eta)}^2 \dt + c(\varepsilon) \int_{I_*} \Vert \partial_t\eta \Vert_{W^{1,2}(\omega)}^2 \Vert \Div\bv \Vert_{L^{ 2}(\Omega_\eta)}^2 \dt.
\end{align}

Furthermore, observe that 
\[{\mathlarger{\mathfrak{R}}}_8 = - \int_{I_*} \int_{\Omega_\eta} (\Div\bv)  \Div\big( \mathcal{E}_\eta (\partial_t\eta\bn)\cdot \nabla\bv  \big)\dx\dt =: {\mathlarger{\mathfrak{R}}}_{8,1} + {\mathlarger{\mathfrak{R}}}_{8,2},   \]
with 
\[{\mathlarger{\mathfrak{R}}}_{8,1}  = - \int_{I_*} \int_{\Omega_\eta} (\Div\bv) \Div(\nabla\bv)^\intercal \cdot \mathcal{E}_\eta (\partial_t\eta\bn) \dx\dt, \quad   {\mathlarger{\mathfrak{R}}}_{8,2}  = - \int_{I_*} \int_{\Omega_\eta} (\Div\bv) (\nabla\bv)^\intercal \colon \nabla\mathcal{E}_\eta (\partial_t\eta\bn) \dx\dt,  \]
mirrors ${\mathlarger{\mathfrak{R}}}_4. $ Proceeding exactly as in the estimate for ${\mathlarger{\mathfrak{R}}}_4, $ we obtain 
\begin{align}\label{eq:R8}
{\mathlarger{\mathfrak{R}}}_{8,\mathsf{j}} \lesssim \varepsilon \int_{I_*} \Vert \bv \Vert_{W^{2, 2}(\Omega_\eta)}^2 \dt + c(\varepsilon) \int_{I_*} \Vert \partial_t\eta \Vert_{W^{1,2}(\omega)}^2 \Vert \Div\bv \Vert_{L^{ 2}(\Omega_\eta)}^2 \dt, \qquad \forall \; \mathsf{j} \in {1,2}. 
\end{align}  

We proceed to estimate the first pressure-driven contribution
\[{\mathlarger{\mathfrak{R}}}_9 = \int_{I_*} \int_{\Omega_\eta} p(\rho)\Div\big( \mathcal{E}_\eta (\partial_t\eta\bn)\cdot \nabla\bv \big) \dx\dt =:  {\mathlarger{\mathfrak{R}}}_{9,1} + {\mathlarger{\mathfrak{R}}}_{9, 2}, \]
where
\[{\mathlarger{\mathfrak{R}}}_{9, 1} =  \int_{I_*} \int_{\Omega_\eta} p(\rho) \Div(\nabla\bv)^\intercal \cdot \mathcal{E}_\eta (\partial_t\eta\bn)\dx\dt, \qquad {\mathlarger{\mathfrak{R}}}_{9, 2} =  \int_{I_*} \int_{\Omega_\eta} p(\rho) (\nabla\bv)^\intercal\colon \nabla\big(\mathcal{E}_\eta (\partial_t\eta\bn)\big) \dx\dt.  \]
To estimate ${\mathlarger{\mathfrak{R}}}_{9, 1} $ we apply H\"older's inequality together with the Sobolev embedding 
\[W^{3/2,2}(\Omega_\eta) \hookrightarrow    L^{4}(\Omega_\eta), \]
 which yields 
\[{\mathlarger{\mathfrak{R}}}_{9, 1}  \lesssim \int_{I_*} \Vert \nabla\bv \Vert_{W^{1, 2}(\Omega_\eta)}  \Vert p(\rho) \Vert_{L^{4}(\Omega_\eta)} \Vert \mathcal{E}_\eta (\partial_t\eta\bn)\Vert_{W^{3/2, 2}(\Omega_\eta)} \dt.  \] 
Using the uniform boundedness of $\rho$ (and hence of $p(\rho)$), together with  the trace property  
\begin{align}\label{eq:Embed32}
\Vert \mathcal{E}_\eta (\partial_t\eta\bn) \Vert_{W^{3/2,2}(\Omega_\eta) }  \lesssim  \Vert \partial_t\eta\Vert_{W^{1, 2}(\omega)}, 
\end{align}
we obtain, after an application of Young's inequality  
\begin{align}\label{eq:R91}
{\mathlarger{\mathfrak{R}}}_{9,1} \lesssim \varepsilon \int_{I_*} \Vert \nabla\bv \Vert_{W^{1, 2}(\Omega_\eta)}^2 \dt + c(\varepsilon) \int_{I_*} \Vert \partial_t\eta \Vert_{W^{1,2}(\omega)}^2  \dt. 
\end{align} 

For the term ${\mathlarger{\mathfrak{R}}}_{9,2}, $ the uniform boundedness of $\rho$ and H\"older's inequality yield
\[{\mathlarger{\mathfrak{R}}}_{9,2} \lesssim \int_{I_*} \Vert \nabla\bv \Vert_{L^{ 2}(\Omega_\eta)}  \Vert p(\rho) \Vert_{L^{\infty}(\Omega_\eta)} \Vert \mathcal{E}_\eta (\partial_t\eta\bn)\Vert_{W^{1, 2}(\Omega_\eta)} \dt.  \]  
Using the embedding $W^{3/2,2}(\Omega_\eta) \hookrightarrow    W^{1,2}(\Omega_\eta) $  and  \eqref{eq:Embed32}, it follows from Young's inequality that 
\begin{align}\label{eq:R92}
{\mathlarger{\mathfrak{R}}}_{9,2} \lesssim \int_{I_*} \left( \Vert \nabla\bv \Vert_{L^{2}(\Omega_\eta)}^2  + \Vert \partial_t\eta \Vert_{W^{1,2}(\omega)}^2  \right) \dt. 
\end{align} 

The pressure-divergence term 
\[ {\mathlarger{\mathfrak{R}}}_{10} = \int_{I_*} \int_{\Omega_\eta} p(\rho)\partial_t\Div(\bv) \dx\dt \]
requires careful attention, since it couples a time derivative of the volumetric strain with the nonlinear pressure, and therefore cannot be estimated directly by H\"older or interpolation. Its analysis instead exploit the structure of the continuity equation  $\eqref{eq:ContMomentEq}_{1}$ which yields a renormalised continuity equation
\begin{align}\label{eq:RenormContEq}
\partial_t p(\rho) + \Div\big(p(\rho)\bv\big) + (\gamma -1)p(\rho)\Div\bv     = 0. 
\end{align}
Using \eqref{eq:RenormContEq}, we derive that  
\begin{align*}
{\mathlarger{\mathfrak{R}}}_{10}  &= \int_{I_*} \dfrac{\dd}{\dt} \int_{\Omega_\eta} p(\rho) \Div\bv \dt - \int_{I_*} \int_{\Omega_\eta} p(\rho)\bv\cdot \nabla\Div\bv  \dx\dt   + (\gamma - 1)\int_{I_*} \int_{\Omega_\eta} p(\rho) \left(\Div\bv\right)^2\dx\dt.
\end{align*}
From Young's inequality and the uniform boundedness of the density, we obtain  
\begin{align*}
{\mathlarger{\mathfrak{R}}}_{10} \lesssim \varepsilon \sup\limits_{I_{*}} \Vert \nabla\bv\Vert_{L^2(\Omega_\eta)}^2 + c(\varepsilon) + \int_{I_*}\Vert \nabla\bv\Vert_{L^2(\Omega_\eta)}^2  \dt - \int_{I_*} \int_{\Omega_\eta} p(\rho)\bv\cdot \nabla\Div\bv  \dx\dt.  
\end{align*}
To estimate the last term, we introduce the vector field 
\[ \mathbf{u} = \bv - \mathcal{E}_\eta (\partial_t\eta\bn),  \;\; \text{so that} \;\; \mathbf{u} = 0 \quad \text{on}\;\; \partial\Omega_\eta. \]
This yields the decomposition
\[-\int_{I_*} \int_{\Omega_\eta} p(\rho)\bv\cdot \nabla\Div\bv  \dx\dt =: {\mathlarger{\mathfrak{R}}}_{10}^a + {\mathlarger{\mathfrak{R}}}_{10}^b, \]
where 
\[{\mathlarger{\mathfrak{R}}}_{10}^\mathtt{a} = -\int_{I_*} \int_{\Omega_\eta} p(\rho)\mathbf{u}\cdot \nabla\Div\bv  \dx\dt, \quad {\mathlarger{\mathfrak{R}}}_{10}^\mathtt{b} = -\int_{I_*} \int_{\Omega_\eta} p(\rho)\mathcal{E}_\eta (\partial_t\eta\bn)\cdot \nabla\Div\bv  \dx\dt . \] 
Since $\bu $ vanishes on the boundary, Poincar\'e inequality and the trace extension bound  \eqref{eq:Embed32} give
\begin{align}\label{eq:ZeroEstimate}
\Vert \bu \Vert_{L^2(\Omega_\eta)} \lesssim \Vert \nabla\bv \Vert_{L^2(\Omega_\eta)}  + \Vert \partial_t\eta \Vert_{W^{1,2}(\omega)} .
\end{align}
Hence, using  the $L^\infty-$bound for $\rho, $   H\"older's inequality,  \eqref{eq:ZeroEstimate} and Young's inequality, we obtain
\begin{align}\label{eq:R10a}
{\mathlarger{\mathfrak{R}}}_{10}^\mathtt{a} \lesssim \varepsilon \int_{I_*} \Vert \nabla^2\bv \Vert_{L^{2}(\Omega_\eta)}^2 \dt + c(\varepsilon) \int_{I_*} \left( \Vert \partial_t\eta \Vert_{W^{1,2}(\omega)}^2 + \Vert \nabla\bv \Vert_{L^{2}(\Omega_\eta)}^2 \right) \dt .
\end{align} 
A similar argument yields
\begin{align}\label{eq:R10b}
{\mathlarger{\mathfrak{R}}}_{10}^\mathtt{b} \lesssim \varepsilon \int_{I_*} \Vert \nabla^2\bv \Vert_{L^{2}(\Omega_\eta)}^2 \dt + c(\varepsilon) \int_{I_*}  \Vert \partial_t\eta \Vert_{W^{1,2}(\omega)}^2 \dt .
\end{align}

Collecting the estimates \eqref{eq:R1}, \eqref{eq:R2}--\eqref{eq:R8}, \eqref{eq:R91}--\eqref{eq:R92} and \eqref{eq:R10a}--\eqref{eq:R10b}, we obtain 
\begin{align}\label{eq:R1R10}
&\sup\limits_{I_*} \int_{\Omega_\eta} |\nabla\bv|^2\dx +  \sup\limits_{I_*} \int_{\Omega_\eta} |\Div\bv|^2\dx  + \int_{I_*}\int_{\Omega_\eta} \rho |\dot{\bv}|^2 \dx\dt + \int_{I_*}\int_\omega |\partial^{2}_t \eta|^2 \dy\dt + \sup\limits_{I_*} \int_\omega |\partial_t \naby\eta|^2 \dy 
\nonumber
\\[0.4em]
& \lesssim  \varepsilon \bigg( \sup\limits_{I_{*}} \Vert \nabla\bv\Vert_{L^2(\Omega_\eta)}^2  +  \int_{I_*} \left( \Vert \bv \Vert_{W^{ 2, 2}(\Omega_\eta)}^2 + \Vert \partial^{2}_t \eta \Vert_{L^{2}(\omega)}^2  \right) \dt \bigg)  
\nonumber
\\[0.4em]
&\quad + c(\varepsilon) \int_{I_*} \left( \Vert \rho^{1/2}\bv\Vert_{L^{\mathtt{r}}(\Omega_\eta)}^{\mathtt{s}} + \Vert \partial_t\eta \Vert_{W^{1,2}(\omega)}^2 \right) \Vert \nabla\bv \Vert_{L^{ 2}(\Omega_\eta)}^2  \dt  
\nonumber
\\[0.4em]
&\quad + c(\varepsilon) \int_{I_*} \Vert \partial_t\eta \Vert_{W^{1,2}(\omega)}^2 \Vert \Div\bv \Vert_{L^{ 2}(\Omega_\eta)}^2   \dt + c(\varepsilon) \int_{I_*} \left( \Vert \partial_t\eta \Vert_{W^{1,2}(\omega)}^2 + \Vert \nabla\bv \Vert_{L^{2}(\Omega_\eta)}^2 \right) \dt
\\
&\quad + \int_{\Omega_{\eta_0}} |\nabla\bv_0|^2\dx + \int_{\Omega_{\eta_0}} |\Div\bv_0|^2\dx + \int_\omega |\naby\eta_*|^2 \dy  \nonumber
\\[0.4em]
&\quad + \sup\limits_{I_*} \int_\omega |\naby\Dely\eta|^2 \dy   + \int_{I_*}\int_\omega |\partial_t\Dely\eta|^2 \dy \dt \nonumber.
\end{align}
The last two terms can be estimated by testing the shell equation \eqref{eq:ShellEq}  with $\partial_t\Dely\eta ,  $ which yields
\begin{equation}\label{eq:EstimateR1415}
\begin{aligned}
&\frac{1}{2} \sup\limits_{I_*} \int_{\omega} |\partial_t \naby\eta|^2 \dy + \int_{I_*}\int_\omega |\partial_t \Dely\eta|^2\dy\dt + \frac{1}{2} \sup\limits_{I_*} \int_{\omega} | \naby\Dely\eta|^2 \dy
\\
& = \frac{1}{2} \int_\omega |\naby\eta_*|^2\dy + \frac{1}{2} \int_\omega |\naby\Dely\eta_0|^2\dy + \int_{I_*}\int_\omega {\mathlarger{\mathtt{F}}}_\eta \partial_t\Dely\eta \dy\dt. 
\end{aligned}
\end{equation}
However, we have 
\begin{align*}
 \int_{I_*}\int_\omega {\mathlarger{\mathtt{F}}}_\eta \partial_t\Dely\eta \dy\dt & \leq   \int_{I_*} \Vert {\mathlarger{\mathtt{F}}}_\eta \Vert_{W^{1/2,2}(\omega)} \Vert \partial_t \Dely\eta \Vert_{W^{-1/2, 2}(\omega)} \dt
 \\
 & \leq  \int_{I_*} \Vert \bm{\tau}\Vert_{W^{1/2,2}(\partial\Omega_\eta)} \Vert \partial_t \eta \Vert_{W^{3/2, 2}(\omega)} \dt
 \\
 & \lesssim  \int_{I_*} \big( \Vert \nabla^2\bv\Vert_{L^2(\Omega_\eta)} + \Vert \nabla p\Vert_{L^2(\Omega_\eta)}  \big) \Vert \partial_t \eta \Vert_{W^{3/2, 2}(\omega)} \dt
 \\
 & \lesssim \varepsilon \int_{I_*} \left( \Vert \nabla^2\bv\Vert_{L^2(\Omega_\eta)}^2 +  \Vert \nabla p\Vert_{L^2(\Omega_\eta)}^2 + \Vert \partial_t\Dely\eta\Vert_{L^2(\omega)}^2   \right) \dt  + c(\varepsilon) \int_{I_*} \Vert \partial_t\eta\Vert_{W^{1,2}(\omega)}^2 \dt,
\end{align*}
where we have used the Sobolev  interpolation inequality
\begin{equation}\label{eq:SobolevEta}
\Vert \partial_t \eta \Vert_{W^{3/2, 2}(\omega)}^2 \leq \Vert \partial_t \eta \Vert_{W^{1, 2}(\omega)}\Vert \partial_t \eta \Vert_{W^{2, 2}(\omega)}.
\end{equation}
Hence, it follows from \eqref{eq:EstimateR1415} that
\begin{equation}\label{eq:R1415}
\begin{aligned}
& \sup\limits_{I_*} \int_{\omega} |\partial_t \naby\eta|^2 \dy + \int_{I_*}\int_\omega |\partial_t \Dely\eta|^2\dy\dt + \sup\limits_{I_*} \int_{\omega} |\partial_t \naby\Dely\eta|^2 \dy
\\
& \lesssim  \varepsilon \int_{I_*} \left( \Vert \nabla^2\bv\Vert_{L^2(\Omega_\eta)}^2 +  \Vert \nabla p\Vert_{L^2(\Omega_\eta)}^2 \right) \dt  + c(\varepsilon) \int_{I_*} \Vert \partial_t\eta\Vert_{W^{1,2}(\omega)}^2 \dt + \int_\omega |\naby\eta_*|^2\dy 
\\
&\quad   +  \int_\omega |\naby\Dely\eta_0|^2\dy. 
\end{aligned}
\end{equation}
Combining \eqref{eq:R1415} and \eqref{eq:R1R10}, we deduce that 
\begin{align}\label{eq:R1R15}
&\sup\limits_{I_*} \int_{\Omega_\eta} |\nabla\bv|^2\dx +  \sup\limits_{I_*} \int_{\Omega_\eta} |\Div\bv|^2\dx  + \int_{I_*}\int_{\Omega_\eta} \rho |\dot{\bv}|^2 \dx\dt + \int_{I_*}\int_\omega |\partial^{2}_t \eta|^2 \dy\dt + \sup\limits_{I_*} \int_\omega |\partial_t \naby\eta|^2 \dy 
\nonumber
\\
&  \quad + \int_{I_*}\int_\omega |\partial_t \Dely\eta|^2\dy\dt + \sup\limits_{I_*} \int_{\omega} |\partial_t \naby\Dely\eta|^2 \dy
\nonumber
\\
& \lesssim \varepsilon \bigg( \sup\limits_{I_{*}} \Vert \nabla\bv\Vert_{L^2(\Omega_\eta)}^2  +  \int_{I_*} \left( \Vert \nabla^2\bv \Vert_{L^{ 2}(\Omega_\eta)}^2 + \Vert \nabla p\Vert_{L^2(\Omega_\eta)}^2 + \Vert \partial^{2}_t \eta \Vert_{L^{2}(\omega)}^2  \right) \dt \bigg)  
\nonumber
\\
&\quad + c(\varepsilon) \int_{I_*} \left( \Vert \rho^{1/2}\bv\Vert_{L^{\mathtt{r}}(\Omega_\eta)}^{\mathtt{s}} + \Vert \partial_t\eta \Vert_{W^{1,2}(\omega)}^2 \right) \Vert \nabla\bv \Vert_{L^{ 2}(\Omega_\eta)}^2  \dt  
\nonumber
\\
&\quad + c(\varepsilon) \int_{I_*} \Vert \partial_t\eta \Vert_{W^{1,2}(\omega)}^2 \Vert \Div\bv \Vert_{L^{ 2}(\Omega_\eta)}^2   \dt + c(\varepsilon) \int_{I_*} \left( \Vert \partial_t\eta \Vert_{W^{1,2}(\omega)}^2 + \Vert \nabla\bv \Vert_{L^{2}(\Omega_\eta)}^2 \right) \dt
\\
&\quad + \int_{\Omega_{\eta_0}} |\nabla\bv_0|^2\dx + \int_{\Omega_{\eta_0}} |\Div\bv_0|^2\dx + \int_\omega |\naby\eta_*|^2\dy  +  \int_\omega |\naby\Dely\eta_0|^2\dy \nonumber.
\end{align}
To close the acceleration estimate \eqref{eq:R1R15} and thereby complete the proof, it remains to control 
\[\Vert \bv \Vert_{W^{2,  2}(\Omega_\eta)} \quad \text{and}\quad   \Vert \nabla p\Vert_{L^2(\Omega_\eta)} .\]
However, such a control cannot be obtained directly through the time-evolution energy method used thus far, and must instead be deduced from the elliptic structure inherent in the momentum equation $\eqref{eq:ContMomentEq}_2 .$ 
For this purpose, we freeze the time variable and regard the pair $(\bv, p) $ as the solution of a stationary Stokes-type system with prescribed boundary data for $\bv .$ The required control of $\nabla^2\bv $ and $p$ then follows from the next lemma.


\begin{lemma}\label{lem:MRlowerOrderVp}
Let $(\bv, p) $ be the unique solution of the boundary value problem 
\begin{equation}\label{eq:BVP}
\left\{\begin{aligned}
&\mathcal{A} (\bv, p)^\intercal
= 
\big( \rho\dot{\bv} , (2\mu + \lambda)^{-1}{\mathlarger{\mathfrak{F}}} \big)^\intercal &&\text{ in }  \Omega_\eta,\\[0.4em]
&\mathcal{B}(\bv, p)^\intercal
= 
\left(\partial_t\eta\bn\right)\circ\bfvarphi_{\eta}^{-1} &&\text{ on }  \partial\Omega_\eta,
\end{aligned}\right.
\end{equation}
where the interior operator is
\[\mathcal{A}  := \begin{pmatrix}
\mu\Delta \mathbb{I}_{3\times3} + (\mu+ \lambda)\nabla\Div &   & -\nabla \\\\
\Div &  & -(2\mu + \lambda)^{-1}
\end{pmatrix},
\]
the boundary operator is
\[\mathcal{B}  := \begin{pmatrix}
 \mathbb{I}_{3\times3}  &   & \mathbf{0}_3 \end{pmatrix},
\]
and 
\[ {\mathlarger{\mathfrak{F}}} := (2\mu + \lambda)\Div\bv - p(\rho) \]
denotes the effective viscous flux. 
\medskip

Then, for every fixed time $t \in I_*$,  there exists a constant 
\[
C = C\left(\mu, \lambda, \Omega_\eta, \Vert\rho\Vert_{L^{\infty}\left(I_* ; L^\infty( \Omega_\eta)\right) } \right) >0 ,
\]
such that the elliptic estimate
\begin{equation}\label{eq:EllipticRegularity}
\Vert \bv \Vert_{W^{2, 2}(\Omega_\eta)} + \Vert  p\Vert_{W^{1,2}(\Omega_\eta)} \leq C \left( \Vert \rho^{1/2}\dot{\bv} \Vert_{L^2(\Omega_\eta)} + \Vert \partial_t \eta\Vert_{W^{3/2,2}(\omega)} \right)
\end{equation}
holds. 

\end{lemma}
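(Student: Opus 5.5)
The plan is to freeze $t\in I_*$, remove the inhomogeneous Dirichlet datum with the extension operator of \cref{sec:Prelim}, and then apply elliptic regularity for the mixed-order system \eqref{eq:BVP} in the Agmon--Douglis--Nirenberg (ADN) framework.

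\emph{Step 1: reduction to zero boundary data.} Set $\bu:=\bv-\mathcal{E}_\eta(\partial_t\eta\bn)$, so that $\bu=0$ on $\partial\Omega_\eta$. The first row of \eqref{eq:BVP} becomes
\[
\mu\Delta\bu+(\mu+\lambda)\nabla\Div\bu-\nabla p=\rho\dot\bv-\big(\mu\Delta+(\mu+\lambda)\nabla\Div\big)\mathcal{E}_\eta(\partial_t\eta\bn).
\]
By the mapping property of $\mathcal{E}_\eta$ (as in \eqref{eq:Embed32}, one order higher), the extension term is bounded in $L^2(\Omega_\eta)$ by $\Vert\partial_t\eta\Vert_{W^{3/2,2}(\omega)}$. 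Moreover $\Vert\rho\dot\bv\Vert_{L^2}\le\Vert\rho\Vert_{L^\infty}^{1/2}\Vert\rho^{1/2}\dot\bv\Vert_{L^2}$; this is exactly where the dependence of $C$ on $\Vert\rho\Vert_{L^\infty}$ enters.

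\emph{Step 2: the effective flux and the pressure.} The second row is the algebraic identity $\Div\bv-(2\mu+\lambda)^{-1}p=(2\mu+\lambda)^{-1}\mathfrak{F}$. Hence the system is not a genuine Stokes system: $p$ is an unknown of the same differential order as $\nabla\bv$, and $\mathfrak{F}$ is tied to it. Using $\Delta\bv=\nabla\Div\bv-\operatorname{curl}\operatorname{curl}\bv$, the momentum equation becomes
\[
\nabla\mathfrak{F}=\rho\dot\bv+\mu\operatorname{curl}\operatorname{curl}\bv .
\]
So $\nabla p=(2\mu+\lambda)\nabla\Div\bv-\nabla\mathfrak F$ is controlled once $\nabla^2\bv$ and $\nabla\mathfrak F$ are.

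I would therefore view $\mathcal{A}$ with $\mathcal{B}$ as an ADN-elliptic system, with weights $s=(0,0,0,-1)$ for the equations and $t=(2,2,2,1)$ for the unknowns $(\bv,p)$. The principal symbol then coincides with that of a Stokes-type system perturbed by the lower-order zero-order pressure entry $-(2\mu+\lambda)^{-1}$ in the second row, which does not enter the principal part under these weights. Ellipticity and the Lopatinskii--Shapiro condition for the Dirichlet condition $\mathcal{B}$ can then be verified exactly as for the Stokes problem, using $\mu>0$ and $2\mu+\lambda>0$. ADN theory then gives
\[
\Vert\bu\Vert_{W^{2,2}}+\Vert p\Vert_{W^{1,2}}\lesssim \Vert\rho\dot\bv\Vert_{L^2}+\Vert \partial_t\eta\Vert_{W^{3/2,2}(\omega)}+\Vert\mathfrak F\Vert_{W^{1,2}}+\Vert\bu\Vert_{L^2}+\Vert p\Vert_{L^2}.
\]

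\emph{Step 3: removing $\mathfrak F$ and the lower-order terms.} The datum $\mathfrak F$ on the right-hand side is controlled through the flux identity above by $\Vert\rho\dot\bv\Vert_{L^2}$ and $\mu\Vert\operatorname{curl}\operatorname{curl}\bv\Vert_{L^2}$. The latter I would absorb by an interpolation/$\varepsilon$-argument together with the $L^2$-bounds on $\nabla\bv$. The terms $\Vert\bu\Vert_{L^2}$ and $\Vert p\Vert_{L^2}$ are bounded, respectively, by \eqref{eq:ZeroEstimate} together with \eqref{eq:BasicEnergy}, and by $\Vert\rho\Vert_{L^\infty}$ via $p=a\rho^\gamma$. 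Alternatively, since $(\bv,p)$ is assumed to be the unique solution, the lower-order terms can be removed by the standard compactness/contradiction argument. Either route yields \eqref{eq:EllipticRegularity}.

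\emph{Main obstacle.} I expect the hardest step to be the regularity of the domain $\Omega_\eta$. ADN $W^{2,2}$-estimates need a $C^{1,1}$ (or $W^{2,\infty}$) boundary, whereas Assumption \ref{A3} only gives $\eta\in C^1$. Because of this, the constant $C$ in \eqref{eq:EllipticRegularity} is stated to depend on $\Omega_\eta$ rather than on $\Vert\eta\Vert_{C^1}$ alone. I would first pull the problem back to the fixed reference domain $\Omega$ via the Hanzawa map $\bfPsi_\eta$, which for a strong solution is regular at each fixed time since $\eta(t)\in W^{5,2}(\omega)$ pointwise in time. On $\Omega$ the estimates hold for a variable-coefficient system, and I would keep track of how the constant depends on $\Vert\eta(t)\Vert_{W^{5/2,2}(\omega)}$. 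If a uniform-in-time constant is needed, I would try the multiplier-type estimates of \cite{breit2023ladyzhenskaya}, adapted to the Lamé operator, which work with $C^1$ boundaries plus a fractional-derivative control of $\eta$.
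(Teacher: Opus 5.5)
Your argument does not close. The obstruction is not the boundary regularity you single out; it is the structure of \eqref{eq:BVP} itself.

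\textbf{Step 2.} You treat the entry $-(2\mu+\lambda)^{-1}$ as lower order for the weights $\mathbf{s}=(0,0,0,-1)$, $\mathbf{t}=(2,2,2,1)$. Its order is $0=s_4+t_4$, so it is principal; the paper's $\mathcal{A}^{(0)}(\bm{\xi})$ contains it too. With it included, the principal symbol annihilates $\big(\bm{\xi},\, i(2\mu+\lambda)|\bm{\xi}|^2\big)$ for every $\bm{\xi}\neq\mathbf{0}_3$. Eliminating $\pi$ leaves $-\mu\big(|\bm{\xi}|^2\mathbb{I}_{3\times3}-\bm{\xi}\otimes\bm{\xi}\big)$, which is the symbol of $-\mu\operatorname{curl}\operatorname{curl}$. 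Since the full symbol of $\mathcal{A}$ has identically vanishing determinant, no admissible choice of weights makes the system Douglis--Nirenberg elliptic. The reason is simple: once $\mathfrak{F}=(2\mu+\lambda)\Div\bv-p(\rho)$ is inserted, the second row of \eqref{eq:BVP} is an identity. So \eqref{eq:BVP} is just the three momentum equations for four unknowns.

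\textbf{Step 3.} This step hits the same wall. The term $\mu\Vert\operatorname{curl}\operatorname{curl}\bv\Vert_{L^2(\Omega_\eta)}$ is top order with an $O(1)$ coefficient, so no interpolation or $\varepsilon$-argument absorbs it into $\Vert\bv\Vert_{W^{2,2}(\Omega_\eta)}$. The identity $\nabla\mathfrak{F}=\rho\dot\bv+\mu\operatorname{curl}\operatorname{curl}\bv$ is only the momentum equation rewritten, so the argument is circular. Separately, your bounds for the lower-order terms introduce $\Vert\rho\Vert_{L^\infty}^{\gamma}$ and $\Vert\nabla\bv\Vert_{L^2(\Omega_\eta)}$. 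Neither appears on the right-hand side of \eqref{eq:EllipticRegularity}.

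\textbf{The estimate itself fails.} Take $\phi\in C^\infty_c(\Omega_\eta)\setminus\{0\}$, $\bv=\nabla\phi$, and $p=c+(2\mu+\lambda)\Delta\phi$ with $c$ large enough that $p=a\rho^\gamma$ for a smooth positive $\rho$. Then $\mu\Delta\bv+(\mu+\lambda)\nabla\Div\bv-\nabla p=0$ and $\bv=0$ on $\partial\Omega_\eta$. This pair solves \eqref{eq:BVP} with $\rho\dot\bv=0$ and $\partial_t\eta=0$. The right-hand side of \eqref{eq:EllipticRegularity} therefore vanishes, while $\Vert\bv\Vert_{W^{2,2}(\Omega_\eta)}>0$. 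The pair $(\mathbf{0}_3,c)$ solves the same problem, so the uniqueness your compactness alternative needs also fails. Taking such data as initial data, with $\eta_0=\eta_*=0$, shows the failure persists for strong solutions at small times.

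\textbf{Comparison with the paper.} The paper bounds $\nabla\mathfrak{F}$ through the Neumann problem behind \eqref{eq:EstimateFlux}. It then invokes Douglis--Nirenberg theory with the constant entry kept in $\mathcal{A}^{(0)}$. It breaks at the same point, in three places:
\begin{itemize}
\item $\mathcal{A}^{(0)}(\bm{\xi})$ is not invertible, by the kernel vector above.
\item The reduced ODE \eqref{eq:SLODE} has the nontrivial decaying solution $\mathbf{w}=(i\xi_1\psi,i\xi_2\psi,\psi')$ with $\psi(x_3)=x_3^2e^{-x_3}$, and $\mathbf{w}(0)=\mathbf{0}_3$. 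So the Shapiro--Lopatinskii check fails.
\item The correct Neumann datum for $\mathfrak{F}$ is $(\rho\dot\bv+\mu\operatorname{curl}\operatorname{curl}\bv)\cdot\bn_\eta$, and its curl--curl part does not vanish under Dirichlet conditions.
\end{itemize}
Adopting the paper's route would therefore not rescue your proof.

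What does hold, for sufficiently regular $\partial\Omega_\eta$, is the Dirichlet Lam\'e estimate
$\Vert\bv\Vert_{W^{2,2}(\Omega_\eta)}\lesssim\Vert\rho\dot\bv\Vert_{L^2(\Omega_\eta)}+\Vert\nabla p\Vert_{L^2(\Omega_\eta)}+\Vert\partial_t\eta\Vert_{W^{3/2,2}(\omega)}$. Here $\nabla p$ must be controlled separately, through the continuity equation.
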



\begin{proof}\phantom{go to next line}\\
We start by observing that the momentum equation implies -- at the level of differential identities -- that, at each fixed time, the effective viscous flux ${\mathlarger{\mathfrak{F}}}  $ solves  a scalar elliptic problem of Poisson type. More precisely, taking the divergence of the momentum equation $\eqref{eq:ContMomentEq}_2 ,$ we obtain that ${\mathlarger{\mathfrak{F}}}  $ satisfies 
\[\Delta{\mathlarger{\mathfrak{F}}}   = \Div(\rho\dot{\bv}) \quad \text{in } \; \Omega_\eta, \]
with natural Neumann boundary condition 
\[\partial_{\bn_\eta}{\mathlarger{\mathfrak{F}}}  =  (\rho\dot{\bv})\cdot \bn_\eta                 \quad \text{on } \; \partial\Omega_\eta . \]  
Importantly, due to the interface condition \eqref{eq:interfaceCond},  
\[
\dot{\bv}  = \left(\partial^2_t \eta \bn \right)\circ \bm{\varphi}^{-1}_\eta  \qquad \text{ on } \;  \omega. 
\]

Since ${\mathlarger{\mathfrak{F}}}  $ is determined only up to an additive constant, we fix $ \int_{\Omega_\eta}{\mathlarger{\mathfrak{F}}} \dx = 0.  $ This renormalisation is completely harmless as the pressure $p $ is itself uniquely determined modulo constants. Thus, the weak formulation yields the following a priori estimate 
\begin{equation}\label{eq:EstimateFlux}
\Vert \nabla {\mathlarger{\mathfrak{F}}}   \Vert_{L^{2}(\Omega_\eta)} \leq C  \Vert \rho^{1/2} \dot{\bv}  \Vert_{L^{2}(\Omega_\eta)},
\end{equation}
for some constant $C = C\left(\mu, \lambda, \Omega_\eta, \Vert\rho\Vert_{L^{\infty}\left(I_* ; L^\infty( \Omega_\eta)\right) } \right) >0 .$\\

\noindent Let 
\[\mathcal{A}^{(0)}(\bm\xi)  := \begin{pmatrix}
-\mu|\bm{\xi}|^2 \mathbb{I}_{3\times3} - (\mu+ \lambda)\bm{\xi}\otimes\bm{\xi} &   & -i\bm{\xi}  \\\\
i\bm{\xi}^\intercal &  & -(2\mu + \lambda)^{-1}
\end{pmatrix}, \quad \bm{\xi} \in \R^3 ,\]
be the principal symbol of the operator $\mathcal{A}, $ with respect to the Douglis--Nirenberg (DN) weights $\mathbf{t} = (2, 1)$ for the unknowns and $\mathbf{s} = (0, -1)$ for the two interior equations.  We denote by 
\[\mathcal{B}^{(0)}(\bm{\xi})  := \begin{pmatrix}
 \mathbb{I}_{3\times3}  &   & \mathbf{0}_3 \end{pmatrix},\]
 the principal symbol of the operator $\mathcal{B} , $ with boundary weights $\mathbf{r} = (-2, -1). $\\

\medskip

\noindent
{\bf{Claim.}} The boundary value problem \eqref{eq:BVP} is DN elliptic in the sense that \\[-0.2cm]

\begin{itemize}

\item[(1)]  The principal symbol $\mathcal{A}^{(0)}(\bm{\xi}) $ is DN elliptic (injective for every $\bm{\xi} \neq \mathbf{0}_3$);  \\[-0.1cm]

\item[(2)] The boundary operator $\mathcal{B} $ satisfies the Shapiro--Lopatinskii (SL) condition. In the general form of the theory, the SL condition requires that, for each nonzero tangential frequency $\bm{\xi}' \in \R^2\setminus \{\mathbf{0}_2\} , $ the following initial value half space problem   
\begin{equation}\label{eq:SL}
\left\{\begin{aligned}
&\mathcal{A}^{(0)} (\bm{\xi}', -i\partial_{x_3}) (\mathbf{w}, \pi)^\intercal (x_3)
= 
0 & \text{ for }  x_3 > 0,\\[0.15cm]
&\mathcal{B}^{(0)}(\bm{\xi}', -i\partial_{x_3}) (\mathbf{w}, \pi)^\intercal (x_3) \Big|_{x_3 = 0}
= 
\mathbf{d},  & 
\end{aligned}\right.
\end{equation}
has a unique solution $(\mathbf{w}, \pi) \in {\mathlarger{\mathfrak{M}}}_+ $ for all $\mathbf{d} \in \mathbb{C}^3, $ where $\partial_{x_3} := \dfrac{\dd}{\dd x_3} $ and 
\[{\mathlarger{\mathfrak{M}}}_+ := \Big\{ (\mathbf{w}, \pi)  \colon \R \to \mathbb{C}^3 \times \mathbb{C}\;   \text{    which tends to zero as } \;  x_3 \to \infty                   \Big\}       .\]

\end{itemize}

\medskip

\noindent
Indeed, it is immediate that $\mathcal{A}^{(0)}(\bm{\xi}) $ is invertible for all $\bm{\xi} \neq \mathbf{0}_3 .$ Thus DN ellipticity holds for the interior operator. Moreover, using the second interior equation of the system \eqref{eq:SL} to eliminate the unknown $\pi, $ the SL condition reduces to the requirement that the ODE
\begin{equation}\label{eq:SLODE}
\left\{\begin{aligned}
&\mu\left(|\bm{\xi}'|^2  - \partial^{2}_{x_3} \right)\mathbf{w} - \mu (\bm{\xi}\otimes\bm{\xi}) \mathbf{w}
= 
0 & \text{ in }  \{ x_3 > 0\},\\[0.15cm]
&\mathbf{w}(\mathbf{0}_3) 
= 
\mathbf{d},  & 
\end{aligned}\right.
\end{equation}
with $\bm{\xi} = (\bm{\xi}' , -i\partial_{x_3})^{\intercal} = (\xi_1, \xi_2 , -i\partial_{x_3})^{\intercal} ,  $ has a unique solution which decays as $x_3 \to \infty. $ \\[-0.8em]

\medskip

\noindent 
Assume that  $(\mathbf{w}, \pi)$ is a solution of the homogeneous  initial value problem  \eqref{eq:SLODE}  on $\{x_3 > 0\}$ such that 
\[\mathbf{w}(\mathbf{0}_3) = 0, \quad (\mathbf{w},\pi)(x_3) \to 0 \;\; \text{ as } x_3 \to \infty. \]
we show in the sequel that necessarily $\mathbf{w} \equiv \mathbf{0}_3 $ and $ \pi \equiv 0. $ 

Indeed, multiplying \eqref{eq:SLODE} by $\overline{\mathbf{w}} = (\overline{w}_1, \overline{w}_2, \overline{w}_3) $ (complex conjugate) and  integrating in $x_3 \in (0, \infty) , $ we obtain  -- thanks to the decay at $\infty $ and the homogeneous boundary condition,
\[\mu \int_{0}^{\infty} \left( |\partial_{x_3}\mathbf{w}|^2 + |\bm{\xi}'|^2|\mathbf{w}|^2      \right)\dd x_3 - \mu \int_{0}^{\infty} \left( \xi_1^2 |w_1|^2 + \xi_2^2 |w_2|^2  + 2\xi_2 \xi_1 \text{Re}(\overline{w}_2 w_1)  - \partial^2_3 w_3 \overline{w}_3      \right) \dd x_3                       =0             , \]
where  $\text{Re}(z) $ denotes the real part of  $z \in \mathbb{C}. $
However, using the inequality 
\[2\xi_2 \xi_1 \text{Re}(\overline{w}_2 w_1) \leq \xi_2^2 |w_1|^2    + \xi_1^2 |w_2|^2 , \]
it follows that 
\begin{align*}
&\mu \int_{0}^{\infty} \left( |\partial_{x_3}\mathbf{w}|^2 + |\bm{\xi}'|^2|\mathbf{w}|^2      \right)\dd x_3 - \mu \int_{0}^{\infty} \left( \xi_1^2 |w_1|^2 + \xi_2^2 |w_2|^2  + 2\xi_2 \xi_1 \text{Re}(\overline{w}_2 w_1)  - \partial^2_3 w_3 \overline{w}_3      \right) \dd x_3 
\\
&\quad \geq \mu  \int_{0}^{\infty} \left(  |\partial_{x_3}w_1|^2 + |\partial_{x_3}w_2|^2 + |\bm{\xi}'|^2 |w_3|^2   \right) \dd x_3
\\
&\quad \geq 0. 
\end{align*}
Thus, $w_3 = \partial_{x_3}w_1 = \partial_{x_3}w_2 =   0 . $ The boundary condition then implies 
\[\mathbf{w} \equiv \mathbf{0}_3, \quad \text{hence} \;\; \pi \equiv 0.  \]
The homogeneous half-space initial value problem has no nontrivial decaying solution and consequently the SL condition \eqref{eq:SL} holds. 

Since the boundary value problem \eqref{eq:BVP} is DN elliptic, it follows from \cite[Section 4, Theorem 4.1]{krupchyk2006sharpiro}
that the associated boundary value operator 
\[\mathcal{O} \colon  W^{2,2}(\Omega_\eta) \times W^{1,2}(\Omega_\eta) \to L^{2}(\Omega_\eta) \times W^{1,2}(\Omega_\eta) \times W^{3/2,2}(\partial\Omega_\eta); \quad \mathcal{O} = (\mathcal{A}, \mathcal{B}) ,  \]
is a  Fredholm operator and that the following  a priori estimate holds  
\begin{equation}\label{eq:VPclosed}
\Vert \bv \Vert_{W^{2, 2}(\Omega_\eta)} +  \Vert \nabla p\Vert_{L^2(\Omega_\eta)}  \leq C \left(  \Vert \rho^{1/2}\dot{\bv} \Vert_{L^{ 2}(\Omega_\eta)}      +  \Vert \nabla{\mathlarger{\mathfrak{F}}} \Vert_{L^2(\Omega_\eta)}   + \Vert \partial_t \eta \circ\bm{\varphi}_{\eta}^{-1}  \Vert_{W^{3/2,2}(\partial\Omega_\eta)} \right),
\end{equation}
for  some constant $C = C\left(\mu, \lambda, \Omega_\eta, \Vert\rho\Vert_{L^{\infty}\left(I_* ; L^\infty( \Omega_\eta)\right) } \right) >0 .$\\
Combining \eqref{eq:VPclosed} and   \eqref{eq:EstimateFlux} with the boundary estimate 
\[\Vert \partial_t \eta \circ\bm{\varphi}_{\eta}^{-1}  \Vert_{W^{3/2,2}(\partial\Omega_\eta)} \lesssim \Vert \partial_t \eta  \Vert_{W^{3/2,2}(\omega)} \]
yields the desired inequality and completes the proof.

\end{proof}



\section{Proof of  \cref{theo:MainResult2} } \label{sec:Proof2}

The proof proceeds through  a sequence of conditional a priori estimates for the density $\rho, $ the structural displacement $\eta $ and the fluid velocity $\bv. $ None of these estimates is closed on its own; each depends on quantities that are estimated in subsequent parts of the argument.  \\
For later use, we first recall  the maximal regularity estimate associated with the shell equation \eqref{eq:ShellEq}. 
\begin{remark}\label{rem:HigherOrderEta}
Rewriting \eqref{eq:ShellEq} as a first-order in time evolution equation and  using the invariance of maximal $L^2-$regularity under shifts of the fractional domain scale   (cf. e.g.,  \cite[Section 2, Theorem 2.2 ]{denk2007optimal}) we obtain  $\forall \, \sigma \in [0, 2]$
\begin{equation}\label{eq:MRforLemma41prior}
\begin{aligned}
& \int_{I_*} \left( \Vert \partial_t^2 \eta \Vert_{W^{\sigma, 2}(\omega)}^2 + \Vert \partial_t \eta \Vert_{W^{\sigma + 2, 2}(\omega)}^2 + \Vert  \eta \Vert_{W^{\sigma + 4, 2}(\omega)}^2  \right) \dt 
\\[0.4em]
& \quad \lesssim \int_{I_*} \left( \Vert \bv \Vert_{W^{\sigma + 3/2,2}(\Omega_\eta)}^2 + \Vert p \Vert_{W^{\sigma + 1/2,2}(\Omega_\eta)}^2 \right) \dt + \Vert \eta_0 \Vert_{W^{5,2}(\omega)}^2 + \Vert \eta_* \Vert_{W^{3,2}(\omega)}^2.
\end{aligned}
\end{equation}
Moreover, using the continuous embedding (cf.  \cite[Section 3, Theorem 3.1]{LionsMagenes1})
\begin{equation}\label{eq:GeneralEmbedding}
W^{1,2}\left(I_*; W^{\sigma, 2}(\omega) \right) \cap  L^{2}\left( I_*; W^{\theta, 2}(\omega)  \right) \hookrightarrow  C\left(\overline{I_*}; W^{(\sigma+\theta)/2, 2}(\omega)\right) \qquad \forall \, 0 < \sigma < \theta, 
\end{equation}
we deduce from \eqref{eq:MRforLemma41prior} that 
\begin{equation}\label{eq:MRforLemma41}
\begin{aligned}
& \int_{I_*} \left( \Vert \partial_t^2 \eta \Vert_{W^{\sigma, 2}(\omega)}^2 + \Vert \partial_t \eta \Vert_{W^{\sigma+2, 2}(\omega)}^2 + \Vert  \eta \Vert_{W^{\sigma + 4, 2}(\omega)}^2  \right) \dt  +  \sup\limits_{I_*} \left(  \Vert \partial_t \eta \Vert_{W^{\sigma + 1, 2}(\omega)}^2  + \Vert \eta \Vert_{W^{\sigma + 3, 2}(\omega)}^2 \right)
\\[0.4em]
& \quad \lesssim \int_{I_*} \left( \Vert \bv \Vert_{W^{\sigma +3/2,2}(\Omega_\eta)}^2 + \Vert p \Vert_{W^{\sigma +1/2,2}(\Omega_\eta)}^2 \right) \dt + \Vert \eta_0 \Vert_{W^{5,2}(\omega)}^2 + \Vert \eta_* \Vert_{W^{3,2}(\omega)}^2.
\end{aligned}
\end{equation}
\end{remark}
We now proceed with the conditional higher-order estimate for the density. \\


\medskip

\noindent {\textbf{Step 1: Conditional estimate for the density.} } \\
Consider the continuity equation $\eqref{eq:ContMomentEq}_1, $ that is, 
\[
\partial_t\rho + \nabla\rho\cdot \bv + \rho\Div\bv = 0 .
\] 
We have  $\forall \, \mathtt{k}  \in \{1,2\}$
\begin{align*}
\Vert \partial_t \rho \Vert_{W^{ \mathtt{k},2}(\Omega_\eta)}^2 &\lesssim \Vert \Div(\rho \bv) \Vert_{W^{ \mathtt{k},2}(\Omega_\eta)}^2 
\\
& \lesssim \Vert \rho \Vert_{W^{ \mathtt{k} + 1,2}(\Omega_\eta)}^2 \Vert \bv \Vert_{W^{ \mathtt{k} + 1,2}(\Omega_\eta)}^2. 
\end{align*}
Hence, 
\begin{equation} \label{eq:DensityTime1}
\sup\limits_{I_*} \Vert \partial_t \rho \Vert_{W^{ \mathtt{k}, 2}(\Omega_\eta)}^2 \lesssim \sup\limits_{I_*} \Vert \rho \Vert_{W^{ \mathtt{k} + 1,2}(\Omega_\eta)}^2 \sup\limits_{I_*}\Vert \bv \Vert_{W^{ \mathtt{k} + 1,2}(\Omega_\eta)}^2.
\end{equation}
However, from the continuous embedding  
\[
W^{1,2}\big(I_*; W^{\mathtt{k}, 2}(\Omega_\eta)\big) \cap L^2\big( I_*; W^{ \mathtt{k} + 2,2}(\Omega_\eta)   \big)  \hookrightarrow  C\left(\overline{I_*}; W^{ \mathtt{k} +1 ,2}(\Omega_\eta) \right),
\] 
the estimate \eqref{eq:DensityTime1} becomes 
\begin{equation}\label{eq:DensityTime2}
\sup\limits_{I_*} \Vert \partial_t \rho \Vert_{W^{ \mathtt{k},2}(\Omega_\eta)}^2 \lesssim \sup\limits_{I_*} \Vert \rho \Vert_{W^{ \mathtt{k} +1,2}(\Omega_\eta)}^2 \left(  \Vert \bv \Vert_{L^{2}\left( I_*; W^{ \mathtt{k} +2, 2}(\Omega_\eta)\right)}^2 + \Vert \partial_t \bv \Vert_{L^{2}\left( I_*; W^{\mathtt{k}, 2}(\Omega_\eta)\right)}^2     \right).
\end{equation}
Recall from the method of characteristics,  the identity
\[\rho(t,\bx) =  \rho_0\big(\mathbf{\Phi}(0,t,\bx)\big) \exp{ \bigg( -\int_0^t \Div\bv\big(s, \mathbf{\Phi}(s,t,\bx)\big)\dd s \bigg)} \qquad \forall\; \bx \in \Omega_\eta.\]  
Moreover,  $\forall\; t \in I_*, $ the pointwise estimate 
\begin{equation}\label{eq:DensityP}
\Vert \rho  \Div\bv \Vert_{W^{ \mathtt{k} +1, 2}(\Omega_\eta)} \lesssim \Vert \Div\bv \Vert_{W^{ \mathtt{k} + 1,2}(\Omega_\eta)} \Vert \rho \Vert_{W^{ \mathtt{k} +1, 2}(\Omega_\eta)}
\end{equation}
holds, and,  by  the regularity of the fluid velocity $\bv$, we have  $\Vert \Div\bv \Vert_{W^{ \mathtt{k} +1, 2}(\Omega_\eta)}  \in L^1(I_*). $  Thus, applying   \cite[Chapter 3, Theorem 3.14 \& Remark 3.17]{BahouriCheminDanchin2011}, we deduce that, for some constant $c > 0 $ independent of time,  
\begin{equation}\label{eq:Density1}
\sup\limits_{I_*} \Vert \rho \Vert_{W^{\mathtt{k} + 1,2}(\Omega_\eta)}^2 \lesssim \Vert \rho_0 \Vert_{W^{\mathtt{k} +1,2}(\Omega_\eta)}^2 \exp{\bigg( c\int_{I_*} \Vert \nabla\bv \Vert_{W^{\mathtt{k} +1 ,2}(\Omega_\eta)} \dt  \bigg)}.
\end{equation}
Therefore, combining \eqref{eq:DensityTime2} and \eqref{eq:Density1} yields the conditional estimate
\begin{equation}\label{eq:DensityCond}
\begin{aligned}
&\sup\limits_{I_*} \big( \Vert \partial_t \rho \Vert_{W^{\mathtt{k}, 2}(\Omega_\eta)}^2  + \Vert \rho \Vert_{W^{\mathtt{k} +1,2}(\Omega_\eta)}^2 \big) 
\\[0.4em]
&\quad  \lesssim \Vert \rho_0 \Vert_{W^{\mathtt{k} +1,2}(\Omega_\eta)}^2 \!\!\left(  1 +   \Vert \bv \Vert_{L^{2}\left( I_*; W^{\mathtt{k} +2, 2}(\Omega_\eta)\right)}^2 + \Vert \partial_t\bv \Vert_{L^{2}\left( I_*; W^{\mathtt{k}, 2}(\Omega_\eta)\right)}^2     \!\right) \exp{\bigg(\! c\int_{I_*}\! \Vert \nabla\bv \Vert_{W^{\mathtt{k} + 1,2}(\Omega_\eta)} \dt  \!\bigg)}.
\end{aligned}
\end{equation}

\begin{remark}\label{rem:UniformBoundDensity}
By \eqref{eq:Density1}, it follows from H\"older's inequality that , 
\begin{equation}\label{eq:Density1Final}
\sup\limits_{I_*} \Vert \rho \Vert_{W^{\mathtt{k} +1,2}(\Omega_\eta)}^2 \lesssim \Vert \rho_0 \Vert_{W^{\mathtt{k}  +1,2}(\Omega_\eta)}^2 \exp{ \left( c\sqrt{T_*} \bigg( \int_{I_*} \Vert \bv \Vert_{W^{\mathtt{k} +2,2}(\Omega_\eta)}^2 \dt  \bigg)^{1/2} \right) }.
\end{equation}
\end{remark} 
In particular, for $\mathtt{k} = 2$, \eqref{eq:Density1Final} yields an exponential dependence on $\Vert \bv \Vert_{L^{2}\left( I_*; W^{4,2}(\Omega_\eta)  \right)}$ -- a quantity  to be controlled. Such  a dependence is not suitable for the subsequent analysis, since it prevents closing the estimates. This therefore, motivates the derivation of an alternative bound.  
\begin{proposition}\label{prop:DensityW32BoundV}
It holds that 
\begin{equation}\label{eq:eq:Density1Finalk2-New}
\begin{aligned}
\sup\limits_{I_*} \Vert \rho \Vert_{W^{3,2}(\Omega_\eta)}^2   & \lesssim \Vert \rho_0 \Vert_{W^{3,2}(\Omega_\eta)}^2 + \kappa \int_{I_*} \Vert \bv \Vert_{W^{4,2}(\Omega_\eta)}^2 \dt   + c(\kappa)\int_{I_*}\Vert \rho \Vert_{L^{\infty}(\Omega_\eta)}^2 
\Vert \rho \Vert_{W^{3,2}(\Omega_\eta)}^2  \dt
\\[0.4em]
& \quad +  c(\kappa) \left(   \int_{I_*} \Vert \nabla\bv \Vert_{L^{\infty}(\Omega_\eta)}^2  \Vert \rho \Vert_{W^{3,2}(\Omega_\eta)}^2  \dt 
+  \int_{I_*} \Vert \nabla\rho \Vert_{L^{\infty}(\Omega_\eta)}^2   \Vert \bv \Vert_{W^{3,2}(\Omega_\eta)}^2   
 \dt \right),
\end{aligned}
\end{equation}
for arbitrary $\kappa > 0$.

\end{proposition}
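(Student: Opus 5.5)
We will prove this with a direct $W^{3,2}$ energy estimate for the transported density, instead of the characteristics/exponential bound in \eqref{eq:Density1}. Write the continuity equation as $\partial_t\rho+\bv\cdot\nabla\rho=-\rho\Div\bv$. Apply $\partial^\alpha$ with $|\alpha|\le 3$, multiply by $\partial^\alpha\rho$, integrate over $\Omega_\eta$ and sum. On the moving domain we use Reynolds' transport theorem. The domain moves with normal velocity $(\partial_t\eta\bn)\circ\bm{\varphi}_\eta^{-1}\cdot\bn_\eta=\bv\cdot\bn_\eta$ by \eqref{eq:interfaceCond}. Hence the boundary flux from $\tfrac{\dd}{\dt}\int_{\Omega_\eta}|\partial^\alpha\rho|^2$ cancels exactly against the boundary term obtained by integrating $\int\bv\cdot\nabla\tfrac12|\partial^\alpha\rho|^2$ by parts. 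What remains is
\[
\frac12\frac{\dd}{\dt}\Vert\partial^\alpha\rho\Vert_{L^2}^2=\frac12\int_{\Omega_\eta}\Div\bv\,|\partial^\alpha\rho|^2\dx-\int_{\Omega_\eta}[\partial^\alpha,\bv\cdot\nabla]\rho\,\partial^\alpha\rho\dx-\int_{\Omega_\eta}\partial^\alpha(\rho\Div\bv)\,\partial^\alpha\rho\dx .
\]
This is the key structural point: $\nabla^4\rho$ never appears, because the top-order transport term is absorbed by the divergence trick.

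Next we estimate the three contributions with the Kato--Ponce/Moser commutator and product estimates on $\Omega_\eta$. These are valid uniformly in time, since by \ref{A3} the domain is uniformly Lipschitz and we may work through the Hanzawa transform or a fixed extension operator. The commutator obeys
\[
\Vert[\partial^\alpha,\bv\cdot\nabla]\rho\Vert_{L^2}\lesssim\Vert\nabla\bv\Vert_{L^\infty}\Vert\rho\Vert_{W^{3,2}}+\Vert\nabla\rho\Vert_{L^\infty}\Vert\bv\Vert_{W^{3,2}} .
\]
For the product term we split $\partial^\alpha(\rho\Div\bv)=\rho\,\partial^\alpha\Div\bv+(\text{lower-order products})$. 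The lower-order products are bounded by $\Vert\Div\bv\Vert_{L^\infty}\Vert\rho\Vert_{W^{3,2}}+\Vert\nabla\rho\Vert_{L^\infty}\Vert\bv\Vert_{W^{3,2}}$ via Moser estimates. The top piece $\rho\,\partial^\alpha\Div\bv$ contains four derivatives of $\bv$. We bound it by $\Vert\rho\Vert_{L^\infty}\Vert\bv\Vert_{W^{4,2}}\Vert\rho\Vert_{W^{3,2}}$ and apply Young's inequality, which gives $\kappa\Vert\bv\Vert_{W^{4,2}}^2+c(\kappa)\Vert\rho\Vert_{L^\infty}^2\Vert\rho\Vert_{W^{3,2}}^2$. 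This produces precisely the $\kappa$-term and the $\Vert\rho\Vert_{L^\infty}^2\Vert\rho\Vert_{W^{3,2}}^2$ term in the statement.

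The remaining products $\Vert\nabla\bv\Vert_{L^\infty}\Vert\rho\Vert_{W^{3,2}}^2$ and $\Vert\nabla\rho\Vert_{L^\infty}\Vert\bv\Vert_{W^{3,2}}\Vert\rho\Vert_{W^{3,2}}$ are handled with Young's inequality. We convert them into $\Vert\nabla\bv\Vert_{L^\infty}^2\Vert\rho\Vert_{W^{3,2}}^2$ and $\Vert\nabla\rho\Vert_{L^\infty}^2\Vert\bv\Vert_{W^{3,2}}^2$, plus $\Vert\rho\Vert_{W^{3,2}}^2$. The leftover $\Vert\rho\Vert_{W^{3,2}}^2$ is subsumed into the $\Vert\rho\Vert_{L^\infty}^2\Vert\rho\Vert_{W^{3,2}}^2$ term by the lower bound on the density, or else absorbed into the constant. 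The lower-order norms $\Vert\rho\Vert_{W^{k,2}}$, $k<3$, are treated identically. Integrating in time and taking the supremum over $I_*$ gives \eqref{eq:eq:Density1Finalk2-New}.

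The main obstacle is the rigorous justification of the top-order identity. For a strong solution only $\rho\in L^\infty(W^{3,2})$ holds, so $\bv\cdot\nabla\partial^\alpha\rho$ is not a priori in $L^2$. We first plan to mollify, in the reference configuration after the Hanzawa transform, and pass to the limit using the Friedrichs commutator lemma. Alternatively, we use \cite[Chapter 3, Theorem 3.14]{BahouriCheminDanchin2011} in its energy form, which is exactly this commutator estimate. The second delicate point is to check that the Reynolds boundary term cancels with the correct sign, using the kinematic condition on the moving boundary.
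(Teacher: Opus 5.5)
Your proposal is correct and follows essentially the paper's route. You apply $\partial^\alpha$ ($|\alpha|\le 3$) to the continuity equation, test with $\partial^\alpha\rho$ and use Reynolds' transport theorem, so that only $-\tfrac12\int|\partial^\alpha\rho|^2\Div\bv$ and the commutator terms remain. You then bound the commutators by $\Vert\nabla\bv\Vert_{L^\infty}\Vert\rho\Vert_{W^{3,2}}+\Vert\nabla\rho\Vert_{L^\infty}\Vert\bv\Vert_{W^{3,2}}$ plus the top-order piece $\Vert\rho\Vert_{L^\infty}\Vert\bv\Vert_{W^{4,2}}$, and close with Young's inequality; the paper obtains the same commutator bounds by an explicit Leibniz case analysis in $|\beta|$ with the interpolation $W^{1,4}=[L^\infty,W^{2,2}]_{1/2,4}$ rather than citing Kato--Ponce.

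Your extra remarks fill in points the paper passes over silently: the cancellation of boundary fluxes via the kinematic condition, the regularisation needed to justify the top-order energy identity, and the disposal of the leftover $\int_{I_*}\Vert\rho\Vert_{W^{3,2}}^2\dt$ from Young's inequality through a positive lower bound on $\Vert\rho\Vert_{L^\infty}$.
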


\begin{proof}
Consider the continuity equation $\eqref{eq:ContMomentEq}_1$,  that is, 
\begin{equation*}
\partial_t\rho + \nabla\rho\cdot \bv + \rho\Div\bv = 0.
\end{equation*}
Taking the spatial derivative $\partial^{\bm{\alpha}}_\bx$ of $\eqref{eq:ContMomentEq}_1$ for all multi-index $\bm{\alpha} = (\alpha_1, \alpha_2, \alpha_3) \in \mathbb{N}^3$ such that \phantom{such that } $| \bm{\alpha} | := \alpha_1 + \alpha_2 + \alpha_3 \leq 3$, we obtain 
\begin{equation}\label{eq:ContEqAlphaD}
\partial_t \partial^{\bm{\alpha}}_\bx \rho + \left(\bv\cdot \nabla \right)\partial^{\bm{\alpha}}_\bx \rho = - \left(\partial^{\bm{\alpha}}_\bx \rho \right)\Div\bv +  {\mathlarger{\mathtt{C}_1 }}  +  {\mathlarger{\mathtt{C}_2 }},
\end{equation}
with commutators 
\[
 {\mathlarger{\mathtt{C}_1 }}  = \left(\bv\cdot \nabla \right)\partial^{\bm{\alpha}}_\bx \rho - \partial^{\bm{\alpha}}_\bx \big( (\bv \cdot\nabla)\rho \big) \qquad \text{and } \quad  {\mathlarger{\mathtt{C}_2 }}  =  \left(\partial^{\bm{\alpha}}_\bx \rho \right)\Div\bv - \partial^{\bm{\alpha}}_\bx  \big( \rho \Div\bv \big).
\]
Testing  \eqref{eq:ContEqAlphaD} with $\partial^{\bm{\alpha}}_\bx \rho$ and using Reynold's transport theorem, we obtain for all  $ t \in I_*$
\begin{equation}\label{eq:ContEqTested}
\dfrac{1}{2} \Vert \partial^{\bm{\alpha}}_\bx \rho(t) \Vert_{L^2(\Omega_\eta)}^2 = \dfrac{1}{2} \Vert \partial^{\bm{\alpha}}_\bx \rho_0 \Vert_{L^2(\Omega_{\eta_0})}^2  - \dfrac{1}{2}  \int_0^t \int_{\Omega_\eta} |\partial^{\bm{\alpha}}_\bx \rho|^2 \Div\bv \dx\ds +  \int_0^t \int_{\Omega_\eta} \left({\mathlarger{\mathtt{C}_1 }}  + {\mathlarger{\mathtt{C}_2 }}  \right) \partial^{\bm{\alpha}}_\bx \rho \dx\ds.
\end{equation}
However, by the Leibniz rule, 
\[
{\mathlarger{\mathtt{C}_1 }} = - {\mathlarger{ \sum\limits_{\bm{0} < \bm{\beta} \leq \bm{\alpha}} } } \binom{\bm{\alpha}}{\bm{\beta}}  \partial^{\bm{\beta}}_\bx \bv \cdot \nabla  \partial^{\bm{\alpha} - \bm{\beta}}_\bx \rho \quad \text{ and }  \quad  {\mathlarger{\mathtt{C}_2 }} = - {\mathlarger{ \sum\limits_{\bm{0} < \bm{\beta} \leq \bm{\alpha}} } } \binom{\bm{\alpha}}{\bm{\beta}} \left( \partial^{\bm{\beta}}_\bx \Div\bv \right) \partial^{\bm{\alpha} - \bm{\beta}}_\bx \rho .
\]
We now estimate each summand according to the value of $|\bm{\beta}|$.  If $|\bm{\beta}| = 3$, then necessarily  $\bm{\alpha} - \bm{\beta} = \bm{0}$. Thus, 
\begin{equation}\label{eq:betacase1}
\begin{aligned}
\Vert \partial^{\bm{\beta}}_\bx \bv \cdot \nabla \rho \Vert_{L^2(\Omega_\eta)} & \leq \Vert  \bv \Vert_{W^{3,2}(\Omega_\eta)} \Vert  \nabla \rho \Vert_{L^\infty(\Omega_\eta)} ,
\\[0.4em]
\Vert \left( \partial^{\bm{\beta}}_\bx \Div\bv\right)  \rho \Vert_{L^2(\Omega_\eta)} & \leq \Vert \bv \Vert_{W^{4,2}(\Omega_\eta)} \Vert   \rho \Vert_{L^\infty(\Omega_\eta)} .
\end{aligned}
\end{equation}
If $|\bm{\beta}| = 1$, then ${\bm{\alpha} - \bm{\beta}} $ has order at most 2, whence 
\begin{equation*}
\begin{aligned}
\Vert \partial^{\bm{\beta}}_\bx \bv \cdot \nabla \partial^{\bm{\alpha} - \bm{\beta}}_\bx \rho \Vert_{L^2(\Omega_\eta)}  & \leq \Vert \nabla \bv \Vert_{L^\infty(\Omega_\eta)} \Vert  \rho \Vert_{W^{3,2}(\Omega_\eta)} ,
\\[0.4em]
\Vert \left( \partial^{\bm{\beta}}_\bx \Div\bv\right)  \partial^{\bm{\alpha} - \bm{\beta}}_\bx \rho \Vert_{L^2(\Omega_\eta)}  & \leq  \Vert \nabla^2 \bv \Vert_{L^4(\Omega_\eta)} \Vert  \nabla^2\rho \Vert_{L^{4}(\Omega_\eta)} .
\end{aligned}
\end{equation*}
Since 
\begin{equation}\label{eq:W14LW22}
W^{1,4}(\Omega_\eta) = \Big[ L^\infty(\Omega_\eta), W^{2,2}(\Omega_\eta) \Big]_{1/2, 4}, 
\end{equation}
we derive from Young's inequality that 
\begin{equation}\label{eq:betacase2}
\begin{aligned}
\Vert \partial^{\bm{\beta}}_\bx \bv \cdot \nabla \partial^{\bm{\alpha} - \bm{\beta}}_\bx \rho \Vert_{L^2(\Omega_\eta)}  & \leq \Vert \nabla \bv \Vert_{L^\infty(\Omega_\eta)} \Vert  \rho \Vert_{W^{3,2}(\Omega_\eta)} ,
\\[0.4em]
\Vert \left( \partial^{\bm{\beta}}_\bx \Div\bv\right)  \partial^{\bm{\alpha} - \bm{\beta}}_\bx \rho \Vert_{L^2(\Omega_\eta)}  & \leq  \Vert  \bv \Vert_{W^{3,2}(\Omega_\eta)} \Vert  \nabla \rho \Vert_{L^{\infty}(\Omega_\eta)}  +  \Vert  \rho \Vert_{W^{3,2}(\Omega_\eta)} \Vert  \nabla \bv \Vert_{L^{\infty}(\Omega_\eta)}  . 
\end{aligned}
\end{equation}
Finally, if $|\bm{\beta}| = 2$, then $ {\bm{\alpha} - \bm{\beta}}  $ has order at most 1. In this case,  using H\"older's inequality and \eqref{eq:W14LW22}, we obtain  
\begin{equation}\label{eq:betacase3}
\begin{aligned}
\Vert \partial^{\bm{\beta}}_\bx \bv \cdot \nabla \partial^{\bm{\alpha} - \bm{\beta}}_\bx \rho \Vert_{L^2(\Omega_\eta)}  & \leq \Vert  \bv \Vert_{W^{3,2}(\Omega_\eta)} \Vert  \nabla \rho \Vert_{L^{\infty}(\Omega_\eta)}  +  \Vert  \rho \Vert_{W^{3,2}(\Omega_\eta)} \Vert  \nabla \bv \Vert_{L^{\infty}(\Omega_\eta)}  ,
\\[0.4em]
\Vert \left( \partial^{\bm{\beta}}_\bx \Div\bv\right)  \partial^{\bm{\alpha} - \bm{\beta}}_\bx \rho \Vert_{L^2(\Omega_\eta)}  & \leq  \Vert  \bv \Vert_{W^{3,2}(\Omega_\eta)} \Vert  \nabla \rho \Vert_{L^{\infty}(\Omega_\eta)}   . 
\end{aligned}
\end{equation}
Combining \eqref{eq:betacase1}--\eqref{eq:betacase3} and summing over all $\bm{0} < \bm{\beta} \leq \bm{\alpha}$, we conclude that 
\begin{subequations}\label{eq:CommutatorEstim12}
\begin{equation}\label{eq:Commutator1Estim}
\Vert {\mathlarger{\mathtt{C}_1 }}  \Vert_{L^2(\Omega_\eta)} \lesssim \Vert  \bv \Vert_{W^{3,2}(\Omega_\eta)} \Vert  \nabla \rho \Vert_{L^{\infty}(\Omega_\eta)}  +  \Vert  \rho \Vert_{W^{3,2}(\Omega_\eta)} \Vert  \nabla \bv \Vert_{L^{\infty}(\Omega_\eta)} ,
\end{equation}
\\[-0.75cm]
\begin{equation}\label{eq:Commutator2Estim}
\Vert {\mathlarger{\mathtt{C}_2 }}  \Vert_{L^2(\Omega_\eta)} \lesssim \Vert \bv \Vert_{W^{4,2}(\Omega_\eta)} \Vert   \rho \Vert_{L^\infty(\Omega_\eta)}    + \Vert  \bv \Vert_{W^{3,2}(\Omega_\eta)} \Vert  \nabla \rho \Vert_{L^{\infty}(\Omega_\eta)}  +  \Vert  \rho \Vert_{W^{3,2}(\Omega_\eta)} \Vert  \nabla \bv \Vert_{L^{\infty}(\Omega_\eta)}.
\end{equation}
\end{subequations}
Therefore, applying H\"older's inequality to \eqref{eq:ContEqTested} and using  \eqref{eq:CommutatorEstim12}, we deduce that  
\begin{equation}
\begin{aligned}
\sup\limits_{I_*} \Vert \partial^{\bm{\alpha}}_\bx \rho(t) \Vert_{L^2(\Omega_\eta)}^2 & \lesssim  \Vert \partial^{\bm{\alpha}}_\bx \rho_0 \Vert_{L^2(\Omega_{\eta_0})}^2 + \int_{I_*}  \Vert  \bv \Vert_{W^{3,2}(\Omega_\eta)} \Vert  \nabla \rho \Vert_{L^{\infty}(\Omega_\eta)}  \Vert \rho \Vert_{W^{|\bm{\alpha}|, 2}(\Omega_\eta)} \dt 
\\[0.4em]
& \quad +  \int_{I_*}  \Big( \Vert  \rho \Vert_{W^{3,2}(\Omega_\eta)} \Vert  \nabla \bv \Vert_{L^{\infty}(\Omega_\eta)}  + \Vert \bv \Vert_{W^{4, 2}(\Omega_\eta)} \Vert \rho \Vert_{L^{\infty}(\Omega_\eta)} \Big) \Vert \rho \Vert_{W^{|\bm{\alpha}|, 2}(\Omega_\eta)} \dt .
\end{aligned}
\end{equation}
Summing over all $|\bm{\alpha}| \leq 3 $ and using Young's inequality yields the desired result. \\
\end{proof}



\medskip

\noindent {\textbf{Step 2: Conditional higher-order estimates for the displacement} $\eta .$ } \\
Since the viscoelastic shell equation \eqref{eq:ShellEq} contains operators of different differential order in space and in time, a single testing procedure cannot recover all components of the desired regularity asserted in  \cref{theo:Existenceresult}.  However, by \cref{rem:HigherOrderEta}, the  higher-order spatial derivatives of the displacement $\eta$ are already controlled (conditionally),  whereas control of the  corresponding  higher-order time derivatives remains to be established.  \\
For this purpose, we first decompose the time derivative of the fluid--structure coupling term ${\mathlarger{\mathtt{F}}}_\eta $ as
\[
\sum\limits_{\mathsf{k} =1 }^{4} {\mathlarger{\mathtt{F}}}^{(\mathsf{k})}_\eta  := \partial_t {\mathlarger{\mathtt{F}}}_\eta,  
\]
where
\begin{align*}
{\mathlarger{\mathtt{F}}}^{(1)}_\eta
  &= \bn^\intercal \bigl( \partial_t\bm{\tau}\,\bn_\eta \bigr)\circ\bm{\varphi}_\eta
     \,\det(\naby \bm{\varphi}_{\eta}),
&
{\mathlarger{\mathtt{F}}}^{(2)}_\eta
  &= \bn^\intercal \Bigl[ \bigl( (\nabla\bm{\tau})\circ\bm{\varphi}_\eta\,
       \partial_t\bm{\varphi}_\eta \bigr)
       \,\bn_\eta\circ\bm{\varphi}_\eta \Bigr]
     \,\det(\naby \bm{\varphi}_{\eta}),
\\[0.5em]
{\mathlarger{\mathtt{F}}}^{(3)}_\eta
  &= \bn^\intercal \Bigl[ \bm{\tau}\circ\bm{\varphi}_\eta\,
       \partial_t\bigl( \bn_\eta\circ\bm{\varphi}_\eta\bigr) \Bigr]
     \,\det(\naby \bm{\varphi}_{\eta}),
&
{\mathlarger{\mathtt{F}}}^{(4)}_\eta
  &= \bn^\intercal \bigl(\bm{\tau}\,\bn_\eta\bigr)\circ \bm{\varphi}_\eta\,
     \partial_t \det(\naby \bm{\varphi}_{\eta}).
\end{align*}
Differentiating the shell equation \eqref{eq:ShellEq} and the momentum equation $\eqref{eq:ContMomentEq}_2$  with respect to time yields
\begin{subequations}\label{eq:ShellMomEqTime}
\begin{align}
\partial_t^3 \eta
  - \partial_t^2 \Dely \eta
  + \partial_t \Dely^2 \eta
  &= - \partial_t {\mathlarger{\mathtt{F}}}_\eta
  &&\text{in } I \times \omega, \label{eq:ShellEqTime}
\\[0.4em]
(\partial_t \rho) \dot{\bv}
  + \rho\!\left(
      \partial_t^2 \bv
      + \partial_t\bv \cdot \nabla\bv
      + \bv \cdot \partial_t\nabla\bv
    \right)
  &= \partial_t (\Div \bm{\tau})
  &&\text{in } I \times \Omega_\eta .  \label{eq:ContMomentEqTime}
\end{align}
\end{subequations}
In the sequel, we use \eqref{eq:ShellMomEqTime} to obtain energy-type estimates for  higher-order time derivatives of $\eta$ and $\bv$.

\medskip 

\indent {\textbf{Step 2a:}  Testing \eqref{eq:ShellEqTime} and \eqref{eq:ContMomentEqTime} with $\partial_t^3 \eta $ and $\partial_t^2\bv $ respectively, we get on one hand   
\begin{equation}\label{eq:ShellTime1}
\begin{aligned}
& \int_{I_*} \int_{\omega} |\partial_t^3 \eta |^2 \dy\dt   +   \dfrac{1}{2} \int_{I_*} \dfrac{\dd}{\dt} \int_{\omega}  |\partial^2_t\naby\eta|^2 \dy\dt + \int_{I_*} \int_{\omega}  \partial_t \Dely^2\eta \partial^3_t \eta \dy\dt 
\\[0.4em]
& \quad =  - \sum\limits_{\mathsf{k}=1}^{4}  \left( \int_{I_*} \int_{\omega}  {\mathlarger{\mathtt{F}}}^{(\mathsf{k})}_\eta \partial_t^3 \eta \dy\dt \right). 
\end{aligned}
\end{equation}
On the other hand, we have 
\begin{equation} \label{eq:MomTime1}
\begin{aligned}
& \int_{I_*} \int_{\Omega_\eta} \rho |\partial^2_t \bv|^2 \dx\dt -  \int_{I_*} \int_{\Omega_\eta} \Div(\rho\bv) \dot{\bv} \cdot \partial^2_t \bv \dx\dt  
\\[0.4em]
& \quad +  \int_{I_*} \int_{\Omega_\eta} \rho \left( \partial_t\bv\cdot \nabla\bv \right) \cdot \partial^2_t \bv \dx\dt   + \int_{I_*} \int_{\Omega_\eta} \rho \left( \bv\cdot \partial_t\nabla\bv \right) \cdot \partial^2_t \bv \dx\dt  
\\[0.4em]
& \quad =  \int_{I_*} \int_{\Omega_\eta}  \Div(\partial_t \bm{\tau}) \cdot \partial^2_t \bv \dx\dt.   
\end{aligned}
\end{equation} 
Hence, adding \eqref{eq:ShellTime1} and \eqref{eq:MomTime1}, and using the identity  \eqref{eq:identityEta}, we arrive at
 
\begin{align}\label{eq:ShellMomTime0}
& \int_{I_*} \int_{\omega} |\partial_t^3 \eta |^2 \dy\dt   +   \dfrac{1}{2} \int_{I_*} \dfrac{\dd}{\dt} \int_{\omega}  |\partial^2_t\naby\eta|^2 \dy\dt + \int_{I_*} \int_{\Omega_\eta} \rho |\partial^2_t \bv|^2 \dx\dt    \nonumber
\\[0.4em] 
& \quad + \dfrac{\mu}{2} \int_{I_*} \dfrac{\dd}{\dt} \int_{\Omega_\eta} |\partial_t \nabla\bv|^2 \dx\dt  + \dfrac{\lambda + \mu}{2} \int_{I_*} \dfrac{\dd}{\dt} \int_{\Omega_\eta} |\partial_t\Div\bv|^2 \dx\dt   \nonumber
\\[0.4em]
&  = -  \sum\limits_{\mathsf{k}=2}^{4}  \left( \int_{I_*} \int_{\omega}  {\mathlarger{\mathtt{F}}}^{(\mathsf{k})}_\eta \partial_t^3 \eta \dy\dt \right) + \int_{I_*} \int_{\Omega_\eta} \Div(\rho\bv) \dot{\bv} \cdot \partial^2_t \bv \dx\dt    \nonumber
\\[0.4em]
& \quad -  \int_{I_*} \int_{\Omega_\eta} \rho \left( \partial_t\bv\cdot \nabla\bv \right) \cdot \partial^2_t \bv \dx\dt  - \int_{I_*} \int_{\Omega_\eta} \rho \left( \bv\cdot \partial_t\nabla\bv \right) \cdot \partial^2_t \bv \dx\dt     \nonumber
\\[0.4em]
& \quad +  \dfrac{\mu}{2} \int_{I_*}  \int_{\partial\Omega_\eta} (\partial_t\eta \bn) \circ \bm{\varphi}_\eta^{-1} \cdot \bn_\eta  |\partial_t\nabla\bv|^2 \dH\dt  - \mu \int_{I_*}  \int_{\partial\Omega_\eta} \left( \partial^2_t \bv \cdot \partial_t\nabla\bv \right) \cdot \bn_\eta \dH\dt    \nonumber
\\[0.4em]
&\quad +  \mu \int_{I_*}  \int_{\partial\Omega_\eta}  \partial_t(\Div \bv) \partial^2_t\bv \cdot \bn_\eta \dH\dt  +  \dfrac{\lambda + \mu}{2}  \int_{I_*}  \int_{\partial\Omega_\eta} (\partial_t\eta \bn) \circ \bm{\varphi}_\eta^{-1} \cdot \bn_\eta  |\partial_t\Div\bv|^2  \dH\dt  \nonumber
\\[0.4em]
& \quad + \int_{I_*}  \int_{\Omega_\eta}  (\partial_t p) \partial^2_t (\Div\bv) \dx\dt  
\\[0.4em]
& \quad -2\int_{I_*} \int_{\omega} (\partial_t{\bm{\tau}})^\intercal \circ \bm{\varphi}_{\eta} \naby\!\left( \partial_t^2 \eta \bn \right)\left(\naby\bm{\varphi}_{\eta}\right)^{-1} \partial_t\bm{\varphi}_{\eta} \cdot \bn_\eta\circ  \bm{\varphi}_{\eta}  \det(\naby \bm{\varphi}_{\eta}) \dy\dt    \nonumber
\\[0.4em]
& \quad + \int_{I_*} \int_{\omega} (\partial_t{\bm{\tau}})^\intercal \circ \bm{\varphi}_{\eta}  \naby^2(\partial_t\eta \bn) \left(\naby\bm{\varphi}_{\eta}\right)^{-1}\partial_t\bm{\varphi}_{\eta}\left(\naby\bm{\varphi}_{\eta}\right)^{-1}\partial_t\bm{\varphi}_{\eta} \cdot \bn_\eta\circ  \bm{\varphi}_{\eta}  \det(\naby \bm{\varphi}_{\eta}) \dy\dt     \nonumber
\\[0.4em]
&\quad  - \int_{I_*} \int_{\omega} (\partial_t{\bm{\tau}})^\intercal \circ \bm{\varphi}_{\eta}  \naby(\partial_t\eta\bn) \left(\naby\bm{\varphi}_{\eta}\right)^{-1} \partial_t^2 \bm{\varphi}_{\eta} \cdot \bn_\eta\circ  \bm{\varphi}_{\eta}  \det(\naby \bm{\varphi}_{\eta}) \dy\dt   \nonumber
\\[0.4em]
& \quad +  \int_{I_*} \int_{\omega} (\partial_t{\bm{\tau}})^\intercal \circ \bm{\varphi}_{\eta}  \naby(\partial_t\eta\bn) \left(\naby\bm{\varphi}_{\eta}\right)^{-1} \bigg[ 2 \partial_t\naby\bm{\varphi}_{\eta}  \left(\naby\bm{\varphi}_{\eta}\right)^{-1}\partial_t\bm{\varphi}_{\eta}     \nonumber
\\[0.4em]
& \qquad \qquad  \qquad  \qquad  \qquad \; -   \naby^2\bm{\varphi}_\eta   \left(\naby\bm{\varphi}_{\eta}\right)^{-1}\partial_t\bm{\varphi}_{\eta} \left(\naby\bm{\varphi}_{\eta}\right)^{-1}\partial_t\bm{\varphi}_{\eta} \bigg]  \cdot \bn_\eta\circ  \bm{\varphi}_{\eta}  \det(\naby \bm{\varphi}_{\eta}) \dy\dt     \nonumber
\\[0.4em]
& \quad + \int_{I_*} \dfrac{\dd}{\dt}  \int_\omega \big(\partial_t \naby\Dely\eta \cdot \partial^2_{t}\naby\eta \big) \dy\dt  +   \int_{I_*}\int_\omega |\partial^2_{t}\Dely\eta|^2 \dy\dt    \nonumber
\\[0.4em]  
& =: \sum\limits_{\mathsf{k} = 1}^{18}  {\mathlarger{\mathfrak{C}}}_\mathsf{k}.    \nonumber
\end{align}
The structure of the right-hand side is entirely analogous to that encountered in the proof of the acceleration estimate (cf.  \cref{sec:Proof1}).  The only new feature here is the presence of the additional time derivatives. We therefore follow the same strategy and only sketch the essential steps in the estimates of ${\mathlarger{\mathfrak{C}}}_\mathsf{k}$.

We start with the geometric contribution
\[
{\mathlarger{\mathfrak{C}}}_1 =  - \int_{I_*} \int_{\omega}  {\mathlarger{\mathtt{F}}}^{(2)}_\eta \partial_t^3 \eta \dy\dt = - \int_{I_*} \int_{\omega} \bn^\intercal \Bigl[ \bigl( (\nabla\bm{\tau})\circ\bm{\varphi}_\eta\,
       \partial_t\bm{\varphi}_\eta \bigr)
       \,\bn_\eta\circ\bm{\varphi}_\eta \Bigr]
     \,\det(\naby \bm{\varphi}_{\eta})  \partial_t^3 \eta \dy\dt . 
\]
Applying  H\"older and Young's inequalities, we obtain for some arbitrary $\kappa > 0, $
\begin{align*}
{\mathlarger{\mathfrak{C}}}_1 \lesssim \kappa \int_{I_*} \Vert \partial_t^3 \eta \Vert_{L^2(\omega)}^2 \dt + c(\kappa) \int_{I_*} \left(  \Vert \nabla^2\bv \Vert_{L^{8}(\partial\Omega_\eta)}^2  +  \Vert \nabla p \Vert_{L^{8}(\partial\Omega_\eta)}^2  \right) \Vert \partial_t \eta \Vert_{L^{8/3}(\omega)}^2 \dt.
\end{align*}  
However, the embeddings 
\[
W^{3/4, 2}(\partial\Omega_\eta)  \hookrightarrow L^{8}(\partial\Omega_\eta); \qquad W^{1/4, 2}(\omega)  \hookrightarrow L^{8/3}(\omega)
\]
yield 

\begin{align*}
{\mathlarger{\mathfrak{C}}}_1 \lesssim  \kappa \int_{I_*} \Vert \partial_t^3 \eta \Vert_{L^2(\omega)}^2 \dt  + c(\kappa) \int_{I_*} \left(\Vert \nabla^2\bv \Vert_{W^{5/4, 2}(\Omega_\eta)}^2  +  \Vert \nabla p \Vert_{W^{5/4, 2}(\Omega_\eta)}^2  \right) \Vert \partial_t \eta \Vert_{W^{1/4, 2}(\omega)}^2 \dt.
\end{align*}
Using the interpolation identities
\[
W^{5/4,2 }(\Omega_\eta) = \bigl[ W^{1,2}(\Omega_\eta), W^{2,2}(\Omega_\eta)   \bigr]_{1/4}; \qquad W^{1/4,2 }(\Omega_\eta) = \bigl[ L^{2 }(\Omega_\eta), W^{1,2 }(\Omega_\eta)\bigr]_{1/4}
\]
together with Young's inequality, we further obtain

\begin{equation}\label{eq:C1}
\begin{aligned}
{\mathlarger{\mathfrak{C}}}_1 &  \lesssim  \kappa \int_{I_*} \Vert \partial_t^3 \eta \Vert_{L^2(\omega)}^2 \dt  + \kappa \left(\sup\limits_{I_*}\Vert \partial_t \eta \Vert_{W^{1,2}(\omega)}^2 \right) \int_{I_*} \Vert \nabla^2\bv \Vert_{W^{2, 2}(\Omega_\eta)}^2 \dt  
\\[0.4em]
& \quad  + c(\kappa)  \int_{I_*} \Vert \nabla^2\bv \Vert_{W^{1, 2}(\Omega_\eta)}^2 \Vert \partial_t \eta \Vert_{W^{1,2}(\omega)}^2  \dt   + c(\kappa) \int_{I_*} \Vert p \Vert_{W^{3,2}(\Omega_\eta)}^2 \Vert \partial_t \eta \Vert_{W^{1,2}(\omega)}^2  \dt .
\end{aligned}
\end{equation}\\

Before estimating ${\mathlarger{\mathfrak{C}}}_2, $ we first observe that 
\begin{equation}\label{eq:ChainRuleNormal}
\partial_t(\bn_\eta\circ\bm{\varphi}_\eta) = \dfrac{1}{|\partial_1\bm{\varphi}_\eta \times \partial_2 \bm{\varphi}_\eta|} \Big( \mathbb{I}_{3\times3} - \bn_\eta\circ\bm{\varphi}_\eta \otimes  \bn_\eta\circ\bm{\varphi}_\eta \! \Big) \partial_t \left( \partial_1\bm{\varphi}_\eta \times \partial_2 \bm{\varphi}_\eta \right).
\end{equation}
Since the deformation remains non-degenerate, it follows -- using Poincar\'e inequality 
\begin{equation}
\Vert \partial_t(\bn_\eta\circ\bm{\varphi}_\eta) \Vert_{L^4(\omega)} \lesssim \Vert \partial_t \naby \eta \Vert_{L^4(\omega)} .
\end{equation}
Thus, for  
\[
{\mathlarger{\mathfrak{C}}}_2 =  - \int_{I_*} \int_{\omega}  {\mathlarger{\mathtt{F}}}^{(3)}_\eta \partial_t^3 \eta \dy\dt = - \int_{I_*} \int_{\omega} \bn^\intercal \Bigl[ \bm{\tau}\circ\bm{\varphi}_\eta\,
       \partial_t\bigl( \bn_\eta\circ\bm{\varphi}_\eta\bigr) \Bigr]
     \,\det(\naby \bm{\varphi}_{\eta}) \partial_t^3 \eta \dy\dt,
\]
applying H\"older and Young's inequalities together with the embedding 
\[
W^{1/2,2}(\partial\Omega_\eta) \hookrightarrow L^4(\partial\Omega_\eta),
\]
we obtain 
\begin{equation*}
\begin{aligned}
{\mathlarger{\mathfrak{C}}}_2   \lesssim \kappa  \int_{I_*} \Vert \partial_t^3 \eta \Vert_{L^{2}(\omega)} \dt + c(\kappa)   \int_{I_*} \left( \Vert \nabla\bv \Vert_{W^{1,2}(\Omega)}^2 + \Vert p \Vert_{W^{1,2}(\Omega)}^2  \right) \Vert \partial_t \naby\eta \Vert_{W^{1/2,2}(\omega)}^2  \dt .
\end{aligned}
\end{equation*}
Whence 
\begin{equation}\label{eq:C2}
\begin{aligned}
{\mathlarger{\mathfrak{C}}}_2  & \lesssim   \kappa \int_{I_*} \Vert \partial_t^3 \eta \Vert_{L^2(\omega)}^2 \dt  + c(\kappa) \sup\limits_{I_*} \Vert \partial_t \eta \Vert_{W^{3/2,2}(\omega)}^2   \left(  \int_{I_*}\Vert \nabla\bv \Vert_{W^{1,2}(\Omega)}^2  \dt +   \int_{I_*} \Vert p \Vert_{W^{1,2}(\Omega)}^2  \dt  \right).
\end{aligned}
\end{equation}

Recalling the identity 
\begin{equation}\label{eq:DetIdentity}
\partial_t\det(\naby\bm{\varphi}_\eta)
  = \left( \bn_\eta\circ \bm{\varphi}_\eta \right)  \cdot \partial_t \left( \partial_1\bm{\varphi}_\eta \times \partial_2 \bm{\varphi}_\eta \right),
\end{equation}
and noting that 
\[
\Vert \partial_t\det(\naby\bm{\varphi}_\eta) \Vert_{L^{4}(\omega)}  \lesssim \Vert \partial_t \naby\eta \Vert_{L^{4}(\omega)}, 
\]
we obtain for the contribution 
\[
{\mathlarger{\mathfrak{C}}}_3 =  - \int_{I_*} \int_{\omega}  {\mathlarger{\mathtt{F}}}^{(4)}_\eta \partial_t^3 \eta \dy\dt = - \int_{I_*} \int_{\omega} \bn^\intercal \bigl(\bm{\tau}\,\bn_\eta\bigr)\circ \bm{\varphi}_\eta\,
     \partial_t \det(\naby \bm{\varphi}_{\eta}) \partial_t^3 \eta \dy\dt
\]
the following estimate 
\begin{equation}\label{eq:C3}
\begin{aligned}
{\mathlarger{\mathfrak{C}}}_3  & \lesssim   \kappa \int_{I_*} \Vert \partial_t^3 \eta \Vert_{L^2(\omega)}^2 \dt  +  c(\kappa) \sup\limits_{I_*} \Vert \partial_t \eta \Vert_{W^{3/2,2}(\omega)}^2   \left(  \int_{I_*}\Vert \nabla\bv \Vert_{W^{1,2}(\Omega)}^2  \dt +   \int_{I_*} \Vert p \Vert_{W^{1,2}(\Omega)}^2  \dt  \right).
\end{aligned}
\end{equation}

We next consider the convective  term 
\[
{\mathlarger{\mathfrak{C}}}_4 = \int_{I_*} \int_{\Omega_\eta} \Div(\rho\bv) \dot{\bv} \cdot \partial^2_t \bv \dx\dt   =: {\mathlarger{\mathfrak{C}}}_{4, 1} + {\mathlarger{\mathfrak{C}}}_{4, 2}, 
\]
where 
\[
{\mathlarger{\mathfrak{C}}}_{4, 1} = \int_{I_*} \int_{\Omega_\eta} (\nabla\rho\cdot\bv) \dot{\bv} \cdot \partial^2_t \bv \dx\dt, \qquad  {\mathlarger{\mathfrak{C}}}_{4, 2} = \int_{I_*} \int_{\Omega_\eta} (\rho\Div\bv) \dot{\bv} \cdot \partial^2_t \bv \dx\dt.
\]
To estimate  ${\mathlarger{\mathfrak{C}}}_{4, 1}, $ we apply H\"older and Young's inequalities, which yields 
\begin{align*}
{\mathlarger{\mathfrak{C}}}_{4, 1} & \lesssim \kappa \int_{I_*} \Vert \partial_t^2\bv \Vert_{L^2(\Omega_\eta)}^2 \dt   + c(\kappa)\Vert \rho \Vert_{L^\infty\left( I_*; W^{3,2}(\Omega_\eta)  \right)}^2 \int_{I_*} \Vert \bv \Vert_{L^4(\Omega_\eta)}^2 \Vert \partial_t\bv \Vert_{L^4(\Omega_\eta)}^2  \dt
\\[0.4em]
& \quad + c(\kappa)\Vert \rho \Vert_{L^\infty\left( I_*; W^{3,2}(\Omega_\eta)  \right)}^2 \int_{I_*} \Vert \bv \Vert_{L^8(\Omega_\eta)}^2 \Vert \bv \Vert_{L^8(\Omega_\eta)}^2 \Vert \nabla\bv \Vert_{L^4(\Omega_\eta)}^2  \dt . 
\end{align*} 
However, using the embeddings 
\[
W^{1,2}(\Omega_\eta) \hookrightarrow W^{3/4, 2}(\Omega_\eta) \hookrightarrow L^{4}(\Omega_\eta), 
\]
and the interpolation 
\begin{equation}\label{eq:W34L4}
\begin{aligned}
W^{3/4, 2}(\Omega_\eta) & = \Big[ L^{2}(\Omega_\eta), W^{1, 2}(\Omega_\eta)\Big]_{3/4}, 
\\[0.4em]
W^{9/8, 2}(\Omega_\eta) & = \Big[ W^{1, 2}(\Omega_\eta), W^{2, 2}(\Omega_\eta)\Big]_{1/8},
\end{aligned}
\end{equation}
we obtain 

\begin{align*}
{\mathlarger{\mathfrak{C}}}_{4, 1} & \lesssim  c(\kappa)\Vert \rho \Vert_{L^\infty\left( I_*; W^{3,2}(\Omega_\eta)  \right)}^2 \int_{I_*} \left(  \Vert \bv \Vert_{L^2(\Omega_\eta)}^2 \Vert \partial_t\bv \Vert_{L^2(\Omega_\eta)}^2   \right)^{1/4} \left( \Vert \bv \Vert_{W^{1,2}(\Omega_\eta)}^2 \Vert \partial_t\bv \Vert_{W^{1,2}(\Omega_\eta)}^2  \right)^{3/4} \dt
\\[0.4em]
& \quad + \kappa \int_{I_*} \Vert \partial_t^2\bv \Vert_{L^2(\Omega_\eta)}^2 \dt  + c(\kappa) \Vert \rho \Vert_{L^\infty\left( I_*; W^{3,2}(\Omega_\eta)  \right)}^2 \Vert \bv \Vert_{L^\infty\left( I_*; W^{1,2}(\Omega_\eta)  \right)}^4 \int_{I_*} \Vert \bv \Vert_{W^{2,2}(\Omega_\eta)}^2   \dt .
\end{align*}
Young's inequality further implies that 

\begin{equation}\label{eq:C41}
\begin{aligned}
{\mathlarger{\mathfrak{C}}}_{4, 1}  & \lesssim \kappa \int_{I_*} \Vert \partial_t^2\bv \Vert_{L^2(\Omega_\eta)}^2 \dt  + \kappa \Vert \rho \Vert_{L^\infty\left( I_*; W^{3,2}(\Omega_\eta)  \right)}^2 \Vert \bv \Vert_{L^\infty\left( I_*; W^{1,2}(\Omega_\eta)  \right)}^2 \int_{I_*} \Vert \partial_t \bv \Vert_{W^{2,2}(\Omega_\eta)}^2    \dt
\\[0.4em]
& \quad + c(\kappa) \Vert \rho \Vert_{L^\infty\left( I_*; W^{3,2}(\Omega_\eta)  \right)}^2 \Vert \bv \Vert_{L^\infty\left( I_*; L^{2}(\Omega_\eta)  \right)}^2 \int_{I_*} \Vert \partial_t \bv \Vert_{L^{2}(\Omega_\eta)}^2   \dt
\\[0.4em]
& \quad + c(\kappa) \Vert \rho \Vert_{L^\infty\left( I_*; W^{3,2}(\Omega_\eta)  \right)}^2 \Vert \bv \Vert_{L^\infty\left( I_*; W^{1,2}(\Omega_\eta)  \right)}^4 \int_{I_*} \Vert \bv \Vert_{W^{2,2}(\Omega_\eta)}^2   \dt. 
\end{aligned}
\end{equation}\\[-0.8em]

In a similar fashion, we have 
\begin{align*}
{\mathlarger{\mathfrak{C}}}_{4, 2}  & \lesssim \kappa \int_{I_*} \Vert \rho^{1/2}\partial_t^2\bv \Vert_{L^2(\Omega_\eta)}^2 \dt  +  c(\kappa)\Vert \rho \Vert_{L^\infty\left( I_*; L^{\infty}(\Omega_\eta)  \right)} \int_{I_*} \Vert \nabla\bv \Vert_{L^4(\Omega_\eta)}^2 \Vert \partial_t\bv \Vert_{L^4(\Omega_\eta)}^2  \dt
\\[0.4em]
& \quad + c(\kappa) \int_{I_*} \Vert  \rho^{1/2}\bv\cdot \nabla\bv \Vert_{L^2(\Omega_\eta)}^2 \Vert \Div\bv \Vert_{L^\infty(\Omega_\eta)}^2  \dt.
\end{align*}
Hence, using \eqref{eq:W34L4} and Young's inequality we deduce that 

\begin{equation}\label{eq:C42}
\begin{aligned}
{\mathlarger{\mathfrak{C}}}_{4, 2}  & \lesssim \kappa \int_{I_*} \Vert \rho^{1/2}\partial_t^2\bv \Vert_{L^2(\Omega_\eta)}^2 \dt 
+ \kappa \Vert \rho \Vert_{L^\infty\left( I_*; L^{\infty}(\Omega_\eta)  \right)} \left( \sup\limits_{I_*}\Vert \partial_t\bv \Vert_{W^{1,2}(\Omega_\eta)}^2  \right) \int_{I_*} \Vert \bv \Vert_{W^{2,2}(\Omega_\eta)}^2  \dt
\\[0.4em]
& \quad + c(\kappa) \Vert \rho \Vert_{L^\infty\left( I_*; L^{\infty}(\Omega_\eta)  \right)}  \Vert \bv \Vert_{L^\infty\left( I_*; W^{1,2}(\Omega_\eta)  \right)}^2 \int_{I_*} \Vert   \partial_t \bv \Vert_{L^2(\Omega_\eta)}^2 \dt
\\[0.4em]
&\quad + c(\kappa) \int_{I_*} \Vert  \rho^{1/2}\bv\cdot \nabla\bv \Vert_{L^2(\Omega_\eta)}^2 \Vert \bv \Vert_{W^{3,2}(\Omega_\eta)}^2  \dt.
\end{aligned}
\end{equation}
Of note, the last term of \eqref{eq:C42} can be estimated by  nothing else but 
\[
c(\kappa)\left( \sup\limits_{I_*} \Vert \bv \Vert_{W^{3,2}(\Omega_\eta)}^2  \right) {\mathlarger{\mathfrak{R}}}_1,   
\]
where ${\mathlarger{\mathfrak{R}}}_1 $ is from \eqref{eq:AccFirstEstimate}. 

Moving further with the term 
\[
{\mathlarger{\mathfrak{C}}}_5 =   -  \int_{I_*} \int_{\Omega_\eta} \rho \left( \partial_t\bv\cdot \nabla\bv \right) \cdot \partial^2_t \bv \dx\dt,  
\]
we have that 
\begin{align*}
{\mathlarger{\mathfrak{C}}}_5 & \lesssim  \kappa \int_{I_*} \Vert \rho^{1/2}\partial_t^2\bv \Vert_{L^2(\Omega_\eta)}^2 \dt  + c(\kappa)\Vert \rho \Vert_{L^\infty\left( I_*; L^{\infty}(\Omega_\eta)  \right)} \int_{I_*} \Vert \nabla\bv \Vert_{L^4(\Omega_\eta)}^2 \Vert \partial_t\bv \Vert_{L^4(\Omega_\eta)}^2  \dt.
\end{align*}
Using once again interpolation and Young's inequality we deduce that  

\begin{equation}\label{eq:C5}
\begin{aligned}
{\mathlarger{\mathfrak{C}}}_5  & \lesssim \kappa \int_{I_*} \Vert \rho^{1/2}\partial_t^2\bv \Vert_{L^2(\Omega_\eta)}^2 \dt 
+ \kappa \Vert \rho \Vert_{L^\infty\left( I_*; L^{\infty}(\Omega_\eta)  \right)} \left( \sup\limits_{I_*}\Vert \partial_t \nabla\bv \Vert_{L^{2}(\Omega_\eta)}^2  \right) \int_{I_*} \Vert \bv \Vert_{W^{2,2}(\Omega_\eta)}^2  \dt
\\[0.4em]
& \quad + c(\kappa) \Vert \rho \Vert_{L^\infty\left( I_*; L^{\infty}(\Omega_\eta)  \right)}  \Vert \nabla\bv \Vert_{L^\infty\left( I_*; L^{2}(\Omega_\eta)  \right)}^2 \int_{I_*} \Vert   \partial_t \bv \Vert_{L^2(\Omega_\eta)}^2 \dt. 
\end{aligned}
\end{equation} \\

For the term 
\[
{\mathlarger{\mathfrak{C}}}_6 = - \int_{I_*} \int_{\Omega_\eta} \rho \left( \bv\cdot \partial_t\nabla\bv \right) \cdot \partial^2_t \bv \dx\dt  ,  
\]
an application of H\"older and  Young's inequalities yields 
\begin{equation}\label{eq:C5prime}
{\mathlarger{\mathfrak{C}}}_6  \lesssim \kappa \int_{I_*} \Vert \rho^{1/2}\partial_t^2\bv \Vert_{L^2(\Omega_\eta)}^2 \dt  + c(\kappa)\int_{I_*} \Vert  \rho^{1/2}\bv\cdot \partial_t\nabla\bv \Vert_{L^2(\Omega_\eta)}^2  \dt .
\end{equation}
However,  the last term of \eqref{eq:C5prime} is similar to  ${\mathlarger{\mathfrak{R}}}_1$ -- up to omitting the time derivative on $\nabla\bv. $ Therefore, arguing as in the derivation of the estimate for $ {\mathlarger{\mathfrak{R}}}_1 $ (see Section \ref{sec:Proof1}), we arrive at 

\begin{equation}\label{eq:C6}
{\mathlarger{\mathfrak{C}}}_6  \lesssim \kappa  \int_{I_*} \left(  \Vert \rho^{1/2}\partial_t^2\bv \Vert_{L^2(\Omega_\eta)}^2  + \Vert \partial_t \bv \Vert_{W^{2,2}(\Omega_\eta)}^2 \right)\dt   + c(\kappa) \int_{I_*} \Vert  \rho^{1/2}\bv\Vert_{L^\mathtt{r}(\Omega_\eta)}^\mathtt{s} \Vert  \partial_t\bv\Vert_{W^{1,2}(\Omega_\eta)}^2 \dt, 
\end{equation} 
where $\mathtt{s} = \dfrac{2\mathtt{r}}{\mathtt{r} - 3} \in [2, \infty)$. \\

Moreover, the term 
\[
{\mathlarger{\mathfrak{C}}}_7 = \dfrac{\mu}{2} \int_{I_*}  \int_{\partial\Omega_\eta} (\partial_t\eta \bn) \circ \bm{\varphi}_\eta^{-1} \cdot \bn_\eta  |\partial_t\nabla\bv|^2 \dH\dt 
\]
is the analogue of ${\mathlarger{\mathfrak{R}}}_2$. Therefore a similar argument yields
\begin{align}\label{eq:C7}
{\mathlarger{\mathfrak{C}}}_7 \lesssim \kappa \int_{I_*} \Vert \partial_t\bv \Vert_{W^{2, 2}(\Omega_\eta)}^2 \dt + c(\kappa) \int_{I_*} \Vert \partial_t\eta \Vert_{W^{1,2}(\omega)}^2 \Vert \partial_t\nabla\bv \Vert_{L^{ 2}(\Omega_\eta)}^2 \dt.
\end{align} \\[-0.8em]

The boundary term 
\[
{\mathlarger{\mathfrak{C}}}_8 =    - \mu \int_{I_*}  \int_{\partial\Omega_\eta} \left( \partial^2_t \bv \cdot \partial_t\nabla\bv \right) \cdot \bn_\eta \dH\dt ,
\]
is estimated as follows. By  H\"older's inequality, we have 
\begin{equation}\label{eq:C8primary}
\begin{aligned}
{\mathlarger{\mathfrak{C}}}_8 & \lesssim  \int_{I_*} \Vert \partial_t \nabla\bv \Vert_{L^{2}(\partial\Omega_\eta)}  \Vert \left(\partial_t^2 \bv\right)\circ \bm{\varphi}_{\eta} \Vert_{L^2(\omega)} \dt.
\end{aligned}
\end{equation}
However, 
\begin{equation}\label{eq:SecondTimeVelocityEstimateL2}
\begin{aligned}
\Vert \left(\partial_t^2 \bv\right)\circ \bm{\varphi}_{\eta} \Vert_{L^2(\omega)} & \lesssim \Vert \partial_t^3 \eta \Vert_{L^2(\omega)} +  \Vert \partial_t^2  \naby\eta \Vert_{L^2(\omega)} \Vert \partial_t  \eta \Vert_{L^\infty(\omega)}  +  \Vert \partial_t \Dely  \eta \Vert_{L^6(\omega)} \Vert \partial_t  \eta \Vert_{L^6(\omega)}^2 
\\[0.4em]
& \quad + \Vert \partial_t \naby \eta \Vert_{L^8(\omega)} \Vert \Dely  \eta \Vert_{L^8(\omega)} \Vert \partial_t  \eta \Vert_{L^8(\omega)}^2    + \Vert \partial_t \naby \eta \Vert_{L^6(\omega)}^2 \Vert \partial_t  \eta \Vert_{L^6(\omega)}
\\[0.4em]
& \quad + \Vert \partial_t \naby \eta \Vert_{L^4(\omega)}\Vert \partial_t^2  \eta \Vert_{L^4(\omega)}
\\[0.4em]
& =: \sum\limits_{\mathsf{i} = 1}^{6} {\mathlarger{\mathfrak{Z}}}_{\mathsf{i}}  .
\end{aligned}
\end{equation}
That is, 
\[
 {\mathlarger{\mathfrak{C}}}_8 \lesssim \sum\limits_{\mathsf{i} = 1}^{6} {\mathlarger{\mathfrak{C}}}_8^{\mathsf{i}}, \quad \text{with} \quad   {\mathlarger{\mathfrak{C}}}_8^{\mathsf{i}} :=   \int_{I_*} \Vert \partial_t \nabla\bv \Vert_{L^{2}(\partial\Omega_\eta)}{\mathlarger{\mathfrak{Z}}}_{\mathsf{i}}  \dt.
\]
Of particular interest is the term 
\[
 {\mathlarger{\mathfrak{C}}}_8^{1} = \int_{I_*} \Vert \partial_t \nabla\bv \Vert_{L^{2}(\partial\Omega_\eta)}{\mathlarger{\mathfrak{Z}}}_{1} \dt =  \int_{I_*} \Vert \partial_t \nabla\bv \Vert_{L^{2}(\partial\Omega_\eta)} \Vert \partial_t^3 \eta \Vert_{L^2(\omega)} \dt. 
\]
Using interpolation, we have 
\[
 {\mathlarger{\mathfrak{C}}}_8^{1} \lesssim \int_{I_*}  \Vert \partial_t \nabla\bv \Vert_{W^{1/2,2}(\partial\Omega_\eta)}^{1/2}\Vert (\partial_t \bv)\circ \bm{\varphi}_\eta \Vert_{W^{1/2,2}(\omega)}^{1/2}  \Vert \partial_t^3 \eta \Vert_{L^2(\omega)} \dt. 
\]
An application of Young's inequality yields 
\begin{equation}\label{eq:C8FirstTerm}
\begin{aligned}
 {\mathlarger{\mathfrak{C}}}_8^{1} & \lesssim  \kappa \int_{I_*} \Vert \partial_t^3 \eta \Vert_{L^2(\omega)}^2 \dt + \kappa \int_{I_*} \Vert \partial_t \nabla^2 \bv \Vert_{L^{2}(\Omega_\eta)}^2 \dt   + c(\kappa) \int_{I_*} \Vert (\partial_t \bv)\circ \bm{\varphi}_\eta \Vert_{W^{1/2,2}(\omega)}^2 \dt.
 \end{aligned}
\end{equation}
By the interpolation 
\begin{equation}\label{eq:InterpoW12L4}
W^{1/2, 2}(\omega) = \Big[ L^2(\omega), W^{1,2}(\omega)\Big]_{1/2} 
\end{equation}
and the embedding 
\begin{equation}\label{eq:EmbedW12Lp}
W^{1, 2}(\omega) \hookrightarrow  L^{\mathtt{p}}(\omega) \quad \text{for all }\; \mathtt{p} \in [1, \infty),  
\end{equation}
it follows from \eqref{eq:C8FirstTerm}  that
\begin{equation}\label{eq:C8FirstTermEstimate}
\begin{aligned}
 {\mathlarger{\mathfrak{C}}}_8^{1} & \lesssim  \kappa \int_{I_*} \Vert \partial_t^3 \eta \Vert_{L^2(\omega)}^2 \dt + \kappa \int_{I_*} \Vert \partial_t \nabla^2 \bv \Vert_{L^{2}(\Omega_\eta)}^2 \dt   + c(\kappa) \int_{I_*} \Vert \partial_t^2 \eta \Vert_{L^{2}(\omega)}^2 \dt
 \\[0.4em]
 & \quad +  c(\kappa) \int_{I_*} \Vert \partial_t \Dely \eta \Vert_{L^{2}(\omega)}^2 \Vert \partial_t \naby \eta \Vert_{L^{2}(\omega)}^2  \dt + \kappa   \int_{I_*} \Vert \partial_t^2 \Dely \eta \Vert_{L^2(\omega)}^2 \dt  
 \\[0.4em]
 & \quad + \kappa \int_{I_*} \Big( \Vert \partial_t \naby\Dely \eta \Vert_{L^{2}(\omega)}^2 \Vert \partial_t \naby \eta \Vert_{L^{2}(\omega)}^2 +  \Vert \partial_t \Dely \eta \Vert_{L^{2}(\omega)}^2 \Vert \naby\Dely \eta \Vert_{L^{2}(\omega)}^2\Vert \partial_t \naby \eta \Vert_{L^{2}(\omega)}^2 \Big) \dt 
 \\[0.4em]
 &\quad + \kappa \int_{I_*} \Vert \partial_t \naby\Dely \eta \Vert_{L^{2}(\omega)}^2 \Vert \partial_t \Dely \eta \Vert_{L^{2}(\omega)}^2   \dt .
 \end{aligned}
\end{equation}
Estimates for the remaining terms $ {\mathlarger{\mathfrak{C}}}_8^{\mathsf{i}}, \;\mathsf{i} \in \{2, \ldots, 6\} , $ follow by straightforward applications of  H\"older's and Young's inequalities, together with the embeddings \eqref{eq:EmbedW12Lp} and 
\begin{equation}\label{eq:EmbedW22Linfty}
W^{2, 2}(\omega) \hookrightarrow  L^{\infty}(\omega) .  
\end{equation}
Hence,
\begin{equation}\label{eq:C8}
\begin{aligned}
{\mathlarger{\mathfrak{C}}}_8 & \lesssim \kappa \int_{I_*} \Vert \partial^3_t\eta \Vert_{L^{ 2}(\omega)}^2 \dt +\kappa   \int_{I_*} \Vert \partial_t\bv \Vert_{W^{2,2}(\Omega_\eta)}^2 \dt + c(\kappa)  \int_{I_*} \Vert \partial^2_t\eta \Vert_{L^{ 2}(\omega)}^2 \dt
\\[0.4em]
&\quad + c(\kappa) \int_{I_*} \Vert \partial_t \naby\eta \Vert_{L^2(\omega)}^2  \Vert \partial_t\Dely \eta \Vert_{L^2(\omega)}^2 \dt + \kappa\int_{I_*} \Vert \partial^2_t \Dely\eta \Vert_{L^{ 2}(\omega)}^2 \dt 
\\[0.4em]
& \quad + c(\kappa)  \int_{I_*} \Vert \partial_t \naby\Dely\eta \Vert_{L^2(\omega)}^2 \Big( \Vert \partial_t \naby\eta \Vert_{L^2(\omega)}^2 +  \Vert \partial_t \Dely\eta \Vert_{L^2(\omega)}^2 \Big) \dt
\\[0.4em]
&\quad + c(\kappa)  \int_{I_*} \Vert \partial_t \Dely\eta \Vert_{L^2(\omega)}^2  \Vert \partial_t \naby\eta \Vert_{L^2(\omega)}^2  \Vert \naby \Dely\eta \Vert_{L^2(\omega)}^2  \dt
\\[0.4em]
&\quad +  c(\kappa)  \int_{I_*} \Vert \partial_t^2 \naby\eta \Vert_{L^2(\omega)}^2  \Vert \partial_t\Dely \eta \Vert_{L^2(\omega)}^2 \dt + \kappa  \int_{I_*} \Vert \partial_t \nabla^2\bv \Vert_{L^2(\Omega_\eta)}^2  \Vert \partial_t \naby\eta \Vert_{L^2(\omega)}^2 \dt.
\end{aligned}
\end{equation} \\[-0.8em]

Since the term 
\[
{\mathlarger{\mathfrak{C}}}_9 = \mu \int_{I_*}  \int_{\partial\Omega_\eta}  \partial_t(\Div \bv) \partial^2_t\bv \cdot \bn_\eta \dH\dt  
\]
is similar to ${\mathlarger{\mathfrak{C}}}_8$, a similar argument yields
\begin{equation}\label{eq:C9}
\begin{aligned}
{\mathlarger{\mathfrak{C}}}_9 & \lesssim \kappa \int_{I_*} \Vert \partial^3_t\eta \Vert_{L^{ 2}(\omega)}^2 \dt +\kappa   \int_{I_*} \Vert \partial_t\bv \Vert_{W^{2,2}(\Omega_\eta)}^2 \dt + c(\kappa)  \int_{I_*} \Vert \partial^2_t\eta \Vert_{L^{ 2}(\omega)}^2 \dt
\\[0.4em]
&\quad + c(\kappa) \int_{I_*} \Vert \partial_t \naby\eta \Vert_{L^2(\omega)}^2  \Vert \partial_t\Dely \eta \Vert_{L^2(\omega)}^2 \dt + \kappa\int_{I_*} \Vert \partial^2_t \Dely\eta \Vert_{L^{ 2}(\omega)}^2 \dt 
\\[0.4em]
& \quad + c(\kappa)  \int_{I_*} \Vert \partial_t \naby\Dely\eta \Vert_{L^2(\omega)}^2 \Big( \Vert \partial_t \naby\eta \Vert_{L^2(\omega)}^2 +  \Vert \partial_t \Dely\eta \Vert_{L^2(\omega)}^2 \Big) \dt
\\[0.4em]
&\quad + c(\kappa)  \int_{I_*} \Vert \partial_t \Dely\eta \Vert_{L^2(\omega)}^2  \Vert \partial_t \naby\eta \Vert_{L^2(\omega)}^2  \Vert \naby \Dely\eta \Vert_{L^2(\omega)}^2  \dt
\\[0.4em]
&\quad +  c(\kappa)  \int_{I_*} \Vert \partial_t^2 \naby\eta \Vert_{L^2(\omega)}^2  \Vert \partial_t\Dely \eta \Vert_{L^2(\omega)}^2 \dt + \kappa  \int_{I_*} \Vert \partial_t \nabla^2\bv \Vert_{L^2(\Omega_\eta)}^2  \Vert \partial_t \naby\eta \Vert_{L^2(\omega)}^2 \dt.
\end{aligned}
\end{equation} \\[-0.8em]

Furthermore, we get for 
\[
{\mathlarger{\mathfrak{C}}}_{10} =  \dfrac{\lambda + \mu}{2}  \int_{I_*}  \int_{\partial\Omega_\eta} (\partial_t\eta \bn) \circ \bm{\varphi}_\eta^{-1} \cdot \bn_\eta  |\partial_t\Div\bv|^2  \dH\dt , 
\]
which is the divergence analogue of ${\mathlarger{\mathfrak{C}}}_7 $, the following estimate 

\begin{align}\label{eq:C10}
{\mathlarger{\mathfrak{C}}}_{10} \lesssim \kappa \int_{I_*} \Vert \partial_t\bv \Vert_{W^{2, 2}(\Omega_\eta)}^2 \dt + c(\kappa) \int_{I_*} \Vert \partial_t\eta \Vert_{W^{1,2}(\omega)}^2 \Vert \partial_t\nabla\bv \Vert_{L^{ 2}(\Omega_\eta)}^2 \dt.
\end{align} \\[-0.8em]

Consider now the pressure-driven contribution 
\[
{\mathlarger{\mathfrak{C}}}_{11} = \int_{I_*}  \int_{\Omega_\eta}  (\partial_t p) \partial^2_t (\Div\bv) \dx\dt.  
\] 
Using a standard integration-by-parts argument, we obtain 
\begin{align}\label{eq:C11primary}
{\mathlarger{\mathfrak{C}}}_{11} \lesssim  \int_{I_*} \Vert \partial_t^2 \bv \Vert_{L^2(\Omega_\eta)} \Vert \partial_t p  \Vert_{W^{1,2}(\Omega_\eta)}  \dt + \int_{I_*} \Vert \left(\partial_t^2 \bv\right)\circ \bm{\varphi}_\eta \Vert_{L^2(\omega)}  \Vert \partial_t p  \Vert_{W^{1,2}(\Omega_\eta)}  \dt. 
\end{align}
By  the renormalised continuity equation \eqref{eq:RenormContEq}, Young's inequality and  \eqref{eq:SecondTimeVelocityEstimateL2}, we further deduce that 
\begin{equation}\label{eq:C11}
\begin{aligned}
{\mathlarger{\mathfrak{C}}}_{11} & \lesssim \kappa \int_{I_*} \Vert \partial_t^2\bv \Vert_{L^{ 2}(\Omega_\eta)}^2 \dt + c(\kappa) \Vert \rho \Vert_{L^\infty\left(I_*; W^{3,2}(\Omega_\eta)  \right)}^2 \int_{I_*}  \Vert \bv \Vert_{W^{2, 2}(\Omega_\eta)}^2 \dt
\\[0.4em]
&\quad + \kappa \int_{I_*} \Vert \partial^3_t \eta \Vert_{L^2(\omega)}^2 \dt + \kappa \int_{I_*} \Vert \partial^2_t \naby\eta \Vert_{L^2(\omega)}^2 \Big( \Vert \partial_t \naby\eta \Vert_{L^2(\omega)}^2   + \Vert \partial_t \Dely\eta \Vert_{L^2(\omega)}^2  \Big) \dt  
\\[0.4em]
& \quad + \kappa \int_{I_*}  \Vert \partial_t \naby\Dely\eta \Vert_{L^2(\omega)}^2  \Vert \partial_t \naby\eta \Vert_{L^2(\omega)}^2  \Big( 1  +  \Vert \partial_t \Dely\eta \Vert_{L^2(\omega)}^2  \Big) \dt
\\[0.4em]
& \quad + \kappa \int_{I_*}  \Vert \partial_t \Dely\eta \Vert_{L^2(\omega)}^2 \Vert \naby \Dely\eta \Vert_{L^2(\omega)}^2  \Vert \partial_t \naby\eta \Vert_{L^2(\omega)}^4 \dt  .
\end{aligned}
\end{equation} \\[-0.8em]

We next estimate the boundary-geometry term
\begin{align*}
{\mathlarger{\mathfrak{C}}}_{12} & = -2\int_{I_*} \int_{\omega} (\partial_t{\bm{\tau}})^\intercal \circ \bm{\varphi}_{\eta} \naby\!\left( \partial_t^2 \eta \bn \right)\left(\naby\bm{\varphi}_{\eta}\right)^{-1} \partial_t\bm{\varphi}_{\eta} \cdot \bn_\eta\circ  \bm{\varphi}_{\eta}  \det(\naby \bm{\varphi}_{\eta}) \dy\dt,
\end{align*} 
which we split according to the product structure of $\naby\left( \partial^2_t \eta \bn \right)$. That is, 
\[
{\mathlarger{\mathfrak{C}}}_{12}  := {\mathlarger{\mathfrak{C}}}_{12}^{\mathtt{a}} + {\mathlarger{\mathfrak{C}}}_{12}^{\mathtt{b}},
\]
where 
\begin{align*}
{\mathlarger{\mathfrak{C}}}_{12}^{\mathtt{a}} & = -2\int_{I_*} \int_{\omega} (\partial_t{\bm{\tau}})^\intercal \circ \bm{\varphi}_{\eta}  \left( \bn \otimes \partial^2_t  \naby\eta  \right)\left(\naby\bm{\varphi}_{\eta}\right)^{-1} \partial_t\bm{\varphi}_{\eta}  \cdot \bn_\eta\circ  \bm{\varphi}_{\eta}  \det(\naby \bm{\varphi}_{\eta}) \dy\dt,
\\[0.4em]
{\mathlarger{\mathfrak{C}}}_{12}^{\mathtt{b}} & = -2\int_{I_*} \int_{\omega} (\partial_t{\bm{\tau}})^\intercal \circ \bm{\varphi}_{\eta}  \left( \partial^2_t  \eta \naby\bn \right)\left(\naby\bm{\varphi}_{\eta}\right)^{-1} \partial_t\bm{\varphi}_{\eta}  \cdot \bn_\eta\circ  \bm{\varphi}_{\eta}  \det(\naby \bm{\varphi}_{\eta}) \dy\dt.
\end{align*}
Applying  H\"older's inequality and using uniform bounds on the geometric quantities  associated with the deformation map $\bm{\varphi}_\eta$, we obtain
\begin{align*}
{\mathlarger{\mathfrak{C}}}_{12}^{\mathtt{a}} & \lesssim \int_{I_*} \Big(\Vert \partial_t \nabla\bv \Vert_{L^2(\partial\Omega_\eta)} + \Vert \partial_t p\Vert_{L^2(\partial\Omega_\eta)}  \Big)\Vert \partial^2_t \naby \eta \Vert_{L^2(\omega)} \Vert \partial_t  \bm{\varphi}_{\eta} \Vert_{L^\infty(\omega)}  \dt.
\end{align*}
Using Young's inequality together with the renormalised continuity equation \eqref{eq:RenormContEq}, we derive that 
\begin{equation}\label{eq:C12a}
\begin{aligned}
{\mathlarger{\mathfrak{C}}}_{12}^{\mathtt{a}}  & \lesssim \kappa \int_{I_*} \Vert \partial_t \nabla^2\bv \Vert_{L^2(\Omega_\eta)}^2 \dt + c(\kappa)\int_{I_*} \Vert \partial_t^2 \naby\eta \Vert_{L^2(\omega)}^2 \Vert \partial_t \Dely\eta \Vert_{L^2(\omega)}^2 \dt 
\\[0.4em]
& \quad + \kappa \sup\limits_{I_*}\Vert \rho \Vert_{W^{3,2}(\Omega_\eta)}^2 \int_{I_*} \Vert  \nabla^2\bv \Vert_{L^2(\Omega_\eta)}^2 \dt. 
\end{aligned}
\end{equation}
Treating the term ${\mathlarger{\mathfrak{C}}}_{12}^{\mathtt{b}}$ analogously leads to 
\begin{equation}\label{eq:C12b}
\begin{aligned}
{\mathlarger{\mathfrak{C}}}_{12}^{\mathtt{b}}  & \lesssim \kappa \int_{I_*} \Vert \partial_t \nabla^2\bv \Vert_{L^2(\Omega_\eta)}^2 \dt + c(\kappa)\int_{I_*} \Vert \partial_t^2 \naby\eta \Vert_{L^2(\omega)}^2 \Vert \partial_t \Dely\eta \Vert_{L^2(\omega)}^2 \dt 
\\[0.4em]
& \quad + \kappa \sup\limits_{I_*}\Vert \rho \Vert_{W^{3,2}(\Omega_\eta)}^2 \int_{I_*} \Vert  \nabla^2\bv \Vert_{L^2(\Omega_\eta)}^2 \dt. 
\end{aligned}
\end{equation}\\[-0.8em]

Proceeding analogously, we decompose 
\[
{\mathlarger{\mathfrak{C}}}_{13} = \int_{I_*} \int_{\omega} (\partial_t{\bm{\tau}})^\intercal \circ \bm{\varphi}_{\eta}  \naby^2(\partial_t\eta \bn) \left(\naby\bm{\varphi}_{\eta}\right)^{-1}\partial_t\bm{\varphi}_{\eta}\left(\naby\bm{\varphi}_{\eta}\right)^{-1}\partial_t\bm{\varphi}_{\eta} \cdot \bn_\eta\circ  \bm{\varphi}_{\eta}  \det(\naby \bm{\varphi}_{\eta}) \dy\dt
\] 
according to the chain rule. More precisely, 
\[
\naby^2 \left(\partial_t \eta \bn \right) = \bn\otimes \partial_t\naby^2\eta +  \partial_t\naby\eta \otimes \naby\bn +  \partial_t\eta \naby^2\bn +   \naby\bn\otimes \partial_t\naby\eta, 
\]
we define ${\mathlarger{\mathfrak{C}}}_{13}^{\mathtt{a}}, {\mathlarger{\mathfrak{C}}}_{13}^{\mathtt{b}}, {\mathlarger{\mathfrak{C}}}_{13}^{\mathtt{c}} $ and ${\mathlarger{\mathfrak{C}}}_{13}^{\mathtt{d}}$ accordingly. We have for the first contribution 
\begin{align*}
{\mathlarger{\mathfrak{C}}}_{13}^{\mathtt{a}} & \lesssim \int_{I_*} \Big(\Vert \partial_t \nabla\bv \Vert_{L^2(\partial\Omega_\eta)} + \Vert \partial_t p\Vert_{L^2(\partial\Omega_\eta)}  \Big)\Vert \partial_t \naby^2 \eta \Vert_{L^6(\omega)} \Vert \partial_t  \eta \Vert_{L^6(\omega)}^2  \dt.
\end{align*}
Using the renormalised continuity equation \eqref{eq:RenormContEq}, Young's inequality combined with the embedding \eqref{eq:EmbedW12Lp} we obtain 
\begin{equation}\label{eq:C13a}
\begin{aligned}
{\mathlarger{\mathfrak{C}}}_{13}^{\mathtt{a}}  & \lesssim \kappa \int_{I_*} \Vert \partial_t \nabla^2\bv \Vert_{L^2(\Omega_\eta)}^2 \Vert \partial_t \naby\eta \Vert_{L^2(\omega)}^2  \dt + c(\kappa) \int_{I_*} \Vert \partial_t \naby\Dely\eta \Vert_{L^2(\omega)}^2 \Vert \partial_t \naby\eta \Vert_{L^2(\omega)}^2  \dt 
\\[0.4em]
& \quad + \kappa \sup\limits_{I_*}\Vert \rho \Vert_{W^{3,2}(\Omega_\eta)}^2 \int_{I_*} \Vert  \nabla^2\bv \Vert_{L^2(\Omega_\eta)}^2 \Vert \partial_t \naby\eta \Vert_{L^2(\omega)}^2 \dt. 
\end{aligned}
\end{equation}
Similarly, we get  for $\mathfrak{l} \in \left\{\mathtt{b}, \mathtt{c},  \mathtt{d} \right\} $, 
\begin{equation}\label{eq:C13l}
\begin{aligned}
{\mathlarger{\mathfrak{C}}}_{13}^{\mathfrak{l}}  & \lesssim \kappa \int_{I_*} \Vert \partial_t \nabla^2\bv \Vert_{L^2(\Omega_\eta)}^2 \Vert \partial_t \naby\eta \Vert_{L^2(\omega)}^2  \dt + c(\kappa) \int_{I_*} \Vert \partial_t \naby\Dely\eta \Vert_{L^2(\omega)}^2 \Vert \partial_t \naby\eta \Vert_{L^2(\omega)}^2  \dt 
\\[0.4em]
& \quad + \kappa \sup\limits_{I_*}\Vert \rho \Vert_{W^{3,2}(\Omega_\eta)}^2 \int_{I_*} \Vert  \nabla^2\bv \Vert_{L^2(\Omega_\eta)}^2 \Vert \partial_t \naby\eta \Vert_{L^2(\omega)}^2 \dt.  
\end{aligned}
\end{equation}\\

Considering the term 
\[
{\mathlarger{\mathfrak{C}}}_{14} = - \int_{I_*} \int_{\omega} (\partial_t{\bm{\tau}})^\intercal \circ \bm{\varphi}_{\eta}  \left(\bn\otimes\partial_t \naby\eta + \partial_t\eta \naby\bn\right) \left(\naby\bm{\varphi}_{\eta}\right)^{-1}\partial^2_t  \bm{\varphi}_{\eta} \cdot \bn_\eta\circ  \bm{\varphi}_{\eta}  \det(\naby \bm{\varphi}_{\eta}) \dy\dt, 
\]
H\"older's inequality yields
\begin{equation*}
\begin{aligned}
{\mathlarger{\mathfrak{C}}}_{14} &\lesssim  \int_{I_*} \Big(\Vert \partial_t \nabla\bv \Vert_{L^2(\partial\Omega_\eta)} + \Vert \partial_t p\Vert_{L^2(\partial\Omega_\eta)}  \Big)\Vert \partial_t \naby \eta \Vert_{L^\infty(\omega)} \Vert \partial_t^2 \eta \Vert_{L^2(\omega)}  \dt.
\end{aligned}
\end{equation*}
Using successively \eqref{eq:EmbedW22Linfty}, the renormalised continuity equation \eqref{eq:RenormContEq} and Young's inequality, we obtain 
\begin{equation}\label{eq:C14}
\begin{aligned}
{\mathlarger{\mathfrak{C}}}_{14} & \lesssim \kappa \int_{I_*} \Vert \partial_t \nabla^2\bv \Vert_{L^2(\Omega_\eta)}^2 \dt + c(\kappa) \int_{I_*}  \Vert \partial_t \naby \Dely\eta \Vert_{L^2(\omega)}^2 \Vert \partial_t^2 \eta \Vert_{L^2(\omega)}^2  \dt  
\\[0.4em]
& \quad +  \kappa \sup\limits_{I_*}\Vert \rho \Vert_{W^{3,2}(\Omega_\eta)}^2 \int_{I_*} \Vert  \nabla^2\bv \Vert_{L^2(\Omega_\eta)}^2 \dt  . 
\end{aligned}
\end{equation}\\[-0.8em]

Considering the contribution
\begin{align*}
{\mathlarger{\mathfrak{C}}}_{15} &  =  2 \int_{I_*} \int_{\omega} (\partial_t{\bm{\tau}})^\intercal \circ \bm{\varphi}_{\eta}  \naby(\partial_t\eta\bn) \left(\naby\bm{\varphi}_{\eta}\right)^{-1} \partial_t\naby\bm{\varphi}_{\eta}  \left(\naby\bm{\varphi}_{\eta}\right)^{-1}\partial_t\bm{\varphi}_{\eta}    \cdot \bn_\eta\circ  \bm{\varphi}_{\eta}  \det(\naby \bm{\varphi}_{\eta}) \dy\dt,
\end{align*}
H\"older's inequality yields
\begin{equation*}
\begin{aligned}
{\mathlarger{\mathfrak{C}}}_{15} & \lesssim \int_{I_*} \Big(\Vert \partial_t \nabla\bv \Vert_{L^{2}(\partial\Omega_\eta)} + \Vert \partial_t p\Vert_{L^{2}(\partial\Omega_\eta)}  \Big)  \Vert \naby(\partial_t\eta\bn) \Vert_{L^6(\omega)}^2   \Vert \partial_t \eta \Vert_{L^6(\omega)} \dt.
\end{aligned}
\end{equation*}
Relying once again on the renormalised continuity equation \eqref{eq:RenormContEq} and Young's inequality, we derive that 
\begin{equation}\label{eq:C15}
\begin{aligned}
{\mathlarger{\mathfrak{C}}}_{15} & \lesssim \kappa \int_{I_*} \Vert \partial_t \nabla^2\bv \Vert_{L^2(\Omega_\eta)}^2\Vert \partial_t \naby\eta \Vert_{L^2(\omega)}^2 \dt + c(\kappa)  \int_{I_*}  \Vert \partial_t \naby\Dely \eta \Vert_{L^2(\omega)}^2 \Vert \partial_t \Dely\eta \Vert_{L^2(\omega)}^2  \dt   
\\[0.4em]
& \quad +  \kappa \sup\limits_{I_*}\Vert \rho \Vert_{W^{3,2}(\Omega_\eta)}^2 \int_{I_*} \Vert  \nabla^2\bv \Vert_{L^2(\Omega_\eta)}^2 \Vert \partial_t \naby\eta \Vert_{L^2(\omega)}^2 \dt. 
\end{aligned}
\end{equation}

To estimate the last boundary-geometry term 
\begin{align*}
{\mathlarger{\mathfrak{C}}}_{16} &  =  - \int_{I_*} \int_{\omega} (\partial_t{\bm{\tau}})^\intercal \circ \bm{\varphi}_{\eta} \naby(\partial_t\eta\bn) \left(\naby\bm{\varphi}_{\eta}\right)^{-1}   \naby^2\bm{\varphi}_\eta   \left(\naby\bm{\varphi}_{\eta}\right)^{-1}\partial_t\bm{\varphi}_{\eta} \left(\naby\bm{\varphi}_{\eta}\right)^{-1}\partial_t\bm{\varphi}_{\eta} 
\\[0.4em]
& \qquad \qquad  \qquad \qquad \qquad \qquad\qquad \qquad\qquad \qquad \qquad \qquad\qquad \quad   \cdot \bn_\eta\circ  \bm{\varphi}_{\eta}  \det(\naby \bm{\varphi}_{\eta}) \dy\dt,
\end{align*}
we first use H\"older's inequality. This yields
\begin{equation*}
\begin{aligned}
{\mathlarger{\mathfrak{C}}}_{16} &   \lesssim \int_{I_*} \Big(\Vert \partial_t \nabla\bv \Vert_{L^{2}(\partial\Omega_\eta)} + \Vert \partial_t p\Vert_{L^{2}(\partial\Omega_\eta)}  \Big) \Vert \partial_t\naby\eta \Vert_{L^{4}(\omega)} \Vert \Dely\eta \Vert_{L^{6}(\omega)} \Vert \partial_t \eta \Vert_{L^{6}(\omega)}^2    \dt. 
\end{aligned}
\end{equation*} 
Using \eqref{eq:RenormContEq} and Young's inequality, we deduce that 
\begin{equation}\label{eq:C16}
\begin{aligned}
{\mathlarger{\mathfrak{C}}}_{16} & \lesssim \kappa \int_{I_*} \Vert \partial_t \nabla^2\bv \Vert_{L^2(\Omega_\eta)}^2 \Vert \partial_t \naby\eta \Vert_{L^2(\omega)}^2 \dt +  \kappa \sup\limits_{I_*}\Vert \rho \Vert_{W^{3,2}(\Omega_\eta)}^2 \int_{I_*} \Vert  \nabla^2\bv \Vert_{L^2(\Omega_\eta)}^2 \Vert \partial_t \naby\eta \Vert_{L^2(\omega)}^2  \dt 
\\[0.4em]
& \quad + c(\kappa)  \int_{I_*}  \Vert \partial_t \Dely\eta \Vert_{L^2(\omega)}^2 \Vert \partial_t \naby\eta \Vert_{L^2(\omega)}^2 \Vert  \naby\Dely\eta \Vert_{L^2(\omega)}^2  \dt   . 
\end{aligned}
\end{equation}

Regarding  the term 
\[
{\mathlarger{\mathfrak{C}}}_{17} = \int_{I_*} \dfrac{\dd}{\dt}  \int_\omega \big(\partial_t \naby\Dely\eta \cdot \partial^2_{t}\naby\eta \big) \dy\dt,  
\]
it is straightforward that 
\begin{align}\label{eq:C17}
{\mathlarger{\mathfrak{C}}}_{17} \lesssim \kappa \sup\limits_{I_*}\Vert \partial_t^2 \naby\eta \Vert_{L^2(\omega)}^2 + c(\kappa) \sup\limits_{I_*}\Vert \partial_t \naby\Dely\eta \Vert_{L^2(\omega)}^2.
\end{align} 
To gain control of  $\partial_t^2\Dely\eta$ and $\partial_t\naby\Dely\eta$, we now choose a higher-order test function for the shell equation.\\


\medskip 

\indent \textbf{Step 2b:}  We now test  \eqref{eq:ShellEqTime}  with $-\partial_t^2 \Dely\eta$. This  yields 

\begin{equation}\label{eq:EstimStep2b}
\begin{aligned}
&\dfrac{1}{2} \int_{I_*} \dfrac{\dd}{\dt} \int_{\omega} |\partial^2_t\naby\eta|^2 \dy\dt + \int_{I_*} \int_{\omega}  |\partial_t^2\Dely\eta|^2 \dx\dt + \dfrac{1}{2} \int_{I_*} \dfrac{\dd}{\dt} \int_{\omega} |\partial_t\naby\Dely\eta|^2 \dy\dt 
\\[0.4em]
& =   \sum\limits_{k =1}^{4}  \int_{I_*} \int_{\omega} {\mathlarger{\mathtt{F}}}^{(k)}_\eta \partial^2_t \Dely\eta \dy\dt
\\[0.4em]
& =: \sum\limits_{k =1}^{4} {\mathlarger{\mathfrak{I}}}_k .
\end{aligned}
\end{equation}\\

For the first  term 
\[
{\mathlarger{\mathfrak{I}}}_1 =  \int_{I_*} \int_{\omega}    \bn^\intercal \bigl( \partial_t\bm{\tau}\,\bn_\eta \bigr)\circ\bm{\varphi}_\eta \,\det(\naby \bm{\varphi}_{\eta})    \partial^2_t \Dely\eta \dy\dt, 
\]
it holds that 
\begin{equation*}
\begin{aligned}
{\mathlarger{\mathfrak{I}}}_1 & \lesssim   \int_{I_*}  \Vert \partial_t\nabla\bv \Vert_{W^{1/2, 2}(\partial\Omega_\eta)} \Vert \partial_t^2 \naby\eta \Vert_{W^{1/2,2}(\omega)}  \dt  + \int_{I_*} \Vert \partial_t p \Vert_{L^{2}(\partial\Omega_\eta)}  \Vert \partial_t^2 \Dely\eta \Vert_{L^{2}(\omega)}  \dt .
\end{aligned}
\end{equation*}
Using the renormalised continuity equation \eqref{eq:RenormContEq}, interpolation, and Young's inequality, we derive that  
\begin{equation}\label{eq:I1}
\begin{aligned}
{\mathlarger{\mathfrak{I}}}_1 &\lesssim \kappa\int_{I_*}  \left(   \Vert \partial^2_t \Dely\eta   \Vert_{L^2(\omega)}^2 \dt  +   \Vert \partial_t \bv \Vert_{W^{2, 2}(\Omega_\eta)}^2   \right) \dt  + c(\kappa) \int_{I_*}  \Vert \partial^2_t \naby\eta   \Vert_{L^2(\omega)}^2  \dt
\\[0.4em]
& \quad + c(\kappa) \int_{I_*} \Vert \rho \Vert_{W^{3,2}(\Omega_\eta) }^2 \Vert \bv \Vert_{W^{2, 2}(\Omega_\eta)}^2 \dt.
\end{aligned}
\end{equation}

Now consider 
\[
{\mathlarger{\mathfrak{I}}}_2 =  \int_{I_*} \int_{\omega}    \bn^\intercal \Bigl[ \bigl( (\nabla\bm{\tau})\circ\bm{\varphi}_\eta\,   \partial_t\bm{\varphi}_\eta \bigr) \,\bn_\eta\circ\bm{\varphi}_\eta \Bigr]    \,\det(\naby \bm{\varphi}_{\eta})    \partial^2_t \Dely\eta \dy\dt, 
\]
then a duality argument together with the embedding \[ L^2(\omega) \hookrightarrow W^{-1/2, 2}(\omega) \]   yields 
\begin{align*}
{\mathlarger{\mathfrak{I}}}_2  &\lesssim \int_{I_*} \left( \Vert \nabla^2\bv \Vert_{W^{1/2, 2}(\partial\Omega_\eta)}  + \Vert \nabla p \Vert_{W^{1/2, 2}(\partial\Omega_\eta)}    \right) \Vert \partial_t\eta \Vert_{L^{\infty}(\omega)}  \Vert \partial^2_t \Dely\eta\Vert_{L^{2}(\omega)} \dt.
\end{align*}
Thus,  from the trace theorem and Young's inequality, we arrive at 
\begin{equation}\label{eq:I2}
\begin{aligned}
{\mathlarger{\mathfrak{I}}}_2 &\lesssim \kappa \int_{I_*} \Vert \partial^2_t \Dely\eta   \Vert_{L^2(\omega)}^2 \dt  +  c(\kappa)\int_{I_*}  \Vert \bv \Vert_{W^{3, 2}(\Omega_\eta)}^2 \Vert \partial_t\Dely\eta \Vert_{L^2(\omega)}^2  \dt
\\[0.4em]
& \quad + c(\kappa) \int_{I_*} \Vert \rho \Vert_{W^{3,2}(\Omega_\eta) }^2 \Vert \partial_t\Dely\eta \Vert_{L^2(\omega)}^2 \dt.
\end{aligned}
\end{equation}

To estimate the term 

\[
{\mathlarger{\mathfrak{I}}}_3 =  \int_{I_*} \int_{\omega}   \bn^\intercal \Bigl[ \bm{\tau}\circ\bm{\varphi}_\eta\,
       \partial_t\bigl( \bn_\eta\circ\bm{\varphi}_\eta\bigr) \Bigr]    \,\det(\naby \bm{\varphi}_{\eta})   \partial^2_t \Dely\eta \dy\dt, 
\]
we rely on \eqref{eq:ChainRuleNormal}.  Applying H\"older inequality with $L^\infty -$ norm on $\partial_t\bigl( \bn_\eta\circ\bm{\varphi}_\eta\bigr)$, we deduce  from Young's inequality and the norm equivalence
\[
\Vert  \partial_t\bigl( \bn_\eta\circ\bm{\varphi}_\eta\bigr) \Vert_{L^{\infty}(\omega)} \approx \Vert \partial_t\naby \eta  \Vert_{L^{\infty}(\omega)},
\]
that 
\begin{equation}\label{eq:I3}
\begin{aligned}
{\mathlarger{\mathfrak{I}}}_3 &\lesssim \kappa \int_{I_*} \Vert \partial^2_t \Dely\eta   \Vert_{L^2(\omega)}^2 \dt +  c(\kappa)\int_{I_*}  \Vert \bv \Vert_{W^{2, 2}(\Omega_\eta)}^2 \Vert \partial_t\naby \Dely\eta \Vert_{L^2(\omega)}^2  \dt 
\\[0.4em]
& \quad + c(\kappa) \int_{I_*} \Vert \nabla p \Vert_{L^{2}(\Omega_\eta) }^2 \Vert \partial_t\naby \Dely\eta \Vert_{L^2(\omega)}^2  \dt.
\end{aligned}
\end{equation}

Similarly, using \eqref{eq:DetIdentity} and arguing as before, we obtain for the term 
\[
{\mathlarger{\mathfrak{I}}}_4 =  \int_{I_*} \int_{\omega}   \bn^\intercal \bigl(\bm{\tau}\,\bn_\eta\bigr)\circ \bm{\varphi}_\eta\,
     \partial_t \det(\naby \bm{\varphi}_{\eta})  \partial^2_t \Dely\eta \dy\dt, 
\]
 the following estimate 

\begin{equation}\label{eq:I4}
\begin{aligned}
{\mathlarger{\mathfrak{I}}}_4 &\lesssim \kappa \int_{I_*} \Vert \partial^2_t \Dely\eta   \Vert_{L^2(\omega)}^2 \dt +  c(\kappa)\int_{I_*}  \Vert \bv \Vert_{W^{2, 2}(\Omega_\eta)}^2 \Vert \partial_t\naby \Dely\eta \Vert_{L^2(\omega)}^2  \dt 
\\[0.4em]
& \quad + c(\kappa) \int_{I_*} \Vert \nabla p \Vert_{L^{2}(\Omega_\eta) }^2 \Vert \partial_t\naby \Dely\eta \Vert_{L^2(\omega)}^2  \dt.
\end{aligned}
\end{equation}\\

The estimates derived in {\bf{Steps~2a--2b}}, together with \cref{rem:HigherOrderEta}, show that  higher-order derivatives of the shell displacement $\eta$  can be bounded on $I_*$ in terms of the initial data, the density estimates from {\bf{Step~1}}, and higher-order Sobolev norms of the fluid variables $\bv$ and $p$.  In particular, all shell terms can be controlled provided the quantities 
\[
\Vert \bv \Vert_{L^2(I_*;W^{4,2}(\Omega_\eta))}, \qquad
\|\partial_t\bv\|_{L^2(I_*;W^{2,2}(\Omega_\eta))},
\qquad
\sup_{I_*}\|\rho\|_{W^{3,2}(\Omega_\eta)},
\quad 
\|p\|_{L^2(I_*;W^{3,2}(\Omega_\eta))},
\]
remain finite.    \\
Therefore, it remains  to complement the above conditional estimates for $\eta$ with corresponding higher-order estimates for the fluid velocity $\bv$ and pressure $p$.  The relevant result is stated  in the next lemma, which allows to close the argument.

\medskip


\begin{lemma}\label{lem:VelocityEstim}
Let $(\rho, \bv, \eta) $ be a strong solution to \eqref{eq:ContMomentEq}--\eqref{eq:interfaceCond} in the sense of  \cref{def:StrongSol}. Moreover, let Assumptions \ref{A1}--\ref{A3}, \ref{B} and the compatibility condition \eqref{eq:CC} hold.
Then  $(\bv, \rho)$ satisfy the  estimate 
\begin{equation}\label{eq:VelocityCondEstimate}
\begin{aligned}
& \int_{I_*} \left( \Vert \partial_t \bv \Vert_{W^{2,2}(\Omega_\eta)}^2   + \Vert \bv \Vert_{W^{4,2}(\Omega_\eta)}^2  + \Vert p \Vert_{W^{3,2}(\Omega_\eta)}^2 \right) \dt  + \sup\limits_{I_*} \Vert \rho \Vert_{W^{3,2}(\Omega_\eta)}^2 
\\[0.4em]
& \quad \lesssim {\mathlarger{\mathtt{E}}}_{\mathrm{acc}}  +   \Vert \bv_0 \Vert_{W^{3,2}(\Omega_{\eta_0})}^2 +  \Vert \rho_0 \Vert_{W^{3,2}(\Omega_{\eta_0})}^2 + \Vert \eta_0 \Vert_{W^{5,2}(\omega)}^2 +  \Vert \eta_* \Vert_{W^{3,2}(\omega)}^2,
\end{aligned}
\end{equation}
with the hidden constant depending on \ref{A1}--\ref{A3}, \ref{B},  $T_*$,   and  $C_0$.
\end{lemma}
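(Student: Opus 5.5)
The estimate follows from a bootstrap that combines three ingredients: the time-differentiated energy identity \eqref{eq:ShellMomEqTime}, a higher-order version of the Douglis--Nirenberg elliptic estimate from \cref{lem:MRlowerOrderVp}, and the transport bound of \cref{prop:DensityW32BoundV}. These are then closed by Gr\"onwall's inequality, where the integrable weights come from \ref{A1}, \ref{B} and ${\mathlarger{\mathtt{E}}}_{\mathrm{acc}}$. Throughout, the quantities already controlled by \cref{theo:MainResult} are treated as known: $\sup\|\nabla\bv\|_{L^2}$, $\|\bv\|_{L^2W^{2,2}}$, $\|\nabla p\|_{L^2L^2}$, $\|\rho^{1/2}\dot\bv\|_{L^2L^2}$, $\sup\|\partial_t\eta\|_{W^{1,2}}$, $\sup\|\eta\|_{W^{3,2}}$ and $\|\partial_t\eta\|_{L^2W^{2,2}}$. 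The density is bounded in $L^\infty$ by \ref{A2b}, or, under \ref{A2a}, via the characteristics formula.

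\textbf{Step (i): time-differentiated energy.} Sum \eqref{eq:ShellMomTime0}, \eqref{eq:EstimStep2b} and the bounds for ${\mathlarger{\mathfrak{C}}}_1$--${\mathlarger{\mathfrak{C}}}_{17}$ and ${\mathlarger{\mathfrak{I}}}_1$--${\mathlarger{\mathfrak{I}}}_4$. The initial values $\|\partial_t\nabla\bv(0)\|_{L^2}$ and $\|\partial_t^2\naby\eta(0)\|_{L^2}$ are expressed through the equations at $t=0$, which gives a bound by $\|\bv_0\|_{W^{3,2}}+\|\rho_0\|_{W^{2,2}}+\|\eta_0\|_{W^{5,2}}+\|\eta_*\|_{W^{3,2}}$. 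The compatibility condition \eqref{eq:CC} enters exactly here: it makes the boundary traces in the time-differentiated problem consistent at $t=0$, so the boundary integrations by parts in ${\mathlarger{\mathfrak{C}}}_8$ and ${\mathlarger{\mathfrak{C}}}_9$ produce no initial-layer term. This yields a bound on
\[
Y(t):=\sup_{(0,t)}\big(\|\partial_t\nabla\bv\|_{L^2}^2+\|\partial_t^2\naby\eta\|_{L^2}^2+\|\partial_t\naby\Dely\eta\|_{L^2}^2\big)+\int_0^t\big(\|\rho^{1/2}\partial_t^2\bv\|_{L^2}^2+\|\partial_t^3\eta\|_{L^2}^2+\|\partial_t^2\Dely\eta\|_{L^2}^2\big)\ds .
\]
The right-hand side consists of $\kappa$-small multiples of $\int\|\partial_t\bv\|_{W^{2,2}}^2$, $\int\|\bv\|_{W^{4,2}}^2$ and $\int\|p\|_{W^{3,2}}^2$, plus terms of the form $\int g(s)\,Y(s)\,\ds$ with $g\in L^1(I_*)$. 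Here $g$ is built from $\|\rho^{1/2}\bv\|_{L^\mathtt{r}}^\mathtt{s}$, $\|\partial_t\Dely\eta\|_{L^2}^2$ and $\|\bv\|_{W^{2,2}}^2$, all integrable by ${\mathlarger{\mathtt{E}}}_{\mathrm{acc}}$. Some terms also carry the factor $\sup\|\rho\|_{W^{3,2}}^2$.

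\textbf{Step (ii): elliptic regularity.} Two lifts of the elliptic estimate are needed.

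1. Apply the time-differentiated Lam\'e/Stokes problem \eqref{eq:BVP}, whose interior right-hand side is $\partial_t(\rho\dot\bv)$ and whose boundary datum is $\partial_t^2\eta\,\bn$ (plus geometric commutators). The DN-ellipticity established in \cref{lem:MRlowerOrderVp} gives
\[
\|\partial_t\bv\|_{W^{2,2}}\lesssim\|\rho^{1/2}\partial_t^2\bv\|_{L^2}+\|\partial_t\rho\|_{L^\infty}\|\dot\bv\|_{L^2}+\|\partial_t\nabla\bv\|_{L^2}\|\bv\|_{W^{2,2}}+\|\partial_t^2\eta\|_{W^{3/2,2}}+\|\partial_t p\|_{W^{1,2}} .
\]
Here $\|\partial_t p\|_{W^{1,2}}$ is handled with \eqref{eq:RenormContEq} and \ref{B}.

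2. Apply the same Shapiro--Lopatinskii argument with shifted weights, so that $(\bv,p)$ is mapped into $W^{4,2}\times W^{3,2}$ with data in $W^{2,2}\times W^{3,2}\times W^{7/2,2}$. This gives
\[
\|\bv\|_{W^{4,2}}+\|p\|_{W^{3,2}}\lesssim\|\rho\dot\bv\|_{W^{2,2}}+\|\partial_t\eta\|_{W^{7/2,2}}+\|p\|_{W^{3,2}}\text{-lower order}.
\]
The pressure is treated through the effective viscous flux, exactly as before. In $\|\rho\dot\bv\|_{W^{2,2}}$, the part $\rho\partial_t\bv$ is controlled by step 1 and by $\sup\|\rho\|_{W^{2,2}}$, and $\rho\,\bv\cdot\nabla\bv$ is handled by tame product estimates.

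For the boundary term, apply \eqref{eq:MRforLemma41} with $\sigma=3/2$, which returns $\int\|\bv\|_{W^{3,2}}^2+\int\|p\|_{W^{2,2}}^2$. By interpolation between $W^{2,2}$ (acceleration level) and $W^{4,2}$, this is $\kappa$-absorbable.

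\textbf{Step (iii): density and closure.} Insert \cref{prop:DensityW32BoundV} for $\sup\|\rho\|_{W^{3,2}}^2$. Its $\kappa\int\|\bv\|_{W^{4,2}}^2$ term is absorbed, while the other terms have weights $\|\nabla\bv\|_{L^\infty}^2$, $\|\nabla\rho\|_{L^\infty}^2$ and $\|\rho\|_{L^\infty}^2$. These weights are in $L^1(I_*)$ by \ref{B} and \ref{A2}, and they multiply $\sup_{(0,s)}\|\rho\|_{W^{3,2}}^2$ and $\|\bv\|_{W^{3,2}}^2\lesssim\|\bv\|_{W^{2,2}}\|\bv\|_{W^{4,2}}$.

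Define
\[
Z(t)=Y(t)+\sup_{(0,t)}\|\rho\|_{W^{3,2}}^2+\int_0^t\big(\|\partial_t\bv\|_{W^{2,2}}^2+\|\bv\|_{W^{4,2}}^2+\|p\|_{W^{3,2}}^2\big).
\]
Adding steps (i)--(iii) with $\kappa$ small gives an inequality of the form $Z(t)\le C\,(\text{data}+{\mathlarger{\mathtt{E}}}_{\mathrm{acc}})+\int_0^t h(s)Z(s)\,\ds$ with $h\in L^1(I_*)$. Gr\"onwall's inequality then yields \eqref{eq:VelocityCondEstimate}.

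\textbf{Main obstacle.} The delicate point is that several bounds in Step 2 carry a product $\kappa\,\sup\|\rho\|_{W^{3,2}}^2\cdot(\cdots)$ or $\kappa\,(\sup\|\partial_t\nabla\bv\|^2)\int\|\bv\|_{W^{2,2}}^2$, which are quadratic in $Z$ rather than linear. I would first try to rewrite each such product as a time integral $\int\|\bv(s)\|_{W^{2,2}}^2 Z(s)\,\ds$ before taking suprema, so that it becomes a linear Gr\"onwall weight. If this fails for some term, the fallback is a continuity argument on short subintervals, where $\int_{t_k}^{t_{k+1}}\|\bv\|_{W^{2,2}}^2$ is small, iterated finitely many times across $I_*$.
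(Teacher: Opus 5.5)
Your route differs from the paper's. You combine the time-differentiated energy identities of Steps 2a--2b with fixed-time elliptic regularity on $\Omega_{\eta(t)}$. The paper instead pulls everything back to $\Omega_{\eta_0}$, subtracts $\mathcal{E}_{\eta_0}(\partial_t\eta\bn)$, and applies Solonnikov's $L^2$ parabolic maximal regularity to the Lam\'e-type problem for $\underline{\bu}$. There the pressure is never an unknown. It enters only as the source $-(J_{\eta\to\eta_0}\underline{\rho})^{-1}\mathbf{B}_{\eta\to\eta_0}\nabla p(\underline{\rho})$ inside $\mathsf{f}$, bounded via \eqref{eq:PressureDensityEstim} by $\|\underline{\rho}\|_{W^{3,2}}$ and hence by \cref{prop:DensityW32BoundV}.

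The genuine gap is your Step (ii).2: the pair $(\bv,p)$ is not elliptically determined by $\rho\dot{\bv}$, the effective flux and the boundary datum. Take any $\phi\in C^\infty_c(\Omega_\eta)$ and constant $c_0$, and set $(\bv,p)=(\nabla\phi,\,c_0+(2\mu+\lambda)\Delta\phi)$. This pair satisfies $\mu\Delta\bv+(\lambda+\mu)\nabla\Div\bv-\nabla p=0$ in $\Omega_\eta$, $\bv=0$ on $\partial\Omega_\eta$, and $\mathfrak{F}\equiv-c_0$. So the interior datum, $\nabla\mathfrak{F}$ and the boundary datum in your $W^{4,2}\times W^{3,2}$ inequality all vanish, while $\|\bv\|_{W^{4,2}}+\|p\|_{W^{3,2}}$ is as large as you like. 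A highly oscillatory $\phi$ also defeats any strictly lower-order correction. Correspondingly, $\mathcal{A}^{(0)}(\bm{\xi})$ annihilates $(\bm{\xi},\,i(2\mu+\lambda)|\bm{\xi}|^2)$ for every $\bm{\xi}\neq\mathbf{0}_3$. So the Douglis--Nirenberg ellipticity you want to inherit from \cref{lem:MRlowerOrderVp} is not there to be shifted. Prescribing $\mathfrak{F}\in W^{3,2}$ as a datum would be circular anyway, since $\mathfrak{F}$ contains $p$ and $\Div\bv$ at top order. Top-order regularity of $p$ is hyperbolic and can only come from transport.

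The repair uses what you already have. Apply the Lam\'e operator alone, keeping $\nabla p$ on the right: $\|\bv\|_{W^{4,2}}\lesssim\|\rho\dot{\bv}\|_{W^{2,2}}+\|\nabla p\|_{W^{2,2}}+\|\partial_t\eta\|_{W^{7/2,2}}+\dots$. This is legitimate because the Lam\'e operator is strongly elliptic, so Dirichlet data satisfy the complementing condition. Then bound $\|p\|_{W^{3,2}}\lesssim1+\|\rho\|_{W^{3,2}}$ and close with \cref{prop:DensityW32BoundV}, whose term $\kappa\int\|\bv\|_{W^{4,2}}^2$ is absorbed. Your Step (ii).1 is fine read this way, since $\|\partial_t p\|_{W^{1,2}}$ is already on its right-hand side.

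You must also make the $W^{4,2}$ constant on $\Omega_{\eta(t)}$ tame in $\eta(t)$. It needs boundary regularity of order $W^{7/2,2}$ and above, which $\mathtt{E}_{\mathrm{acc}}$ does not control. The top geometric norms therefore have to multiply low norms of $\bv$ and be fed back through \cref{rem:HigherOrderEta}. The paper gets this from the pull-back, \eqref{eq:GeoEstimate}, and $\|\eta-\eta_0\|\le T_*\sup\|\partial_t\eta\|$. Your short-interval fallback for the quadratic terms matches the paper's covering argument.

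Finally, \eqref{eq:CC} plays no role in $\mathfrak{C}_8,\mathfrak{C}_9$. For a strong solution in the sense of \cref{def:StrongSol} it is just the time derivative of \eqref{eq:interfaceCond} at $t=0$, and the initial values in your energy identity are read off from the equations. In the paper, it is the first-order compatibility $\partial_t\underline{\bu}|_{t=0}=0$ on the boundary required by Solonnikov's theorem.
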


\medskip 

\begin{proof}
Throughout the proof we work on the fixed domain $\Omega_{\eta_0}$.  For this purpose, we introduce the time-dependent  diffeomorphism 
\[
\bfPsi_{\eta \to \eta_0 }  := \bfPsi_\eta \circ \left(\bfPsi_{\eta_0} \right)^{-1} \colon \Omega_{\eta_0} \to   \Omega_{\eta},  
\]
and define the pull-back density and velocity by 
\[
\underline{\rho} = \rho \circ \bfPsi_{\eta \to \eta_0 }, \qquad \underline{\bv} := \bv\circ \bfPsi_{\eta \to \eta_0 }. 
\]
Moreover, we define
\begin{equation*}\label{matrices}
\begin{aligned}
\mathbf{A}_{\eta \to \eta_0 } & =J_{\eta \to \eta_0 } \big(\nabx \bfPsi_{\eta \to \eta_0 }^{-1}\circ \bfPsi_{\eta \to \eta_0 }\big)\big( \nabx \bfPsi_{\eta \to \eta_0 }^{-1}\circ \bfPsi_{\eta \to \eta_0 } \big)^\intercal,
\\[0.4em]
\mathbf{B}_{\eta \to \eta_0 } & =J_{\eta \to \eta_0 } \left(\nabx \bfPsi_{\eta \to \eta_0 }^{-1}\circ \bfPsi_{\eta \to \eta_0 }\right)^\intercal,
\end{aligned}
\end{equation*}
where $J_{\eta \to \eta_0 } = \mathrm{det}(\nabla\bfPsi_{\eta \to \eta_0 })$.   \\

We next reduce to homogeneous boundary conditions by setting  
\[ \underline{\bu} = \underline{\bv} - \mathcal{E}_{\eta_0}(\partial_t\eta\bn). \]
For convenience, we denote by $\mathcal{L}$ the Lam\'e operator 
\[
\mathcal{L}\,\underline{\bu} := \mu\Delta\underline{\bu} + (\lambda + \mu)\nabla\Div\underline{\bu},  
\] 
and further introduce  the operators 
\begin{equation*}
\begin{aligned}
\underline{\mathsf{B}}\, \underline{\bu} & :=  - J_{\eta \to \eta_0 } \underline{\rho} \nabla\underline{\bu} \cdot \partial_t \bfPsi_{\eta \to \eta_0 }^{-1}\circ \bfPsi_{\eta \to \eta_0 }    -   \underline{\rho} \mathcal{E}_{\eta_0}(\partial_t\eta\bn)  \big( \nabla \underline{\bu} \colon \mathbf{B}_{\eta \to \eta_0 } \big) -  \underline{\rho} \underline{\bu}  \big( \nabla\mathcal{E}_{\eta_0}(\partial_t\eta\bn) \colon  \mathbf{B}_{\eta \to \eta_0 }    \big),
\end{aligned}
\end{equation*}
and 
\begin{equation*}
\begin{aligned}
\underline{\mathsf{f}} & :=  - J_{\eta \to \eta_0 } \underline{\rho} \partial_t \mathcal{E}_{\eta_0}(\partial_t\eta\bn)  -  J_{\eta \to \eta_0 } \underline{\rho} \nabla\mathcal{E}_{\eta_0}(\partial_t\eta\bn) \cdot \partial_t \bfPsi_{\eta \to \eta_0 }^{-1}\circ \bfPsi_{\eta \to \eta_0 }  - \underline{\rho} \underline{\bu} \big( \nabla \underline{\bu} \colon \mathbf{B}_{\eta \to \eta_0 } \big)  
\\[0.4em]
& \quad \;\,  - \underline{\rho} \mathcal{E}_{\eta_0}(\partial_t\eta\bn) \big( \nabla\mathcal{E}_{\eta_0}(\partial_t\eta\bn)  \colon\mathbf{B}_{\eta \to \eta_0 }  \big)  - \mathbf{B}_{\eta \to \eta_0 } \nabla p(\underline{\rho})     
\\[0.4em]
& \quad \;\, + \Div\Bigg[  \mu \left(\mathbf{A}_{\eta \to \eta_0 }  - \mathbb{I}_{3 \times 3}   \right) \nabla\underline{\bu}  + (\lambda + \mu)\biggl( \Bigl(  \left(\mathbf{B}_{\eta \to \eta_0 } - \mathbb{I}_{3 \times 3} \right) \colon \nabla\underline{\bu} \Bigr)  \mathbb{I}_{3 \times 3}  
\\[0.4em]
& \qquad \qquad + \left( \mathbf{B}_{\eta \to \eta_0 } \colon \nabla\underline{\bu}  \right)  \left( \dfrac{1}{J_{\eta \to \eta_0 }} \mathbf{B}_{\eta \to \eta_0 } -  \mathbb{I}_{3 \times 3}  \right)                 \biggr)      \Bigg]      
\\[0.4em]
& \quad \;\, + \Div\Bigg[ \mu  \mathbf{A}_{\eta \to \eta_0 }  \nabla\mathcal{E}_{\eta_0}(\partial_t\eta\bn) +   \dfrac{(\lambda + \mu)}{J_{\eta \to \eta_0}}   \Big( \mathbf{B}_{\eta \to \eta_0 } \colon \nabla \mathcal{E}_{\eta_0}(\partial_t\eta\bn)     \Big)     \mathbf{B}_{\eta \to \eta_0 }     \Bigg].
\end{aligned}
\end{equation*}
Then $\underline{\bu}$ solves the initial--boundary value problem (IBVP) (cf. \cite[Section 3, Lemma 3.2]{ngougoue2025local})

\begin{equation}\label{eq:IBVP-u}
\left\{
\begin{aligned}
J_{\eta \to \eta_0}\underline{\rho}\partial_t\underline{\bu} - \mathcal{L}\,\underline{\bu}  - \underline{\mathsf{B}} \, \underline{\bu} & = \underline{\mathsf{f}}  && \text{ in } I_*\times \Omega_{\eta_0}
\\[0.4em]
\underline{\bu} & = 0 && \text{ on }  I_* \times \partial\Omega_{\eta_0} 
\\[0.4em]
 \underline{\bu}(0) &= \underline{\bu}_0 = \underline{\bv}_0 - \mathcal{E}_{\eta_0} (\eta_*\bn) && \text{ in } \Omega_{\eta_0} .
\end{aligned}\right.
\end{equation}

We define 
\[
D\!\left(\mathcal{L}\right)  = \left\{  \underline{\bu} \in W^{2,2}(\Omega_{\eta_0})  \;\colon\;  \mathcal{L}\,\underline{\bu} \in  L^{2}(\Omega_{\eta_0}) \text{ and } \underline{\bu}\Big |_{\partial\Omega_{\eta_0}} = 0   \right\} 
\]
to be the domain of the Lam\'e operator ${\mathlarger{\mathcal{L}}}$.
We further introduce  the multiplicative perturbation
\[
\mathcal{M}(t) \;\colon\;  W^{2,2}(\Omega_{\eta_0}) \xrightarrow{\hspace*{0.7cm}} W^{2,2}(\Omega_{\eta_0});  \quad \bu \longmapsto \bigl(J_{\eta \to \eta_0}\underline{\rho}\bigr)^{-1}(t)\underline{\bu} .
\] 
Since the density $\rho$ is uniformly positive and bounded -- a consequence of Assumption \eqref{eq:PositivityAssumption}  -- 
and due to the regularity of $\rho$, it follows that $\mathcal{M}(t) $ is well-defined with $\mathcal{M}(t) \in \mathscr{L}\left( W^{2,2}(\Omega_{\eta_0})\right)$\footnote{For a Banach space $X$, we denote by $\mathscr{L}(X)$ the space of linear bounded operators $T \colon X \to X$.  }. \\
For every $t \in I_*$, we define 
\[
  \mathsf{A}(t):=   \mathcal{M}(t)\mathcal{L}, \qquad   \mathsf{B}(t):=   \mathcal{M}(t)\underline{\mathsf{B}} \quad \text{and} \quad \mathsf{f}(t) :=  \mathcal{M}(t) \underline{\mathsf{f}}. 
\]
Thus, \eqref{eq:IBVP-u} is nothing else but an IBVP of the form 

\begin{equation}\label{eq:CauchyPb-u}
\tag{$\mathtt{PB}$}\left\{
\begin{aligned}
\partial_t\underline{\bu} & =  \mathsf{A} \underline{\bu} + \mathsf{B}\underline{\bu}  + \mathsf{f}  && \text{ in } I_* \times \Omega_{\eta_0},
\\[0.4em]
\underline{\bu} & = 0 && \text{ on }  I_* \times \partial\Omega_{\eta_0} ,
\\[0.4em]
\underline{\bu}(0) & = \underline{\bu}_0   && \text{ in } \Omega_{\eta_0}.  
\end{aligned}\right.
\end{equation}\\
Importantly, we have for all $t \in [0, T_*)$, 
\[
D(\mathsf{A}(t)) = D(\mathcal{L}) = W^{2,2}(\Omega_{\eta_0})\cap W^{1,2}_0(\Omega_{\eta_0}). 
\]
Furthermore, note that the acceleration estimate \eqref{eq:AccelEstimate} yields  precisely the boundary regularity required for  maximal regularity in $W^{4,2}$. More precisely,  since $\partial\Omega_{\eta_0}$ is uniformly of class $W^{7/2, 2}$ (cf. \cref{rem:HigherOrderEta}), the elliptic problem 
\begin{equation}
\gamma \underline{\bu} + \mathsf{A}(0)\underline{\bu} =  \mathsf{h}
\end{equation}
with $\gamma \in \R$, admits a unique solution $\underline{\bu} \in W^{4,2}(\Omega_{\eta_0})\cap W^{1,2}_0(\Omega_{\eta_0})$ for  every $\mathsf{h} \in W^{2,2}(\Omega_{\eta_0})\cap W^{1,2}_0(\Omega_{\eta_0})$. Moreover,  
\begin{equation}
\Vert \underline{\bu} \Vert_{W^{4,2}(\Omega_{\eta_0})\cap W^{1,2}_0(\Omega_{\eta_0})}   \leq C \Big( \Vert \mathsf{h} \Vert_{W^{2,2}(\Omega_{\eta_0})\cap W^{1,2}_0(\Omega_{\eta_0})}  + \Vert \underline{\bu} \Vert_{W^{2,2}(\Omega_{\eta_0})\cap W^{1,2}_0(\Omega_{\eta_0})}  \Big) 
\end{equation}  
with $C > 0$. Hence, for each fixed  $t$, $\mathsf{A}(t)$ defines a closed operator with graph norm equivalent to the $W^{4,2}$-norm  (see, e.g., \cite[Chapter 14, Section 14.5, Theorem 14.5.4]{maz2009theory}). \\[-0.8em]

We now aim to apply Solonnikov's anisotropic $L^2-$theory for linear parabolic systems  of general form \cite[Chapter 5, Section 21]{solonnikov1965}. Accordingly, we verify the required hypotheses for \eqref{eq:CauchyPb-u}. \\
Since $\underline{\rho} \in W^{1,\infty}\left(   I_*; W^{2,2}(\Omega_{\eta_0})    \right)$  and satisfies the uniform positivity condition of Assumption \eqref{eq:PositivityAssumption}, the identity 
\[
\partial_t  \underline{\rho}^{-1}  = - \underline{\rho}^{-2} \partial_t \underline{\rho}
\]
together with the Banach algebra property of $W^{2,2}(\Omega_{\eta_0})$ implies 
\[
\underline{\rho}^{-1} \in W^{1,\infty}\left(   I_*; W^{2,2}(\Omega_{\eta_0})    \right).
\]
Owing to the non-degeneracy of the deformation, $J_{\eta \to \eta_0}$ is uniformly bounded away from zero.  Moreover, the Hanzawa transform (see  \cref{sec:Prelim}) yields
\[
J_{\eta \to \eta_0} \in W^{1,\infty}\left(   I_*; W^{2,2}(\Omega_{\eta_0})    \right) \qquad \Bigl(\text{ since} \;\; \partial_t\eta \in L^{\infty}\left(   I_*; W^{3,2}(\Omega_{\eta_0})    \right) \Bigr), 
\]
whence,
\[
\dfrac{1}{J_{\eta \to \eta_0 } }  \in W^{1,\infty}\left(   I_*; W^{2,2}(\Omega_{\eta_0})    \right). 
\]
In particular, the map 
\[
t \longmapsto \left( J_{\eta \to \eta_0}\underline{\rho} \right)^{-1}(t)
\] 
is Lipschitz continuous in time. Hence, 
\[
\mathsf{A} \in C\Big( I_*; \mathscr{L}\big( D(\mathsf{A} (0)),  W^{2,2}(\Omega_{\eta_0})\big)       \Big).
\]
Moreover,  $\forall\, \bx \in \overline{\Omega_{\eta_0}}$, let  $\mathcal{P}_{\mathsf{A}} $ be the principal part of  $- \mathsf{A}(t)$. That is,  $\forall\, \bm{\xi} \in \R^3,$ we have 
\[
\mathcal{P}_{\mathsf{A}} (t, \bx, \bm{\xi}) = \left(J_{\eta \to \eta_0}\underline{\rho}\right)^{-1} \Big( \mu|\bm{\xi}|^2 \mathbb{I}_{3\times3} + (\lambda + \mu)  \bm{\xi}\otimes\bm{\xi} \Big). 
\]
The operator $- \mathsf{A}(t)$ is normally elliptic in the sense that $\forall\, \bm{\xi} \in \R^3, \, |\bm{\xi}| = 1$ its spectrum 
\[
\sigma\left(\mathcal{P}_{\mathsf{A}} (t, \bx, \bm{\xi}) \right) \subset \mathbb{C}_+  := \{ z \in \mathbb{C} \colon \text{Re}(z) > 0 \}.
\]
Indeed, it follows from the rank-nullity theorem that 
\[
\sigma\left(\mathcal{P}_{\mathsf{A}} (t, \bx, \bm{\xi}) \right)  = \sigma_{\mathtt{p}}\left(\mathcal{P}_{\mathsf{A}} (t, \bx, \bm{\xi}) \right)  := \big\{ \lambda \in \mathbb{C} \;\colon\; \lambda \mathbb{I}_{3\times3} - \mathcal{P}_{\mathsf{A}} (t, \bx, \bm{\xi}) \text{ is not injective}  \big\}.
\]
If ${\bm{\mathsf{w}}}_0 \in \R^3 \setminus  \{0\}$  is an eigenvector of $\mathcal{P}_{\mathsf{A}} (t, \bx, \bm{\xi})$ with associated eigenvalue $\lambda_0$, that is,
\[
\mu\left(J_{\eta \to \eta_0}\underline{\rho}\right)^{-1}|\bm{\xi}|^2 {\bm{\mathsf{w}}}_0  + (\lambda + \mu) \left(J_{\eta \to \eta_0}\underline{\rho}\right)^{-1} \left(\bm{\xi}\otimes\bm{\xi} \right) {\bm{\mathsf{w}}}_0 = \lambda_0 {\bm{\mathsf{w}}}_0,
\]
then up to assuming $|{\bm{\mathsf{w}}}_0| = 1$, it follows from the symmetry and the  Courant--Fischer min-max  theorem 
\cite[Chapter 4, Theorem 4.2.2]{HornJohnson2012} that 
\[
\lambda_{0, \min} = \left(J_{\eta \to \eta_0}\underline{\rho}\right)^{-1} \Big( \mu |\bm{\xi}|^2 + \min\limits_{|{\bm{\mathsf{w}}}_0| = 1} (\lambda + \mu) (\bm{\xi}\cdot {\bm{\mathsf{w}}}_0)^2 \Big). 
\]
Whence, $\sigma\left(\mathcal{P}_{\mathsf{A}} (t, \bx, \bm{\xi}) \right) \subset \mathbb{C}_+ $  holds.   Hence, \eqref{eq:CauchyPb-u} is parabolic in the sense of Petrovsky. \\[-0.8em]

We next verify the Shapiro--Lopatinskii complementing condition for the Dirichlet boundary operator.  That is,  
 $\forall\,  \by \in \partial\Omega_{\eta_0}, \; \bm{\xi} \in \R^3  $ with $\bm{\xi} \cdot \bn_{\eta_0}(\by) = 0, \; \forall\, \bm{h} \in \mathbb{C}^3, \zeta \in \overline{\mathbb{C}}_+ $  with $|\bm{\xi}| + |\zeta| \neq 0$, the ODE system 
 \begin{equation}\label{eq:ODEsyst}
\tag{$\mathtt{SL}$}\left\{
\begin{aligned}
\zeta {\bm{\mathsf{w}}}(z) +  \mathcal{P}_{\mathsf{A}} \left(t, \by, \bm{\xi} + i\bn_{\eta_0}(\by)\partial_z \right) {\bm{\mathsf{w}}}(z) & = 0  && \text{ in } \R_+,
\\[0.4em]
{\bm{\mathsf{w}}}(0) & = \bm{h} ,  &&  
\end{aligned}\right.
\end{equation}
with $\partial_z := \dfrac{\partial}{\partial z}$,  admits a unique solution 
\[
{\bm{\mathsf{w}}} \in C_0 \left( \R_+ ; \mathbb{C}^3 \right) := \left\{ {\bm{\mathsf{w}}} \colon \R_+ \to \mathbb{C}^3 \text{ continuous, such that  } {\bm{\mathsf{w}}}(z) \to 0 \text { as } z \to \infty         \right\} .
\]
Indeed, put 
\[
\mathcal{S}_{(\mathtt{SL})} =  \Big\{   {\bm{\mathsf{w}}} \in C_0 \left( \R_+ ; \mathbb{C}^3 \right) \;\colon \;    \zeta {\bm{\mathsf{w}}}(z) +  \mathcal{P}_{\mathsf{A}} \left(t, \by, \bm{\xi} + i\bn_{\eta_0}(\by)\partial_z \right) {\bm{\mathsf{w}}}(z)  = 0 \;\; \forall\, z \in \R_+          \Big\}
\]
and consider the map 
\[ \mathcal{S} \;\colon\;  \mathcal{S}_{(\mathtt{SL})}  \xrightarrow{\hspace*{0.5cm}}  \mathbb{C}^3; \qquad {\bm{\mathsf{w}}} \longmapsto {\bm{\mathsf{w}}}(0) .
\]
Then $\ker\!\left( \mathcal{S}  \right) =  \left\{ 0_{C_0 \left( \R_+ ; \mathbb{C}^3 \right)} \right\} $. Importantly, by normal ellipticity, $\mathrm{dim}\left(\mathcal{S}_{(\mathtt{SL})}\right) = 3$.

Moving further with the smoothness assumptions on the coefficients of $\mathsf{A}$, we first note that 
 $\forall\, t \in I_*, $    
\begin{align*}
\mathsf{A} (t) = \sum\limits_{\mathsf{i},\mathsf{j}  = 1}^{3} a_{\mathsf{i},\mathsf{j}} \mathsf{D}_{\mathsf{i}\mathsf{j}} = \left( J_{\eta \to \eta_0}\underline{\rho} \right)^{-1}(t) \Big( \mu \mathbb{I}_{3\times 3}\Delta  + (\lambda + \mu)\nabla\otimes\nabla \Big),
\end{align*}
where 
\[
(\nabla\otimes\nabla) \underline{\bu} := \left(  \sum\limits_{\mathsf{j} = 1}^{3}    \mathsf{D}_{\mathsf{i}\mathsf{j}} \underline{u}_\mathsf{j}        \right)_{\mathsf{i} = 1}^{3}  = \nabla\Div\underline{\bu},
 \quad  \text{ with } \;\;   \mathsf{D}_{\mathsf{i}\mathsf{j}}  = \dfrac{\partial}{\partial x_\mathsf{i}} \left(  \dfrac{\partial}{\partial x_\mathsf{j}} \right) 
\]
and 
\[
[a_{\mathsf{i},\mathsf{j}}]  =  (\lambda + 2\mu)\left( J_{\eta \to \eta_0}\underline{\rho} \right)^{-1} \mathbb{I}_{3 \times 3} + \begin{bmatrix}
0 &   & 1 && 1 \\\\
1 &  & 0  && 1 \\ \\
1 && 1 && 0
\end{bmatrix} .
\]
Thus,   
\[
a_{\mathsf{i},\mathsf{i}} \in W^{1,\infty}\left(   I_*; W^{2,2}(\Omega_{\eta_0})    \right) \quad \forall\,  \mathsf{i} = \left\{ 1,2,3 \right\}. 
\]
In particular, by Sobolev embedding 
\[
W^{2,2}(\Omega_{\eta_0})   \hookrightarrow    C^{0, 1/2}(\overline{\Omega_{\eta_0}}\,), 
\]
it holds that 
\[
a_{\mathsf{i},\mathsf{i}} \in C^{0,1}\left(I_*; C^{0, 1/2}(\overline{\Omega_{\eta_0}}\,)  \right) \quad \forall\,  \mathsf{i} = \left\{ 1,2,3 \right\}. 
\]
Noteworthily, the compatibility condition \eqref{eq:CC} yields 
\[
\partial_t \underline{\bu}\circ  \bm{\varphi}_{\eta_0}\raisebox{-1.6ex}{$\Big|_{t=0}$}  = 0  \quad \text{on } \omega. 
\]
Hence, by \cite[Chapter 5, Section 21, Theorem 5.4]{solonnikov1965},  the solution $\underline{\bu}$ of \eqref{eq:CauchyPb-u} satisfies 
\begin{equation}\label{eq:uMRestimatePrior}
\begin{aligned}
&  \Vert \partial_t \underline{\bu} \Vert^2_{L^2\left(I_*;  W^{2,2}(\Omega_{\eta_0})   \right)} + \Vert \underline{\bu} \Vert^2_{L^2\left(I_*;  W^{4,2}(\Omega_{\eta_0})   \right)}  
 \\[0.4em]
&   \lesssim  \Vert \underline{\bu}_0 \Vert^2_{W^{3,2}(\Omega_{\eta_0}) } +  \Vert  \mathsf{B}\underline{\bu} \Vert^2_{L^2\left(I_*;  W^{2,2}(\Omega_{\eta_0})   \right)} +  \Vert  \mathsf{f} \Vert^2_{L^2\left(I_*;  W^{2,2}(\Omega_{\eta_0})   \right)} + \Vert  \partial_t \big( \mathsf{B}\underline{\bu} + \mathsf{f} \big) \Vert^2_{L^2\left(I_*;  L^{2}(\Omega_{\eta_0})   \right)} ,
\end{aligned}
\end{equation}
where, for arbitrary $\kappa > 0$,   the following estimates hold:
\begin{align}
\Vert  \mathsf{B}\underline{\bu}  \Vert^2_{L^2\left(I_*;  W^{2,2}(\Omega_{\eta_0})   \right)}
&\lesssim \kappa \int_{I_*} \Vert \underline{\bu} \Vert_{W^{4,2}(\Omega_{\eta_0})}^2  \dt
+ \int_{I_*} \Vert \underline{\bu} \Vert_{W^{3,2}(\Omega_{\eta_0})}^2
\Vert \partial_t \eta \Vert_{W^{5/2,2}(\omega)}^2  \dt
+ {\mathlarger{\mathtt{E}}}_{\mathrm{acc}} ,
\label{eq:BuEstim}
\\[1em]
\Vert  \partial_t \mathsf{B}\underline{\bu}  \Vert^2_{L^2\left(I_*;  L^{2}(\Omega_{\eta_0})   \right)}
&\lesssim  \kappa \int_{I_*} \Vert \partial_t \underline{\bu} \Vert_{W^{2,2}(\Omega_{\eta_0})}^2  \dt
+ {\mathlarger{\mathtt{E}}}_{\mathrm{acc}}  \nonumber
\\[0.4em]
&\quad + \int_{I_*} \Vert \underline{\bu} \Vert_{W^{3,2}(\Omega_{\eta_0})}^2
\left(
\Vert \partial_t^2 \eta \Vert_{W^{1/2, 2}(\omega)}^2
+ \Vert \partial_t \eta \Vert_{W^{5/2, 2}(\omega)}^2
\right) \dt ,   \label{eq:TimeBuEstim}
\end{align}
\\[-0.6em]
\begin{equation}\label{eq:fEstim}
\begin{aligned}
&  \Vert  \mathsf{f} + \mathcal{E}_{\eta_0}(\partial_t\eta\bn)  \Vert^2_{L^2\left(I_*;  W^{2,2}(\Omega_{\eta_0})   \right)} 
\\[0.4em]
& \lesssim    \int_{I_*} \left( \Vert \nabla  \underline{\rho} \Vert_{L^\infty(\Omega_{\eta_0})}^2  + \Vert \partial_t \Dely  \eta \Vert_{L^2(\omega)}^2 \right) \Vert   \partial_t\eta\Vert_{W^{5/2,2}(\omega)}^2  \dt  + \int_{I_*} \Vert \nabla  \underline{\rho} \Vert_{L^\infty(\Omega_{\eta_0})}^2  \Vert   \underline{\rho} \Vert_{W^{3,2}(\Omega_{\eta_0})}^2  \dt 
\\[0.4em]
& \quad  + (T_* + \kappa) \sup\limits_{I_*} \Vert   \partial_t \eta \Vert_{W^{5/2,2}(\omega)}^2  + T_*^{1/2} \int_{I_*}  \Vert   \partial_t \eta \Vert_{W^{7/2,2}(\omega)}^2  \dt  
\\[0.4em]
& \quad    + \int_{I_*} \left( \Vert \nabla  \underline{\rho} \Vert_{L^\infty(\Omega_{\eta_0})}^2  + \Vert \nabla  \underline{\bu} \Vert_{L^\infty(\Omega_{\eta_0})}^2 + \Vert \underline{\bu} \Vert_{W^{2,2}(\Omega_{\eta_0})}^2 \right) \Vert   \underline{\bu} \Vert_{W^{3,2}(\Omega_{\eta_0})}^2  \dt    + {\mathlarger{\mathtt{E}}}_{\mathrm{acc}}
\\[0.4em]
& \quad + \int_{I_*} \Vert \nabla  \underline{\rho} \Vert_{L^\infty(\Omega_{\eta_0})}^2  \Vert   \underline{\rho} \Vert_{W^{3,2}(\Omega_{\eta_0})}^2  \dt  + T_* \sup\limits_{I_*} \Vert   \underline{\bu} \Vert_{W^{3,2}(\Omega_{\eta_0})}^2  + T_*^{1/2} \int_{I_*}  \Vert   \underline{\bu} \Vert_{W^{4,2}(\Omega_{\eta_0})}^2 \dt
\\[0.4em]
& \quad   +  \int_{I_*}  \Vert \eta \Vert_{W^{9/2, 2}(\omega)}^2 \Vert \nabla\underline{\bu}\Vert_{L^\infty(\Omega_{\eta_0})}^2 \dt  + \int_{I_*} \Vert \nabla \underline{\rho} \Vert_{L^{\infty}(\Omega_{\eta_0})}^2 \Vert  p(\underline{\rho}) \Vert_{W^{3,2}(\Omega_{\eta_0})}^2 \dt  
\\[0.4em]
& \quad  +  \int_{I_*} \Vert  p(\underline{\rho}) \Vert_{W^{3,2}(\Omega_{\eta_0})}^2 \dt + \int_{I_*}  \left(  1 +   \Vert \underline{\rho} \Vert_{W^{2,2}(\Omega_{\eta_0})}^2 \right) \Vert  \nabla \underline{\rho} \Vert_{L^{\infty}(\Omega_{\eta_0})}^2    \dt ,
\end{aligned}
\end{equation}
and 

\begin{equation}\label{eq:TimefEstim}
\begin{aligned}
& \Vert \partial_t \big( \mathsf{f} + \mathcal{E}_{\eta_0}(\partial_t\eta\bn) \big)  \Vert^2_{L^2\left(I_*;  L^{2}(\Omega_{\eta_0})   \right)} 
\\[0.4em]
&\lesssim {\mathlarger{\mathtt{E}}}_{\mathrm{acc}}  +  \int_{I_*} \left( \Vert   \underline{\bv} \Vert_{W^{2,2}(\Omega_{\eta_0})}^2  +  \Vert   \partial_t \eta \Vert_{W^{5/2,2}(\omega)}^2 +  \Vert  \nabla \underline{\bv} \Vert_{L^{\infty}(\Omega_{\eta_0})}^2  + \Vert  \nabla \underline{\rho} \Vert_{L^{\infty}(\Omega_{\eta_0})}^2   \right)  \Vert \underline{\rho} \Vert_{W^{3,2}(\Omega_{\eta_0})}^2 \dt  
\\[0.4em]
&\quad +  \int_{I_*}  \Vert \underline{\bu} \Vert_{W^{2,2}(\Omega_{\eta_0})}^2 \Big( \Vert \underline{\bv} \Vert_{W^{3,2}(\Omega_{\eta_0})}^2  +  \Vert \underline{\rho} \Vert_{W^{3,2}(\Omega_{\eta_0})}^2 \Big) \dt    
\\[0.4em]
&\quad +  \int_{I_*}  \Big( \Vert \underline{\bv} \Vert_{W^{3,2}(\Omega_{\eta_0})}^2  +   \Vert \underline{\bu} \Vert_{W^{3,2}(\Omega_{\eta_0})}^2 \Big)\Vert \nabla\underline{\rho} \Vert_{L^{\infty}(\Omega_{\eta_0})}^2 \dt     + T_*  \int_{I_*} \Vert \partial_t^2 \Dely\eta \Vert_{L^{2}(\omega)}^2   \dt
\\[0.4em]
&\quad +  \int_{I_*} \Big(  \Vert   \partial_t \eta \Vert_{W^{5/2,2}(\omega)}^2  +  \Vert \partial_t \underline{\bu} \Vert_{L^{2}(\Omega_{\eta_0})}^2 + \Vert \nabla\underline{\bu} \Vert_{L^{\infty}(\Omega_{\eta_0})}^2   \Big) \Vert \underline{\bu} \Vert_{W^{3,2}(\Omega_{\eta_0})}^2    \dt
\\[0.4em]
&\quad + \kappa  \int_{I_*} \Big(  \Vert \partial_t \eta \Vert_{W^{5/2,2}(\omega)}^2 +  \Vert\partial_t^2 \eta \Vert_{L^{2}(\omega)}^2  \Big)  \Vert\partial_t \eta \Vert_{W^{3,2}(\omega)}^2 \dt  + (T_* + \kappa)\int_{I_*} \Vert \partial_t \underline{\bu} \Vert_{W^{2,2}(\Omega_{\eta_0})}^2  \dt  .
\end{aligned}
\end{equation}
For details, we refer to \cref{app:MRsourcetermEstim}.\\
Furthermore, it follows from \cite[Section 2, p. 273, (2.4)]{moser1966rapidly}  that 
\begin{equation}\label{eq:PressureDensityEstim}
\Vert p(\underline{\rho}) \Vert_{W^{3,2}(\Omega_{\eta_0})} \lesssim  \Big(1 + \Vert \underline{\rho}\Vert_{W^{3,2}(\Omega_{\eta_0})}\Big),
\end{equation}
with the hidden constant depending on $\Vert \underline{\rho}_0 \Vert_{W^{3,2}(\Omega_{\eta_0})}$. \\
Hence, by {\bf{Steps~1--2}}, \cref{rem:HigherOrderEta,prop:DensityW32BoundV},  choosing $\kappa > 0$ sufficiently small and applying Gr\"onwall's lemma,  the estimate \eqref{eq:uMRestimatePrior}  closes on sufficiently short time intervals. \\
Indeed,  Let 
\[
{\mathlarger{\mathtt{X}}}_{I_*} := \int_{I_*} \left( \Vert \partial_t \bv \Vert_{W^{2,2}(\Omega_\eta)}^2   + \Vert \bv \Vert_{W^{4,2}(\Omega_\eta)}^2  + \Vert p \Vert_{W^{3,2}(\Omega_\eta)}^2 \right) \dt  + \sup\limits_{I_*} \Vert \rho \Vert_{W^{3,2}(\Omega_\eta)}^2 . 
\]
For convenience, we define  $J_{\tau, t_0} := (t_0 , t_0 + \tau   ) \subset I_*,$
\[
   \mathtt{S}(t_0) :=   \Vert \bv(t_0) \Vert_{W^{3,2}(\Omega_{\eta(t_0)})}^2 +  \Vert \rho(t_0) \Vert_{W^{3,2}(\Omega_{\eta(t_0)})}^2 +  \Vert \eta (t_0) \Vert_{W^{5,2}(\omega)}^2 + \Vert \partial_t\eta (t_0) \Vert_{W^{3,2}(\omega)}^2, 
\]
and denote by $ {\mathlarger{\mathtt{E}}}_{\mathrm{acc}}(\tau; t_0) $   the acceleration energy   on  $J_{\tau, t_0}$.  \\
For $t_0 = 0$ and $\tau$ sufficiently small,  we get that  
\begin{equation}
{\mathlarger{\mathtt{X}}}_{J_{\tau, 0}}   \lesssim {\mathlarger{\mathtt{E}}}_{\mathrm{acc}}(\tau; 0)  +  \mathtt{S}(0) .
\end{equation}
Now let  $t_0 = \tau$.  The estimate on $J_{\tau, \tau}$ reads
\[
{\mathlarger{\mathtt{X}}}_{J_{\tau, \tau}}   \lesssim  {\mathlarger{\mathtt{E}}}_{\mathrm{acc}}(\tau; \tau) +  \mathtt{S}(\tau). 
\]
However, it follows from the continuity of the maps 
\[
t \mapsto \Vert \bv \Vert_{W^{3,2}(\Omega_{\eta })}^2, \quad t \mapsto \Vert \rho \Vert_{W^{3,2}(\Omega_{\eta })}^2,  \quad  t \mapsto \Vert  \eta \Vert_{W^{5,2}(\omega)}^2  \quad \text{and } \quad t \mapsto \Vert \partial_t \eta \Vert_{W^{3,2}(\omega)}^2 ,
\]
that 
\[
\mathtt{S}(\tau)  \leq  \sup\limits_{J_{\tau, 0}} \left(  \Vert \bv \Vert_{W^{3,2}(\Omega_{\eta })}^2 +   \Vert \rho \Vert_{W^{3,2}(\Omega_{\eta })}^2 +  \Vert \partial_t \eta \Vert_{W^{3,2}(\omega)}^2 + \Vert \eta \Vert_{W^{5,2}(\omega)}^2 \right).
\]
By interpolation and \cref{rem:HigherOrderEta}, we obtain   $\mathtt{S}(\tau) \lesssim {\mathlarger{\mathtt{E}}}_{\mathrm{acc}}(\tau; 0) +   \mathtt{S}(0)$.  Consequently,
\begin{equation}
{\mathlarger{\mathtt{X}}}_{J_{\tau, \tau}}   \lesssim {\mathlarger{\mathtt{E}}}_{\mathrm{acc}}(\tau; 0) + {\mathlarger{\mathtt{E}}}_{\mathrm{acc}}(\tau; \tau) +  \mathtt{S}(0) .
\end{equation} \\
Hence, covering $I_*$ by finitely many such intervals  yields the desired result.  
This completes the proof of  \cref{lem:VelocityEstim}. 
\end{proof}
\begin{remark}
Since 
\[
D(\mathsf{A}(t))  \xhookrightarrow{d} W^{2,2}(\Omega_{\eta_0})\cap W^{1,2}_0(\Omega_{\eta_0}) := \mathbb{X}
\]
it follows from \cite[Section 3, Theorem 3.1]{LionsMagenes1} that 
\[
L^2\left( I_*, D(\mathsf{A}(t)) \right) \cap  W^{1,2}\left( I_*, \mathbb{X}\right)   \hookrightarrow C\left(I_* ;  D(\mathsf{A}^{1/2}(t)) \right).
\]
In particular 
\begin{equation}\label{eq:ThirdOrderVelocitySup}
\sup\limits_{I_*} \Vert \underline{\bv}\Vert_{W^{3,2}(\Omega_{\eta_0})}^2  \lesssim \int_{I_*} \Vert \partial_t \underline{\bv}\Vert_{W^{2,2}(\Omega_{\eta_0})}^2   \dt +   \int_{I_*} \Vert  \underline{\bv}\Vert_{W^{4,2}(\Omega_{\eta_0})}^2   \dt ,
 \end{equation}
where the hidden constant is independent of $T_*$.

\end{remark}

\noindent By combining all preceding estimates, choosing $\kappa > 0$ sufficiently small, and applying Gr\"onwall's lemma, we conclude the proof of   \cref{theo:MainResult2}.   
\begin{remark}
 \cref{theo:MainResultFinal} is  an immediate  consequence of  \cref{theo:MainResult} and \cref{theo:MainResult2}.
\end{remark}

\begin{remark}
The derivation of the continuation criterion relies on Assumption \ref{B}, namely 
\[
\int_{I_*} \Vert \nabla\bv \Vert_{L^\infty(\Omega_{\eta})}^2 \dt  +  \int_{I_*} \Vert \nabla\rho \Vert_{L^\infty(\Omega_{\eta})}^2 \dt < \infty .
\] 
We point out, however, that the argument can also be carried out under the alternative condition 
\begin{enumerate}[label={$(B^\sharp)$}]

    \item \label{Bnew}
    (\textbf{Material acceleration condition})
 \[     \int_{I_*} \Vert \rho \dot{\bv} \Vert_{W^{1,2}(\Omega_\eta)}^2  \dt < \infty . \]   
     
\end{enumerate}
Indeed, by \cref{lem:MRlowerOrderVp}, a  maximal regularity argument yields
\begin{equation}\label{eq:MRstep1}
\begin{aligned}
\int_{I_*} \left( \Vert \bv \Vert_{W^{3,2}(\Omega_\eta)}^2 +  \Vert p \Vert_{W^{2,2}(\Omega_\eta)}^2  \right) \dt \lesssim  \int_{I_*} \Vert \rho\dot{\bv} \Vert_{W^{1,2}(\Omega_\eta)}^2 \dt + \int_{I_*} \Vert \partial_t\eta \Vert_{W^{5/2,2}(\omega)}^2 \dt. 
\end{aligned}
\end{equation}
While \eqref{eq:MRstep1}  implies, through Sobolev embedding,  that 
\[
\int_{I_*} \Vert \nabla\bv \Vert_{L^\infty(\Omega_{\eta})}^2 \dt < \infty,
\]
the corresponding control of the density, that is,
\[
 \int_{I_*} \Vert \nabla\rho \Vert_{L^\infty(\Omega_{\eta})}^2 \dt < \infty
\]
is only recovered a posteriori. Nevertheless, the estimate in   \cref{lem:VelocityEstim} can still be closed,  as the $W^{2,2}-$control of the density provided by    \eqref{eq:MRstep1} is sufficient.

\end{remark}


\section{Weak--Strong Uniqueness } \label{sec:WeakStrong}


The aim of this section is to establish the weak-strong uniqueness property for the coupled fluid--structure system \eqref{eq:ContMomentEq}--\eqref{eq:interfaceCond}. More precisely, we prove that, given the same initial data,  a finite-energy weak solution $(\rho_1, \bv_1, \eta_1)$ in the sense of  \cite[Section 2, Definition 2.1]{macha2022existence}, satisfying \eqref{eq:BasicEnergy} must coincide with a strong solution $(\rho_2, \bv_2, \eta_2)$ satisfying \eqref{eq:HigherEstimate}. The overall proof strategy follows the relative entropy framework developed in \cite{feireisl2012relative} and its extension to fluid--structure interaction problems in  \cite[Section 2]{trifunovic2023compressible}, with appropriate modifications to account for the present model.  
Since both solutions are defined on different moving domains, we follow the approach of \cite{schwarzacher2022weak} and  map the strong solution  $(\rho_2, \bv_2, \eta_2)$ onto the  domain $\Omega_{\eta_1}$ of the weak solution. To this end, we introduce the Hanzawa map   
\[
\bfPsi_{\eta_2 \to \eta_1 } :=   \bfPsi_{\eta_2}\circ \left( \bfPsi_{\eta_1 }\right)^{-1} \colon \Omega_{\eta_1} \to \Omega_{\eta_2} ,
\]
and define 
\begin{equation}
\bv^{\sharp}_2 = \bv_2 \circ \bfPsi_{\eta_2 \to \eta_1 }, \quad  \rho^{\sharp}_2 = \rho_2 \circ \bfPsi_{\eta_2 \to \eta_1 } . 
\end{equation}   
We then compare $(\rho_1, \bv_1, \eta_1)$ with the triple $(\rho^{\sharp}_2, \bv^{\sharp}_2, \eta_2)$. 
Accordingly,  for  $t \in I$, we define the relative entropy with respect to $(\rho^{\sharp}_2, \bv^{\sharp}_2, \eta_2)$  as 
\begin{equation}\label{eq:EntropyRelativeInitial}
\begin{aligned}
{\mathlarger{\EuScript{E}} }_{\mathrm{rel}}\! \left( (\rho_1, \bv_1, \eta_1) \bm{\Big|} (\rho^{\sharp}_2, \bv^{\sharp}_2, \eta_2) \right) (t) & :=  \dfrac{1}{2}  \int_{\Omega_{\eta_1(t)}} \rho_1 |\bv_1 - \bv^{\sharp}_2|^2 \dx  +  \int_{\Omega_{\eta_1(t)}}  {\mathlarger{\mathcal{H}}}\!\left(\rho_1 \bm{\big|}\rho^\sharp_2\right) \dx  
\\[0.4em]
& \quad + \dfrac{1}{2} \int_{\omega} \bigl(  |\partial_t \eta_1 - \partial_t \eta_2|^2  + |\Dely \eta_1 - \Dely \eta_2|^2 \bigr) \dy ,  
\end{aligned}
\end{equation}
where 
\[
{\mathlarger{\mathcal{H}}}\!\left(\rho_1 \bm{\big|}\rho^\sharp_2\right) :=  H(\rho_1) - H(\rho^\sharp_2) - H'(\rho_2^\sharp)(\rho_1 - \rho^\sharp_2).
\]
The following theorem shows that the relative entropy  \eqref{eq:EntropyRelativeInitial} satisfies a suitable inequality, which forms the basis of the weak-strong uniqueness  argument.

\begin{theorem}
Let  $(\rho_1, \bv_1, \eta_1)$ be a finite-energy weak solution of \eqref{eq:ContMomentEq}--\eqref{eq:interfaceCond} with initial data $(\rho_1^0, \bv_1^0, \eta_1^0, \eta_{*, 1})$  in the sense of  \cite[Section 2, Definition 2.1]{macha2022existence}. Moreover, let $(\rho_2, \bv_2, \eta_2)$ be a strong solution of \eqref{eq:ContMomentEq}--\eqref{eq:interfaceCond} with initial data  $(\rho_2^0, \bv_2^0, \eta_2^0, \eta_{*, 2})$ in the sense of \cref{def:StrongSol}.  Suppose, in addition,  that 
\begin{equation}
\eta_1 \in L^\infty\left(I, C^1(\omega) \right) .
\end{equation}
Then the following relative energy inequality holds:
\begin{equation}\label{eq:RelativEnergyWeakL}
\begin{aligned}
&{\mathlarger{\EuScript{E}} }_{\mathrm{rel}}\! \left( (\rho_1, \bv_1, \eta_1) \bm{\Big|} (\rho^{\sharp}_2, \bv^{\sharp}_2, \eta_2) \right)\!(t) + \int_0^t  \int_{\Omega_{\eta_1(t)}}  \mathbb{S}\left(\nabla\bv_1 - \nabla\bv^\sharp_2\right)\colon \left(\nabla\bv_1 - \nabla\bv^\sharp_2\right)  \dx \ds 
\\
& +   \int_0^t \int_\omega \bigl|\partial_t\naby\eta_1 - \partial_t\naby\eta_2\bigr|^2 \dy\ds  
\\
&\leq  {\mathlarger{\EuScript{E}} }_{\mathrm{rel}}\! \left( (\rho_1, \bv_1, \eta_1) \bm{\Big|} (\rho^{\sharp}_2, \bv^{\sharp}_2, \eta_2) \right)\!(0)   + \int_0^t  {\mathlarger{\EuScript{R}}}\!\left(\rho_1, \bv_1, \eta_1 \bm{\big|} \rho^{\sharp}_2, \bv^{\sharp}_2, \eta_2 \right)   \ds ,
\end{aligned}
\end{equation}
where 
\begin{align*}
{\mathlarger{\EuScript{R}}}\!\left(\rho_1, \bv_1, \eta_1 \bm{\big|} \rho^{\sharp}_2, \bv^{\sharp}_2, \eta_2 \right) & :=  \int_{\Omega_{\eta_1(t)}} \!\!\!\!\! \mathbb{S}\left(\nabla\bv^\sharp_2\right)\!\colon \! \left(\nabla\bv^\sharp_2 - \nabla\bv_1\!\right)  \dx  +  \int_{\Omega_{\eta_1(t)}} \!\!\!\!\! \rho_1 \left(\partial_s \bv^\sharp_2 + \bv_1\cdot \nabla\bv^\sharp_2\right) \cdot \left(\!\bv^\sharp_2 - \bv_1\!\right) \dx 
\\[0.2em]
& \quad + \dfrac{a\gamma}{\gamma -1}  \int_{\Omega_{\eta_1(t)}}  \left[  \left(\rho^\sharp_2\bv^\sharp_2 - \rho_1\bv_1\right)\cdot \nabla\left(\rho^\sharp_2\right)^{\gamma -1}  -  \left(\rho_1 - \rho^\sharp_2\right)\partial_s \left(\rho_2^\sharp\right)^{\gamma -1} \right] \dx
\\[0.2em]
&\quad +   \int_{\Omega_{\eta_1(t)}} \left(p(\rho^\sharp_2)  - p(\rho_1)\right) \Div\!\left(\bv^\sharp_2\right) \dx  
\\[0.2em]
&\quad - \int_{\omega} \!(\partial_s\eta_2 - \partial_s\eta_1)\bn\cdot \!\left(p(\rho^\sharp_2)\bn_{\eta_1}\!\right)\!\circ \bm{\varphi}_{\eta_1}  \mathrm{det}(\naby\bm{\varphi}_{\eta_1}) \dy   + \int_{\omega}\! (\partial_s\eta_2 - \partial_s\eta_1)\Dely^2\eta_2 \dy
\\[0.2em]
&\quad  + \int_{\omega} (\partial_s\eta_2 - \partial_s\eta_1)\partial_s^2\eta_2 \dy    -  \int_{\omega} (\partial_s\eta_2 - \partial_s\eta_1)\partial_t \Dely\eta_2 \dy .
\end{align*}

\end{theorem}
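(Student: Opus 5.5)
The relative energy inequality \eqref{eq:RelativEnergyWeakL} will be obtained by the standard relative-entropy bookkeeping of \cite{feireisl2012relative}, adapted to the moving-domain setting as in \cite{trifunovic2023compressible}. We expand the three quadratic pieces of ${\EuScript{E}}_{\mathrm{rel}}$:
\[
\tfrac12\rho_1|\bv_1-\bv_2^\sharp|^2=\tfrac12\rho_1|\bv_1|^2-\rho_1\bv_1\cdot\bv_2^\sharp+\tfrac12\rho_1|\bv_2^\sharp|^2,
\]
and write $\mathcal{H}(\rho_1|\rho_2^\sharp)=H(\rho_1)-H'(\rho_2^\sharp)\rho_1+p(\rho_2^\sharp)$. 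The shell terms are expanded in the same way. This splits ${\EuScript{E}}_{\mathrm{rel}}(t)$ into three groups: the total energy of the weak solution, which is controlled by the energy inequality \eqref{eq:BasicEnergy} in its time-localised form from \cite{macha2022existence}; a family of cross terms, each linear in the weak solution; and purely strong terms.

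The cross terms are computed from the weak formulation of \cite{macha2022existence}, each with a suitable test pair.
\begin{itemize}
\item Momentum equation: test with $(\bv_2^\sharp,\partial_t\eta_2)$. This pair is admissible because it is regular, by \cref{def:StrongSol}. It is also boundary compatible only up to a defect, since $\bv_2^\sharp\circ\bm{\varphi}_{\eta_1}=\partial_t\eta_2\bn$. This follows from $\bfPsi_{\eta_2\to\eta_1}$ mapping $\bm{\varphi}_{\eta_1}(\by)\mapsto\bm{\varphi}_{\eta_2}(\by)$.
\item Continuity equation: test with $\tfrac12|\bv_2^\sharp|^2$ to handle $\tfrac12\rho_1|\bv_2^\sharp|^2$.
\item Continuity equation: test with $H'(\rho_2^\sharp)=\tfrac{a\gamma}{\gamma-1}(\rho_2^\sharp)^{\gamma-1}$ to handle $-H'(\rho_2^\sharp)\rho_1$.
\end{itemize}
The weak solution lives on $\Omega_{\eta_1}$, so all of these are Reynolds-type identities on $\Omega_{\eta_1(t)}$. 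The boundary fluxes vanish or combine because $\bv_1\circ\bm{\varphi}_{\eta_1}=\partial_t\eta_1\bn$.

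The purely strong terms $\int\! p(\rho_2^\sharp)$ and $\tfrac12\int(|\partial_t\eta_2|^2+|\Dely\eta_2|^2)$ are differentiated directly in time. The shell equation \eqref{eq:ShellEq} for $\eta_2$ converts the $\partial_s^2\eta_2$ and $\Dely^2\eta_2$ contributions into the listed integrals over $\omega$. Since the extra viscoelastic term $-\partial_t\Dely\eta$ is not absorbed, it appears as the explicit $\partial_t\Dely\eta_2$ term in $\EuScript{R}$. The remaining dissipation $|\partial_t\naby(\eta_1-\eta_2)|^2$ is completed by adding and subtracting $\partial_t\naby\eta_1\cdot\partial_t\naby\eta_2$.

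Next, summing the contributions, the viscous terms are completed to the square $\mathbb{S}(\nabla\bv_1-\nabla\bv_2^\sharp):(\nabla\bv_1-\nabla\bv_2^\sharp)$, leaving $\mathbb{S}(\nabla\bv_2^\sharp):(\nabla\bv_2^\sharp-\nabla\bv_1)$. The convective and time-derivative pieces of $\bv_2^\sharp$ combine into $\rho_1(\partial_s\bv_2^\sharp+\bv_1\cdot\nabla\bv_2^\sharp)\cdot(\bv_2^\sharp-\bv_1)$. The pressure pieces, after integrating $\rho_1\bv_1\cdot\nabla H'(\rho_2^\sharp)$ and using $\rho H''(\rho)=p'(\rho)$, yield the $\frac{a\gamma}{\gamma-1}$-bracket and the $(p(\rho_2^\sharp)-p(\rho_1))\Div\bv_2^\sharp$ term. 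The remaining boundary term $p(\rho_2^\sharp)\,\bv\cdot\bn_{\eta_1}$ from integrating by parts the pressure gradient gives the $\omega$-integral with $\mathrm{det}(\naby\bm{\varphi}_{\eta_1})$ and $(\partial_s\eta_2-\partial_s\eta_1)$.

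The main obstacle is that $(\rho_2^\sharp,\bv_2^\sharp)$ is \emph{not} a solution on $\Omega_{\eta_1}$. We will not substitute the strong equations at all, which is why $\EuScript{R}$ still contains $\partial_s\bv_2^\sharp$, $\partial_s\rho_2^\sharp$ and the raw stress. The inequality is then an identity-type consequence of the weak formulation alone, and no transformed strong equations are needed. The delicate points are twofold:
\begin{itemize}
\item Admissibility of $\bv_2^\sharp$, $(\rho_2^\sharp)^{\gamma-1}$ and $|\bv_2^\sharp|^2$ as test functions on $I\times\Omega_{\eta_1}$. This needs $W^{1,\infty}$-type control of $\bfPsi_{\eta_2\to\eta_1}$, which is where $\eta_1\in L^\infty(I;C^1(\omega))$ and the strong regularity of $\eta_2$ enter. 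We also need $\rho_2>0$ bounded away from zero so that $(\rho_2^\sharp)^{\gamma-1}$ is $W^{1,\infty}$.
\item The time derivative of the $\mathcal{H}$ term. The weak solution only satisfies an energy \emph{inequality}, so this term is the source of the ``$\le$''. We would first verify each testing step by density or mollification in time on the fixed reference domain via $\bfPsi_{\eta_1}$, then pass to the limit.
\end{itemize}
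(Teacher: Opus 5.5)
Your proposal is correct and matches the paper's proof. It uses the same testings: the momentum--shell equation with $(\bv_2^\sharp,\partial_t\eta_2)$, and the continuity equation with $\tfrac12|\bv_2^\sharp|^2$ and with $\tfrac{a\gamma}{\gamma-1}(\rho_2^\sharp)^{\gamma-1}$. It also uses Reynolds' theorem for $\int_{\Omega_{\eta_1(t)}}p(\rho_2^\sharp)\dx$, combined with integrating $\bv_2^\sharp\cdot\nabla p(\rho_2^\sharp)$ by parts to produce the $(\partial_s\eta_2-\partial_s\eta_1)$ boundary term, the rewriting of the shell cross terms, and the weak energy inequality, with no use of the strong equations.

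One internal inconsistency needs fixing. The shell equation for $\eta_2$ plays no role: the $\partial_s^2\eta_2$ and $\Dely^2\eta_2$ terms come from differentiating $\tfrac12\Vert\partial_t\eta_2\Vert_{L^2(\omega)}^2+\tfrac12\Vert\Dely\eta_2\Vert_{L^2(\omega)}^2-\int_\omega\Dely\eta_1\Dely\eta_2\dy$ together with the tested cross terms. The $\partial_s\Dely\eta_2$ term comes from completing the square in the damping term. Also, the pair $(\bv_2^\sharp,\partial_t\eta_2)$ is exactly compatible, not ``up to a defect'', since $\bv_2^\sharp\circ\bm{\varphi}_{\eta_1}=\partial_t\eta_2\bn$.
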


\begin{proof}
The proof closely mirrors that of \cite[Section 3.2]{trifunovic2023compressible}, with a slight modification accounting for the more general  geometric deformation considered here. For completeness, we present the argument in detail.  

Since 
\(\eta_1 \in W^{1,\infty}\left(I; L^{2}(\omega)\right) \cap L^\infty\left(I, W^{2,2}(\omega)\right) \),  
the Hanzawa map $\bfPsi_{\eta_2 \to \eta_1 }$ is a  $W^{2,2}-$diffeomorphism for a.e. $t \in I$. Together with the regularity of $(\rho_2, \bv_2, \eta_2)$,  it follows that $(\rho^{\sharp}_2, \bv^{\sharp}_2, \eta_2)$  lies in the closure of the admissible smooth test class for the weak solution $(\rho_1, \bv_1, \eta_1)$.  
Moreover, by construction, 
\begin{equation}\label{eq:PullbackKinematicCond}
\bv^{\sharp}_2\circ \bm{\varphi}_{\eta_1} = \bv_2\circ\bm{\varphi}_{\eta_2} =  (\partial_t\eta_2) \bn ,
\end{equation}
so that $(\bv^{\sharp}_2, \partial_t\eta_2)$ satisfies the compatibility conditions required for admissible test functions in the weak formulation.   Therefore, by a density argument, we may use $(\bv^{\sharp}_2, \partial_t\eta_2), \; |\bv^\sharp_2|^2$, and $\left(\rho^\sharp_2\right)^{\gamma -1}$ as test functions in the weak formulation. \\
Thus, testing the coupled momentum-structure equation $\eqref{eq:ContMomentEq}_2-$\eqref{eq:ShellEq} with  $(\bv^{\sharp}_2, \partial_s\eta_2)$, and using Reynolds' transport theorem, we derive that 
\begin{equation}\label{eq:TestFunc1}
\begin{aligned}
&\int_{\Omega_{\eta_1(t)}} \left(\rho_1 \bv_1\cdot \bv^\sharp_2 \right)(t) \dx + \int_\omega \left(\partial_t\eta_1 \partial_t\eta_2\right) (t) \dy
\\[0.4em]
& = \int_{\Omega_{\eta_1(0)}} \left(\rho_1 \bv_1\cdot \bv^\sharp_2 \right)(0) \dx + \int_\omega \left(\partial_t\eta_1 \partial_t\eta_2\right) (0) \dy +  \int_0^t \int_{\Omega_{\eta_1(t)}} \left(\rho_1 \bv_1\cdot \partial_s \bv^\sharp_2\right) \dx\ds
\\[0.4em]
&\quad +  \int_0^t \int_{\Omega_{\eta_1(t)}} \!\!\! \left( \rho_1 \bv_1\otimes\bv_1 \right) \colon \nabla\bv^\sharp_2  \dx\ds   +  \int_0^t \int_{\Omega_{\eta_1(t)}} \!\!\!  p(\rho_1)\Div\bv^\sharp_2 \dx\ds   -  \int_0^t \!\int_{\Omega_{\eta_1(t)}} \!\!\!\!\!  \mathbb{S}(\nabla\bv_1)\colon \nabla\bv^\sharp_2  \dx\ds
\\[0.4em]
&\quad + \int_0^t \int_\omega \partial_s\eta_1\partial_s^2 \eta_2 \dy\ds -  \int_0^t \int_\omega \Dely\eta_1 \partial_s\Dely\eta_2 \dy\ds -  \int_0^t \int_\omega \partial_s\naby\eta_1\cdot\partial_s\naby\eta_2 \dy\ds .
\end{aligned}
\end{equation}
Moreover, by choosing $\dfrac{1}{2}|\bv^\sharp_2|^2$  as test function for the continuity equation $\eqref{eq:ContMomentEq}_1$, we obtain 
\begin{equation}\label{eq:TestFunc2}
\begin{aligned}
\dfrac{1}{2} \int_{\Omega_{\eta_1(t)}} \left(\rho_1 |\bv^\sharp_2|^2\right) (t) \dx & = \dfrac{1}{2} \int_{\Omega_{\eta_1(0)}} \left(\rho_1 |\bv^\sharp_2|^2\right) (0) \dx  + \int_0^t \int_{\Omega_{\eta_1(t)}} \rho_1 \partial_s \bv^\sharp_2 \cdot \bv^\sharp_2 \dx\ds 
\\[0.4em]
&\quad + \int_0^t \int_{\Omega_{\eta_1(t)}} \rho_1 \left( \bv_1\cdot \nabla\bv^\sharp_2 \right)\cdot \bv^\sharp_2 \dx\ds .
\end{aligned}
\end{equation}
We deduce from \eqref{eq:TestFunc2} that, for the choice of  $\dfrac{a\gamma}{\gamma -1} \left(\rho^\sharp_2\right)^{\gamma-1} $ as test function, it holds that 
\begin{equation}\label{eq:TestFunc3}
\begin{aligned}
\dfrac{a\gamma}{\gamma-1} \int_{\Omega_{\eta_1(t)}} \!\! \left(\rho_1\left(\rho^\sharp_2\right)^{\gamma-1} \right)(t) \dx & =  \dfrac{a\gamma}{\gamma-1} \Bigg[ \int_{\Omega_{\eta_1(0)}} \! \left(\rho_1\left(\rho^\sharp_2\right)^{\gamma-1} \right)(0) \dx 
\\[0.4em]
&\quad + \int_0^t\int_{\Omega_{\eta_1(t)}} \left( \rho_1 \partial_s \!\left(\rho^\sharp_2\right)^{\gamma-1} +  \rho_1\bv_1 \cdot \nabla\!\left(\rho^\sharp_2\right)^{\gamma-1} \right) \!\dx\ds  \Bigg] .
\end{aligned}
\end{equation} 
However, observe that 
\begin{equation*}
\begin{aligned}
\dfrac{a\gamma}{\gamma-1} \int_0^t \int_{\Omega_{\eta_1(t)}} \!\! \rho^\sharp_2\bv^\sharp_2\cdot \nabla \!\left(\rho^\sharp_2\right)^{\gamma-1}  \!\! \!\dx\ds  & =  \int_0^t \int_{\Omega_{\eta_1(t)}}  \bv^\sharp_2 \nabla p(\rho^\sharp_2) \dx\ds 
\\[0.4em]
& = -  \int_0^t \int_{\Omega_{\eta_1(t)}} \!\! p(\rho^\sharp_2) \Div\bv^\sharp_2 \dx\ds + \int_0^t \int_{\partial\Omega_{\eta_1(t)}} \!\!p(\rho^\sharp_2) \bv^\sharp_2 \cdot \bn_{\eta_1} \dH\ds ,
\end{aligned} 
\end{equation*}
which after using the kinematic boundary condition \eqref{eq:PullbackKinematicCond}, 
further reduces to 
\begin{equation}\label{eq:DensityPullTest1}
\begin{aligned}
\dfrac{a\gamma}{\gamma-1} \int_0^t \int_{\Omega_{\eta_1(t)}} \rho^\sharp_2\bv^\sharp_2\cdot \nabla \!\left(\rho^\sharp_2\right)^{\gamma-1}  \!\! \!\dx\ds  
& = -  \int_0^t \int_{\Omega_{\eta_1(t)}}  p(\rho^\sharp_2) \Div\bv^\sharp_2 \dx\ds 
\\[0.4em]
&\quad + \int_0^t \int_\omega  \partial_s\eta_2\, \bn \cdot \left( p(\rho^\sharp_2)\bn_{\eta_1}\right)\circ \bm{\varphi}_{\eta_1}  \mathrm{det}(\naby\bm{\varphi}_{\eta_1}) \dy\ds .
\end{aligned} 
\end{equation}
Furthermore,  the Reynolds' transport theorem yields 
\begin{equation*}
\begin{aligned}
\int_0^t \dfrac{\dd}{\ds} \int_{\Omega_{\eta_1(t)}} p(\rho^\sharp_2) \dx\ds = \int_0^t \int_{\Omega_{\eta_1(t)}} \partial_s p(\rho^\sharp_2) \dx\ds   + \int_0^t \int_{\partial\Omega_{\eta_1(t)}} p(\rho^\sharp_2) \bv_1 \cdot \bn_{\eta_1} \dH\ds ,
\end{aligned}
\end{equation*}
which together with the identity 
\[
a\gamma \left(\rho^\sharp_2\right)^{\gamma-1} \partial_s  \rho^\sharp_2  = \partial_s p(\rho^\sharp_2)  =a \rho^\sharp_2\partial_s \left(\rho^\sharp_2\right)^{\gamma-1} + a\left(\rho^\sharp_2\right)^{\gamma-1} \partial_s  \rho^\sharp_2 ,
\]
imply that 
\begin{equation}\label{eq:DensityPullTest2}
\begin{aligned}
 \int_{\Omega_{\eta_1(t)}} p(\rho^\sharp_2) (t) \dx & = \int_{\Omega_{\eta_1(0)}} p(\rho^\sharp_2) (0) \dx +  \dfrac{a\gamma}{\gamma -1}  \int_0^t \int_{\Omega_{\eta_1(t)}}  \rho^\sharp_2\partial_s \left(\rho^\sharp_2\right)^{\gamma-1}  \dx\ds 
 \\[0.4em]
 & \quad +  \int_0^t \int_\omega  \partial_s\eta_1\,\bn\cdot \left(p(\rho^\sharp_2)\bn_{\eta_1}\right)\circ \bm{\varphi}_{\eta_1}  \mathrm{det}(\naby\bm{\varphi}_{\eta_1})\dy\ds .
\end{aligned}
\end{equation}
Combining \eqref{eq:DensityPullTest1} and \eqref{eq:DensityPullTest2}, we deduce that  
\begin{equation}\label{eq:TestFunc4}
\begin{aligned}
 \int_{\Omega_{\eta_1(t)}} \! p(\rho^\sharp_2) (t) \dx & = \int_{\Omega_{\eta_1(0)}} \!p(\rho^\sharp_2) (0) \dx 
 \\[0.4em]
 & \quad + \int_0^t \int_\omega  \left( \partial_s\eta_1 - \partial_s\eta_2 \right)\bn\cdot \left(p(\rho^\sharp_2)\bn_{\eta_1}\!\right) \circ \bm{\varphi}_{\eta_1}  \mathrm{det}(\naby\bm{\varphi}_{\eta_1}) \dy\ds 
 \\[0.4em]
 & \quad +    \dfrac{a\gamma}{\gamma -1}  \int_0^t \int_{\Omega_{\eta_1(t)}} \left( \rho^\sharp_2\bv^\sharp_2\cdot \nabla \left(\rho^\sharp_2\right)^{\gamma-1}  +   \rho^\sharp_2\partial_s \left(\rho^\sharp_2\right)^{\gamma-1}  \right) \dx\ds 
 \\[0.4em]
 & \quad + \int_0^t \int_{\Omega_{\eta_1(t)}}  p(\rho^\sharp_2) \Div\bv^\sharp_2 \dx\ds .
\end{aligned}
\end{equation}
Importantly,  the last three terms of \eqref{eq:TestFunc1} can be rewritten so as to isolate some relevant energy  terms.\\
Indeed, for the first term, it holds that 
\begin{equation}
\int_0^t \int_\omega \partial_s\eta_1\partial_s^2 \eta_2 \dy\ds = \int_0^t \int_\omega \left(\partial_s\eta_1 - \partial_s\eta_2\right)\partial_s^2 \eta_2 \dy\ds + \dfrac{1}{2} \int_\omega |\partial_t\eta_2|^2 (t) \dy -  \dfrac{1}{2} \int_\omega |\partial_t\eta_2|^2 (0) \dy.
\end{equation}
Considering the bending term, we get that 
\begin{equation*}
\begin{aligned}
 -  \int_0^t \int_\omega \Dely\eta_1 \partial_s\Dely\eta_2 \dy\ds & = -  \int_0^t \int_\omega \eta_1 \partial_s\Dely^2\eta_2 \dy\ds - \dfrac{1}{2} \int_0^t \dfrac{\dd}{\ds} \int_\omega |\Dely\eta_2|^2 \dy\ds 
 \\
 &\quad + \dfrac{1}{2} \int_0^t \dfrac{\dd}{\ds} \int_\omega |\Dely\eta_2|^2 \dy\ds
 \\[0.4em]
 & = \int_0^t \int_\omega \partial_s\eta_1 \Dely^2\eta_2 \dy\ds  -  \int_0^t \dfrac{\dd}{\ds} \int_\omega \eta_1\Dely^2\eta_2 \dy\ds 
 \\[0.4em]
 &\quad - \int_0^t \int_\omega \partial_s\Dely\eta_2\Dely\eta_2 \dy\ds + \dfrac{1}{2} \int_\omega |\Dely\eta_2|^2 (t)\dy  -  \dfrac{1}{2} \int_\omega |\Dely\eta_2|^2 (0)\dy .
\end{aligned}
\end{equation*}
That is,
 \begin{equation}
\begin{aligned}
-  \int_0^t \int_\omega \Dely\eta_1 \partial_s\Dely\eta_2 \dy\ds & = \int_0^t\int_\omega \left( \partial_s\eta_1 - \partial_s\eta_2 \right) \Dely^2\eta_2 \dy\ds   - \int_\omega  \left(\Dely\eta_1\Dely\eta_2 \right)(t)\dy 
\\[0.4em]
&\quad +  \int_\omega \! \left(\Dely\eta_1\Dely\eta_2\right) \!(0)\dy  
\\[0.4em]
&\quad +  \dfrac{1}{2} \int_\omega |\Dely\eta_2|^2 (t)\dy  -  \dfrac{1}{2} \int_\omega \!|\Dely\eta_2|^2 (0)\dy .
\end{aligned}
\end{equation}
Finally,   for the viscous damping term,  we derive that  
\begin{equation*}
\begin{aligned}
 -  \int_0^t \int_\omega \partial_s\naby\eta_1\cdot\partial_s\naby\eta_2 \dy\ds & = \int_0^t \int_\omega \partial_s\Dely\eta_1\partial_s\eta_2 \dy\ds  +  \int_0^t \int_\omega  \partial_s\Dely\eta_2 \partial_s\eta_2   \dy\ds 
 \\[0.4em]
 &\quad +   \int_0^t \int_\omega |\partial_s\naby\eta_2|^2  \dy\ds
 \\[0.4em]
 &=   \int_0^t \int_\omega \partial_s\Dely\eta_2 (\partial_s\eta_2 -\partial_s\eta_1)\dy\ds + 2\int_0^t \int_\omega \partial_s\Dely\eta_1\partial_s\eta_2 \dy\ds 
 \\[0.4em]
 &\quad +   \int_0^t \int_\omega |\partial_s\naby\eta_2|^2  \dy\ds. 
\end{aligned}
\end{equation*}
That is,
 \begin{equation}
\begin{aligned}
-  \int_0^t \int_\omega \partial_s\naby\eta_1\cdot\partial_s\naby\eta_2 \dy\ds & =    \int_0^t \int_\omega |\partial_s\naby\eta_2|^2  \dy\ds +  \int_0^t \int_\omega \partial_s\Dely\eta_2 (\partial_s\eta_2 -\partial_s\eta_1)\dy\ds 
\\[0.4em]
&\quad - 2\int_0^t \int_\omega \partial_s\naby\eta_1\cdot \partial_s\naby\eta_2 \dy\ds .
\end{aligned}
\end{equation}
Moreover, the finite-energy weak solution $(\rho_1, \bv_1, \eta_1)$ satisfies \eqref{eq:BasicEnergy}, that is, 
\begin{equation}\label{eq:BasicE}
\begin{aligned}
&\dfrac{1}{2} \int_{\Omega_{\eta_1(t)}} \left(\rho_1|\bv_1|^2\right) (t)\dx +  \int_{\Omega_{\eta_1(t)}}  H(\rho_1)(t) \dx +  \int_0^t \int_{\Omega_{\eta_1(t)}}  \mathbb{S}(\nabla\bv_1)\colon \nabla\bv_1 \dx\ds 
\\[0.4em]
&\quad +  \dfrac{1}{2} \int_\omega |\partial_t\eta_1 (t)|^2\dy +  \dfrac{1}{2} \int_\omega |\Dely\eta_1(t)|^2\dy + \int_0^t \int_\omega |\partial_s\naby\eta_1|^2\dy\ds
\\[0.4em]
& \leq \dfrac{1}{2} \int_{\Omega_{\eta_1^0}} \rho_1^0 \left(\bv_1^0\right)^2  \dx + \int_{\Omega_{\eta_1^0}} H(\rho_1^0) \dx + \dfrac{1}{2} \int_\omega |\eta_{1,*}|^2 \dy  + \dfrac{1}{2} \int_\omega |\Delta\eta_1^0|^2 \dy.
\end{aligned}
\end{equation}
Therefore, summing  \eqref{eq:BasicE} - \eqref{eq:TestFunc1} + \eqref{eq:TestFunc2} - \eqref{eq:TestFunc3} + \eqref{eq:TestFunc4}, we get the desired result.

\end{proof}

Although the relative energy inequality  \eqref{eq:RelativEnergyWeakL}   allows for a direct comparison with  the transported strong solution $(\rho^{\sharp}_2, \bv^{\sharp}_2, \eta_2)$, it is not suitable for weak-strong uniqueness. Indeed, the triple $(\rho^{\sharp}_2, \bv^{\sharp}_2, \eta_2)$ no longer retains the regularity required to qualify as a strong solution in the sense of  \cref{def:StrongSol}.  Moreover, both  $(\rho^{\sharp}_2, \bv^{\sharp}_2, \eta_2)$ and $(\rho_1, \bv_1, \eta_1)$  do not satisfy the same system of equations   on $\Omega_{\eta_1}$.  For these reasons, we transfer both solutions as well as the relative energy inequality  \eqref{eq:RelativEnergyWeakL} to the fixed reference domain $\Omega$.
Let
\[
\underline{\rho}_{\mathsf{k}} := \rho_\mathsf{k} \circ \bfPsi_{\eta_\mathsf{k}} , \quad  \underline{\bv}_\mathsf{k} := \bv_\mathsf{k}\circ \bfPsi_{\eta_\mathsf{k}}, \quad J_{\eta_{\mathsf{k}}} := \mathrm{det}(\nabla\bfPsi_{\eta_{\mathsf{k}}})  \qquad \forall\; \mathsf{k} \in \{1,2\}.
\]
In order to simplify the subsequent analysis, we introduce a compact notation for derivatives in the reference configuration.
Let  $\mathtt{u}$ be a sufficiently smooth scalar or vector  field defined on $I\times\Omega_{\eta_{\mathsf{k}}}$, and denote its pullback by  $\mathtt{u}_{\mathsf{k}} = \mathtt{u}\circ\bfPsi_{\eta_\mathsf{k}}$. We define the corresponding  spatial derivatives  expressed in the reference configuration by (for all $\mathsf{j}, \mathsf{k} \in \{1,2\}$)
\begin{equation*}
\begin{aligned}
\nabla_\mathtt{k}\mathtt{u}_{\mathsf{j}} & := \nabla \mathtt{u}_{\mathsf{j}}\left(\nabla  \bfPsi_{\eta_\mathsf{k}} \right)^{-1}\circ  \bfPsi_{\eta_\mathsf{k}},
\\[0.4em]
\Div_\mathtt{k}\mathtt{u}_{\mathsf{j}} = \nabla_\mathtt{k}\cdot \mathtt{u}_{\mathsf{j}} &:= \mathrm{tr}\left( \nabla_\mathtt{k}\mathtt{u}_{\mathsf{j}} \right) . 
\end{aligned}
\end{equation*}
Moreover, the chain rule yields 
\[
\partial_t \mathtt{u}_{\mathsf{k}} = \left(\partial_t \mathtt{u}\right)\circ \bfPsi_{\eta_\mathsf{k}} + \partial_t \bfPsi_{\eta_\mathsf{k}} \cdot \nabla_\mathtt{k}\mathtt{u}_{\mathsf{k}} .
\]
Using the above notation, the relative energy inequality \eqref{eq:RelativEnergyWeakL}  can be rewritten on the fixed reference domain $\Omega$ through a standard change of variables as follows  

\begin{equation}\label{eq:RelativEnergyFixedD}
\begin{aligned}
&{\mathlarger{\EuScript{E}} }_{\mathrm{rel}}\! \left( \big(\,\underline{\rho}_1, \underline{\bv}_1, \eta_1\big) \bm{\Big|} \big(\,\underline{\rho}_2, \underline{\bv}_2, \eta_2\big) \right)\!(t)  + {\mathlarger{\mathcal{D}}} \!\left(\underline{\bv}_1, \eta_1 \bm{\big|} \underline{\bv}_2, \eta_2  \right)\!(t) 
\\[0.4em]
&\leq  {\mathlarger{\EuScript{E}} }_{\mathrm{rel}}\! \left( \big(\,\underline{\rho}_1, \underline{\bv}_1, \eta_1\big) \bm{\Big|} \big(\,\underline{\rho}_2, \underline{\bv}_2, \eta_2\big) \right)\!(0)   + \int_0^t  {\mathlarger{\EuScript{R}}}\!\left(\underline{\rho}_1, \underline{\bv}_1, \eta_1 \bm{\big|}\underline{\rho}_2, \underline{\bv}_2, \eta_2 \right)   \ds ,
\end{aligned}
\end{equation}
where the relative entropy on the fixed domain  is given by 
\begin{equation}\label{eq:EntropyRelativeFixed}
\begin{aligned}
{\mathlarger{\EuScript{E}} }_{\mathrm{rel}}\! \left( \big(\,\underline{\rho}_1, \underline{\bv}_1, \eta_1\big) \bm{\Big|} \big(\,\underline{\rho}_2, \underline{\bv}_2, \eta_2\big)  \right) (t) & :=  \dfrac{1}{2}  \int_{\Omega} \underline{\rho}_1 |\underline{\bv}_1 - \underline{\bv}_2|^2  J_{\eta_1}\dx  +  \int_{\Omega} J_{\eta_1} {\mathlarger{\mathcal{H}}}\!\left(\underline{\rho}_1 \bm{\big|}\underline{\rho}_2\right) \dx  
\\[0.4em]
& \quad + \dfrac{1}{2} \int_{\omega} \left(  |\partial_t \eta_1 - \partial_t \eta_2|^2  + |\Dely \eta_1 - \Dely \eta_2|^2 \right) \dy ,  
\end{aligned}
\end{equation}
the dissipation term 
\begin{equation}
\begin{aligned}
{\mathlarger{\mathcal{D}}} \left(\underline{\bv}_1, \eta_1 \bm{\big|} \underline{\bv}_2, \eta_2  \right)(t) & :=  \int_0^t  \int_{\Omega} J_{\eta_{1}}  \mathbb{S}\left(\nabla_1\underline{\bv}_1 - \nabla_1\underline{\bv}_2\right)\colon \left(\nabla_1\underline{\bv}_1 - \nabla_1\underline{\bv}_2\right)  \dx \ds 
\\[0.4em]
&\quad  +  \int_0^t \int_\omega \bigl|\partial_t\naby\eta_1 - \partial_t\naby\eta_2\bigr|^2 \dy\ds  ,
\end{aligned}
\end{equation}
and 
\begin{align}
{\mathlarger{\EuScript{R}}}\!\left(\underline{\rho}_1, \underline{\bv}_1, \eta_1 \bm{\big|}\underline{\rho}_2, \underline{\bv}_2, \eta_2 \right) & :=  \int_{\Omega} J_{\eta_1} \mathbb{S}\left(\nabla_1\underline{\bv}_2\right) \colon  \left(\nabla_1\underline{\bv}_2 - \nabla_1\underline{\bv}_1 \right)  \dx     \nonumber
\\[0.4em]
&\quad + \int_{\Omega} J_{\eta_1} \underline{\rho}_1 \left(\partial_s \underline{\bv}_2 + \underline{\bv}_1\cdot \nabla_1\underline{\bv}_2\right) \cdot \left(\underline{\bv}_2 - \underline{\bv}_1\right) \dx     \nonumber
\\[0.4em]
&\quad - \int_{\Omega} J_{\eta_1} \underline{\rho}_1 \left( \partial_s \bfPsi_{\eta_1} \cdot \nabla_1\underline{\bv}_2\right) \cdot \left(\underline{\bv}_2 - \underline{\bv}_1\right) \dx    \nonumber
\\[0.4em]
&\quad +  \dfrac{a\gamma}{\gamma -1}  \int_\Omega \bigg[  J_{\eta_1}\left( \underline{\rho}_2\underline{\bv}_2 - \underline{\rho}_1\underline{\bv}_1 \right)\cdot \nabla_1(\,\underline{\rho}_2^{\gamma -1})    \nonumber
\\[0.4em]
&\quad\quad \qquad \qquad \;\;\;\;    +  J_{\eta_1} (\,\underline{\rho}_2 - \underline{\rho}_1 ) \Big( \partial_s(\,\underline{\rho}_2^{\gamma -1}) -  \partial_s \bfPsi_{\eta_1} \cdot \nabla_1(\,\underline{\rho}_2^{\gamma -1} )  \Big)  \bigg] \dx   \label{eq:RemainderFix}
\\[0.4em]
&\quad +   \int_\Omega J_{\eta_1} \left( p(\,\underline{\rho}_2) - p(\,\underline{\rho}_1)  \right) \nabla_1\cdot \underline{\bv}_2 \dx  \nonumber
\\[0.4em]
&\quad  - \int_{\omega} (\partial_s\eta_2 - \partial_s\eta_1) p(\,\underline{\rho}_2) \bn\cdot\bn_{\eta_1}\circ \bm{\varphi}_{\eta_1}  \mathrm{det}(\naby\bm{\varphi}_{\eta_1}) \dy    \nonumber
\\[0.4em]
&\quad + \int_{\omega} (\partial_s\eta_2 - \partial_s\eta_1)\Dely^2\eta_2 \dy + \int_{\omega} (\partial_s\eta_2 - \partial_s\eta_1)\partial_s^2\eta_2 \dy    \nonumber
\\[0.4em]
&\quad    -  \int_{\omega} (\partial_s\eta_2 - \partial_s\eta_1)\partial_t \Dely\eta_2 \dy .  \nonumber
\end{align}
Having introduced the necessary framework and notation, we are now ready to prove \cref{theo:WeakStrongUniqueMain}. This is the purpose of the next section.


\subsection{Proof of \cref{theo:WeakStrongUniqueMain}}

\phantom{Just to make it start on the next line}\\
For brevity, we henceforth suppress the dependence on the arguments $\big(\,\underline{\rho}_1, \underline{\bv}_1, \eta_1\big)$ and $ \big(\,\underline{\rho}_2, \underline{\bv}_2, \eta_2\big)$, and simply write ${\mathlarger{\EuScript{E}} }_{\mathrm{rel}} (t)$, ${\mathlarger{\mathcal{D}}}(t)$ and ${\mathlarger{\EuScript{R}}} (t)$  for the relative entropy, dissipation,  and remainder terms, respectively. \\
Recall that 
\begin{equation}
{\mathlarger{\EuScript{E}} }_{\mathrm{rel}}(t)  + {\mathlarger{\mathcal{D}}}(t) 
\leq  {\mathlarger{\EuScript{E}} }_{\mathrm{rel}}(0)   + \int_0^t  {\mathlarger{\EuScript{R}}}(s)  \ds  . \tag{\ref{eq:RelativEnergyFixedD}}
\end{equation}
Therefore, it suffices to show that for a.e. $t \in I$, 
\begin{equation}\label{eq:RemainderEstimFinal}
 \int_0^t  {\mathlarger{\EuScript{R}}}(s)  \ds  \leq C(\kappa) \int_0^t \mathtt{G}(s)  {\mathlarger{\EuScript{E}} }_{\mathrm{rel}}(s)  \ds + \kappa {\mathlarger{\mathcal{D}}}(t) , 
\end{equation}
with $\mathtt{G} \in L^1(I)$ and arbitrary $\kappa > 0$.  This would imply uniqueness by Gr\"onwall's inequality, provided the initial data coincide. \\
In the spirit of \cite[Section 4.2]{trifunovic2023compressible}, we split the proof of   \eqref{eq:RemainderEstimFinal} into several steps for clarity.

\medskip

\noindent {\textbf{Step 1: Reformulation of the remainder} ${\mathlarger{\EuScript{R}}}$.  } \\
We start by recalling that the pullback $\big(\,\underline{\rho}_2, \underline{\bv}_2, \eta_2\big)$ of the strong solution satisfies 
\begin{equation}\label{eq:StrongSystFix}
(\mathtt{S})\left\{\begin{aligned}
&\partial_t \underline{\rho}_2 - \partial_t  \bfPsi_{\eta_2} \cdot \nabla_2\underline{\rho}_2  + \nabla_2\cdot \left(\,\underline{\rho}_2\underline{\bv}_2 \right)  = 0 \quad \text{ in } I\times \Omega ,
\\[0.4em]
&\partial_t \underline{\bv}_2  - \partial_t  \bfPsi_{\eta_2} \cdot \nabla_2\underline{\bv}_2 + \underline{\bv}_2 \cdot \nabla_2\underline{\bv}_2 =  -\dfrac{1}{\underline{\rho}_2} \nabla_2 p(\, \underline{\rho}_2)  +  \dfrac{1}{\underline{\rho}_2} \nabla_2\cdot \mathbb{S}\left( \nabla_2\underline{\bv}_2 \right)   \quad \text{ in } I\times \Omega ,
\\[0.4em]
& \partial_t^2\eta_2 - \partial_t\Dely\eta_2 + \Dely^2\eta_2 = -\bn^\intercal \bigg[ \left( \mathbb{S}\left( \nabla_2\underline{\bv}_2 \right) - p(\, \underline{\rho}_2)\mathbb{I}_{3\times3}  \right) \bigg] \bn_{\eta_2} \circ \bm{\varphi}_{\eta_2}  \mathrm{det}(\naby\bm{\varphi}_{\eta_2}) \quad \text{on } I \times \omega .
\end{aligned}
\right.
\end{equation}
Next, we decompose the remainder ${\mathlarger{\EuScript{R}}}$ as 
\[
{\mathlarger{\EuScript{R}}}(s) = {\mathlarger{\EuScript{R}}}_{\mathtt{visc}}(s) + {\mathlarger{\EuScript{R}}}_{\mathtt{mom}}(s)  + {\mathlarger{\EuScript{R}}}_{\mathtt{geo}}(s)  +{\mathlarger{\EuScript{R}}}_{\mathtt{press}}(s)  +  {\mathlarger{\EuScript{R}}}_{\mathtt{shell}}(s), 
\]
where 
\begin{subequations}
\begin{equation}\label{eq:Rvisc}
 {\mathlarger{\EuScript{R}}}_{\mathtt{visc}}(s) =  \int_{\Omega} J_{\eta_1} \mathbb{S}\left(\nabla_1\underline{\bv}_2\right) \colon  \left(\nabla_1\underline{\bv}_2 - \nabla_1\underline{\bv}_1 \right)  \dx  ,
\end{equation}
\begin{equation}\label{eq:Rmom}
 {\mathlarger{\EuScript{R}}}_{\mathtt{mom}}(s) =  \int_{\Omega} J_{\eta_1} \underline{\rho}_1 \left(\partial_s \underline{\bv}_2 + \underline{\bv}_1\cdot \nabla_1\underline{\bv}_2\right) \cdot \left(\underline{\bv}_2 - \underline{\bv}_1\right) \dx   ,
\end{equation}
\begin{equation}
 {\mathlarger{\EuScript{R}}}_{\mathtt{geo}}(s) =   - \int_{\Omega} J_{\eta_1} \underline{\rho}_1 \left( \partial_s \bfPsi_{\eta_1} \cdot \nabla_1\underline{\bv}_2\right) \cdot \left(\underline{\bv}_2 - \underline{\bv}_1\right) \dx  ,
\end{equation}
\begin{equation}\label{eq:Rpress}
\begin{aligned}
 {\mathlarger{\EuScript{R}}}_{\mathtt{press}}(s) & =   \dfrac{a\gamma}{\gamma -1}  \int_\Omega \Bigg[  J_{\eta_1}\left( \underline{\rho}_2\underline{\bv}_2 - \underline{\rho}_1\underline{\bv}_1 \right)\cdot \nabla_1(\,\underline{\rho}_2^{\gamma -1})
\\[0.4em]
&\quad\quad \qquad \qquad \;\;\;\;    +  J_{\eta_1} (\,\underline{\rho}_2 - \underline{\rho}_1 ) \Big( \partial_s(\,\underline{\rho}_2^{\gamma -1}) -  \partial_s \bfPsi_{\eta_1} \cdot \nabla_1(\,\underline{\rho}_2^{\gamma -1})  \Big)  \Bigg] \dx  
\\[0.4em]
&\quad + \int_\Omega J_{\eta_1} \left( p(\,\underline{\rho}_2) - p(\,\underline{\rho}_1)  \right) \nabla_1\cdot \underline{\bv}_2 \dx ,
\end{aligned}
\end{equation}
and 
\begin{equation}\label{eq:Rshell}
\begin{aligned}
 {\mathlarger{\EuScript{R}}}_{\mathtt{shell}}(s) & =     - \int_{\omega} (\partial_s\eta_2 - \partial_s\eta_1) p(\,\underline{\rho}_2) \bn\cdot\bn_{\eta_1}\circ \bm{\varphi}_{\eta_1}  \mathrm{det}(\naby\bm{\varphi}_{\eta_1}) \dy  + \int_{\omega} (\partial_s\eta_2 - \partial_s\eta_1)\Dely^2\eta_2 \dy
\\[0.4em]
&\quad  + \int_{\omega} (\partial_s\eta_2 - \partial_s\eta_1)\partial_s^2\eta_2 \dy   -  \int_{\omega} (\partial_s\eta_2 - \partial_s\eta_1)\partial_s \Dely\eta_2 \dy .
\end{aligned}
\end{equation}
\end{subequations}
Considering the momentum part, it holds that
\begin{equation*}
\begin{aligned}
\int_0^t {\mathlarger{\EuScript{R}}}_{\mathtt{mom}}(s)  \ds & =  \int_0^t \int_{\Omega} J_{\eta_1} \underline{\rho}_1 \left(\partial_s \underline{\bv}_2  -  \partial_s \bfPsi_{\eta_2} \cdot \nabla_2\underline{\bv}_2 + \underline{\bv}_2\cdot \nabla_2\underline{\bv}_2\right) \cdot \left(\underline{\bv}_2 - \underline{\bv}_1\right) \dx\ds
\\[0.4em]
&\quad +  \int_0^t \int_{\Omega} J_{\eta_1} \underline{\rho}_1 \Big[ \underline{\bv}_2\cdot \left( \nabla_1 - \nabla_2\right)\underline{\bv}_2\Big] \cdot \left(\underline{\bv}_2 - \underline{\bv}_1\right) \dx\ds
\\[0.4em]
&\quad +  \int_0^t \int_{\Omega} J_{\eta_1} \underline{\rho}_1 \Big[\left( \underline{\bv}_1 -  \underline{\bv}_2\right) \cdot \nabla_1\underline{\bv}_2\Big] \cdot \left(\underline{\bv}_2 - \underline{\bv}_1\right) \dx\ds
\\[0.4em]
&\quad + \int_0^t \int_{\Omega} J_{\eta_1} \underline{\rho}_1 \left( \partial_s \bfPsi_{\eta_2} \cdot \nabla_2\underline{\bv}_2 \right)\cdot \left(\underline{\bv}_2 - \underline{\bv}_1\right) \dx\ds . 
\end{aligned}
\end{equation*}
Using the momentum equation $\eqref{eq:StrongSystFix}_2$, we further obtain 
\begin{equation}\label{eq:RmomIntegr}
\begin{aligned}
&\int_0^t {\mathlarger{\EuScript{R}}}_{\mathtt{mom}}(s)  \ds 
\\
& =  - \int_0^t \int_{\Omega}  \dfrac{J_{\eta_1} \underline{\rho}_1}{\underline{\rho}_2} \nabla_2 p(\, \underline{\rho}_2) \cdot \left(\underline{\bv}_2 - \underline{\bv}_1\right) \dx\ds  + \int_0^t \int_{\Omega} \dfrac{J_{\eta_1} \underline{\rho}_1}{\underline{\rho}_2} \Big[ \nabla_2\cdot \mathbb{S}(\nabla_2\underline{\bv}_2) \Big]\cdot  \left(\underline{\bv}_2 - \underline{\bv}_1\right)         \dx\ds
\\[0.4em]
&\quad +  \int_0^t \int_{\Omega} J_{\eta_1} \underline{\rho}_1 \Big[ \underline{\bv}_2\cdot \left( \nabla_1 - \nabla_2\right)\underline{\bv}_2\Big] \cdot \left(\underline{\bv}_2 - \underline{\bv}_1\right) \dx\ds 
\\[0.4em]
&\quad +  \int_0^t \int_{\Omega} J_{\eta_1} \underline{\rho}_1 \Big[\left( \underline{\bv}_1 -  \underline{\bv}_2\right) \cdot \nabla_1\underline{\bv}_2\Big] \cdot \left(\underline{\bv}_2 - \underline{\bv}_1\right) \dx\ds
\\[0.4em]
&\quad + \int_0^t \int_{\Omega} J_{\eta_1} \underline{\rho}_1 \left( \partial_s \bfPsi_{\eta_2} \cdot \nabla_2\underline{\bv}_2 \right)\cdot \left(\underline{\bv}_2 - \underline{\bv}_1\right) \dx\ds .
\end{aligned}
\end{equation}
However, observe that the second term of  \eqref{eq:RmomIntegr}, that is,
\begin{equation*}
\begin{aligned}
\int_0^t {\mathlarger{\EuScript{R}}}_{\mathtt{mom}, 2}(s)  \ds & :=  \int_0^t \int_{\Omega} \dfrac{J_{\eta_1} \underline{\rho}_1}{\underline{\rho}_2} \Big[ \nabla_2\cdot \mathbb{S}(\nabla_2\underline{\bv}_2) \Big]\cdot  \left(\underline{\bv}_2 - \underline{\bv}_1\right)         \dx\ds
\\[0.4em]
& = \int_0^t \int_{\Omega} J_{\eta_1} \dfrac{ 1}{\underline{\rho}_2} (\, \underline{\rho}_1 - \underline{\rho}_2) \Big[ \nabla_2\cdot \mathbb{S}(\nabla_2\underline{\bv}_2) \Big]\cdot  \left(\underline{\bv}_2 - \underline{\bv}_1\right)         \dx\ds
\\[0.4em]
&\quad +  \int_0^t \int_{\Omega} \left(J_{\eta_1} - J_{\eta_2} \right) \Big[ \nabla_2\cdot \mathbb{S}(\nabla_2\underline{\bv}_2) \Big]\cdot  \left(\underline{\bv}_2 - \underline{\bv}_1\right)         \dx\ds
\\[0.4em]
&\quad +   \int_0^t \int_{\Omega} J_{\eta_2} \Big[ \nabla_2\cdot \mathbb{S}(\nabla_2\underline{\bv}_2) \Big]\cdot  \left(\underline{\bv}_2 - \underline{\bv}_1\right)         \dx\ds . 
\end{aligned}
\end{equation*}
Integrating the last term of the above identity  by parts, that is, 
\begin{align*}
  \int_0^t \int_{\Omega} J_{\eta_2} \Big[ \nabla_2\cdot \mathbb{S}(\nabla_2\underline{\bv}_2) \Big]\cdot  \left(\underline{\bv}_2 - \underline{\bv}_1\right)         \dx\ds & = \int_0^t \int_{\Omega_{\eta_2}(t)}  \Div\left(\mathbb{S}(\nabla\bv_2)\right) \cdot \left( \bv_2 - \underline{\bv}_1\circ \bfPsi_{\eta_2}^{-1}  \right) \dx\ds  
  \\[0.4em]
  & = - \int_0^t \int_{\Omega_{\eta_2}(t)} \mathbb{S}(\nabla\bv_2) \colon \Big( \nabla\bv_2 - \nabla\left(\underline{\bv}_1\circ \bfPsi_{\eta_2}^{-1} \right)  \Big) \dx\ds  
\\[0.2em]  
& \quad +   \int_0^t \int_{\partial\Omega_{\eta_2}(t)} \left( \mathbb{S}(\nabla\bv_2)\bn_{\eta_2}\right) \cdot \left( \bv_2 - \underline{\bv}_1\circ \bm{\varphi}_{\eta_2}^{-1}  \right)  \dH\ds
\\[0.2em]
& = -  \int_0^t \int_{\Omega} J_{\eta_2}  \mathbb{S}(\nabla_2\underline{\bv}_2) \colon \left(\nabla_2\underline{\bv}_2 - \nabla_2\underline{\bv}_1  \right)    \dx\ds 
\\[0.2em]
& \quad + \int_0^t \!\int_\omega \! \Big[\mathbb{S}(\nabla\bv_2)\bn_{\eta_2}\!\Big]\!\circ\! \bm{\varphi}_{\eta_2}\! \cdot \bn (\partial_t\eta_2 - \partial_t\eta_1) \mathrm{det}(\naby\bm{\varphi}_{\eta_2})  \! \dy\ds ,
\end{align*}
we deduce that 
\begin{equation}\label{eq:RmomIntegr2T}
\begin{aligned}
& \int_0^t {\mathlarger{\EuScript{R}}}_{\mathtt{mom}, 2}(s)  \ds 
\\[0.4em]
& =  \int_0^t \!\! \int_\Omega J_{\eta_1} \dfrac{ 1}{\underline{\rho}_2} (\, \underline{\rho}_1 - \underline{\rho}_2) \Big[ \nabla_2\cdot \mathbb{S}(\nabla_2\underline{\bv}_2) \Big]\cdot  \left(\underline{\bv}_2 - \underline{\bv}_1\right)         \dx\ds
\\[0.4em]
&\quad +  \int_0^t  \int_\Omega  \bigg(\! \left(J_{\eta_1} - J_{\eta_2} \right) \Big[ \nabla_2\cdot \mathbb{S}(\nabla_2\underline{\bv}_2) \Big]\cdot  \left(\underline{\bv}_2 - \underline{\bv}_1\right)         -   J_{\eta_2}  \mathbb{S}(\nabla_2\underline{\bv}_2) \colon \left(\nabla_2\underline{\bv}_2 - \nabla_2\underline{\bv}_1  \right) \!\bigg)   \dx\ds 
\\[0.4em]
&\quad +    \int_0^t  \int_\Omega \Big[\mathbb{S}(\nabla\bv_2)\bn_{\eta_2}\!\Big]\!\circ\! \bm{\varphi}_{\eta_2}\! \cdot  \!(\partial_t\eta_2 - \partial_t\eta_1)\bn\, \mathrm{det}(\naby\bm{\varphi}_{\eta_2})   \dy\ds. 
\end{aligned}
\end{equation}
Adding  the first term of \eqref{eq:Rpress} and the first term of \eqref{eq:RmomIntegr}, we  derive that 
\begin{align*}
&\int_0^t {\mathlarger{\EuScript{R}}}_{\mathtt{press}, 1{\scalebox{0.35}{+}}}(s) \ds
\\[0.4em]
&:= \dfrac{a\gamma}{\gamma -1}  \int_0^t \!\! \int_\Omega \bigg[  J_{\eta_1}\left( \underline{\rho}_2\underline{\bv}_2 - \underline{\rho}_1\underline{\bv}_1 \right)\cdot \nabla_1(\,\underline{\rho}_2^{\gamma -1})
  +  J_{\eta_1} (\,\underline{\rho}_2 - \underline{\rho}_1 ) \Big( \partial_s(\,\underline{\rho}_2^{\gamma -1} ) -  \partial_s \bfPsi_{\eta_1} \cdot \nabla_1(\,\underline{\rho}_2^{\gamma -1} )  \!\Big)  \bigg] \dx  \ds
\\[0.4em]
&\quad -   \int_0^t \!\! \int_\Omega  \dfrac{J_{\eta_1} \underline{\rho}_1}{\underline{\rho}_2} \nabla_2 p(\, \underline{\rho}_2) \cdot \left(\underline{\bv}_2 - \underline{\bv}_1\right) \dx\ds 
\\[0.4em]
& = \dfrac{a\gamma}{\gamma -1}  \int_0^t \!\! \int_\Omega J_{\eta_1}(\, \underline{\rho}_2 - \underline{\rho}_1) \Big[ \underline{\bv}_2 \cdot \nabla_1(\,\underline{\rho}_2^{\gamma -1})
  + \partial_s(\,\underline{\rho}_2^{\gamma -1} ) -  \partial_s \bfPsi_{\eta_1} \cdot \nabla_1(\,\underline{\rho}_2^{\gamma -1})   \Big] \dx \ds
  \\[0.4em]
 &\quad +  \dfrac{a\gamma}{\gamma -1} \int_0^t \!\! \int_\Omega  J_{\eta_1} \underline{\rho}_1 (\nabla_1 - \nabla_2) \underline{\rho}_2^{\gamma -1} \cdot (\underline{\bv}_2 - \underline{\bv}_1) \dx\ds
 \\[0.4em]
 &\quad + \dfrac{a\gamma}{\gamma -1}  \int_0^t \!\! \int_\Omega  J_{\eta_1} (\,\underline{\rho}_2 - \underline{\rho}_1 ) \left( \underline{\bv}_2 \cdot \nabla_2(\,\underline{\rho}_2^{\gamma -1})  -   \partial_s \bfPsi_{\eta_2} \cdot \nabla_2(\,\underline{\rho}_2^{\gamma -1})  \right)\dx\ds
 \\[0.4em]
 &\quad - \dfrac{a\gamma}{\gamma -1}  \int_0^t \!\! \int_\Omega  J_{\eta_1} (\,\underline{\rho}_2 - \underline{\rho}_1 ) \left( \underline{\bv}_2 \cdot \nabla_2(\,\underline{\rho}_2^{\gamma -1})  -   \partial_s \bfPsi_{\eta_2} \cdot \nabla_2(\,\underline{\rho}_2^{\gamma -1})  \right)\dx\ds 
\end{align*}
where we have used the fact that 
\[
\dfrac{1}{\underline{\rho}_2}\nabla_2 p(\,\underline{\rho}_2 ) =  \dfrac{a\gamma}{\gamma -1}  \nabla_2\left(\underline{\rho}_2^{\gamma -1}\right) .
\]
Moreover, using the renormalised continuity equation
\begin{equation}
\partial_t\left(\underline{\rho}_2^{\gamma -1}\right)  + \underline{\bv}_2\cdot \nabla_2\left(\underline{\rho}_2^{\gamma -1}\right)  - \partial_t \bfPsi_{\eta_2} \cdot \nabla_2\left(\underline{\rho}_2^{\gamma -1}\right) = -(\gamma -1) \left( \nabla_2\cdot \underline{\bv}_2\right)\underline{\rho}_2^{\gamma -1} ,
\end{equation}
it follows that 
\begin{align}\label{RpressIntegr1}
&\int_0^t {\mathlarger{\EuScript{R}}}_{\mathtt{press}, 1{\scalebox{0.35}{+}}}(s) \ds
\\[0.4em]
& = -a\gamma \int_0^t  \int_\Omega J_{\eta_1} (\,\underline{\rho}_2 - \underline{\rho}_1 ) \left( \nabla_2\cdot \underline{\bv}_2\right)\underline{\rho}_2^{\gamma -1} \dx\ds   \nonumber
\\[0.4em]
&\quad + \dfrac{a\gamma}{\gamma -1} \int_0^t  \int_\Omega  J_{\eta_1} (\,\underline{\rho}_2 - \underline{\rho}_1 ) \bigg[ \underline{\bv}_2\cdot (\nabla_1 - \nabla_2) \underline{\rho}_2^{\gamma -1} - \left(  \partial_s \bfPsi_{\eta_1} -  \partial_s \bfPsi_{\eta_2} \right)\cdot \nabla_1(\,\underline{\rho}_2^{\gamma -1})   \nonumber
\\[0.4em]
&\qquad \qquad\qquad\qquad\qquad \qquad\qquad \qquad\qquad\qquad \qquad\qquad \qquad -  \partial_s \bfPsi_{\eta_2} \cdot  (\nabla_1 - \nabla_2) \underline{\rho}_2^{\gamma -1} \bigg] \dx\ds
\\[0.4em]
&\quad +  \dfrac{a\gamma}{\gamma -1} \int_0^t  \int_\Omega   J_{\eta_1} \underline{\rho}_1 (\nabla_1 - \nabla_2) \underline{\rho}_2^{\gamma -1}  \cdot ( \underline{\bv}_2 -  \underline{\bv}_1) \dx\ds.  \nonumber
\end{align}
Combining the second term  of \eqref{eq:Rpress} with the first term of \eqref{RpressIntegr1}, we get 
\begin{equation}\label{RpressIntegr2}
\begin{aligned}
&\int_0^t {\mathlarger{\EuScript{R}}}_{\mathtt{press}, 2{\scalebox{0.35}{+}}}(s) \ds
\\[0.4em]
& :=   \int_0^t \int_\Omega J_{\eta_1} \left( p(\,\underline{\rho}_2) - p(\,\underline{\rho}_1)  \right) \nabla_1\cdot \underline{\bv}_2 \dx\ds   -a\gamma \int_0^t  \int_\Omega J_{\eta_1} (\,\underline{\rho}_2 - \underline{\rho}_1 ) \left( \nabla_2\cdot \underline{\bv}_2\right)\underline{\rho}_2^{\gamma -1} \dx\ds
\\[0.4em]
&= - \int_0^t \int_\Omega   J_{\eta_1} {\mathlarger{\mathcal{H}}}\!\left(\underline{\rho}_1 \bm{\big|}\underline{\rho}_2\right)  \nabla_1\cdot \underline{\bv}_2   \dx\ds    + a\gamma \int_0^t  \int_\Omega J_{\eta_1} (\,\underline{\rho}_2 - \underline{\rho}_1 )\underline{\rho}_2^{\gamma -1} ( \nabla_1 - \nabla_2) \cdot \underline{\bv}_2 \dx\ds. 
\end{aligned}
\end{equation}
Finally, testing the shell equation $\eqref{eq:StrongSystFix}_3$ with $(\partial_s\eta_2 - \partial_s\eta_1)$ yields 
\begin{equation}\label{eq:RshellIntegr}
\begin{aligned}
&\int_0^t \int_{\omega} \Big( \partial_s^2\eta_2  -  \partial_s \Dely\eta_2 +  \Dely^2\eta_2  \Big) (\partial_s\eta_2 - \partial_s\eta_1) \dy\ds
\\[0.3em]
& =  \int_0^t  \int_{\omega} (\partial_s\eta_2 - \partial_s\eta_1) p(\,\underline{\rho}_2) \bn\cdot\bn_{\eta_2}\circ \bm{\varphi}_{\eta_2}  \mathrm{det}(\naby\bm{\varphi}_{\eta_2}) \dy  \ds
\\[0.3em]
&\quad - \int_0^t \int_\omega  \Big[\mathbb{S}(\nabla\bv_2)\bn_{\eta_2}\!\Big]\!\circ\! \bm{\varphi}_{\eta_2}\! \cdot  \!(\partial_s\eta_2 - \partial_s\eta_1)\bn\, \mathrm{det}(\naby\bm{\varphi}_{\eta_2})   \dy\ds  . 
\end{aligned}
\end{equation}
Hence, it follows from \eqref{eq:Rvisc}--\eqref{eq:Rshell}, \eqref{eq:RmomIntegr2T},  and \eqref{RpressIntegr1}--\eqref{eq:RshellIntegr}, that 
\begin{align}
& \int_0^t  {\mathlarger{\EuScript{R}}}(s)  \ds    \nonumber
\\[0.2em]
& =   \int_0^t \int_{\Omega} J_{\eta_1} \mathbb{S}\left(\nabla_1\underline{\bv}_2\right) \colon  \left(\nabla_1\underline{\bv}_2 - \nabla_1\underline{\bv}_1 \right)  \dx\ds  -  \int_0^t  \int_{\Omega} J_{\eta_2} \mathbb{S}\left(\nabla_2\underline{\bv}_2\right) \colon  \left(\nabla_2\underline{\bv}_2 - \nabla_2\underline{\bv}_1 \right)  \dx\ds   \nonumber
\\[0.2em]
&\quad + \int_0^t \int_{\Omega} J_{\eta_1} \dfrac{ 1}{\underline{\rho}_2} (\, \underline{\rho}_1 - \underline{\rho}_2) \Big[ \nabla_2\cdot \mathbb{S}(\nabla_2\underline{\bv}_2) \Big]\cdot  \left(\underline{\bv}_2 - \underline{\bv}_1\right)         \dx\ds   \nonumber
\\[0.2em]
&\quad + \int_0^t \int_{\Omega} \left(J_{\eta_1} - J_{\eta_2} \right) \Big[ \nabla_2\cdot \mathbb{S}(\nabla_2\underline{\bv}_2) \Big]\cdot  \left(\underline{\bv}_2 - \underline{\bv}_1\right)         \dx\ds    \nonumber
\\[0.2em]
&\quad + \dfrac{a\gamma}{\gamma -1} \int_0^t  \int_\Omega  J_{\eta_1} (\,\underline{\rho}_2 - \underline{\rho}_1 ) \bigg[ \underline{\bv}_2\cdot (\nabla_1 - \nabla_2) \underline{\rho}_2^{\gamma -1} - \left(  \partial_s \bfPsi_{\eta_1} -  \partial_s \bfPsi_{\eta_2} \right)\cdot \nabla_1(\,\underline{\rho}_2^{\gamma -1})   \nonumber
\\[0.2em]
&\qquad \qquad\qquad\qquad\qquad \qquad\qquad \qquad\qquad\qquad \qquad\qquad \qquad -  \partial_s \bfPsi_{\eta_2} \cdot  (\nabla_1 - \nabla_2) \underline{\rho}_2^{\gamma -1} \bigg] \dx\ds    
\\[0.2em]
&\quad +  \int_0^t \int_{\Omega} J_{\eta_1} \underline{\rho}_1 \Big[ \underline{\bv}_2\cdot \left( \nabla_1 - \nabla_2\right)\underline{\bv}_2\Big] \cdot \left(\underline{\bv}_2 - \underline{\bv}_1\right) \dx\ds     \nonumber
\\[0.2em]
&\quad +  \int_0^t \int_{\Omega} J_{\eta_1} \underline{\rho}_1 \Big[\left( \underline{\bv}_1 -  \underline{\bv}_2\right) \cdot \nabla_1\underline{\bv}_2\Big] \cdot \left(\underline{\bv}_2 - \underline{\bv}_1\right) \dx\ds   \nonumber
\\[0.2em]
&\quad +  \dfrac{a\gamma}{\gamma -1} \int_0^t  \int_\Omega   J_{\eta_1} \underline{\rho}_1 (\nabla_1 - \nabla_2) \underline{\rho}_2^{\gamma -1}  \cdot ( \underline{\bv}_2 -  \underline{\bv}_1) \dx\ds    \nonumber
\\[0.2em]
&\quad +   a\gamma \int_0^t  \int_\Omega J_{\eta_1} (\,\underline{\rho}_2 - \underline{\rho}_1 )\underline{\rho}_2^{\gamma -1} ( \nabla_1 - \nabla_2) \cdot \underline{\bv}_2 \dx\ds    -  \int_0^t \int_\Omega   J_{\eta_1} {\mathlarger{\mathcal{H}}}\!\left(\underline{\rho}_1 \bm{\big|}\underline{\rho}_2\right)  \nabla_1\cdot \underline{\bv}_2   \dx\ds    \nonumber
\\[0.2em]
&\quad + \int_0^t  \int_\Omega J_{\eta_1} \underline{\rho}_1 \Big[  \partial_s \bfPsi_{\eta_2} \cdot  \nabla_2\underline{\bv}_2  - \partial_t \bfPsi_{\eta_1} \cdot  \nabla_1\underline{\bv}_2  \Big] \cdot ( \underline{\bv}_2 -  \underline{\bv}_1) \dx\ds     \nonumber
\\[0.2em]
&\quad +  \int_0^t \int_\omega  (\partial_s\eta_2 - \partial_s\eta_1) p(\,\underline{\rho}_2) \bn\cdot  \big(  \bn_{\eta_2}\circ \bm{\varphi}_{\eta_2}  \mathrm{det}(\naby\bm{\varphi}_{\eta_2})  - \bn_{\eta_1}\circ \bm{\varphi}_{\eta_1}  \mathrm{det}(\naby\bm{\varphi}_{\eta_1}) \big) \dy\ds \nonumber
\\[0.2em]
& \quad =: \sum\limits_{\mathsf{k} =1}^{12}  {\mathlarger{\EuScript{T}}}_{\mathsf{k}}.   \nonumber
\end{align}
In the next step, we show that each $ {\mathlarger{\EuScript{T}}}_{\mathsf{k}}, \, \mathsf{k} \in \{1,\ldots, 11\}$  satisfies \eqref{eq:RemainderEstimFinal}. Before proceeding, we make the  following observations, which will be used repeatedly.

\begin{remark}
By the uniform ellipticity of the mapping
\[
\R^{3\times 3} \ni \mathbf{A} \mapsto \mathbb{S}(\mathbf{A}) ,
\]
there exist constants $c, C > 0$ such that 
\[
c |\mathbf{A}|^2 \leq \mathbb{S}(\mathbf{A}) \colon \mathbf{A} \leq C |\mathbf{A}|^2 , \quad \text{ and }\quad   c |\mathbf{A}|^2 \leq |\mathbb{S}(\mathbf{A})|^2 \leq C |\mathbf{A}|^2 .
\] 
In particular, setting  $\underline{\bv} = \underline{\bv}_1 - \underline{\bv}_2$, the quantities  
\[
\int_\Omega J_{\eta_1}\mathbb{S}(\nabla_1\underline{\bv})\colon \nabla_1\underline{\bv} \dx,  \qquad  \int_\Omega J_{\eta_1} \left|\mathbb{S}(\nabla_1\underline{\bv})\right|^2 \dx
\]
are equivalent up to multiplicative constants.
\end{remark}

\begin{remark}\label{rem:DensityRelativeEntrop}
Let $2 \wedge \gamma := \min\{2, \gamma\}$. Then
\begin{equation}\label{eq:DensityRelativeEntrop}
\Vert \underline{\rho}_1(t) - \underline{\rho}_2(t) \Vert_{L^{2\wedge \gamma}(\Omega)}^2 \leq C {\mathlarger{\EuScript{E}} }_{\mathrm{rel}}(t)  \qquad \forall\; t \in I.
\end{equation}
This estimate follows from the convexity of the pressure potential $H(\rho) = \dfrac{p(\rho)}{\gamma -1 }$, together with  the uniform lower and upper bounds on the density $\underline{\rho}_2$.  For a detailed proof, see e.g., \cite{trifunovic2023compressible, feireisl2012relative}.
\end{remark}

\medskip

\noindent {\textbf{Step 2: Estimate of } ${\mathlarger{\EuScript{T}}}_{\mathsf{k}}$.  } \\
We first consider the first two  terms, that is,
\begin{equation*}
\begin{aligned}
{\mathlarger{\EuScript{T}}}_1 + {\mathlarger{\EuScript{T}}}_2 & =   \int_0^t \int_{\Omega} J_{\eta_1} \mathbb{S}\left(\nabla_1\underline{\bv}_2\right) \colon  \left(\nabla_1\underline{\bv}_2 - \nabla_1\underline{\bv}_1 \right)  \dx \ds -   \int_0^t \int_{\Omega} J_{\eta_2} \mathbb{S}\left(\nabla_2\underline{\bv}_2\right) \colon  \left(\nabla_2\underline{\bv}_2 - \nabla_2\underline{\bv}_1 \right)  \dx\ds
\\[0.4em]
& = \int_0^t \int_{\Omega} \left(J_{\eta_1} - J_{\eta_2} \right) \mathbb{S}\left(\nabla_1\underline{\bv}_2\right) \colon  \left(\nabla_1\underline{\bv}_2 - \nabla_1\underline{\bv}_1 \right)  \dx \ds
\\[0.4em]
&\quad +  \int_0^t \int_{\Omega}  J_{\eta_2}  \mathbb{S}\big((\nabla_1 -\nabla_2 )\underline{\bv}_2\big) \colon  \left(\nabla_1\underline{\bv}_2 - \nabla_1\underline{\bv}_1 \right)  \dx \ds
\\[0.4em]
&\quad + \int_0^t \int_{\Omega} J_{\eta_2}  \mathbb{S}\left(\nabla_2\underline{\bv}_2\right) \colon  \left(\nabla_1 - \nabla_2 \right) (\underline{\bv}_2 - \underline{\bv}_1)  \dx \ds.
\end{aligned}
\end{equation*} 
Using H\"older's inequality, together with the uniform boundedness of $J_{\eta_2}$ and $\left(\nabla\bfPsi_{\eta_2} \right)^{-1}\circ \bfPsi_{\eta_2} $, we obtain  
\begin{equation}
\begin{aligned}
{\mathlarger{\EuScript{T}}}_1 + {\mathlarger{\EuScript{T}}}_2 & \lesssim  \int_0^t \Vert \eta_1 - \eta_2 \Vert_{W^{1,4}(\omega)} \Vert \nabla \underline{\bv}_2\Vert_{L^\infty(\Omega)}  \Vert \left( \nabla\bfPsi_{\eta_1} \right)^{-1}\circ \bfPsi_{\eta_1}   \Vert_{L^{24}(\Omega)}^2 \Vert \underline{\bv}_2 - \underline{\bv}_1\Vert_{W^{1, 3/2}(\Omega)} \ds
\\[0.4em]
& \quad +  \int_0^t \Vert \eta_1 - \eta_2 \Vert_{W^{1,4}(\omega)}  \Vert \nabla \underline{\bv}_2\Vert_{L^\infty(\Omega)}\Vert \underline{\bv}_2 - \underline{\bv}_1\Vert_{W^{1, 3/2}(\Omega)} \Vert \left( \nabla\bfPsi_{\eta_1} \right)^{-1}\circ \bfPsi_{\eta_1}   \Vert_{L^{12}(\Omega)}  \ds 
\\[0.4em]
&\quad +  \int_0^t  \! \Vert \nabla \underline{\bv}_2\Vert_{L^\infty(\Omega)}  \Vert\! \left( \nabla\bfPsi_{\eta_1} \right)^{-1}\circ \bfPsi_{\eta_1} - \left(\nabla\bfPsi_{\eta_2} \right)^{-1}\circ \bfPsi_{\eta_2}   \Vert_{L^{3}(\Omega)}  \Vert \underline{\bv}_2 - \underline{\bv}_1\Vert_{W^{1, 3/2}(\Omega)} \ds .
\end{aligned}
\end{equation}
Moreover, by the a priori bounds for the weak and strong solutions, notably  
\begin{subequations}
\begin{equation}
\sup\limits_{(0,t)} \Vert  \underline{\bv}_2 \Vert_{W^{3,2}(\Omega)}^2 \leq C  ,
\end{equation}
\begin{equation}\label{eq:Eta1control}
\sup\limits_{(0,t)}  \Vert \left( \nabla\bfPsi_{\eta_1} \right)^{-1}\circ \bfPsi_{\eta_1}   \Vert_{L^{24}(\Omega) } \lesssim 1 + \sup\limits_{(0,t)}  \Vert \eta_1 \Vert_{ W^{2,2}(\omega) }  \leq C ,
\end{equation}
\end{subequations}
we further obtain 
\begin{equation}\label{eq:priorT12Estim}
{\mathlarger{\EuScript{T}}}_1 + {\mathlarger{\EuScript{T}}}_2  \lesssim  \int_0^t \Vert \Dely\eta_1 - \Dely\eta_2 \Vert_{L^2(\omega)} \Vert \underline{\bv}_2 - \underline{\bv}_1\Vert_{W^{1, 3/2}(\Omega)}  \ds .
\end{equation}
However, observe that 
\begin{equation*}
\Vert \underline{\bv}_2 - \underline{\bv}_1\Vert_{W^{1, 3/2}(\Omega)}^2  \leq   \Vert \nabla\bfPsi_{\eta_1} \Vert_{L^{12}(\Omega) }^3  \Vert \nabla_1(\underline{\bv}_2 - \underline{\bv}_1)\Vert_{L^2(\Omega)}^2  \lesssim  \Vert \eta_1\Vert_{W^{2,2}(\omega)}^3 \int_\Omega J_{\eta_1} |\mathbb{S}\left(\nabla_1(\underline{\bv}_2 - \underline{\bv}_1)\right)|^2 \dx.
\end{equation*}
Therefore, by Young's inequality and \eqref{eq:Eta1control}, we deduce from \eqref{eq:priorT12Estim} that 
\begin{equation}\label{eq:T1T2Estim}
{\mathlarger{\EuScript{T}}}_1 + {\mathlarger{\EuScript{T}}}_2  \lesssim  C(\kappa) \int_0^t {\mathlarger{\EuScript{E}} }_{\mathrm{rel}}(s) \ds + \kappa  \int_0^t \int_\Omega J_{\eta_1} |\mathbb{S}\left(\nabla_1(\underline{\bv}_2 - \underline{\bv}_1)\right)|^2 \dx \ds ,
\end{equation}
for arbitrary $\kappa >0$. 

We next consider 
\begin{equation*}
\begin{aligned}
{\mathlarger{\EuScript{T}}}_3 & =     \int_0^t \int_{\Omega} J_{\eta_1} \dfrac{ 1}{\underline{\rho}_2} (\, \underline{\rho}_1 - \underline{\rho}_2) \Big[ \nabla_2\cdot \mathbb{S}(\nabla_2\underline{\bv}_2) \Big]\cdot  \left(\underline{\bv}_2 - \underline{\bv}_1\right)         \dx\ds  
\\[0.4em]
&\lesssim    \int_0^t  \Vert J_{\eta_1} \Vert_{L^{12}(\Omega)}  \left\Vert \dfrac{1}{J_{\eta_2}} \right\Vert_{L^{\infty}(\Omega)}  \Vert \underline{\rho}_2 - \underline{\rho}_1  \Vert_{L^{2\wedge\gamma}(\Omega)}  \Vert \nabla_2\cdot \mathbb{S}(\nabla_2\underline{\bv}_2) \Vert_{L^\mathtt{p}(\Omega)} \Vert \underline{\bv}_2 - \underline{\bv}_1 \Vert_{L^3(\Omega)}   \ds  .
\end{aligned}
\end{equation*} 
Since 
\[
\sup\limits_{(0, t)}  \Vert J_{\eta_1} \Vert_{L^{12}(\Omega)} \lesssim   1 + \sup\limits_{(0, t)} \Vert \Dely\eta_1 \Vert_{L^2(\omega)} \leq C , \quad \text{and } \;\;  \mathtt{p} > \max\left\{  12, \dfrac{12\gamma}{7\left(\gamma - 12/7 \right)}  \right\} ,
\]
Young's inequality, together with \cref{rem:DensityRelativeEntrop}, yields
\begin{equation}\label{eq:T3Estim}
{\mathlarger{\EuScript{T}}}_3 \leq   C(\kappa) \int_0^t  \Vert \nabla_2\cdot \mathbb{S}(\nabla_2\underline{\bv}_2) \Vert_{L^\mathtt{p}(\Omega)}^2 {\mathlarger{\EuScript{E}} }_{\mathrm{rel}}(s)   \ds  + \kappa  \int_0^t  \Vert \underline{\bv}_2 - \underline{\bv}_1 \Vert_{L^3(\Omega)}^2  \ds.
\end{equation} \\
We now show  that 
\begin{equation}\label{eq:DivL1integr}
\Vert \nabla_2\cdot \mathbb{S}(\nabla_2\underline{\bv}_2) \Vert_{L^\mathtt{p}(\Omega)}^2 \in L^1\left( (0, t) \right).
\end{equation}
Indeed, we have 
\begin{equation*}
\begin{aligned}
\Vert \nabla_2\cdot \mathbb{S}(\nabla_2\underline{\bv}_2) \Vert_{L^\mathtt{p}(\Omega)}  & \lesssim  \Vert \nabla^2 \underline{\bv}_2 \Vert_{L^{\mathtt{p}}(\Omega)}  \Vert \nabla\bfPsi_{\eta_2}^{-1}\circ \bfPsi_{\eta_2}   \Vert_{L^{\infty}(\Omega)}^2  
\\[0.4em]
&\quad +  \Vert \nabla \underline{\bv}_2 \Vert_{L^{\infty}(\Omega)}  \Vert \nabla\left( \nabla\bfPsi_{\eta_2}^{-1}\circ \bfPsi_{\eta_2}  \right) \Vert_{L^{\infty}(\Omega)}\Vert \nabla\bfPsi_{\eta_2}^{-1}\circ \bfPsi_{\eta_2}   \Vert_{L^{\infty}(\Omega)}.
\end{aligned}
\end{equation*}
By  Sobolev embeddings, and  the properties of the Hanzawa transform, this yields 
\begin{equation*}
\begin{aligned}
\Vert \nabla_2\cdot \mathbb{S}(\nabla_2\underline{\bv}_2) \Vert_{L^\mathtt{p}(\Omega)}  & \lesssim  \Vert\underline{\bv}_2 \Vert_{W^{4,2}(\Omega)} \left( 1 +  \Vert \eta_2   \Vert_{W^{3, 2}(\omega)} \right)^2  
\\[0.4em]
&\quad +  \Vert\underline{\bv}_2 \Vert_{W^{3,2}(\Omega)} \Vert \eta_2   \Vert_{W^{4, 2}(\omega)} \ \left( 1 +  \Vert \eta_2   \Vert_{W^{3, 2}(\omega)} \right).
\end{aligned}
\end{equation*}
Consequently, the regularity of the strong solution  yields the desired result. 

For the term  
\[
{\mathlarger{\EuScript{T}}}_4 =  \int_0^t \int_{\Omega} \left(J_{\eta_1} - J_{\eta_2} \right) \Big[ \nabla_2\cdot \mathbb{S}(\nabla_2\underline{\bv}_2) \Big]\cdot  \left(\underline{\bv}_2 - \underline{\bv}_1\right)         \dx\ds ,
\]
it follows immediately from H\"older's inequality that 
\begin{equation*}
{\mathlarger{\EuScript{T}}}_4 \lesssim \int_0^t \Vert \eta_1 - \eta_2 \Vert_{W^{1,3}(\omega)} \Vert \nabla_2\cdot \mathbb{S}(\nabla_2\underline{\bv}_2) \Vert_{L^\infty(\Omega)}   \Vert \underline{\bv}_2 - \underline{\bv}_1 \Vert_{L^{3/2}(\Omega)}      \ds .
\end{equation*}
Applying Young's inequality yields
\begin{equation}\label{eq:T4Estim}
{\mathlarger{\EuScript{T}}}_4 \lesssim C(\kappa) \int_0^t  \Vert \nabla_2\cdot \mathbb{S}(\nabla_2\underline{\bv}_2) \Vert_{L^\infty(\Omega)}^2 {\mathlarger{\EuScript{E}} }_{\mathrm{rel}}(s)   \ds  + \kappa  \int_0^t  \Vert \underline{\bv}_2 - \underline{\bv}_1 \Vert_{L^{3/2}(\Omega)}^2  \ds .
\end{equation}
Importantly, it follows from \eqref{eq:DivL1integr}, that  
\[
\Vert \nabla_2\cdot \mathbb{S}(\nabla_2\underline{\bv}_2) \Vert_{L^\infty(\Omega)}^2 \in L^1\left( (0, t) \right).
\]

Considering the term 
\begin{equation*}
\begin{aligned}
{\mathlarger{\EuScript{T}}}_5 & =   \dfrac{a\gamma}{\gamma -1} \int_0^t  \int_\Omega  J_{\eta_1} (\,\underline{\rho}_2 - \underline{\rho}_1 ) \bigg[ \underline{\bv}_2\cdot (\nabla_1 - \nabla_2) \underline{\rho}_2^{\gamma -1} - \left(  \partial_s \bfPsi_{\eta_1} -  \partial_s \bfPsi_{\eta_2} \right)\cdot \nabla_1(\,\underline{\rho}_2^{\gamma -1}) 
\\[0.4em]
&\qquad \qquad\qquad\qquad\qquad \qquad\qquad \qquad\qquad\qquad \qquad\qquad \qquad -  \partial_s \bfPsi_{\eta_2} \cdot  (\nabla_1 - \nabla_2) \underline{\rho}_2^{\gamma -1} \bigg] \dx\ds,
\end{aligned}
\end{equation*}
we apply once again H\"older's inequality to derive that 
\begin{equation*}
\begin{aligned}
{\mathlarger{\EuScript{T}}}_5 & \lesssim \int_0^t \Vert J_{\eta_1} \Vert_{L^{12}(\Omega)} \Vert \underline{\rho}_2 -  \underline{\rho}_1  \Vert_{L^{2\wedge\gamma}(\Omega)} \bigg[   \Vert\! \left( \nabla\bfPsi_{\eta_1} \right)^{-1}\circ \bfPsi_{\eta_1} - \left(\nabla\bfPsi_{\eta_2} \right)^{-1}\circ \bfPsi_{\eta_2}   \Vert_{L^{3}(\Omega)} \Vert \underline{\bv}_2 \cdot \nabla(\, \underline{\rho}_2^{\gamma -1}) \Vert_{L^\mathtt{p}(\Omega)}
\\[0.4em]
& \qquad \qquad  + \Vert \partial_s \eta_1 - \partial_s \eta_2 \Vert_{L^3(\omega)}  \Vert \nabla(\, \underline{\rho}_2^{\gamma -1}) \Vert_{L^\mathtt{p}(\Omega)}   
\\[0.4em]
& \quad \qquad \qquad \qquad \qquad +   \Vert\! \left( \nabla\bfPsi_{\eta_1} \right)^{-1}\circ \bfPsi_{\eta_1} - \left(\nabla\bfPsi_{\eta_2} \right)^{-1}\circ \bfPsi_{\eta_2}   \Vert_{L^{3}(\Omega)} \Vert \partial_s\bfPsi_{\eta_2} \cdot \nabla(\, \underline{\rho}_2^{\gamma -1}) \Vert_{L^\mathtt{p}(\Omega)}  \bigg] \ds.
\end{aligned}
\end{equation*}
By Young's inequality, we deduce that 
\begin{equation}\label{eq:T5Estim}
\begin{aligned}
{\mathlarger{\EuScript{T}}}_5 & \lesssim C(\kappa)\! \int_0^t \!\Big(   \Vert \underline{\bv}_2\! \cdot \nabla(\, \underline{\rho}_2^{\gamma -1}) \Vert_{L^\mathtt{p}(\Omega)}^2 + \Vert \nabla(\, \underline{\rho}_2^{\gamma -1}) \Vert_{L^\mathtt{p}(\Omega)}^2 +  \Vert \partial_s\bfPsi_{\eta_2} \!\cdot \nabla(\, \underline{\rho}_2^{\gamma -1}) \Vert_{L^\mathtt{p}(\Omega)}^2 + 1 \!\Big) {\mathlarger{\EuScript{E}} }_{\mathrm{rel}}(s)   \ds
\\[0.4em]
& \quad + \kappa \int_0^t  \Vert \partial_s\naby\eta_1 - \partial_s\naby\eta_2 \Vert_{L^2(\omega)}^2  \ds .
\end{aligned}
\end{equation}
We note that,
\begin{equation}
 \Vert \underline{\bv}_2\! \cdot \nabla(\, \underline{\rho}_2^{\gamma -1}) \Vert_{L^\mathtt{p}(\Omega)}, \quad   \Vert \nabla(\, \underline{\rho}_2^{\gamma -1}) \Vert_{L^\mathtt{p}(\Omega)}, 
 \quad    \Vert \partial_s\bfPsi_{\eta_2} \!\cdot \nabla(\, \underline{\rho}_2^{\gamma -1}) \Vert_{L^\mathtt{p}(\Omega)}  \in L^2((0,t)). 
\end{equation}
Indeed, it holds that 
\begin{subequations}
\begin{equation}
\Vert \underline{\bv}_2\! \cdot \nabla(\, \underline{\rho}_2^{\gamma -1}) \Vert_{L^\mathtt{p}(\Omega)}  \lesssim  \Vert \underline{\bv}_2\Vert_{W^{2,2}(\Omega)} \Vert \underline{\rho}_2 \Vert_{W^{3,2}(\Omega)},
\end{equation}
\begin{equation}
 \Vert \partial_s\bfPsi_{\eta_2} \!\cdot \nabla(\, \underline{\rho}_2^{\gamma -1}) \Vert_{L^\mathtt{p}(\Omega)}  \lesssim  \left( 1 + \Vert \partial_s \eta_2 \Vert_{W^{2,2}(\omega)} \right)  \Vert \underline{\rho}_2 \Vert_{W^{3,2}(\Omega)}  .
\end{equation}
\end{subequations}

In a similar fashion, we obtain 
\begin{equation*}
\begin{aligned}
{\mathlarger{\EuScript{T}}}_6 & \lesssim  \int_0^t  \Vert J_{\eta_1} \Vert_{L^{24}(\Omega)} \Vert \underline{\rho}_1 \Vert_{L^{ \gamma}(\Omega)}  \Vert \underline{\bv} \Vert_{L^\infty(\Omega)}  
\\[0.4em]
& \quad \quad \qquad  \cdot \Vert\! \left( \nabla\bfPsi_{\eta_1} \right)^{-1}\circ \bfPsi_{\eta_1} - \left(\nabla\bfPsi_{\eta_2} \right)^{-1}\circ \bfPsi_{\eta_2}   \Vert_{L^{24}(\Omega)}   \Vert \nabla\underline{\bv}_2 \Vert_{L^\mathtt{p}(\Omega)} \Vert \underline{\bv}_2 - \underline{\bv}_1 \Vert_{L^3(\Omega)} \ds .
\end{aligned} 
\end{equation*}
Hence, by Young's inequality and \eqref{eq:BasicEnergy}, it follows that 
\begin{equation}\label{eq:T6Estim}
\begin{aligned}
{\mathlarger{\EuScript{T}}}_6 & \lesssim C(\kappa)  \int_0^t  \Vert \nabla \underline{\bv}_2\Vert_{L^\mathtt{p}(\Omega)}^2 {\mathlarger{\EuScript{E}} }_{\mathrm{rel}}(s)   \ds  + \kappa \int_0^t \Vert \underline{\bv}_2 - \underline{\bv}_1 \Vert_{L^3(\Omega)}^2  \ds ,
\end{aligned}
\end{equation}
with 
\[ \Vert \nabla \underline{\bv}_2\Vert_{L^\mathtt{p}(\Omega)} \in L^2\left( (0, t) \right) .\]

Considering the term ${\mathlarger{\EuScript{T}}}_7$, we have 
\begin{equation*}
{\mathlarger{\EuScript{T}}}_7  \lesssim \int_0^t     \Vert \left(\nabla\bfPsi_{\eta_1} \right)^{-1}\circ \bfPsi_{\eta_1}   \Vert_{L^{\infty}(\Omega)}  \Vert \nabla\underline{\bv}_2\Vert_{L^\infty(\Omega)} \int_\Omega J_{\eta_1} \underline{\rho}_1 |\underline{\bv}_2 - \underline{\bv}_1|^2 \dx\ds .
\end{equation*}
Whence, 
\begin{equation}\label{eq:T7Estim}
{\mathlarger{\EuScript{T}}}_7 \lesssim \int_0^t   \Vert \nabla\underline{\bv}_2\Vert_{L^\infty(\Omega)} {\mathlarger{\EuScript{E}} }_{\mathrm{rel}}(s)  \ds .
\end{equation}

Arguing as in the estimate of ${\mathlarger{\EuScript{T}}}_6$, we obtain
\begin{equation}\label{eq:T8Estim}
\begin{aligned}
{\mathlarger{\EuScript{T}}}_8 & \lesssim C(\kappa)  \int_0^t  \Vert \nabla \underline{\rho}_2\Vert_{L^\mathtt{p}(\Omega)}^2 {\mathlarger{\EuScript{E}} }_{\mathrm{rel}}(s)   \ds  + \kappa \int_0^t \Vert \underline{\bv}_2 - \underline{\bv}_1 \Vert_{L^3(\Omega)}^2  \ds .
\end{aligned}
\end{equation}

We next estimate 
\[
{\mathlarger{\EuScript{T}}}_9 =  a\gamma \int_0^t  \int_\Omega J_{\eta_1} (\,\underline{\rho}_2 - \underline{\rho}_1 )\underline{\rho}_2^{\gamma -1} ( \nabla_1 - \nabla_2) \cdot \underline{\bv}_2 \dx\ds.
\]
By H\"older's inequality, it holds that 
\begin{equation*}
\begin{aligned}
{\mathlarger{\EuScript{T}}}_9 & \lesssim \int_0^t   \Vert J_{\eta_1}\Vert_{L^{24}(\Omega)} \Vert \underline{\rho}_2 - \underline{\rho}_1 \Vert_{L^{2\wedge\gamma}(\Omega)}  \Vert \underline{\rho}_2 \Vert_{L^\infty(\Omega)}^{\gamma -1} 
\\[0.4em]
& \qquad \qquad \cdot \Vert \left( \nabla\bfPsi_{\eta_1} \right)^{-1}\circ \bfPsi_{\eta_1} - \left(\nabla\bfPsi_{\eta_2} \right)^{-1}\circ \bfPsi_{\eta_2}   \Vert_{L^{24}(\Omega)}  \Vert \nabla \underline{\bv}_2 \Vert_{L^{\mathtt{p}}(\Omega)} \ds .
\end{aligned}
\end{equation*} 
Hence,
\begin{equation}\label{eq:T9Estim}
\begin{aligned}
{\mathlarger{\EuScript{T}}}_9 & \lesssim   \int_0^t  \Vert \nabla \underline{\bv}_2 \Vert_{L^{\mathtt{p}}(\Omega)}^2 {\mathlarger{\EuScript{E}} }_{\mathrm{rel}}(s)   \ds  .
\end{aligned}
\end{equation}

For the term 
\[
{\mathlarger{\EuScript{T}}}_{10} = -  \int_0^t \int_\Omega   J_{\eta_1} {\mathlarger{\mathcal{H}}}\!\left(\underline{\rho}_1 \bm{\big|}\underline{\rho}_2\right)  \nabla_1\cdot \underline{\bv}_2   \dx\ds ,
\]
it is immediate that 
\begin{equation*}
{\mathlarger{\EuScript{T}}}_{10} \lesssim \int_0^t  \Vert \left(\nabla\bfPsi_{\eta_1} \right)^{-1}\circ \bfPsi_{\eta_1}   \Vert_{L^{\infty}(\Omega)}  \Vert \nabla\underline{\bv}_2\Vert_{L^\infty(\Omega)} \int_\Omega {\mathlarger{\mathcal{H}}}\!\left(\underline{\rho}_1 \bm{\big|}\underline{\rho}_2\right) \dx\ds.
\end{equation*}
Whence,
\begin{equation}\label{eq:T10Estim}
{\mathlarger{\EuScript{T}}}_{10} \lesssim \int_0^t \Vert \nabla\underline{\bv}_2\Vert_{L^\infty(\Omega)} {\mathlarger{\EuScript{E}} }_{\mathrm{rel}}(s)  \ds .
\end{equation}

Next, we estimate 
\begin{equation*}
\begin{aligned}
{\mathlarger{\EuScript{T}}}_{11} & = \int_0^t  \int_\Omega J_{\eta_1} \underline{\rho}_1 \Big[  \partial_s \bfPsi_{\eta_2} \cdot  \nabla_2\underline{\bv}_2  - \partial_s \bfPsi_{\eta_1} \cdot  \nabla_1\underline{\bv}_2  \Big] \cdot ( \underline{\bv}_2 -  \underline{\bv}_1) \dx\ds 
\\[0.3em]
& =  \int_0^t  \int_\Omega J_{\eta_1} (\underline{\rho}_1 - \underline{\rho}_2) \Big[  \partial_s \bfPsi_{\eta_2} \cdot  \nabla_2\underline{\bv}_2  - \partial_s \bfPsi_{\eta_1} \cdot  \nabla_1\underline{\bv}_2  \Big] \cdot ( \underline{\bv}_2 -  \underline{\bv}_1) \dx\ds 
\\[0.3em]
& \quad +  \int_0^t  \int_\Omega J_{\eta_1}  \underline{\rho}_2 \Big[  \partial_s \bfPsi_{\eta_2} \cdot  \nabla_2\underline{\bv}_2  - \partial_s \bfPsi_{\eta_1} \cdot  \nabla_1\underline{\bv}_2  \Big] \cdot ( \underline{\bv}_2 -  \underline{\bv}_1) \dx\ds .
\end{aligned}
\end{equation*}
Using H\"older's inequality, we deduce that 
\begin{equation*}
\begin{aligned}
 {\mathlarger{\EuScript{T}}}_{11} & \lesssim \int_0^t   \Vert J_{\eta_1} \Vert_{L^{24}(\Omega)} \Vert \underline{\rho}_1 - \underline{\rho}_2 \Vert_{L^{2\wedge \gamma}(\Omega)} \Vert \nabla\underline{\bv}_2 \Vert_{L^\infty(\Omega)}  \bigg(   \Vert \partial_s \bfPsi_{\eta_1} \left(\nabla \bfPsi_{\eta_1} \right)^{-1}\circ \bfPsi_{\eta_1} \Vert_{L^{24}(\Omega)} 
 \\[0.3em]
 &  \qquad \qquad \qquad \qquad \qquad  \qquad \qquad \qquad \qquad  +  \Vert \partial_s \bfPsi_{\eta_2} \left(\nabla \bfPsi_{\eta_2} \right)^{-1}\circ \bfPsi_{\eta_2} \Vert_{L^{24}(\Omega)}    \bigg) \Vert \underline{\bv}_2  - \underline{\bv}_1 \Vert_{L^3(\Omega)}  \ds 
 \\[0.4em]
 & \quad +  \int_0^t   \Vert J_{\eta_1} \Vert_{L^{\infty}(\Omega)} \Vert  \underline{\rho}_2 \Vert_{L^{\infty}(\Omega)} \Vert \nabla\underline{\bv}_2 \Vert_{L^\infty(\Omega)}  \Bigg(  \Vert  \left( \partial_t \bfPsi_{\eta_1}  - \partial_s \bfPsi_{\eta_2}  \right)   \left(\nabla \bfPsi_{\eta_1} \right)^{-1}\circ \bfPsi_{\eta_1}\Vert_{L^2(\Omega)}  
 \\[0.3em]
 & \qquad \qquad \qquad \qquad \quad    + \left \Vert \partial_s \bfPsi_{\eta_2} \left(  \left(\nabla \bfPsi_{\eta_1} \right)^{-1}\circ \bfPsi_{\eta_1} - \left(\nabla \bfPsi_{\eta_2} \right)^{-1}\circ \bfPsi_{\eta_2}   \right)   \right\Vert_{L^2(\Omega)} \Bigg)  \Vert \underline{\bv}_2  - \underline{\bv}_1 \Vert_{L^2(\Omega)}  \ds  .
 \end{aligned}
\end{equation*}
By Young's inequality and the  regularity properties of the weak and strong solutions, it follows that 
\begin{equation}\label{eq:T11Estim}
\begin{aligned}
{\mathlarger{\EuScript{T}}}_{11} & \lesssim C(\kappa)\! \int_0^t \!  \Big(\! \Vert \partial_s \bfPsi_{\eta_1} \left(\nabla \bfPsi_{\eta_1} \right)^{-1}\!\circ \bfPsi_{\eta_1} \Vert_{L^{24}(\Omega)}^2 +   \Vert \partial_s \bfPsi_{\eta_2}\! \left(\nabla \bfPsi_{\eta_2} \right)^{-1}\!\circ \bfPsi_{\eta_2} \Vert_{L^{24}(\Omega)}^2 + 1 \!\Big) {\mathlarger{\EuScript{E}} }_{\mathrm{rel}}(s)  \ds  
\\[0.3em]
& \quad + \kappa \int_0^t  \Vert \underline{\bv}_2  - \underline{\bv}_1 \Vert_{L^2(\Omega)}^2   \ds   ,
\end{aligned}
\end{equation}
where 
\[
\Vert \partial_s \bfPsi_{\eta_1} \left(\nabla \bfPsi_{\eta_1} \right)^{-1}\!\circ \bfPsi_{\eta_1} \Vert_{L^{24}(\Omega)} , \;\;    \Vert \partial_s \bfPsi_{\eta_2}\! \left(\nabla \bfPsi_{\eta_2} \right)^{-1}\!\circ \bfPsi_{\eta_2} \Vert_{L^{24}(\Omega)}  \; \in  L^2 \left( (0, t) \right).  
\]

Finally, we consider the term 
\begin{equation}\label{eq:T12Old}
{\mathlarger{\EuScript{T}}}_{12} = \int_0^t \int_\omega  (\partial_s\eta_2 - \partial_s\eta_1) p(\,\underline{\rho}_2) \bn\cdot  \big(  \bn_{\eta_2}\circ \bm{\varphi}_{\eta_2}  \mathrm{det}(\naby\bm{\varphi}_{\eta_2})  - \bn_{\eta_1}\circ \bm{\varphi}_{\eta_1}  \mathrm{det}(\naby\bm{\varphi}_{\eta_1}) \big) \dy\ds .
\end{equation}
We first rewrite  ${\mathlarger{\EuScript{T}}}_{12}$ by introducing  
\[
\mathtt{W}(\eta) := \bn\cdot\bn_\eta \mathrm{det}(\naby\bm{\varphi}_\eta).
\]
Using properties of the triple scalar product, it follows that 
\begin{equation}\label{eq:DetDiff}
\mathtt{W}(\eta_2) - \mathtt{W}(\eta_1) = (\eta_2 - \eta_1) \Big[ \bn\cdot \big( (\partial_1\bm{\varphi} \times \partial_2\bn) + (\partial_1\bn \times \partial_2 \bm{\varphi} ) \big) + (\eta_2 + \eta_1)(\partial_1\bn \times \partial_2\bn) \Big]  . 
\end{equation}
Therefore, substituting  \eqref{eq:DetDiff} into \eqref{eq:T12Old}, and applying  H\"older's inequality, the trace theorem  and Young's inequality, we obtain  
\begin{equation}
{\mathlarger{\EuScript{T}}}_{12} \lesssim  C(\kappa) \int_{0}^t \Vert p(\,\underline{\rho}_2) \Vert_{W^{1,2}(\Omega)}^2 {\mathlarger{\EuScript{E}} }_{\mathrm{rel}}(s) \ds  + \kappa \int_0^t   \Vert \partial_s \naby\eta_2 - \partial_s \naby\eta_1\Vert_{L^2(\omega)}^2 \ds .
\end{equation} 
This concludes the proof of \cref{theo:WeakStrongUniqueMain}.



 \appendix
\crefalias{section}{appendix}
\section{ }
\label{app:MRsourcetermEstim}
In order to prove the estimates \eqref{eq:BuEstim}, \eqref{eq:TimeBuEstim}, \eqref{eq:fEstim}, and \eqref{eq:TimefEstim},   we first recall  that the Hanzawa transform depends smoothly on the interface deformation $\eta$. In particular, for the  relative mapping $\bfPsi_{\eta \to \eta_0}$  and for all   $\mathtt{k} \in \{1 , \ldots, 5 \}, \; \mathtt{p} \in [1, \infty] $, the following estimates hold
\begin{subequations}\label{eq:GeoEstimate}
\begin{equation}
\Vert  \mathbf{A}_{\eta \to \eta_0 } - \mathbb{I}_{3\times 3} \Vert_{W^{\mathtt{k}, \mathtt{p}}(\Omega_{\eta_0})}  + \Vert \mathbf{B}_{\eta \to \eta_0 } - \mathbb{I}_{3\times 3} \Vert_{W^{\mathtt{k}, \mathtt{p}}(\Omega_{\eta_0})}   \lesssim   \Vert \eta - \eta_0 \Vert_{W^{\mathtt{k}+1, \mathtt{p}}(\omega) } ,  
\end{equation}
\begin{equation}
 \left\Vert \dfrac{1}{J_{\eta \to \eta_0} } -1 \right\Vert_{W^{\mathtt{k}, \mathtt{p}}(\Omega_{\eta_0})} +  \Vert \nabla\bfPsi_{\eta \to \eta_0 } - \mathbb{I}_{3\times 3} \Vert_{W^{\mathtt{k}, \mathtt{p}}(\Omega_{\eta_0})}   \lesssim   \Vert \eta - \eta_0 \Vert_{W^{\mathtt{k}+1, \mathtt{p}}(\omega) }. 
\end{equation}
\end{subequations}
Here the implicit constant $C(\eta_0) >0$ (cf. \cite[Section 2]{breit2024regularity}). 

We start with the additive perturbation operator   $\mathsf{B}$ acting on the velocity filed $\underline{\bu}$, while the forcing term $\mathsf{f}$ will be treated subsequently.
More precisely, we first consider 
\begin{equation*}
\begin{aligned}
\mathsf{B}\underline{\bu} & =  -  \nabla\underline{\bu} \cdot \partial_t \bfPsi_{\eta \to \eta_0 }^{-1}\circ \bfPsi_{\eta \to \eta_0 }   -  \dfrac{1}{J_{\eta \to \eta_0 }} \Big( \underline{\bu}  \bigl( \nabla\mathcal{E}_{\eta_0}(\partial_t\eta\bn) \colon  \mathbf{B}_{\eta \to \eta_0 }    \bigr)  +  \mathcal{E}_{\eta_0}(\partial_t\eta\bn)  \big( \nabla \underline{\bu} \colon \mathbf{B}_{\eta \to \eta_0 } \big) \Big).
\end{aligned}
\end{equation*}
Upon rewriting $\mathsf{B}\underline{\bu}$ as a perturbation of the reference operator, we deduce that 
\begin{align*}
\int_{I_*} \!\!\Vert \mathsf{B}\underline{\bu} \Vert_{W^{2,2}(\Omega_{\eta_0}\!)}^2 \!\dt & \lesssim \int_{I_*} \Vert \nabla\underline{\bu} \Vert_{W^{2,2}(\Omega_{\eta_0})}^2   \Vert \left(\nabla\bfPsi_{\eta \to \eta_0} \right)^{-1}  - \mathbb{I}_{3 \times 3} \Vert_{W^{2,2}(\Omega_{\eta_0})}^2   \Vert \partial_t\bfPsi_{\eta \to \eta_0} \Vert_{W^{2,2}(\Omega_{\eta_0})}^2  \dt
\\[0.4em]
& \quad + \int_{I_*} \Vert \nabla\underline{\bu} \Vert_{W^{2,2}(\Omega_{\eta_0})}^2  \Vert \partial_t\bfPsi_{\eta \to \eta_0} \Vert_{W^{2,2}(\Omega_{\eta_0})}^2  \dt
\\[0.4em]
& \quad + \! \int_{I_*}\! \left\Vert \!\frac{1}{J_{\eta \to \eta_0}} \!-\!1\! \right \Vert_{W^{2,2}(\Omega_{\eta_0}\!)}^2 \!\!\!\!\! \Vert \underline{\bu} \Vert_{W^{2,2}(\Omega_{\eta_0}\!)}^2  \!\Vert\!\nabla\mathcal{E}_{\eta_0}\!(\partial_t\eta\bn) \!\Vert_{W^{2, 2}(\Omega_{\eta_0}\!)}^2  \!  \Vert \!\mathbf{B}_{\eta \to \eta_0 }\! - \mathbb{I}_{3\times 3}\! \Vert_{W^{2, 2}(\Omega_{\eta_0}\!)}^2 \!\dt
\\[0.4em]
& \quad + \int_{I_*} \left\Vert \frac{1}{J_{\eta \to \eta_0}} - 1  \right \Vert_{W^{2,2}(\Omega_{\eta_0})}^2 \Vert \underline{\bu} \Vert_{W^{2,2}(\Omega_{\eta_0})}^2  \Vert\Div\left( \mathcal{E}_{\eta_0}(\partial_t\eta\bn) \right) \Vert_{W^{2, 2}(\Omega_{\eta_0})}^2 \dt
\\[0.4em]
& \quad +  \int_{I_*} \Vert \underline{\bu} \Vert_{W^{2,2}(\Omega_{\eta_0})}^2 \Vert\nabla \mathcal{E}_{\eta_0}(\partial_t\eta\bn) \Vert_{W^{2, 2}(\Omega_{\eta_0})}^2  \Vert \mathbf{B}_{\eta \to \eta_0 } - \mathbb{I}_{3\times 3} \Vert_{W^{2, 2}(\Omega_{\eta_0} )}^2 \dt 
\\[0.4em]
& \quad +  \int_{I_*} \Vert \underline{\bu} \Vert_{W^{2,2}(\Omega_{\eta_0})}^2 \ \Vert\Div\left( \mathcal{E}_{\eta_0}(\partial_t\eta\bn) \right) \Vert_{W^{2, 2}(\Omega_{\eta_0})}^2 \dt 
\\[0.4em]
& \quad + \! \int_{I_*}\! \left\Vert \!\frac{1}{J_{\eta \to \eta_0}} \!-\!1\! \right \Vert_{W^{2,2}(\Omega_{\eta_0}\!)}^2 \!\!\!\!\! \Vert \!\nabla\underline{\bu} \Vert_{W^{2,2}(\Omega_{\eta_0}\!)}^2  \!\Vert\mathcal{E}_{\eta_0}\!(\partial_t\eta\bn) \!\Vert_{W^{2, 2}(\Omega_{\eta_0}\!)}^2  \!  \Vert \!\mathbf{B}_{\eta \to \eta_0 }\! - \mathbb{I}_{3\times 3}\! \Vert_{W^{2, 2}(\Omega_{\eta_0}\!)}^2 \!\dt
\\[0.4em]
& \quad + \int_{I_*} \left\Vert \frac{1}{J_{\eta \to \eta_0}} - 1  \right \Vert_{W^{2,2}(\Omega_{\eta_0})}^2 \Vert \Div\underline{\bu} \Vert_{W^{2,2}(\Omega_{\eta_0})}^2  \Vert\mathcal{E}_{\eta_0}(\partial_t\eta\bn) \Vert_{W^{2, 2}(\Omega_{\eta_0})}^2 \dt
\\[0.4em]
& \quad +  \int_{I_*} \Vert \nabla\underline{\bu} \Vert_{W^{2,2}(\Omega_{\eta_0})}^2 \Vert \mathcal{E}_{\eta_0}(\partial_t\eta\bn) \Vert_{W^{2, 2}(\Omega_{\eta_0})}^2  \Vert \mathbf{B}_{\eta \to \eta_0 } - \mathbb{I}_{3\times 3} \Vert_{W^{2, 2}(\Omega_{\eta_0} )}^2 \dt 
\\[0.4em]
& \quad +   \int_{I_*} \Vert \Div\underline{\bu} \Vert_{W^{2,2}(\Omega_{\eta_0})}^2 \Vert \mathcal{E}_{\eta_0}(\partial_t\eta\bn) \Vert_{W^{2, 2}(\Omega_{\eta_0})}^2 \dt  
\\[0.4em]
& \lesssim \sum\limits_{\mathsf{i} = 1}^{10} {\mathlarger{\mathfrak{B}}}_\mathsf{i} .
\end{align*}

Using the geometric estimates \eqref{eq:GeoEstimate}, we derive that 
\begin{equation}\label{eq:B1}
{\mathlarger{\mathfrak{B}}}_1 \lesssim  \int_{I_*} \Vert \underline{\bu} \Vert_{W^{3,2}(\Omega_{\eta_0})}^2 \Vert \partial_t\eta \Vert_{W^{5/2,2}(\omega)}^2 \dt. 
\end{equation} 

Since ${\mathlarger{\mathfrak{B}}}_2$ is similar to ${\mathlarger{\mathfrak{B}}}_1$ up to an integrand factor, it holds 
\begin{equation}\label{eq:B2}
{\mathlarger{\mathfrak{B}}}_2 \lesssim \int_{I_*} \Vert \underline{\bu} \Vert_{W^{3,2}(\Omega_{\eta_0})}^2 \Vert \partial_t\eta \Vert_{W^{5/2,2}(\omega)}^2 \dt. 
\end{equation}

For the contributions ${\mathlarger{\mathfrak{B}}}_\mathsf{i}, \mathsf{i} \in \{3,\dots, 6\} $, it is straightforward -- using \eqref{eq:GeoEstimate} -- that  
 \begin{equation}\label{eq:B3456}
{\mathlarger{\mathfrak{B}}}_\mathsf{i} \lesssim   \int_{I_*} \Vert \underline{\bu} \Vert_{W^{3,2}(\Omega_{\eta_0})}^2 \Vert \partial_t \eta \Vert_{W^{5/2, 2}(\omega)}^2\dt    . 
\end{equation}

Considering the term ${\mathlarger{\mathfrak{B}}}_7$,  we get using \eqref{eq:GeoEstimate} and interpolation 
\begin{equation*}
{\mathlarger{\mathfrak{B}}}_7 \lesssim \sup\limits_{I_*} \Vert \eta - \eta_0 \Vert_{W^{3,2}(\omega)}^4 \int_{I_*} \Vert \nabla\underline{\bu} \Vert_{W^{1,2}(\Omega_{\eta_0})} \Vert \nabla\underline{\bu} \Vert_{W^{3,2}(\Omega_{\eta_0})} \Vert \partial_t\eta \Vert_{W^{3/2,2}(\omega)}^2  \dt. 
\end{equation*}
An application of Young's inequality further yields to 
\begin{equation}\label{eq:B7}
{\mathlarger{\mathfrak{B}}}_7 \lesssim \kappa \sup\limits_{I_*} \Vert \partial_t\eta \Vert_{W^{3/2, 2}(\omega)}^2 \int_{I_*} \Vert \underline{\bu} \Vert_{W^{4,2}(\Omega_{\eta_0})}^2 \dt + c(\kappa) \sup\limits_{I_*} \Vert \partial_t\eta \Vert_{W^{3/2, 2}(\omega)}^2 \int_{I_*} \Vert \nabla\underline{\bu} \Vert_{W^{1,2}(\Omega_{\eta_0})}^2 \dt. 
\end{equation} 

The remaining terms ${\mathlarger{\mathfrak{B}}}_\mathsf{j}, \, \mathsf{j} \in \{8, 9, 10\}$, being similar to ${\mathlarger{\mathfrak{B}}}_7$ up to an integrand factor, we deduce that 
\begin{equation}\label{eq:B8910}
{\mathlarger{\mathfrak{B}}}_\mathsf{j} \lesssim  \kappa \sup\limits_{I_*} \Vert \partial_t\eta \Vert_{W^{3/2, 2}(\omega)}^2 \int_{I_*} \Vert \underline{\bu} \Vert_{W^{4,2}(\Omega_{\eta_0})}^2 \dt + c(\kappa) \sup\limits_{I_*} \Vert \partial_t\eta \Vert_{W^{3/2, 2}(\omega)}^2 \int_{I_*} \Vert \nabla\underline{\bu} \Vert_{W^{1,2}(\Omega_{\eta_0})}^2 \dt. 
\end{equation}
Hence, \eqref{eq:BuEstim} holds, with  the hidden constant  depending on the value of the acceleration energy  ${\mathlarger{\mathtt{E}}}_{\mathrm{acc}}$.\\[-0.8em]


We proceed with the estimate for  $\partial_t \mathsf{B}\underline{\bu}$,   which we decompose as 
\[
\partial_t \mathsf{B}\underline{\bu} =   \big(\partial_t \mathsf{B}\underline{\bu} \big)_1  +  \big(\partial_t \mathsf{B}\underline{\bu} \big)_2 +  \big(\partial_t \mathsf{B}\underline{\bu} \big)_3 ,
\]
where 
\begin{align*}
 \big(\partial_t \mathsf{B}\underline{\bu} \big)_1 & :=  -\partial_t \nabla\underline{\bu} \cdot \partial_t  \bfPsi_{\eta \to \eta_0 }^{-1}\circ \bfPsi_{\eta \to \eta_0 }  + \nabla\underline{\bu} \cdot \partial_t \left( \partial_t  \bfPsi_{\eta \to \eta_0 }^{-1}\circ \bfPsi_{\eta \to \eta_0 }   \right) ,
 \\[1em]
  \big(\partial_t \mathsf{B}\underline{\bu} \big)_2 & := -\partial_t \left(\dfrac{1}{J_{\eta \to \eta_0 }} \right)  \mathcal{E}_{\eta_0}(\partial_t\eta\bn)  \left(\nabla\underline{\bu} \colon \mathbf{B}_{\eta \to \eta_0 }  \right)  - \dfrac{1}{J_{\eta \to \eta_0 }}  \partial_t  \mathcal{E}_{\eta_0}(\partial_t\eta\bn)  \left(\nabla\underline{\bu} \colon \mathbf{B}_{\eta \to \eta_0 }  \right) 
\\[0.4em]
&\quad -   \dfrac{1}{J_{\eta \to \eta_0 }}  \mathcal{E}_{\eta_0}(\partial_t\eta\bn)  \Bigl(  \partial_t \nabla\underline{\bu} \colon \mathbf{B}_{\eta \to \eta_0 }  +  \nabla\underline{\bu} \colon \partial_t\mathbf{B}_{\eta \to \eta_0 }  \Bigr) , 
\\[0.4em] 
\intertext{and}
\\[-1em]
\big(\partial_t \mathsf{B}\underline{\bu} \big)_3 & := -\partial_t \left(\dfrac{1}{J_{\eta \to \eta_0 }} \right)  \underline{\bu} \left(\nabla\mathcal{E}_{\eta_0}(\partial_t\eta\bn) \colon \mathbf{B}_{\eta \to \eta_0 }  \right)  - \dfrac{1}{J_{\eta \to \eta_0 }}\partial_t \underline{\bu} \left(\nabla\mathcal{E}_{\eta_0}(\partial_t\eta\bn) \colon \mathbf{B}_{\eta \to \eta_0 }  \right)  
\\[0.4em]
&\quad - \dfrac{1}{J_{\eta \to \eta_0 }}\underline{\bu} \Big( \partial_t \nabla\mathcal{E}_{\eta_0}(\partial_t\eta\bn) \colon \mathbf{B}_{\eta \to \eta_0 }  + \nabla\mathcal{E}_{\eta_0}(\partial_t\eta\bn) \colon \partial_t \mathbf{B}_{\eta \to \eta_0 }  \Big)  .
\end{align*}
For the first term, we have 
\begin{align*}
\int_{I_*} \left\Vert \big(\partial_t \mathsf{B}\underline{\bu} \big)_1 \right\Vert_{L^2(\Omega_{\eta_0})}^2 \dt & \lesssim \int_{I_*}  \Vert \partial_t \nabla\underline{\bu} \Vert_{L^2(\Omega_{\eta_0})}^2 \Vert \left(\nabla\bfPsi_{\eta \to \eta_0} \right)^{-1}  - \mathbb{I}_{3 \times 3} \Vert_{L^{\infty}(\Omega_{\eta_0})}^2   \Vert \partial_t\bfPsi_{\eta \to \eta_0} \Vert_{L^{\infty}(\Omega_{\eta_0})}^2  \dt
\\[0.4em]
&\quad + \int_{I_*} \Vert \partial_t \nabla\underline{\bu} \Vert_{L^2(\Omega_{\eta_0})}^2 \Vert \partial_t\bfPsi_{\eta \to \eta_0} \Vert_{L^{\infty}(\Omega_{\eta_0})}^2  \dt
\\[0.4em]
& \quad +   \int_{I_*} \Vert  \nabla\underline{\bu} \Vert_{L^\infty(\Omega_{\eta_0})}^2  \Vert \partial_t^2\bfPsi_{\eta \to \eta_0 }^{-1}\circ \bfPsi_{\eta \to \eta_0 }  \Vert_{L^2(\Omega_{\eta_0})}^2  \dt
\\[0.4em]
& \quad +   \int_{I_*} \Vert  \nabla\underline{\bu} \Vert_{L^\infty(\Omega_{\eta_0})}^2  \Vert \partial_t\nabla \bfPsi_{\eta \to \eta_0 }^{-1}\circ \bfPsi_{\eta \to \eta_0 }  \cdot \partial_t \bfPsi_{\eta \to \eta_0}  \Vert_{L^2(\Omega_{\eta_0})}^2  \dt .
\end{align*}
Using  \eqref{eq:GeoEstimate}, Sobolev embeddings, \cref{rem:HigherOrderEta},  interpolation and Young's inequality, we deduce that  
\begin{equation}\label{eq:TimeBuEstim1}
\int_{I_*} \left\Vert \big(\partial_t \mathsf{B}\underline{\bu} \big)_1 \right\Vert_{L^2(\Omega_{\eta_0})}^2 \dt  \lesssim \kappa \int_{I_*} \Vert \partial_t \underline{\bu} \Vert_{W^{2,2}(\Omega_{\eta_0})}^2  \dt  +  \int_{I_*} \Vert  \underline{\bu} \Vert_{W^{3,2}(\Omega_{\eta_0})}^2  \Vert \partial_t^2 \eta \Vert_{L^{2}(\omega)}^2 \dt   +   {\mathlarger{\mathtt{E}}}_{\mathrm{acc}}.
\end{equation}
A similar argument applies to $ \big(\partial_t \mathsf{B}\underline{\bu} \big)_2$. Indeed, it holds that 
\begin{equation}\label{eq:TimeBuEstim2prior}
\begin{aligned}
& \int_{I_*} \left\Vert \big(\partial_t \mathsf{B}\underline{\bu} \big)_2 \right\Vert_{L^2(\Omega_{\eta_0})}^2 \dt 
\\[0.4em]
& \lesssim   \int_{I_*}\Big(\Vert \partial_t \naby\eta\Vert_{L^\infty(\omega)}^2   +  \Vert \partial_t  \mathcal{E}_{\eta_0}(\partial_t\eta\bn) \Vert_{L^2(\Omega_{\eta_0})}^2  \Big) \Vert  \nabla\underline{\bu} \Vert_{L^\infty(\Omega_{\eta_0})}^2    \dt 
\\[0.4em]
&\quad +  \int_{I_*} \Vert  \mathcal{E}_{\eta_0}(\partial_t\eta\bn) \Vert_{L^\infty(\Omega_{\eta_0})}^2  \Big( \Vert \partial_t \nabla\underline{\bu} \Vert_{L^2(\Omega_{\eta_0})}^2  + \Vert \nabla\underline{\bu} \Vert_{L^\infty(\Omega_{\eta_0})}^2 \Vert  \partial_t \mathbf{B}_{\eta \to \eta_0 }  \Vert_{L^2(\Omega_{\eta_0})}^2   \Big)   \dt  . 
\end{aligned}
\end{equation}
Whence,
\begin{equation}\label{eq:TimeBuEstim2}
\begin{aligned}
& \int_{I_*} \left\Vert \big(\partial_t \mathsf{B}\underline{\bu} \big)_2 \right\Vert_{L^2(\Omega_{\eta_0})}^2 \dt  
\\[0.4em]
&\lesssim  \kappa \int_{I_*} \Vert \partial_t \underline{\bu} \Vert_{W^{2,2}(\Omega_{\eta_0})}^2  \dt +  \int_{I_*} \Big(\Vert \partial_t \eta \Vert_{W^{5/2, 2}(\omega)}^2  +  \Vert \partial_t^2 \eta \Vert_{L^{2}(\omega)}^2 \Big) \Vert  \underline{\bu} \Vert_{W^{3,2}(\Omega_{\eta_0})}^2 \dt  +   {\mathlarger{\mathtt{E}}}_{\mathrm{acc}}.
\end{aligned}
\end{equation}
By symmetry in $\underline{\bu}$ and $\mathcal{E}_{\eta_0}(\partial_t\eta\bn)$, \eqref{eq:TimeBuEstim2} holds for $\big(\partial_t \mathsf{B}\underline{\bu} \big)_3$. That is, 
\begin{equation}\label{eq:TimeBuEstim3}
\begin{aligned}
& \int_{I_*} \left\Vert \big(\partial_t \mathsf{B}\underline{\bu} \big)_3 \right\Vert_{L^2(\Omega_{\eta_0})}^2 \dt  
\\[0.4em]
&\lesssim  \kappa \int_{I_*} \Vert \partial_t \underline{\bu} \Vert_{W^{2,2}(\Omega_{\eta_0})}^2  \dt +  \int_{I_*} \Big(\Vert \partial_t \eta \Vert_{W^{5/2, 2}(\omega)}^2  +  \Vert \partial_t^2 \eta \Vert_{L^{2}(\omega)}^2 \Big) \Vert  \underline{\bu} \Vert_{W^{3,2}(\Omega_{\eta_0})}^2 \dt  +   {\mathlarger{\mathtt{E}}}_{\mathrm{acc}}.
\end{aligned}
\end{equation} 
Thus, \eqref{eq:TimeBuEstim} follows from \eqref{eq:TimeBuEstim1}--\eqref{eq:TimeBuEstim3}. \\[-0.8em]


Considering the forcing term $\mathsf{f}$, which we write as  
\begin{align*}
\mathsf{f} + \partial_t \mathcal{E}_{\eta_0}(\partial_t\eta\bn) & =     - \nabla\mathcal{E}_{\eta_0}(\partial_t\eta\bn) \cdot \partial_t \bfPsi_{\eta \to \eta_0 }^{-1}\circ \bfPsi_{\eta \to \eta_0 }  -\dfrac{1}{J_{\eta \to \eta_0}}\underline{\bu} \big( \nabla \underline{\bu} \colon \mathbf{B}_{\eta \to \eta_0 } \big)  
\\[0.3em]
& \quad \;\,  -\dfrac{1}{J_{\eta \to \eta_0}} \mathcal{E}_{\eta_0}(\partial_t\eta\bn) \big( \nabla\mathcal{E}_{\eta_0}(\partial_t\eta\bn)  \colon\mathbf{B}_{\eta \to \eta_0 }  \big)  - \left(J_{\eta \to \eta_0}\underline{\rho} \right)^{-1} \mathbf{B}_{\eta \to \eta_0 } \nabla p(\underline{\rho})
\\[0.3em]
& \quad \;\, + \left(J_{\eta \to \eta_0}\underline{\rho} \right)^{-1}\Div\Biggl[  \mu \left(\mathbf{A}_{\eta \to \eta_0 }  - \mathbb{I}_{3 \times 3}   \right) \nabla\underline{\bu}  + (\lambda + \mu)\biggl( \Bigl(  \left(\mathbf{B}_{\eta \to \eta_0 } - \mathbb{I}_{3 \times 3} \right) \colon \nabla\underline{\bu} \Bigr)  \mathbb{I}_{3 \times 3}    
\\[0.3em]
& \qquad \qquad\qquad \qquad \qquad \qquad\qquad \qquad\qquad + \left( \mathbf{B}_{\eta \to \eta_0 } \colon \nabla\underline{\bu}  \right)  \left( \dfrac{1}{J_{\eta \to \eta_0 }} \mathbf{B}_{\eta \to \eta_0 } -  \mathbb{I}_{3 \times 3}  \right)                 \biggr)      \Biggr]      
\\[0.3em]
& \quad \;\, + \left(J_{\eta \to \eta_0}\underline{\rho} \right)^{-1}\Div\Bigg[ \mu  \mathbf{A}_{\eta \to \eta_0 }  \nabla\mathcal{E}_{\eta_0}(\partial_t\eta\bn) +   \dfrac{(\lambda + \mu)}{J_{\eta \to \eta_0}}   \Big( \mathbf{B}_{\eta \to \eta_0 } \colon \nabla \mathcal{E}_{\eta_0}(\partial_t\eta\bn)     \Big)     \mathbf{B}_{\eta \to \eta_0 }     \Bigg]
\\[0.3em]
& = \sum\limits_{l = 1}^{6} {\mathlarger{\mathfrak{S}}}_l.
\end{align*}
Using chain rule and \eqref{eq:GeoEstimate}, we derive that 
\begin{equation*}
\begin{aligned}
\int_{I_*} \Vert {\mathlarger{\mathfrak{S}}}_1 \Vert_{W^{2,2}(\Omega_{\eta_0})}^2 \dt & \lesssim \int_{I_*} \Vert \nabla\mathcal{E}_{\eta_0}(\partial_t\eta\bn) \Vert_{W^{2,2}(\Omega_{\eta_0})}^2 \Vert \partial_t\eta \Vert_{W^{2,2}(\omega)}^2   \dt
\\[0.4em]
& \quad \lesssim \int_{I_*} \Vert \partial_t \eta \Vert_{W^{5/2,2}}^2 \Vert \partial_t\eta \Vert_{W^{2,2}(\omega)}^2 \dt .
\end{aligned}
\end{equation*}
A standard interpolation argument yields
\begin{equation}\label{eq:S1}
\int_{I_*} \Vert {\mathlarger{\mathfrak{S}}}_1 \Vert_{W^{2,2}(\Omega_{\eta_0})}^2 \dt \lesssim   \sup\limits_{I_*}\Vert \partial_t  \eta \Vert_{W^{3/2, 2}(\omega)}   \int_{I_*} \Vert \partial_t  \eta \Vert_{W^{5/2, 2}(\omega)}^3 \dt.
\end{equation}
For the term ${\mathlarger{\mathfrak{S}}}_2$, it holds that 
\begin{equation*}
\int_{I_*} \Vert {\mathlarger{\mathfrak{S}}}_2 \Vert_{W^{2,2}(\Omega_{\eta_0})}^2 \dt \leq \int_{I_*} \left\Vert \dfrac{1}{ J_{\eta \to \eta_0} } \right\Vert_{W^{2,2}(\Omega_{\eta_0})}^2     \Vert \underline{\bu}\cdot \nabla  \underline{\bu} \Vert_{W^{2,2}(\Omega_{\eta_0})}^2 \Vert \mathbf{B}_{\eta \to \eta_0 } \Vert_{W^{2,2}(\Omega_{\eta_0})}^2   \dt.
\end{equation*}
Therefore,  we deduce from \eqref{eq:GeoEstimate}  that
\begin{equation}\label{eq:S2}
\int_{I_*} \Vert {\mathlarger{\mathfrak{S}}}_2 \Vert_{W^{2,2}(\Omega_{\eta_0})}^2 \dt \lesssim    \int_{I_*}  \Vert \underline{\bu} \Vert_{W^{3,2}(\Omega_{\eta_0})}^2 \Vert \underline{\bu} \Vert_{W^{2,2}(\Omega_{\eta_0})}^2  \dt.
\end{equation}
Moreover, it follows immediately from  \eqref{eq:GeoEstimate} that
 \begin{equation}\label{eq:S3}
\int_{I_*} \Vert {\mathlarger{\mathfrak{S}}}_3 \Vert_{W^{2,2}(\Omega_{\eta_0})}^2 \dt \lesssim  \left( \int_{I_*} \Vert \partial_t \eta \Vert_{W^{5/2, 2}(\omega)}^2 \dt \right) \sup\limits_{I_*} \Vert \partial_t\eta \Vert_{W^{3/2, 2}(\omega)}^2 .
\end{equation}
Furthermore, it holds that (see e.g., \cite[Chapter 3, Lemma 3.4]{MajdaBertozzi2002})  
\begin{equation}\label{eq:PressureMoserIneq}
\begin{aligned}
\Vert \underline{\rho}^{-1}\nabla p(\underline{\rho}) \Vert_{W^{2,2}(\Omega_{\eta_0})}^2 & \lesssim   \Vert \nabla\left(\underline{\rho}^{-1} \right) \Vert_{L^{\infty}(\Omega_{\eta_0})}^2 \Vert  p(\underline{\rho}) \Vert_{W^{2,2}(\Omega_{\eta_0})}^2  +  \Vert \underline{\rho}^{-1}  \Vert_{W^{2,2}(\Omega_{\eta_0})}^2 \Vert  \nabla p(\underline{\rho}) \Vert_{L^{\infty}(\Omega_{\eta_0})}^2
\\[0.4em]
& \qquad +   \Vert \underline{\rho}^{-1}  \Vert_{L^{\infty}(\Omega_{\eta_0})}^2 \Vert  p(\underline{\rho}) \Vert_{W^{3,2}(\Omega_{\eta_0})}^2. 
\end{aligned}
\end{equation}
However,  
\begin{equation*}
 \left\Vert \underline{\rho}^{-1} \right\Vert_{W^{2,2}(\Omega_{\eta_0})}  \lesssim 1 +   \Vert \underline{\rho} \Vert_{W^{2,2}(\Omega_{\eta_0})} + \Vert \nabla\underline{\rho} \Vert_{L^{4}(\Omega_{\eta_0})}^2 , 
\end{equation*}
which, after the  interpolation 
\begin{equation}\label{W14LW22FixDomain}
W^{1,4}(\Omega_{\eta_0}) = \Big[ L^\infty(\Omega_{\eta_0}), W^{2,2}(\Omega_{\eta_0}) \Big]_{1/2, 4}, 
\end{equation}
yields the estimate, with a constant $C = C\left(\Vert \underline{\rho} \Vert_{L^\infty\left(I_*; L^\infty(\Omega_{\eta_0}) \right)} \right) > 0$, given by
\begin{equation}\label{eq:MultiPertubationEstim}
 \left\Vert  \underline{\rho}^{-1} \right\Vert_{W^{2,2}(\Omega_{\eta_0})}  \lesssim 1 +   \Vert \underline{\rho} \Vert_{W^{2,2}(\Omega_{\eta_0})}. 
\end{equation}
Thus, \eqref{eq:PressureMoserIneq} becomes 
\begin{equation}\label{eq:PressureMoserIneqFinal}
\begin{aligned}
\Vert \underline{\rho}^{-1}\nabla p(\underline{\rho}) \Vert_{W^{2,2}(\Omega_{\eta_0})}^2 & \lesssim   \Vert \nabla \underline{\rho} \Vert_{L^{\infty}(\Omega_{\eta_0})}^2 \Vert  p(\underline{\rho}) \Vert_{W^{3,2}(\Omega_{\eta_0})}^2  + \left(  1 +   \Vert \underline{\rho} \Vert_{W^{2,2}(\Omega_{\eta_0})}^2 \right) \Vert  \nabla \underline{\rho} \Vert_{L^{\infty}(\Omega_{\eta_0})}^2  
\\[0.4em]
& \qquad +   \Vert  p(\underline{\rho}) \Vert_{W^{3,2}(\Omega_{\eta_0})}^2, 
\end{aligned}
\end{equation}
with hidden constant depending on $\Vert \underline{\rho} \Vert_{L^\infty\left(I_*; L^\infty(\Omega_{\eta_0})\right)} $.
We therefore, deduce  from \eqref{eq:GeoEstimate} and  \eqref{eq:PressureMoserIneqFinal} that
\begin{equation}\label{eq:S4}
\begin{aligned}
\int_{I_*} \Vert {\mathlarger{\mathfrak{S}}}_4 \Vert_{W^{2,2}(\Omega_{\eta_0})}^2 \dt & \lesssim  \int_{I_*} \Vert \nabla \underline{\rho} \Vert_{L^{\infty}(\Omega_{\eta_0})}^2 \Vert  p(\underline{\rho}) \Vert_{W^{3,2}(\Omega_{\eta_0})}^2 \dt +  \int_{I_*} \Vert  p(\underline{\rho}) \Vert_{W^{3,2}(\Omega_{\eta_0})}^2 \dt
\\[0.4em]
& \quad + \int_{I_*}  \left(  1 +   \Vert \underline{\rho} \Vert_{W^{2,2}(\Omega_{\eta_0})}^2 \right) \Vert  \nabla \underline{\rho} \Vert_{L^{\infty}(\Omega_{\eta_0})}^2    \dt.
\end{aligned}
\end{equation}
To estimate  ${\mathlarger{\mathfrak{S}}}_5$, we first observe that 
\begin{equation}\label{eq:AbsCont}
\eta(t) - \eta_0 = \int_{0}^t \partial_s\eta (s) \ds \quad \forall\, t \in I_* .
\end{equation}
This implies that 
\begin{equation}\label{eq:EtaEta0Norm}
\sup\limits_{I_*} \Vert \eta - \eta_0 \Vert_{W^{k,2}(\omega)} \leq T_* \sup\limits_{I_*}\Vert \partial_t\eta \Vert_{W^{k,2}(\omega)} \qquad \forall\, k  > 0.
\end{equation}
Thus, we have 
\begin{equation}\label{eq:S5prime}
\begin{aligned}
\int_{I_*} \Vert {\mathlarger{\mathfrak{S}}}_5 \Vert_{W^{2,2}(\Omega_{\eta_0})}^2 \dt  & \lesssim  \int_{I_*}    \left\Vert \underline{\rho}^{-1} \Div \Big(\left(  \mathbf{A}_{\eta \to \eta_0 } - \mathbb{I}_{3\times 3}  \right) \nabla\underline{\bu} \Big) \right\Vert_{W^{2, 2}(\Omega_{\eta_0})}^2\dt
\\[0.4em]
&\lesssim \sum\limits_{\mathsf{j} = 0}^{2}  {\mathlarger{ \mathfrak{U} }}_{\mathsf{j}} ,
\end{aligned}
\end{equation}
where 
\[
{\mathlarger{ \mathfrak{U} }}_{\mathsf{j}}  := \int_{I_*} \left\Vert \nabla^{\mathsf{j}} \left(  \underline{\rho}^{-1} \Div \Big(\left(  \mathbf{A}_{\eta \to \eta_0 } - \mathbb{I}_{3\times 3}  \right) \nabla\underline{\bu} \Big)  \right) \right\Vert_{L^{2}(\Omega_{\eta_0})}^2   \dt , \qquad \mathsf{j} \in \{ 0, 1, 2\}.  
\]
We first estimate the $\mathsf{j} = 2$ contribution, 
\begin{equation}
\begin{aligned}
{\mathlarger{ \mathfrak{U} }}_{2} &\lesssim  \int_{I_*} \left\Vert \nabla^{2} \left(  \underline{\rho}^{-1} \right) \Div \Big(\left(  \mathbf{A}_{\eta \to \eta_0 } - \mathbb{I}_{3\times 3}  \right) \nabla\underline{\bu} \Big)  \right\Vert_{L^{2}(\Omega_{\eta_0})}^2  \dt  
\\[0.4em]
& \quad +    \int_{I_*} \left\Vert \nabla \left(  \underline{\rho}^{-1} \right) \nabla\Div \Big(\left(  \mathbf{A}_{\eta \to \eta_0 } - \mathbb{I}_{3\times 3}  \right) \nabla\underline{\bu} \Big)  \right\Vert_{L^{2}(\Omega_{\eta_0})}^2  \dt 
\\[0.4em]
&\quad +  \int_{I_*} \left\Vert  \underline{\rho}^{-1}  \nabla^2\Div \Big(\left(  \mathbf{A}_{\eta \to \eta_0 } - \mathbb{I}_{3\times 3}  \right) \nabla\underline{\bu} \Big)  \right\Vert_{L^{2}(\Omega_{\eta_0})}^2  \dt 
\\[0.4em]
&\lesssim {\mathlarger{ \mathfrak{U} }}_{2, \mathtt{a}} +  {\mathlarger{ \mathfrak{U} }}_{2, \mathtt{b}} + {\mathlarger{ \mathfrak{U} }}_{2, \mathtt{c}} .
\end{aligned}
\end{equation} 
Using H\"older's inequality,  we obtain 
\begin{equation}\label{eq:U2aprimary}
\begin{aligned}
{\mathlarger{ \mathfrak{U} }}_{2, \mathtt{a}} & \lesssim  \int_{I_*}  \Vert \nabla^{2} \left(  \underline{\rho}^{-1} \right) \Vert_{L^4(\Omega_{\eta_0})}^2  \Vert  \mathbf{A}_{\eta \to \eta_0 } - \mathbb{I}_{3\times 3} \Vert_{W^{1,\infty}(\Omega_{\eta_0})}^2 \Vert \nabla^2 \underline{\bu} \Vert_{L^4(\Omega_{\eta_0})}^2 \dt
\\[0.4em]
&\lesssim   \int_{I_*} \left(1  + \Vert \nabla^2   \underline{\rho} \Vert_{L^4(\Omega_{\eta_0})}^2    \right)   \Vert  \mathbf{A}_{\eta \to \eta_0 } - \mathbb{I}_{3\times 3} \Vert_{W^{1,\infty}(\Omega_{\eta_0})}^2 \Vert \nabla^2 \underline{\bu} \Vert_{L^4(\Omega_{\eta_0})}^2 \dt ,
\end{aligned}
\end{equation}
where the last inequality follows from Gagliardo--Nirenberg inequality 
\begin{equation}\label{L8W24}
 \Vert \nabla   \underline{\rho} \Vert_{L^8(\Omega_{\eta_0})}^2  \lesssim  \Vert \underline{\rho} \Vert_{L^\infty(\Omega_{\eta_0})}\Vert \nabla^2   \underline{\rho} \Vert_{L^4(\Omega_{\eta_0})}.
\end{equation}
By  \eqref{W14LW22FixDomain}, \eqref{eq:GeoEstimate}, and Sobolev embedding, we deduce from \eqref{eq:U2aprimary} that 
\begin{equation}\label{eq:U2a}
\begin{aligned}
{\mathlarger{ \mathfrak{U} }}_{2, \mathtt{a}} & \lesssim  \sup\limits_{I_*} \Vert \eta -\eta_0 \Vert_{W^{7/2,2}(\omega)}^2 \int_{I_*} \left( \Vert \nabla  \underline{\rho} \Vert_{L^\infty(\Omega_{\eta_0})}^2 \Vert   \underline{\rho} \Vert_{W^{3,2}(\Omega_{\eta_0})}^2    +   \Vert \nabla  \underline{\bu} \Vert_{L^\infty(\Omega_{\eta_0})}^2 \Vert   \underline{\bu} \Vert_{W^{3,2}(\Omega_{\eta_0})}^2\right) \dt
\\[0.4em]
&\quad + T_*    \sup\limits_{I_*} \Vert \eta -\eta_0 \Vert_{W^{7/2,2}(\omega)}^2 \sup\limits_{I_*} \Vert   \underline{\bu} \Vert_{W^{3,2}(\Omega_{\eta_0})}^2  .
\end{aligned}
\end{equation}
It follows immediately from  the Banach algebra property of $W^{2,2}(\Omega_{\eta_0})$ and  \eqref{eq:GeoEstimate} that  
\begin{equation}\label{eq:U2b}
\begin{aligned}
{\mathlarger{ \mathfrak{U} }}_{2, \mathtt{b}} & \lesssim  \sup\limits_{I_*} \Vert \eta -\eta_0 \Vert_{W^{3,2}(\omega)}^2 \int_{I_*} \Vert \nabla  \underline{\rho} \Vert_{L^\infty(\Omega_{\eta_0})}^2 \Vert   \underline{\bu} \Vert_{W^{3,2}(\Omega_{\eta_0})}^2 \dt.
\end{aligned}
\end{equation}
Furthermore, we have 
\begin{equation*}
\begin{aligned}
{\mathlarger{ \mathfrak{U} }}_{2, \mathtt{c}}  & \lesssim  \sup\limits_{I_*} \Vert  \underline{\rho} \Vert_{L^\infty(\Omega_{\eta_0})}^2 \int_{I_*}    \left\Vert  \nabla^3 \Big(\left(  \mathbf{A}_{\eta \to \eta_0 } - \mathbb{I}_{3\times 3}  \right) \nabla\underline{\bu} \Big) \right\Vert_{L^2(\Omega_{\eta_0})}^2\dt
\\[0.4em]
& \lesssim  \Biggl( \int_{I_*}    \left\Vert  \nabla^3 \left(  \mathbf{A}_{\eta \to \eta_0 } - \mathbb{I}_{3\times 3}  \right) \nabla\underline{\bu}  \right\Vert_{L^2(\Omega_{\eta_0})}^2\dt   +  \int_{I_*}    \left\Vert  \nabla^2 \left(  \mathbf{A}_{\eta \to \eta_0 } - \mathbb{I}_{3\times 3}  \right) \nabla^2\underline{\bu}  \right\Vert_{L^2(\Omega_{\eta_0})}^2\dt
\\[0.4em]
& \qquad \qquad + \int_{I_*}    \left\Vert  \nabla \left(  \mathbf{A}_{\eta \to \eta_0 } - \mathbb{I}_{3\times 3}  \right) \nabla^3\underline{\bu}  \right\Vert_{L^2(\Omega_{\eta_0})}^2\dt + \int_{I_*}    \left\Vert \left(  \mathbf{A}_{\eta \to \eta_0 } - \mathbb{I}_{3\times 3}  \right) \nabla^4\underline{\bu}  \right\Vert_{L^2(\Omega_{\eta_0})}^2\dt    \Biggr)
\\[0.4em]
& =: \sum\limits_{\mathsf{j} = 1}^{4}  {\mathlarger{\mathfrak{O}}}_{\mathsf{j}}.
\end{aligned}
\end{equation*}
Using  \eqref{eq:GeoEstimate},  it follows that 
\begin{equation*}
\begin{aligned}
{\mathlarger{\mathfrak{O}}}_{1} & \lesssim  \int_{I_*}  \Vert \Dely^2\eta \Vert_{L^2(\omega)}^2 \Vert \nabla\underline{\bu}\Vert_{L^\infty(\Omega_{\eta_0})}^2 \dt  + \int_{I_*} \Vert \naby^4\eta_0 \Vert_{L^4(\omega)}^2  \Vert \nabla\underline{\bu}\Vert_{L^4(\Omega_{\eta_0})}^2 \dt.
\end{aligned}
\end{equation*}
We further obtain  -- after applying Sobolev embeddings -- the estimate 
\begin{equation}
\begin{aligned}
{\mathlarger{\mathfrak{O}}}_{1} & \lesssim \int_{I_*}  \Vert \eta \Vert_{W^{9/2, 2}(\omega)}^2 \Vert \nabla\underline{\bu}\Vert_{L^\infty(\Omega_{\eta_0})}^2 \dt   +  \Vert \eta_0 \Vert_{W^{5,2}(\omega)}^2 \int_{I_*}   \Vert \nabla^2\underline{\bu}\Vert_{L^2(\Omega_{\eta_0})}^2 \dt.
\end{aligned}
\end{equation}
For the term ${\mathlarger{\mathfrak{O}}}_{2}$, it holds that 
\begin{equation*}
\begin{aligned}
{\mathlarger{\mathfrak{O}}}_{2} & \lesssim \sup\limits_{I_*} \Vert \eta - \eta_0 \Vert_{W^{3,2}(\omega)}^2 \int_{I_*} \Vert \nabla^4\underline{\bu}\Vert_{L^2(\Omega_{\eta_0})}^2 \dt .
\end{aligned}
\end{equation*}
Combining standard interpolation with \eqref{eq:EtaEta0Norm}, we obtain  
\begin{equation}
\begin{aligned}
{\mathlarger{\mathfrak{O}}}_{2} & \lesssim T_*^{1/2}\sup\limits_{I_*} \Vert \eta -\eta_0 \Vert_{W^{ 7/2, 2}(\omega)}^{3/2}  \sup\limits_{I_*}\Vert \partial_t \eta \Vert_{W^{3/2, 2}(\omega)}^{1/2}  \int_{I_*}   \Vert \nabla^4\underline{\bu}\Vert_{L^2(\Omega_{\eta_0})}^2 \dt.
\end{aligned}
\end{equation}
Considering the term ${\mathlarger{\mathfrak{O}}}_{3}$,  a combination of H\"older's inequality and Sobolev embeddings yields
\begin{equation*}
\begin{aligned}
{\mathlarger{\mathfrak{O}}}_{3} & \lesssim \sup\limits_{I_*} \Vert \eta - \eta_0 \Vert_{W^{3,2}(\omega)}^2 \int_{I_*} \Vert \nabla^4\underline{\bu}\Vert_{L^2(\Omega_{\eta_0})}^2 \dt .
\end{aligned}
\end{equation*}
Therefore, arguing as for ${\mathlarger{\mathfrak{O}}}_{2}$, it follows that 
 \begin{equation}
\begin{aligned}
{\mathlarger{\mathfrak{O}}}_{3} & \lesssim T_*^{1/2}\sup\limits_{I_*} \Vert \eta -\eta_0 \Vert_{W^{ 7/2, 2}(\omega)}^{3/2}  \sup\limits_{I_*}\Vert \partial_t \eta \Vert_{W^{3/2, 2}(\omega)}^{1/2}  \int_{I_*}   \Vert \nabla^4\underline{\bu}\Vert_{L^2(\Omega_{\eta_0})}^2 \dt.
\end{aligned}
\end{equation}
Similar estimate holds for ${\mathlarger{\mathfrak{O}}}_{4}$, that is, 
\begin{equation}
\begin{aligned}
{\mathlarger{\mathfrak{O}}}_{4} & \lesssim T_*^{1/2}\sup\limits_{I_*} \Vert \eta -\eta_0 \Vert_{W^{ 7/2, 2}(\omega)}^{3/2}  \sup\limits_{I_*}\Vert \partial_t \eta \Vert_{W^{3/2, 2}(\omega)}^{1/2}  \int_{I_*}   \Vert \nabla^4\underline{\bu}\Vert_{L^2(\Omega_{\eta_0})}^2 \dt.
\end{aligned}
\end{equation}
Hence, 
\begin{equation}\label{eq:U2c}
\begin{aligned}
{\mathlarger{ \mathfrak{U} }}_{2, \mathtt{c}}    & \lesssim  \int_{I_*}  \Vert \eta \Vert_{W^{9/2, 2}(\omega)}^2 \Vert \nabla\underline{\bu}\Vert_{L^\infty(\Omega_{\eta_0})}^2 \dt   +  \int_{I_*}   \Vert \nabla^2\underline{\bu}\Vert_{L^2(\Omega_{\eta_0})}^2 \dt  + T_*^{1/2}\int_{I_*}   \Vert \nabla^4\underline{\bu}\Vert_{L^2(\Omega_{\eta_0})}^2 \dt   .
\end{aligned}
\end{equation}
We now consider the  $\mathsf{j} = 1$ contribution, 
\begin{equation}
\begin{aligned}
{\mathlarger{ \mathfrak{U} }}_{1} &\lesssim  \int_{I_*} \left\Vert \nabla \left(  \underline{\rho}^{-1} \right) \Div \Big(\left(  \mathbf{A}_{\eta \to \eta_0 } - \mathbb{I}_{3\times 3}  \right) \nabla\underline{\bu} \Big)  \right\Vert_{L^{2}(\Omega_{\eta_0})}^2  \dt  
\\[0.4em]
& \quad +    \int_{I_*} \left\Vert \underline{\rho}^{-1} \nabla\Div \Big(\left(  \mathbf{A}_{\eta \to \eta_0 } - \mathbb{I}_{3\times 3}  \right) \nabla\underline{\bu} \Big)  \right\Vert_{L^{2}(\Omega_{\eta_0})}^2  \dt 
\\[0.4em]
&\lesssim {\mathlarger{ \mathfrak{U} }}_{1, \mathtt{a}} +  {\mathlarger{ \mathfrak{U} }}_{1, \mathtt{b}}.
\end{aligned}
\end{equation} 
Using  \eqref{eq:GeoEstimate}, it is straightforward that 
\begin{equation}\label{eq:U1a}
{\mathlarger{ \mathfrak{U} }}_{1, \mathtt{a}} \lesssim \sup\limits_{I_*} \Vert \eta -\eta_0 \Vert_{W^{3,2}(\omega)}^2 \int_{I_*} \Vert \nabla  \underline{\rho} \Vert_{L^\infty(\Omega_{\eta_0})}^2 \Vert   \underline{\bu} \Vert_{W^{3,2}(\Omega_{\eta_0})}^2 \dt.
\end{equation}
Moreover, upon expanding the operator $\nabla \Div(\cdot)$, we obtain 
\begin{equation}\label{eq:U1b}
\begin{aligned}
{\mathlarger{ \mathfrak{U} }}_{1, \mathtt{b}} & \lesssim \sup\limits_{I_*} \Vert \eta -\eta_0 \Vert_{W^{3,2}(\omega)}^2  \left( \int_{I_*} \Vert \nabla  \underline{\bu} \Vert_{L^\infty(\Omega_{\eta_0})}^2 \dt  +  T_* \sup\limits_{I_*} \Vert   \underline{\bu} \Vert_{W^{3,2} (\Omega_{\eta_0})}^2 \right) 
\\[0.4em]
&\quad +  \sup\limits_{I_*} \Vert \eta -\eta_0 \Vert_{W^{7/2, 2}(\omega)}^2  \int_{I_*} \Vert \nabla^2  \underline{\bu} \Vert_{L^2(\Omega_{\eta_0})}^2 \dt .
\end{aligned}
\end{equation}
Considering the $\mathsf{j} = 0$ term, it holds that 
\begin{equation}\label{eq:U0}
\begin{aligned}
{\mathlarger{ \mathfrak{U} }}_{0} & \lesssim  \sup\limits_{I_*} \Vert \eta -\eta_0 \Vert_{W^{7/2, 2}(\omega)}^2  \int_{I_*} \Vert \nabla^2  \underline{\bu} \Vert_{L^2(\Omega_{\eta_0})}^2 \dt .
\end{aligned}
\end{equation}
Consequently, 
\begin{equation}\label{eq:S5}
\begin{aligned}
\int_{I_*} \Vert {\mathlarger{\mathfrak{S}}}_5 \Vert_{W^{2,2}(\Omega_{\eta_0})}^2 \dt  & \lesssim  \int_{I_*} \left( \Vert \nabla  \underline{\rho} \Vert_{L^\infty(\Omega_{\eta_0})}^2  + \Vert \nabla  \underline{\bu} \Vert_{L^\infty(\Omega_{\eta_0})}^2 \right) \Vert   \underline{\bu} \Vert_{W^{3,2}(\Omega_{\eta_0})}^2  \dt  
\\[0.4em]
& \quad + \int_{I_*} \Vert \nabla  \underline{\rho} \Vert_{L^\infty(\Omega_{\eta_0})}^2  \Vert   \underline{\rho} \Vert_{W^{3,2}(\Omega_{\eta_0})}^2  \dt  + T_* \sup\limits_{I_*} \Vert   \underline{\bu} \Vert_{W^{3,2}(\Omega_{\eta_0})}^2  
\\[0.4em]
& \quad + T_*^{1/2} \int_{I_*}  \Vert   \underline{\bu} \Vert_{W^{4,2}(\Omega_{\eta_0})}^2 \dt  +  \int_{I_*}  \Vert \eta \Vert_{W^{9/2, 2}(\omega)}^2 \Vert \nabla\underline{\bu}\Vert_{L^\infty(\Omega_{\eta_0})}^2 \dt   + {\mathlarger{\mathtt{E}}}_{\mathrm{acc}} .
\end{aligned}
\end{equation}
Finally, we have 
\begin{equation}\label{eq:S6prime}
\begin{aligned}
\int_{I_*} \Vert {\mathlarger{\mathfrak{S}}}_6 \Vert_{W^{2,2}(\Omega_{\eta_0})}^2 \dt  & \lesssim  \int_{I_*}    \left\Vert \underline{\rho}^{-1} \Div \Big( \mathbf{A}_{\eta \to \eta_0 } \nabla\mathcal{E}_{\eta_0}(\partial_t\eta\bn)\Big) \right\Vert_{W^{2, 2}(\Omega_{\eta_0})}^2\dt . 
\end{aligned}
\end{equation}
However, observe that the right-hand side of \eqref{eq:S6prime} is structurally identical to that of \eqref{eq:S5prime}. Accordingly, one may proceed exactly as in the derivation of  \eqref{eq:S5} to obtain
 \begin{equation}\label{eq:S6}
\begin{aligned}
\int_{I_*} \Vert {\mathlarger{\mathfrak{S}}}_6 \Vert_{W^{2,2}(\Omega_{\eta_0})}^2 \dt  & \lesssim  \int_{I_*} \left( \Vert \nabla  \underline{\rho} \Vert_{L^\infty(\Omega_{\eta_0})}^2  + \Vert \partial_t \Dely  \eta \Vert_{L^2(\omega)}^2 \right) \Vert   \partial_t\eta\Vert_{W^{5/2,2}(\omega)}^2  \dt  
\\[0.4em]
& \quad + \int_{I_*} \Vert \nabla  \underline{\rho} \Vert_{L^\infty(\Omega_{\eta_0})}^2  \Vert   \underline{\rho} \Vert_{W^{3,2}(\Omega_{\eta_0})}^2  \dt  + T_* \sup\limits_{I_*} \Vert   \partial_t \eta \Vert_{W^{5/2,2}(\omega)}^2  
\\[0.4em]
& \quad + T_*^{1/2} \int_{I_*}  \Vert   \partial_t \eta \Vert_{W^{7/2,2}(\omega)}^2  \dt  +  \int_{I_*}  \Vert \eta \Vert_{W^{9/2, 2}(\omega)}^2 \Vert \partial_t \Dely  \eta \Vert_{L^2(\omega)}^2 \dt   + {\mathlarger{\mathtt{E}}}_{\mathrm{acc}} .
\end{aligned}
\end{equation}
Hence, \eqref{eq:fEstim} follows by combining the above estimate with \cref{rem:HigherOrderEta}. \\[-0.8em]


We conclude with the estimate of $\partial_t \mathsf{f}$, which we split  as 
\[
\partial_t  \big(\mathsf{f}+ \partial_t \mathcal{E}_{\eta_0}(\partial_t\eta\bn) \big) =  \sum\limits_{\mathsf{j} = 1}^{8}  \big(\partial_t \mathsf f \big)_\mathsf{j},
\]
where  
\begin{align*}
\big(\partial_t \mathsf f \big)_1 & :=   -\partial_t \Big( \nabla\mathcal{E}_{\eta_0}(\partial_t\eta\bn) \cdot \partial_t \bfPsi_{\eta \to \eta_0 }^{-1}\circ \bfPsi_{\eta \to \eta_0 }  \Big),
\\[1em]
\big(\partial_t \mathsf f \big)_2 & :=   - \partial_t \bigg( \dfrac{1}{J_{\eta \to \eta_0}}\underline{\bu} \big( \nabla \underline{\bu} \colon \mathbf{B}_{\eta \to \eta_0 } \big)    \bigg),
\\[1em]
\big(\partial_t \mathsf f \big)_3 & :=  -\partial_t \bigg( \dfrac{1}{J_{\eta \to \eta_0}} \mathcal{E}_{\eta_0}(\partial_t\eta\bn) \big( \nabla\mathcal{E}_{\eta_0}(\partial_t\eta\bn)  \colon\mathbf{B}_{\eta \to \eta_0 }  \big)   \bigg),
\\[1em]
\big(\partial_t \mathsf f \big)_4 & :=  - \partial_t \bigg( \dfrac{1}{J_{\eta \to \eta_0} \underline{\rho}} \mathbf{B}_{\eta \to \eta_0 }\nabla p(\underline{\rho})   \bigg) ,
\\[1em]
\big(\partial_t \mathsf f \big)_5 & :=  - \partial_t \!\left( \dfrac{1}{J_{\eta \to \eta_0} \underline{\rho}} \,\right) \Div\Biggl[  \mu \left(\mathbf{A}_{\eta \to \eta_0 }  - \mathbb{I}_{3 \times 3}   \right) \nabla\underline{\bu}  + (\lambda + \mu)\biggl( \Bigl(  \left(\mathbf{B}_{\eta \to \eta_0 } - \mathbb{I}_{3 \times 3} \right) \colon \nabla\underline{\bu} \Bigr)  \mathbb{I}_{3 \times 3}    
\\[0.4em]
& \qquad \qquad\qquad \qquad \qquad \qquad\qquad \qquad\qquad\quad  + \left( \mathbf{B}_{\eta \to \eta_0 } \colon \nabla\underline{\bu}  \right)  \left( \dfrac{1}{J_{\eta \to \eta_0 }} \mathbf{B}_{\eta \to \eta_0 } -  \mathbb{I}_{3 \times 3}  \right)               \biggr)      \Biggr]  ,
\\[1em]
\big(\partial_t \mathsf f \big)_6 & := -  \dfrac{1}{J_{\eta \to \eta_0} \underline{\rho}} \, \partial_t\Div\Biggl[  \mu \left(\mathbf{A}_{\eta \to \eta_0 }  - \mathbb{I}_{3 \times 3}   \right) \nabla\underline{\bu}  + (\lambda + \mu)\biggl( \Bigl(  \left(\mathbf{B}_{\eta \to \eta_0 } - \mathbb{I}_{3 \times 3} \right) \colon \nabla\underline{\bu} \Bigr)  \mathbb{I}_{3 \times 3}    
\\[0.4em]
& \qquad \qquad\qquad \qquad \qquad \qquad\qquad \qquad\qquad\quad  + \left( \mathbf{B}_{\eta \to \eta_0 } \colon \nabla\underline{\bu}  \right)  \left( \dfrac{1}{J_{\eta \to \eta_0 }} \mathbf{B}_{\eta \to \eta_0 } -  \mathbb{I}_{3 \times 3}  \right)               \biggr)      \Biggr]  ,
\\[1em]
\big(\partial_t \mathsf f \big)_7 & := \partial_t \!\left( \dfrac{1}{J_{\eta \to \eta_0} \underline{\rho}} \,\right)\Div\Bigg[ \mu  \mathbf{A}_{\eta \to \eta_0 }  \nabla\mathcal{E}_{\eta_0}(\partial_t\eta\bn) +   \dfrac{(\lambda + \mu)}{J_{\eta \to \eta_0}}   \Big( \mathbf{B}_{\eta \to \eta_0 } \colon \nabla \mathcal{E}_{\eta_0}(\partial_t\eta\bn)     \Big)     \mathbf{B}_{\eta \to \eta_0 }     \Bigg],
\\[1em]
\big(\partial_t \mathsf f \big)_8 & := \dfrac{1}{J_{\eta \to \eta_0} \underline{\rho}} \,\partial_t\Div\Bigg[ \mu  \mathbf{A}_{\eta \to \eta_0 }  \nabla\mathcal{E}_{\eta_0}(\partial_t\eta\bn) +   \dfrac{(\lambda + \mu)}{J_{\eta \to \eta_0}}   \Big( \mathbf{B}_{\eta \to \eta_0 } \colon \nabla \mathcal{E}_{\eta_0}(\partial_t\eta\bn)     \Big)     \mathbf{B}_{\eta \to \eta_0 }     \Bigg].
\end{align*}
Starting with the first term, we have, by \eqref{eq:GeoEstimate},  
\begin{align*}
\int_{I_*} \left\Vert \big(\partial_t \mathsf f \big)_1 \right\Vert_{L^2(\Omega_{\eta_0})}^2 \dt & \lesssim  \int_{I_*} \Vert \partial_t \nabla\mathcal{E}_{\eta_0}(\partial_t\eta\bn) \Vert_{L^2(\Omega_{\eta_0})}^2 \Vert \partial_t\bfPsi_{\eta \to \eta_0} \Vert_{L^{\infty}(\Omega_{\eta_0})}^2  \dt
\\[0.4em]
& \quad +   \int_{I_*} \Vert  \nabla\mathcal{E}_{\eta_0}(\partial_t\eta\bn) \Vert_{L^\infty(\Omega_{\eta_0})}^2  \Vert \partial_t^2\bfPsi_{\eta \to \eta_0 }^{-1}\circ \bfPsi_{\eta \to \eta_0 }  \Vert_{L^2(\Omega_{\eta_0})}^2  \dt
\\[0.4em]
& \quad +   \int_{I_*} \Vert  \nabla\mathcal{E}_{\eta_0}(\partial_t\eta\bn) \Vert_{L^\infty(\Omega_{\eta_0})}^2  \Vert \partial_t\nabla \bfPsi_{\eta \to \eta_0 }^{-1}\circ \bfPsi_{\eta \to \eta_0 }  \cdot \partial_t \bfPsi_{\eta \to \eta_0}  \Vert_{L^2(\Omega_{\eta_0})}^2  \dt .
\end{align*}
Using the trace theorem and Sobolev embeddings, we obtain 
 \begin{align*}
\int_{I_*} \left\Vert \big(\partial_t \mathsf f \big)_1 \right\Vert_{L^2(\Omega_{\eta_0})}^2 \dt & \lesssim  \int_{I_*} \Vert \partial_t^2 \eta \Vert_{W^{1/2, 2}(\omega)}^2 \Vert \partial_t\eta \Vert_{W^{3/2, 2}(\omega)}^2  \dt
\\[0.4em]
& \quad +   \int_{I_*} \Vert \partial_t \eta \Vert_{W^{5/2,2}(\omega)}^2 \Big(  \Vert \partial_t^2\eta \Vert_{L^{2}(\omega)}^2   +  \Vert \partial_t\naby\eta \Vert_{L^{2}(\omega)}^2 \Vert \partial_t\eta \Vert_{W^{3/2, 2}(\omega)}^2 \Big) \dt .
\end{align*}
Thus, it follows from \cref{rem:HigherOrderEta}, interpolation, and Young's inequality that  
\begin{equation}\label{eq:TimefEstim1}
\int_{I_*} \left\Vert \big(\partial_t \mathsf f \big)_1 \right\Vert_{L^2(\Omega_{\eta_0})}^2 \dt  \lesssim  {\mathlarger{\mathtt{E}}}_{\mathrm{acc}} + \kappa \int_{I_*} \Vert \partial_t \eta \Vert_{W^{3,2}(\omega)}^2  \Vert \partial_t^2\eta \Vert_{L^{2}(\omega)}^2  \dt    .
\end{equation}
Noting that $\big(\partial_t \mathsf f \big)_2$ coincides with $\big(\partial_t \mathsf{B}\underline{\bu} \big)_2$  upon replacing $\mathcal{E}_{\eta_0}(\partial_t\eta\bn)$ by $\underline{\bu}$, we derive from\eqref{eq:TimeBuEstim2prior} that 
\begin{equation*}
\begin{aligned}
\int_{I_*} \left\Vert\big(\partial_t \mathsf f \big)_2 \right\Vert_{L^2(\Omega_{\eta_0})}^2 \dt & \lesssim   \int_{I_*}\Big(\Vert \partial_t \naby\eta\Vert_{L^\infty(\omega)}^2   +  \Vert \partial_t \underline{\bu} \Vert_{L^2(\Omega_{\eta_0})}^2  \Big) \Vert  \nabla\underline{\bu} \Vert_{L^\infty(\Omega_{\eta_0})}^2    \dt 
\\[0.4em]
&\quad +  \int_{I_*} \Vert \underline{\bu} \Vert_{L^\infty(\Omega_{\eta_0})}^2  \Big( \Vert \partial_t \nabla\underline{\bu} \Vert_{L^2(\Omega_{\eta_0})}^2  +  \Vert \nabla\underline{\bu} \Vert_{L^\infty(\Omega_{\eta_0})}^2 \Vert  \partial_t \mathbf{B}_{\eta \to \eta_0 }  \Vert_{L^2(\Omega_{\eta_0})}^2   \Big)   \dt  . 
\end{aligned}
\end{equation*}
By Sobolev embeddings and interpolation, we deduce that 
\begin{equation}\label{eq:TimefEstim2}
\begin{aligned}
 \int_{I_*} \left\Vert \big(\partial_t \mathsf f \big)_2 \right\Vert_{L^2(\Omega_{\eta_0})}^2 \dt 
& \lesssim  \int_{I_*}\Big(  \Vert \partial_t\eta \Vert_{W^{5/2,2}(\omega)}^2   +  \Vert \partial_t \underline{\bu} \Vert_{L^{2}(\Omega_{\eta_0})}^2 +  \Vert \nabla\underline{\bu} \Vert_{L^\infty(\Omega_{\eta_0})}^2  \Big)\Vert \underline{\bu} \Vert_{W^{3,2}(\Omega_{\eta_0})}^2 \dt    
\\[0.4em]
&\quad +  \kappa \int_{I_*} \Vert \partial_t \underline{\bu} \Vert_{W^{2,2}(\Omega_{\eta_0})}^2  \dt   + {\mathlarger{\mathtt{E}}}_{\mathrm{acc}} .
\end{aligned}
\end{equation}
Similarly, we have 
\begin{equation*}
\begin{aligned}
\int_{I_*} \left\Vert\big(\partial_t \mathsf f \big)_3 \right\Vert_{L^2(\Omega_{\eta_0})}^2 \dt & \lesssim   \int_{I_*}\Big(\Vert \partial_t \naby\eta\Vert_{L^\infty(\omega)}^2   +  \Vert \partial_t \mathcal{E}_{\eta_0}(\partial_t\eta\bn) \Vert_{L^2(\Omega_{\eta_0})}^2  \Big) \Vert  \nabla\mathcal{E}_{\eta_0}(\partial_t\eta\bn) \Vert_{L^\infty(\Omega_{\eta_0})}^2    \dt 
\\[0.4em]
&\quad +  \int_{I_*} \Vert \mathcal{E}_{\eta_0}(\partial_t\eta\bn) \Vert_{L^\infty(\Omega_{\eta_0})}^2  \Vert \partial_t \nabla\mathcal{E}_{\eta_0}(\partial_t\eta\bn) \Vert_{L^2(\Omega_{\eta_0})}^2  \dt
\\[0.4em]
&\quad + \int_{I_*} \Vert \mathcal{E}_{\eta_0}(\partial_t\eta\bn) \Vert_{L^\infty(\Omega_{\eta_0})}^2 \Vert \nabla\mathcal{E}_{\eta_0}(\partial_t\eta\bn) \Vert_{L^\infty(\Omega_{\eta_0})}^2 \Vert  \partial_t \mathbf{B}_{\eta \to \eta_0 }  \Vert_{L^2(\Omega_{\eta_0})}^2   \dt  . 
\end{aligned}
\end{equation*}
Using \cref{rem:HigherOrderEta}, interpolation and Young's inequality, we deduce that 
\begin{equation}\label{eq:TimefEstim3}
\begin{aligned}
 \int_{I_*} \left\Vert \big(\partial_t \mathsf f \big)_3 \right\Vert_{L^2(\Omega_{\eta_0})}^2 \dt 
& \lesssim  \kappa \int_{I_*}\Big(  \Vert \partial_t\eta \Vert_{W^{5/2,2}(\omega)}^2   +  \Vert \partial_t^2 \eta\Vert_{L^{2}(\omega)}^2 \Big)\Vert \partial_t\eta \Vert_{W^{3,2}(\omega)}^2 \dt   + {\mathlarger{\mathtt{E}}}_{\mathrm{acc}} .
\end{aligned}
\end{equation}
For the pressure term $ \big(\partial_t \mathsf f \big)_4$, it is straightforward that 
\begin{equation}\label{eq:TimefEstim4prior}
\begin{aligned}
 \int_{I_*} \left\Vert \big(\partial_t \mathsf f \big)_4 \right\Vert_{L^2(\Omega_{\eta_0})}^2 \dt  & \lesssim  \int_{I_*}  \left\Vert \partial_t \big(\, \underline{\rho}^{-1}\nabla p(\underline{\rho})   \big) \right\Vert_{L^2(\Omega_{\eta_0})}^2 \dt
 \\[0.4em]
 &\lesssim   \int_{I_*} \Vert \partial_t  \underline{\rho}  \Vert_{L^2(\Omega_{\eta_0})}^2 \Vert \nabla \underline{\rho}  \Vert_{L^\infty(\Omega_{\eta_0})}^2   \dt +     \int_{I_*} \Vert \partial_t \nabla \underline{\rho}  \Vert_{L^2(\Omega_{\eta_0})}^2   \dt .
 \end{aligned}
\end{equation} 
Moreover, $\underline{\rho}$ satisfies the transport-type equation  
\begin{equation}
 \partial_t  \underline{\rho}   =  - \Big(  \partial_t\bfPsi_{\eta \to \eta_0}^{-1}\circ\bfPsi_{\eta \to \eta_0}  + \dfrac{1}{J_{\eta \to \eta_0}}\mathbf{B}_{\eta \to \eta_0 }^\intercal \underline{\bv}  \Big) \cdot \nabla \underline{\rho}   - \Big(\dfrac{1}{J_{\eta \to \eta_0}} \mathbf{B}_{\eta \to \eta_0 } \colon \nabla\underline{\bv}\Big) \underline{\rho} . 
\end{equation}
 This yields
 \begin{align}
 \Vert \partial_t  \underline{\rho}  \Vert_{L^\infty(\Omega_{\eta_0})} & \lesssim  \Big( \Vert  \partial_t \eta \Vert_{W^{3/2, 2}(\omega)}  + \Vert \underline{\bv} \Vert_{L^\infty(\Omega_{\eta_0})}  \Big)\Vert \nabla\underline{\rho} \Vert_{L^\infty(\Omega_{\eta_0})}  + \Vert \nabla\underline{\bv} \Vert_{L^\infty(\Omega_{\eta_0})} ,
 \label{eq:TransportRHOinf}
 \\[1em]
  \Vert \partial_t  \underline{\rho}  \Vert_{L^2(\Omega_{\eta_0})} & \lesssim  \Big( \Vert  \partial_t \eta \Vert_{L^{2}(\omega)}  + \Vert \underline{\bv} \Vert_{L^2(\Omega_{\eta_0})}  \Big)\Vert \nabla\underline{\rho} \Vert_{L^\infty(\Omega_{\eta_0})}   + \Vert \nabla \underline{\bv} \Vert_{L^2(\Omega_{\eta_0})} \Vert \underline{\rho} \Vert_{L^\infty(\Omega_{\eta_0})} ,
  \label{eq:TransportRHO1}
 \\[1em]
   \Vert \partial_t  \underline{\rho}  \Vert_{W^{1,2}(\Omega_{\eta_0})} & \lesssim  \Big( \Vert  \partial_t \eta \Vert_{W^{1,\infty}(\omega)}  + \Vert \underline{\bv} \Vert_{W^{1,\infty}(\Omega_{\eta_0})}  \Big)\Vert \nabla\underline{\rho} \Vert_{W^{1,2}(\Omega_{\eta_0})} + \Vert \nabla \underline{\bv} \Vert_{W^{1,2}(\Omega_{\eta_0})} \Vert \nabla\underline{\rho} \Vert_{L^\infty(\Omega_{\eta_0})} .
    \label{eq:TransportRHO2}
 \end{align} 
Substituting  \eqref{eq:TransportRHO1} and  \eqref{eq:TransportRHO2} into \eqref{eq:TimefEstim4prior}, we deduce that 
\begin{equation}\label{eq:TimefEstim4}
\begin{aligned}
&\int_{I_*} \left\Vert \big(\partial_t \mathsf f \big)_4 \right\Vert_{L^2(\Omega_{\eta_0})}^2 \dt  
\\[0.4em]
& \lesssim  \int_{I_*} \Big( \Vert \underline{\bv} \Vert_{W^{2,2}(\Omega_{\eta_0})}^2   +  \Vert \partial_t\eta \Vert_{W^{5/2,2}(\omega)}^2  +  \Vert \nabla\underline{\bv} \Vert_{L^{\infty}(\Omega_{\eta_0})}^2 + \Vert \nabla\underline{\rho} \Vert_{L^\infty(\Omega_{\eta_0})}^2  \Big)  \Vert \underline{\rho} \Vert_{W^{3,2}(\Omega_{\eta_0})}^2   \dt  +  {\mathlarger{\mathtt{E}}}_{\mathrm{acc}} .
\end{aligned}
\end{equation}
By \eqref{eq:GeoEstimate} and \cref{rem:HigherOrderEta}, we immediately have 
\begin{equation}\label{eq:TimefEstim5prior}
\int_{I_*} \left\Vert \big(\partial_t \mathsf f \big)_5 \right\Vert_{L^2(\Omega_{\eta_0})}^2 \dt  \lesssim \int_{I_*}   \Big( \Vert \partial_t\naby\eta\Vert_{L^\infty(\omega)}^2 +  \Vert \partial_t  \underline{\rho}  \Vert_{L^\infty(\Omega_{\eta_0})}^2 \Big) \Vert \underline{\bu} \Vert_{W^{2,2}(\Omega_{\eta_0})}^2 \dt . 
\end{equation}
Using \eqref{eq:TransportRHOinf}, interpolation, and Young's inequality, we obtain   
\begin{equation}\label{eq:TimefEstim5}
\begin{aligned}
& \int_{I_*} \left\Vert \big(\partial_t \mathsf f \big)_5 \right\Vert_{L^2(\Omega_{\eta_0})}^2 \dt 
\\[0.4em]
&  \lesssim  \int_{I_*} \Vert \partial_t\eta \Vert_{W^{5/2,2}(\omega)}^2 \Vert \underline{\bu} \Vert_{W^{3,2}(\Omega_{\eta_0})}^2   \dt  + \int_{I_*}  \Vert \underline{\bu} \Vert_{W^{2,2}(\Omega_{\eta_0})}^2 \Big(  \Vert \underline{\bv} \Vert_{W^{3,2}(\Omega_{\eta_0})}^2  + \Vert \underline{\rho} \Vert_{W^{3,2}(\Omega_{\eta_0})}^2 \Big)  \dt 
\\[0.4em]
&\quad + \int_{I_*}  \Big(  \Vert \underline{\bv} \Vert_{W^{3,2}(\Omega_{\eta_0})}^2  +   \Vert \underline{\bu} \Vert_{W^{3,2}(\Omega_{\eta_0})}^2     \Big)  \Vert \nabla\underline{\rho} \Vert_{L^\infty(\Omega_{\eta_0})}^2    \dt .
\end{aligned}
\end{equation}
Proceeding as for $ \big(\partial_t \mathsf f \big)_5$, and using that time derivative and spatial derivative commute, we  derive that 
\begin{equation*}
\begin{aligned}
 \int_{I_*} \left\Vert \big(\partial_t \mathsf f \big)_6 \right\Vert_{L^2(\Omega_{\eta_0})}^2 \dt & \lesssim \int_{I_*}  \Vert \partial_t \mathbf{A}_{\eta \to \eta_0 } \nabla\underline{\bu} \Vert_{W^{1,2}(\Omega_{\eta_0})}^2 \dt  + \int_{I_*}  \Vert  \left(\mathbf{A}_{\eta \to \eta_0 }  - \mathbb{I}_{3 \times 3}   \right) \partial_t\nabla\underline{\bu} \Vert_{W^{1, 2}(\Omega_{\eta_0})}^2  \dt 
 \\[0.4em]
 &\lesssim  \int_{I_*}  \Vert \partial_t \mathbf{A}_{\eta \to \eta_0 } \Vert_{W^{1,4}(\Omega_{\eta_0})}^2  \Vert \nabla\underline{\bu} \Vert_{W^{1,4}(\Omega_{\eta_0})}^2 \dt      
 \\[0.4em]
 & \quad +    \int_{I_*}  \Vert  \left(\mathbf{A}_{\eta \to \eta_0 }  - \mathbb{I}_{3 \times 3}   \right) \Vert_{L^\infty(\Omega_{\eta_0})}^2 \Vert \partial_t\nabla\underline{\bu} \Vert_{W^{1, 2}(\Omega_{\eta_0})}^2  \dt 
 \\[0.4em]
 &\quad +  \int_{I_*}  \Vert  \left(\mathbf{A}_{\eta \to \eta_0 }  - \mathbb{I}_{3 \times 3}   \right) \Vert_{W^{1,\infty}(\Omega_{\eta_0})}^2 \Vert \partial_t\nabla\underline{\bu} \Vert_{L^{2}(\Omega_{\eta_0})}^2  \dt .
 \end{aligned}
\end{equation*}
Using   \eqref{eq:GeoEstimate} together with Sobolev embeddings, the above estimate reduces to 
\begin{equation}\label{eq:TimefEstim6prior}
\begin{aligned}
 \int_{I_*} \left\Vert \big(\partial_t \mathsf f \big)_6 \right\Vert_{L^2(\Omega_{\eta_0})}^2 \dt  &\lesssim  \int_{I_*}  \Vert \partial_t \eta  \Vert_{W^{5/2, 2}(\omega)}^2  \Vert \underline{\bu} \Vert_{W^{3, 2}(\Omega_{\eta_0})}^2 \dt      
 \\[0.4em]
 & \quad +    \int_{I_*}  \Vert \eta - \eta_0 \Vert_{W^{1, \infty}(\omega)}^2 \Vert \partial_t\underline{\bu} \Vert_{W^{2, 2}(\Omega_{\eta_0})}^2  \dt 
 \\[0.4em]
 &\quad +  \int_{I_*}  \Vert  \eta - \eta_0  \Vert_{W^{7/2, 2}(\omega)}^2 \Vert \partial_t\nabla\underline{\bu} \Vert_{L^{2}(\Omega_{\eta_0})}^2  \dt .
 \end{aligned}
\end{equation}
However, by \eqref{eq:AbsCont}, it holds that  
\begin{equation}\label{eq:AbstContBound}
\begin{aligned}
\sup\limits_{I_*} \Vert \eta - \eta_0 \Vert_{W^{1, \infty}(\omega)}^2 & \leq T_* \int_{I_*} \Vert \partial_t \eta \Vert_{W^{1, \infty}(\omega)}^2 \dt \; \leq T_*  \int_{I_*}  \Vert \partial_t \eta \Vert_{W^{5/2, 2}(\omega)}^2 \dt .
\end{aligned}
\end{equation}
Thus, upon substituting \eqref{eq:AbstContBound} into \eqref{eq:TimefEstim6prior}, we deduce, by \cref{rem:HigherOrderEta}, interpolation, and Young's inequality,  that 
\begin{equation}\label{eq:TimefEstim6}
 \int_{I_*} \left\Vert \big(\partial_t \mathsf f \big)_6 \right\Vert_{L^2(\Omega_{\eta_0})}^2 \dt  \lesssim  {\mathlarger{\mathtt{E}}}_{\mathrm{acc}}  +  (T_* + \kappa) \int_{I_*}\Vert \partial_t\underline{\bu} \Vert_{W^{2,2}(\Omega_{\eta_0})}^2  \dt   +  \int_{I_*}  \Vert \partial_t \eta  \Vert_{W^{5/2, 2}(\omega)}^2  \Vert \underline{\bu} \Vert_{W^{3, 2}(\Omega_{\eta_0})}^2 \dt    .
\end{equation}
Since $\big(\partial_t \mathsf f \big)_7$ is analogous to $\big(\partial_t \mathsf f \big)_5$, the same argument applies and yields
\begin{equation}\label{eq:TimefEstim7}
\int_{I_*} \left\Vert \big(\partial_t \mathsf f \big)_7 \right\Vert_{L^2(\Omega_{\eta_0})}^2 \dt  \lesssim  {\mathlarger{\mathtt{E}}}_{\mathrm{acc}}  + \int_{I_*} \Vert \underline{\bv} \Vert_{W^{3,2}(\Omega_{\eta_0})}^2 \Vert \nabla\underline{\rho} \Vert_{L^\infty(\Omega_{\eta_0})}^2    \dt  .
\end{equation}
Finally,  for the term $\big(\partial_t \mathsf f \big)_8$, up to replacing $\underline{\bu}$ by $ \mathcal{E}_{\eta_0}(\partial_t\eta\bn)$, we may proceed as in the estimate of  $\big(\partial_t \mathsf f \big)_6$. Consequently, we infer  from \eqref{eq:TimefEstim6prior} that 
\begin{equation}\label{eq:TimefEstim8prior}
\begin{aligned}
 \int_{I_*} \left\Vert \big(\partial_t \mathsf f \big)_8 \right\Vert_{L^2(\Omega_{\eta_0})}^2 \dt  &\lesssim  \int_{I_*}  \Vert \partial_t \eta  \Vert_{W^{5/2, 2}(\omega)}^2  \Vert \partial_t\eta \Vert_{W^{5/2, 2}(\omega)}^2 \dt      
 \\[0.4em]
 & \quad +    \int_{I_*}  \Vert \eta - \eta_0 \Vert_{W^{1, \infty}(\omega)}^2 \Vert \partial_t^2 \eta \Vert_{W^{3/2, 2}(\Omega_{\eta_0})}^2  \dt 
 \\[0.4em]
 &\quad +  \int_{I_*}  \Vert  \eta - \eta_0  \Vert_{W^{7/2, 2}(\omega)}^2 \Vert \partial_t^2 \eta \Vert_{W^{1/2, 2}(\Omega_{\eta_0})}^2  \dt  .
 \end{aligned}
\end{equation}
Using  \eqref{eq:AbstContBound}, interpolation and Young's inequality, we deduce that 
\begin{equation} \label{eq:TimefEstim8}
 \int_{I_*} \left\Vert \big(\partial_t \mathsf f \big)_8 \right\Vert_{L^2(\Omega_{\eta_0})}^2 \dt  \lesssim  {\mathlarger{\mathtt{E}}}_{\mathrm{acc}}  + T_* \int_{I_*}  \Vert \partial_t^2 \Dely\eta \Vert_{L^2(\omega)}^2 \dt  + \kappa \int_{I_*}  \Vert \partial_t \eta  \Vert_{W^{5/2, 2}(\omega)}^2  \Vert \partial_t\eta \Vert_{W^{3, 2}(\omega)}^2 \dt .
\end{equation}
Hence, \eqref{eq:TimefEstim} follows by combining \eqref{eq:TimefEstim1}--\eqref{eq:TimefEstim8}.



 \section*{Acknowledgements}
The authors sincerely acknowledge  Dominic Breit for inspiring discussions and constructive feedback related to the present work.  
This work was funded by the Deutsche Forschungsgemeinschaft (DFG) -- Projektnummer 543675748.

\section*{Compliance with Ethical Standards}
\smallskip
\par\noindent
{\bf Conflict of Interest}. The authors declare that they have no conflict of interest.

\smallskip
\par\noindent
{\bf Data Availability}. Data sharing is not applicable to this article as no datasets were generated or analysed during the current study.

\end{document}